\NeedsTeXFormat{LaTeX2e}

\documentclass{lms}

\usepackage{amsmath,amssymb,theoremref,float}
\usepackage[shortlabels]{enumitem}
\usepackage{xypic} 
\usepackage{mathrsfs} 


\newtheorem{thm}{Theorem}[section] 
\newtheorem{lem}[thm]{Lemma}     
\newtheorem{cor}[thm]{Corollary}
\newtheorem{prop}[thm]{Proposition}


\newnumbered{assertion}[thm]{Assertion}    
\newnumbered{conjecture}[thm]{Conjecture}  
\newnumbered{defn}[thm]{Definition}
\newnumbered{hypothesis}[thm]{Hypothesis}
\newnumbered{rmk}[thm]{Remark}
\newnumbered{note}[thm]{Note}
\newnumbered{obs}[thm]{Observation}
\newnumbered{problem}[thm]{Problem}
\newnumbered{quest}[thm]{Question}
\newnumbered{algorithm}[thm]{Algorithm}
\newnumbered{example}[thm]{Example}
\newunnumbered{notation}[thm]{Notation} 



\title[Elementary t.d.l.c. groups ]
 {Elementary totally disconnected locally compact groups} 

\author{Phillip Wesolek}


\classno{22D05 (primary), 03E15 (secondary)}


\newcommand{\Zb}{\mathbb{Z}}
\newcommand{\Nb}{\mathbb{N}}

\renewcommand{\Uc}{\mathcal{U}}

\newcommand{\Qp}{\mathbb{Q}_p}

\newcommand{\mc}[1]{\mathcal{#1}}

\newcommand{\ms}[1]{\mathscr{#1}} 

\newcommand{\acts}{\curvearrowright}
\newcommand{\sleq}{\leqslant}
\newcommand{\sgeq}{\geqslant}
\newcommand{\cc}{\trianglelefteq_{cc}}

\newcommand{\Es}{\mathscr{E}}
\newcommand{\wbaire}{\mathbb{N}^{<\mathbb{N}}}
\newcommand{\baire}{\mathbb{N}^{\mathbb{N}}}
\newcommand{\rest}{\upharpoonright}
\newcommand{\conc}{^{\smallfrown}}

\newcommand{\cgrp}[1]{\overline{\left\langle #1 \right\rangle}}
\newcommand{\ngrp}[1]{\left\langle \left\langle #1 \right\rangle \right\rangle}
\newcommand{\grp}[1]{\left\langle #1 \right\rangle}
\newcommand{\ol}[1]{\overline{#1}}

\newcommand{\qci}[2]{QC_{#1}(#2)}
\renewcommand{\qc}[2]{\overline{QC}_{#1}(#2)}

\newcommand{\Tsumw}[2]{\bigoplus_{n\in \omega}(#1,#2)}

\newcommand{\Tsumz}[2]{\bigoplus_{i\in \mathbb{Z}}(#1,#2)}

\newcommand{\Rad}[1]{\mathop{\rm Rad}_{#1}\nolimits}
\newcommand{\Res}[1]{\mathop{\rm Res}_{#1}\nolimits}
\newcommand{\SIN}{\mathop{\rm SIN}}
\newcommand{\rk}{\mathop{\rm rk}}

\begin{document}
\maketitle
\begin{abstract}
We identify the class of elementary groups: the smallest class of totally disconnected locally compact second countable (t.d.l.c.s.c.) groups that contains the profinite groups and the discrete groups, is closed under group extensions of profinite groups and discrete groups, and is closed under countable increasing unions. We show this class enjoys robust permanence properties. In particular, it is closed under group extension, taking closed subgroups, taking Hausdorff quotients, and inverse limits. A characterization of elementary groups in terms of well-founded descriptive-set-theoretic trees is then presented. We conclude with three applications. We first prove structure results for general t.d.l.c.s.c. groups. In particular, we show a compactly generated t.d.l.c.s.c. group decomposes into elementary groups and topologically characteristically simple groups via group extension. We then prove two local-to-global structure theorems: Locally solvable t.d.l.c.s.c. groups are elementary and $[A]$-regular t.d.l.c.s.c. groups are elementary.
\end{abstract}

\section{Introduction}
We study totally disconnected locally compact (t.d.l.c.) groups that are also second countable (s.c.). T.d.l.c.s.c. groups are members of the natural and robust class of Polish groups, i.e. topological groups that are separable and completely metrizable; cf. \cite[(5.3)]{K95}. Polish spaces and, therefore, Polish groups are considered to be the ``correct" topological spaces and topological groups to study. That is to say, they capture almost all naturally occurring examples, admit useful and general theorems such as the Baire category theorem, and exclude pathological examples. The second countability assumption is also quite mild. Every t.d.l.c. group is a directed union of open $K_{\sigma}$ subgroups that are second countable modulo a compact normal subgroup; cf. \cite[(8.7)]{HR79}. Our results may therefore be easily adapted to the t.d.l.c. setting.  \par

\indent In the study of t.d.l.c.s.c. groups, groups ``built" from profinite groups and discrete groups arise often. Certainly, there are a number of counterexamples built as such. Less obviously, the work of V.P. Platonov \cite{Plat66} demonstrates that a t.d.l.c.s.c. group such that every finite set generates a relatively compact subgroup may be written as a countable increasing union of profinite groups. More subtly still, a residually discrete t.d.l.c.s.c. group may be written as a countable increasing union of SIN groups by results of P-E. Caprace and N. Monod \cite{CM11}.\par

\indent T.d.l.c.s.c. groups built from profinite groups and discrete groups also seem to play an important role in the structure theory of t.d.l.c.s.c. groups. Indeed, \cite{GW01} shows the kernel of the adjoint representation of a $p$-adic Lie group is such a group. More generally, the work \cite{CM11} indicates groups built from profinite groups and discrete groups are the barrier to finding non-trivial minimal normal subgroups, which is an essential step in reducing a t.d.l.c.s.c. group into ``basic" groups.\par

\indent  The frequency of occurrence of t.d.l.c.s.c. groups built from profinite groups and discrete groups and the important role they appear to play in the structure of general t.d.l.c.s.c. groups encourage further study. We here initiate this study, make a number of contributions to the theory, and give a few applications.\par

\subsection{Statement of results}
\indent We consider a seemingly narrow class of t.d.l.c.s.c. groups built from profinite groups and discrete groups.

\begin{defn}
The class of \textbf{elementary groups} is the smallest class $\Es$ of t.d.l.c.s.c. groups such that
\begin{enumerate}[(i)]

\item $\Es$ contains all second countable profinite groups and countable discrete groups;

\item $\Es$ is closed under taking group extensions of second countable profinite groups and countable discrete groups. I.e. if $G$ is a t.d.l.c.s.c. group and $H\trianglelefteq G$ is a closed normal subgroup with $H\in \Es$ and $G/H$ either profinite or discrete, then $G\in \Es$; and

\item If $G$ is a t.d.l.c.s.c. group and $G=\bigcup_{i\in \omega}O_i$ where $(O_i)_{i\in \omega}$ is an $\subseteq$-increasing sequence of open subgroups of $G$ with $O_i\in\Es$ for each $i$, then $G\in\Es$. We say $\Es$ is \textbf{closed under countable increasing unions}.
\end{enumerate}
\end{defn}

\begin{rmk} 
The class $\Es$ could more accurately but less efficiently be called the class of \textit{topologically elementary groups}. We omit ``topologically" as it is redundant in light of the fact we are interested in the non-compact topological theory. We apologize for reducing the interesting and complicated classes of discrete groups and profinite groups to the status of ``elementary" groups. Of course, such definitional slights are common in mathematics; e.g. the rich and deep theory of finite groups concerns virtually trivial groups.
\end{rmk}

Our first results show $\Es$ satisfies strong permanence properties indicating this class indeed captures our intuitive idea of groups ``built" from profinite groups and discrete groups.

\begin{thm} $\Es$ enjoys the following permanence properties:
\begin{enumerate}[(1)]
\item $\Es$ is closed under group extension. I.e. if $G$ is a t.d.l.c.s.c. group and $H\trianglelefteq G$ is a closed normal subgroup such that $H,G/H\in \Es$, then $G\in \Es$.
\item If $G\in \Es$, $H$ is a t.d.l.c.s.c. group, and $\psi:H\rightarrow G$ is a continuous, injective homomorphism, then $H\in \Es$. In particular, $\Es$ is closed under taking closed subgroups.
\item $\Es$ is closed under taking quotients by closed normal subgroups.
\item If $G$ is a residually elementary t.d.l.c.s.c. group, then $G\in \Es$. In particular, $\Es$ is closed under inverse limits that result in a t.d.l.c.s.c. group.
\item $\Es$ is closed under quasi-products.
\item $\Es$ is closed under local direct products.
\item If $G$ is a t.d.l.c.s.c. group and $(C_i)_{i\in \omega}$ is an $\subseteq$-increasing sequence of elementary closed subgroups of $G$ such that $N_G(C_i)$ is open for each $i$ and $\ol{\bigcup_{i\in \omega}C_i}=G$, then $G\in \Es$. 
\end{enumerate}
\end{thm}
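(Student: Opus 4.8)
My plan is to exploit the induction principle furnished by the minimality of $\Es$: to show that every elementary group enjoys a given property, it suffices to verify that the class of t.d.l.c.s.c. groups with that property satisfies (i)--(iii) of the definition. I would prove the seven parts in an order that lets the later ones cite the earlier, starting with (1). For (1), fix $H\in\Es$ and let $\mathcal{A}$ be the class of t.d.l.c.s.c. groups $Q$ such that every $G$ containing a closed normal copy of $H$ with $G/H\cong Q$ lies in $\Es$. Profinite and discrete $Q$ land in $\mathcal{A}$ by axiom (ii); if $M\trianglelefteq Q$ with $M\in\mathcal{A}$ and $Q/M$ is profinite or discrete, then for $\pi:G\to Q$ the group $\pi^{-1}(M)$ lies in $\Es$ and $G/\pi^{-1}(M)\cong Q/M$, so $G\in\Es$; and increasing unions in $Q$ pull back to increasing unions of open subgroups in $G$. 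Hence $\Es\subseteq\mathcal{A}$, which is (1). Two consequences I record for later: finite products of elementary groups are elementary (since $A\times B$ is an extension of $B\in\Es$ by $A\in\Es$), and — directly from axiom (ii) — any t.d.l.c.s.c. group with a compact open normal subgroup is elementary.

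For (2) the real content is the base case. If $\psi:H\to P$ is a continuous injection into a profinite group, then pulling back the open normal subgroups of $P$ shows that the open normal subgroups of $H$ intersect trivially, i.e. $H$ is residually discrete; by the Caprace--Monod result quoted in the introduction \cite{CM11}, $H$ is an increasing union of compactly generated open subgroups, each of which inherits residual discreteness and is therefore SIN, hence possesses a compact open normal subgroup, hence is elementary, and (iii) gives $H\in\Es$. With this in hand I would induct on the construction of $G$ through the class $\mathcal{B}=\{G:\text{every continuous injection into }G\text{ has elementary domain}\}$; the extension step factors $H\to G$ through $\psi^{-1}(M)$ and $H/\psi^{-1}(M)\hookrightarrow G/M$, concluding by the profinite base case (or, if $G/M$ is discrete, the observation that a continuous injection into a discrete group has discrete domain) together with (1). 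Part (6) then falls out: a local direct product is the increasing union over finite $F$ of the open subgroups $\prod_{i\in F}G_i\times\prod_{i\notin F}U_i$, each the product of a finite product of elementary groups with a profinite group, hence elementary by (1), so (iii) applies.

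Part (3) I would again attack by induction, with $\mathcal{Q}=\{G:G/N\in\Es\text{ for all closed normal }N\}$. The base cases and the increasing-union step are routine, because images of \emph{open} subgroups under a quotient map are open, hence closed, so the continuous bijections involved are isomorphisms by the open mapping theorem for Polish groups. The difficulty is concentrated entirely in the extension step: given $M\trianglelefteq G$ with $M\in\mathcal{Q}$, $G/M$ profinite or discrete, and $N\trianglelefteq G$ closed, one reduces to proving $\overline{MN}/N\in\Es$, whose quotient in $G/N$ is profinite or discrete. When $MN$ is closed this is clean, since $\overline{MN}/N=MN/N\cong M/(M\cap N)\in\Es$ by the open mapping theorem. \textbf{The main obstacle is the case in which $MN$ is not closed}, where $MN/N$ is a proper dense subgroup of $\overline{MN}/N$ bearing a topology strictly finer than the subspace topology; one cannot conclude formally, since a dense continuous image of a discrete (hence elementary) group can be an arbitrary t.d.l.c.s.c. group. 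Resolving this requires an ordinal-valued invariant: a \emph{decomposition rank} defined by well-founded recursion, built from the discrete residual of Caprace--Monod along a compactly generated exhaustion, which is finite exactly on $\Es$ and does not increase under quotients. I would prove (3) by transfinite induction on that rank.

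The remaining parts lean on the same rank. For (4), after using (1) to make the defining family directed, (2) to pass to closed subgroups, and (iii) to reduce to compactly generated $G$, closure under residual approximation (equivalently, inverse limits) is exactly the assertion that the rank remains finite in the limit, which the well-founded construction is designed to deliver; I expect (5) to follow, a quasi-product of elementary groups being residually elementary once (3) supplies its complementary quotients. Finally, for (7) the normal case is easy — if each $C_i\trianglelefteq G$ and $U$ is a fixed compact open subgroup, then $C_iU$ is an increasing sequence of open elementary subgroups exhausting $G$, and (iii) finishes — but the weaker hypothesis that only $N_G(C_i)$ is open forces the compact open subgroup to vary with $i$ and leads to groups generated by finitely many conjugates of $C_i$ that need not normalise one another, which is the same obstacle as in (3) and is again handled by the rank. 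I expect the construction and good behaviour of this decomposition rank to be the crux of the whole theorem, with parts (1), (2), and (6) being the genuinely elementary consequences of the three defining axioms.
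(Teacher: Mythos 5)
Your parts (1), (2), and (6) are correct and essentially the paper's own arguments: the paper proves (1) by induction on the construction rank of $G/H$ (equivalent to your minimality argument for $\mathcal{A}$), proves (2) by induction on $\rk(G)$ with exactly your profinite base case (residual discreteness, then the Caprace--Monod characterization of compactly generated SIN groups), and proves (6) by your exhaustion. For (3) you correctly isolate the obstruction ($MN$ not closed, dense continuous images), but the paper's Section 3 proof is \emph{rank-free}: it reduces, after passing to $G/(L\cap H)$, to the key Lemma that if $P\trianglelefteq G$ is elementary, $L\trianglelefteq G$, and $[P,L]=\{1\}$, then $\ol{PL}/L\in\Es$, proved by induction on $\rk(P)$ using the SIN-core $\SIN(\cdot)$; your decomposition-rank route does work and is essentially how the paper later \emph{reproves} quotient closure with rank control (via the identity $\Res{}\left(UO_i/L_i\right)=\ol{\Res{}(O_i)L_i}/L_i$, which in turn needs the lemma that a cocompact normal SIN subgroup of a compactly generated group forces SIN), but you leave precisely these pivotal steps unproved, and your description of the rank is wrong as stated: it is not ``finite exactly on $\Es$'' --- the paper exhibits elementary groups with $\xi(G)=\omega+1$ --- rather, the decomposition tree is well-founded (the rank is a countable ordinal) exactly on $\Es$.

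The genuine gaps are at (4) and (7). For (4), ``closure under residual approximation is exactly the assertion that the rank remains finite in the limit'' is not an argument: nothing in the well-founded recursion explains why an inverse limit of elementary groups cannot be non-elementary, and no purely formal argument can, since the analogous closure statement is \emph{false} for elementary amenable discrete groups, as the paper stresses. The indispensable t.d.l.c.-specific input is the Caprace--Monod filtering theorem: for a compactly generated group, any filtering family of closed normal subgroups with trivial intersection contains a compact-by-discrete member; the paper applies this to $\mc{F}_O=\{N\cap O\mid G/N\in\Es\}$ for compactly generated open $O\sleq G$, making $O$ elementary by one application of (1). Your proposal never invokes anything of this kind. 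For (7), the obstacle you describe dissolves without any rank: since $V\sleq_o U$ normalizes $C_i$ and $B\subseteq C_i$, the group $\cgrp{B^V}$ is a \emph{closed subgroup of $C_i$}, hence elementary by (2); it is cocompact and normal in the open subgroup $\grp{B^V}V$, which has finite index in $O=\grp{B}U$, so the normal core of $\grp{B^V}V$ in $O$ is elementary and $O$ is elementary-by-finite. Finally, your sketch of (5) via residual elementarity requires $\bigcap_{i}\cgrp{N_j\mid j\neq i}=\{1\}$, which does not follow from injectivity of the multiplication map (injectivity controls the abstract products, not their closures); the paper instead inducts on the number of quasi-factors, using $G/M\simeq UN_{k+1}/\left(UN_{k+1}\cap M\right)$ with $M=\cgrp{N_1,\dots,N_k}$ together with (1) and (3), which avoids the issue entirely.
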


We next characterize elementary groups in terms of well-founded descriptive-set-theoretic trees. This characterization provides not only a tool to identify elementary groups in nature but also a new rank on elementary groups. Using this new rank, we obtain an additional permanence property.

\begin{thm}
Suppose $H\in \Es$, $G$ is a t.d.l.c.s.c. group, and $\psi:H\rightarrow G$ is a continuous, injective homomorphism. If $\psi(H)$ is normal and dense in $G$, then $G\in \Es$.
\end{thm}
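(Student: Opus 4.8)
The plan is to argue by transfinite induction on the decomposition rank $\xi(H)$, using the permanence properties of $\Es$ together with the discrete residual and the structure theorem of Caprace--Monod \cite{CM11}. If $\xi(H)=1$ then $H=\{1\}$, so $\psi(H)$ is the trivial dense subgroup of $G$ and $G=\ol{\{1\}}=\{1\}\in\Es$. Assume then the result for all elementary groups of rank strictly below $\xi(H)$.

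First I would reduce to the case that $G$ is compactly generated. Write $G=\bigcup_{i\in\omega}V_i$ with $(V_i)$ an increasing sequence of compactly generated open subgroups, and put $H_i:=\psi^{-1}(V_i)$. Each $H_i$ is an open subgroup of $H$, so $H_i\in\Es$ with $\xi(H_i)\leqslant\xi(H)$ by monotonicity of $\xi$ under closed subgroups, and $\psi(H_i)=\psi(H)\cap V_i$. Since $V_i$ is open and $\psi(H)$ is dense, $\psi(H_i)$ is dense in $V_i$, and since $V_i$ normalises both $V_i$ and $\psi(H)$, the subgroup $\psi(H_i)$ is normal in $V_i$. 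Thus each restriction $\psi\rest H_i\colon H_i\to V_i$ satisfies the hypotheses of the theorem, and if every $V_i\in\Es$ then $G\in\Es$ because $\Es$ is closed under countable increasing unions. Hence it suffices to treat compactly generated $G$.

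So suppose $G$ is compactly generated and let $R:=\operatorname{Res}(G)$ be the intersection of all open normal subgroups of $G$, so that $G/R$ is the largest residually discrete quotient. Then $G/R$ is compactly generated and residually discrete, hence a SIN group by \cite{CM11}, and therefore elementary; as $\Es$ is closed under extensions it is enough to prove $R\in\Es$. Here I would exploit normality twice. Since $\psi(H)$ and $R$ are both normal in $G$, we have $[\psi(H),R]\subseteq\psi(H)\cap R=:D$, a normal subgroup of $G$. Setting $L:=\ol{D}\trianglelefteq G$ (so $L\subseteq R$), every element of $R$ commutes modulo $L$ with every element of the dense subgroup $\psi(H)$, so the image of $R$ lies in the centre of $G/L$; in particular $R/L$ is abelian and hence elementary. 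On the other hand $\psi\rest\psi^{-1}(R)\colon\psi^{-1}(R)\to L$ is a continuous injective homomorphism whose image $D$ is dense in $L$ by construction and normal in $L$. Provided $\psi^{-1}(R)$ has rank strictly below $\xi(H)$, the inductive hypothesis yields $L\in\Es$; then $R$ is an extension of the elementary group $R/L$ by $L$, so $R\in\Es$ and therefore $G\in\Es$.

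The crux of the argument, and the point where the new rank is essential, is the verification that $\xi(\psi^{-1}(\operatorname{Res}(G)))<\xi(H)$. Intuitively, passing from $G$ to its discrete residual strips off the top residually discrete layer, and the dense normal copy of $H$ should see exactly this: $\psi$ induces a dense embedding of $H/\psi^{-1}(R)$ into the SIN group $G/R$, so this quotient is itself residually discrete and ought to account for the final $+1$ in the rank. I expect the main difficulty to lie in making this precise through the characterisation of $\xi$ via the discrete residuals of compactly generated open subgroups, and in controlling the interaction between $\psi$, the normality of $\psi(H)$, and $\operatorname{Res}(G)$. The commutator computation above is what rescues the step in which $\psi(H)$ fails to meet $R$ densely, since it forces the unseen part $R/L$ to be central and hence harmless.
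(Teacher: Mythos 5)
Your strategy is appealing and several of its components are sound: the reduction to compactly generated $G$ is valid, the observation that $[\psi(H),R]\subseteq\psi(H)\cap R$ forces $R/L$ to be central (hence abelian, hence elementary) in $G/L$ is correct and close in spirit to Lemma~\rm\ref{lem:closure_quot}, and the restriction $\psi^{-1}(R)\to L$ does again have dense normal image. But the step on which the whole induction rests --- the inequality $\xi\bigl(\psi^{-1}(\Res{}(G))\bigr)<\xi(H)$ --- is a genuine gap, and the heuristic you offer for it does not close it. Knowing that $H/\psi^{-1}(R)$ embeds densely in the SIN group $G/R$ only tells you that $H/\psi^{-1}(R)$ is residually discrete, i.e.\ that $\Res{}(H)\sleq\psi^{-1}(R)$; this is an inclusion in the \emph{wrong} direction, and a proper closed normal subgroup with residually discrete (even discrete) quotient can have the same decomposition rank as the ambient group. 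For instance, in $H=\Tsumn{L_n}{K_n}$ of Section~\rm\ref{sec:examples} the closed normal subgroup $N$ of elements with trivial first coordinate has $H/N\simeq L_1$ discrete yet $\xi(N)=\xi(H)=\omega+1$; nothing in your set-up rules out $\psi^{-1}(R)$ behaving like this $N$. Note also that after your reduction $H$ need not be compactly generated (preimages of compact sets under $\psi$ need not be compact), so Lemma~\rm\ref{lem:xi_union}(2) cannot be applied to $H$ to relate $\xi(H)$ to $\xi$ of a single discrete residual.

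This is precisely the difficulty the paper's proof is engineered to avoid. There the rank drop is obtained not from $\Res{}(G)$ but from $\Res{}(L)$ for a \emph{compactly generated open subgroup $L$ of $H$}, where Lemma~\rm\ref{lem:xi_union}(1) guarantees $\xi(\Res{}(L))<\xi(H)$. Producing such an $L$ whose image interacts correctly with the topology of $G$ is the technical heart of the argument: one first needs Lemma~\rm\ref{lem:perm_2_norm} (via Baire category and the Bergman--Lenstra theorem) to find $W\in \Uc(H)$ with $\psi(W)$ normal in some $U\in\Uc(G)$, then a quasi-centralizer reduction to arrange a finite set $A\subseteq\psi(H)$ with $G=\grp{A}U$ and $\psi(W)A$ normalized by an open subgroup, so that $L:=\psi^{-1}(\grp{\psi(W)A})$ is compactly generated and open in $H$ and $\psi(\Res{}(L))$ is normal and dense in its closure. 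Your proposal bypasses all of this machinery, which is a sign that the inductive step cannot be made to terminate as written; to repair it you would need either a proof that $\xi(\psi^{-1}(\Res{}(G)))<\xi(H)$ under your hypotheses (which I do not believe holds in general) or a replacement of $\psi^{-1}(\Res{}(G))$ by an object whose rank Lemma~\rm\ref{lem:xi_union} actually controls.
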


Our investigations conclude with three applications. The first application is to the general structure theory of t.d.l.c.s.c. groups. We begin by isolating two canonical normal subgroups.

\begin{thm} Let $G$ be a t.d.l.c.s.c. group. Then
\begin{enumerate}[(1)]
\item There is a unique maximal closed normal subgroup $\Rad{\Es}(G)$ such that $\Rad{\Es}(G)$ is elementary. 
\item There is a unique minimal closed normal subgroup $\Res{\Es}(G)$ such that $G/\Res{\Es}(G)$ is elementary.
\end{enumerate}
\end{thm}
\noindent We call $\Rad{\Es}(G)$ and $\Res{\Es}(G)$ the \textbf{elementary radical} and \textbf{elementary residual}, respectively. \par

\indent Using the elementary radical, a result of Caprace and Monod, and a clever technique of V.I. Trofimov, we arrive at a compelling structure result for compactly generated t.d.l.c.s.c. groups.

\begin{thm}\label{thm:cg_dcomp_intro}
Let $G$ be a compactly generated t.d.l.c.s.c. group. Then there is a finite series of closed characteristic subgroups 
\[
\{1\}= H_0\sleq H_1\sleq \dots \sleq H_{n}\sleq G
\]
such that 
\begin{enumerate}[(1)]
\item $G/H_{n}\in \Es$, and 
\item for $0\sleq k \sleq n-1$, $(H_{k+1}/H_{k})/\Rad{\Es}(H_{k+1}/H_{k})$ is a quasi-product with $0<n_{k+1}<\infty$ many topologically characteristically simple non-elementary quasi-factors.
\end{enumerate}
\end{thm}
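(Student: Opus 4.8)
The plan is to build the series from the bottom up, at each stage stripping off the elementary radical and then peeling off the socle of what remains: the Caprace--Monod minimal-normal-subgroup theorem will supply the quasi-product structure, while Trofimov's technique will force the procedure to terminate.

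I would set $H_0=\{1\}$ and argue inductively. Suppose $H_k$ is a closed characteristic subgroup of $G$ with $G/H_k\notin\Es$; if instead $G/H_k\in\Es$ I stop and set $n=k$, so that (1) holds. As a quotient of a compactly generated group, $G/H_k$ is compactly generated. Let $R_k\sgeq H_k$ be the preimage in $G$ of $\Rad{\Es}(G/H_k)$. Since $H_k$ is characteristic and $\Rad{\Es}(\cdot)$ is a characteristic subgroup (being the unique maximal elementary closed normal subgroup), $R_k$ is closed and characteristic in $G$, and $Q_k:=G/R_k=(G/H_k)/\Rad{\Es}(G/H_k)$ is a nontrivial compactly generated t.d.l.c.s.c. group with $\Rad{\Es}(Q_k)=\{1\}$. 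In particular $Q_k$ has no nontrivial compact normal subgroup and no nontrivial discrete normal subgroup, as second countable profinite groups and countable discrete groups are elementary.

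Next I would apply the Caprace--Monod structure theorem to $Q_k$: a nontrivial compactly generated t.d.l.c. group with no nontrivial compact or discrete normal subgroup has a finite, nonzero number $n_{k+1}$ of minimal nontrivial closed normal subgroups, whose join is a quasi-product $\operatorname{Soc}(Q_k)$. Each such minimal subgroup is topologically characteristically simple---a closed characteristic subgroup of it is normal in $Q_k$, hence trivial or everything---and non-elementary, since any elementary closed normal subgroup of $Q_k$ lies in $\Rad{\Es}(Q_k)=\{1\}$. I then take $H_{k+1}\sgeq R_k$ to be the preimage of $\operatorname{Soc}(Q_k)$; this is closed and characteristic in $G$, and $H_{k+1}\gneq H_k$ because $\operatorname{Soc}(Q_k)\neq\{1\}$. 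To verify (2), note that $\Rad{\Es}(\operatorname{Soc}(Q_k))$ is characteristic in $\operatorname{Soc}(Q_k)\trianglelefteq Q_k$, hence normal and elementary in $Q_k$, whence $\Rad{\Es}(\operatorname{Soc}(Q_k))=\{1\}$. Since $R_k/H_k=\Rad{\Es}(G/H_k)$ is elementary and normal in $H_{k+1}/H_k$, we have $R_k/H_k\sleq\Rad{\Es}(H_{k+1}/H_k)$; conversely the image of $\Rad{\Es}(H_{k+1}/H_k)$ in $(H_{k+1}/H_k)/(R_k/H_k)\cong\operatorname{Soc}(Q_k)$ is an elementary closed normal subgroup, hence trivial. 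Thus $\Rad{\Es}(H_{k+1}/H_k)=R_k/H_k$ and $(H_{k+1}/H_k)/\Rad{\Es}(H_{k+1}/H_k)\cong\operatorname{Soc}(Q_k)$ is precisely the required quasi-product.

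The crux is to show the ascending chain $\{1\}=H_0\lneq H_1\lneq\dots$ terminates after finitely many steps. Here I would pass to a Cayley--Abels graph $\Gamma$ for $G$: a connected, locally finite graph with base vertex $v_0$ on which $G$ acts vertex-transitively with compact open vertex stabiliser $U=G_{v_0}$. If some $H_k$ acts transitively on $\Gamma$, then $G=H_kU$ and $G/H_k\cong U/(U\cap H_k)$ is compact, hence profinite and elementary, so the process has already halted. The genuine difficulty is to preclude an infinite chain in which no $H_k$ is vertex-transitive: each orbit system $\{H_k v\}$ is then a $G$-invariant system of imprimitivity blocks, coarsening as $k$ increases, and the quotient graphs $H_k\backslash\Gamma$ remain connected, locally finite, and vertex-transitive under $G/H_k$ with local structure bounded uniformly by that of $\Gamma$. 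This is exactly the arena of Trofimov's block-decomposition technique, which I would use to extract a finite monotone invariant that must strictly change whenever a nontrivial non-elementary socle is adjoined, thereby bounding the number of steps. Carrying out this last step---converting Trofimov's graph-theoretic analysis into a strict-monotonicity statement that caps the chain length---is the main obstacle; by comparison, the radical bookkeeping and the Caprace--Monod input above are routine given the permanence properties of $\Es$.
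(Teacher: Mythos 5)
Your construction of the series is exactly the paper's: strip off $\Rad{\Es}(G/H_k)$, apply the Caprace--Monod theorem (in the form: trivial elementary radical forces trivial locally elliptic radical and no infinite discrete normal subgroup, hence finitely many minimal closed normal subgroups) to obtain the socle as a quasi-product of finitely many topologically characteristically simple non-elementary quasi-factors, and let $H_{k+1}$ be its preimage. Your verification that $\Rad{\Es}(H_{k+1}/H_k)$ is precisely $R_k/H_k$ is correct and in fact slightly more explicit than what the paper writes down. The one point the paper is careful about that you elide is injectivity of the multiplication map $N_1\times\dots\times N_{n_{k+1}}\to\cgrp{N_1,\dots,N_{n_{k+1}}}$: a non-trivial kernel element would force some $N_i$ to be central in the join of the others, hence abelian, hence elementary, contradicting triviality of the radical. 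This is short but should be said.

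The genuine gap is the termination argument, which you correctly identify as the crux but do not carry out. Observing that the $H_k$-orbit block systems coarsen is not enough: a vertex-transitive locally finite connected graph can admit an infinite strictly coarsening chain of invariant block systems, so coarsening alone gives no bound. The finite monotone invariant the paper uses is the \emph{degree} of the quotient Cayley--Abels graph: with $\sigma_k$ the congruence induced by the $H_k$-orbits, one has $\deg(\Gamma/\sigma_k)\sgeq\deg(\Gamma/\sigma_{k+1})$ automatically, and the paper shows the inequality is \emph{strict} at each step, whence $n\sleq\deg(G)$. The strict drop is proved by first passing to $\tau$, the congruence induced by $R_k=\pi^{-1}(\Rad{\Es}(G/H_k))$ (so $\deg(\Gamma/\sigma_k)\sgeq\deg(\Gamma/\tau)\sgeq\deg(\Gamma/\sigma_{k+1})$), and exploiting that $G/R_k$ has no non-trivial compact or discrete normal subgroups: hence $G/R_k$ acts faithfully on $\Gamma/\tau$ and the image $\tilde H_{k+1}$ of $H_{k+1}$ is non-discrete, so some non-trivial $\tilde h\in\tilde H_{k+1}$ fixes a vertex $v^{\tau}$. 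Taking $w^{\tau}$ nearest to $v^{\tau}$ with $\tilde h.w^{\tau}\neq w^{\tau}$, the vertex $u^{\tau}$ preceding $w^{\tau}$ on a geodesic to $v^{\tau}$ is fixed by $\tilde h$ and has the two \emph{distinct} neighbours $w^{\tau}$ and $(h.w)^{\tau}$; these become a single neighbour in $\Gamma/\sigma_{k+1}$ because $h\in H_{k+1}$, so the degree strictly decreases. Without this (or an equivalent) monotonicity argument, finiteness of the series -- the substance of the theorem -- is not established.
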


Theorem~\rm\ref{thm:cg_dcomp_intro} seems to be convincing evidence that elementary groups are fundamental building blocks of t.d.l.c.s.c. groups. Indeed, Theorem~\rm\ref{thm:cg_dcomp_intro} demonstrates that the study of compactly generated t.d.l.c.s.c. groups reduces to the study of elementary groups and topologically characteristically simple non-elementary groups. \par

\indent Since there is interest in compactly generated t.d.l.c. groups, we note a corollary that relaxes the second countability hypothesis.

\begin{cor}
Let $G$ be a compactly generated t.d.l.c. group. Then there is a finite series of closed normal subgroups 
\[
\{1\}\sleq H_0\sleq H_1\sleq \dots \sleq H_{n}\sleq G
\]
such that 
\begin{enumerate}[(1)]
\item $H_0$ is compact and $G/H_0$ is second countable,
\item $G/H_{n}\in \Es$, and
\item for $0\sleq k \sleq n-1$, $(H_{k+1}/H_{k})/\Rad{\Es}(H_{k+1}/H_{k})$ is a quasi-product with $0<n_{k+1}<\infty$ many topologically characteristically simple non-elementary quasi-factors.
\end{enumerate}
\end{cor}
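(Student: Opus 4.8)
The plan is to reduce to the second countable case by quotienting out a suitable compact normal subgroup and then pulling back the resulting series. First I would produce a compact normal subgroup $H_0\trianglelefteq G$ with $G/H_0$ second countable. Since $G$ is compactly generated it is $\sigma$-compact, so the Kakutani--Kodaira theorem (or \cite[(8.7)]{HR79}) supplies, for any identity neighbourhood, a compact normal subgroup contained in it whose quotient is metrizable; taking the neighbourhood to be a compact open subgroup $U$ of $G$ yields a compact normal $H_0\sleq U$ with $G/H_0$ metrizable. As a $\sigma$-compact, locally compact, metrizable group, $G/H_0$ is second countable, which establishes conclusion (1).

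Next I would check that $\ol{G}:=G/H_0$ is again a compactly generated t.d.l.c.s.c. group, so that Theorem~\ref{thm:cg_dcomp_intro} applies to it. Writing $\pi\colon G\rightarrow \ol{G}$ for the quotient map, $\ol{G}$ is locally compact Hausdorff since $H_0$ is compact (hence closed), and it is compactly generated because $\pi$ carries a compact generating set of $G$ to one of $\ol{G}$. Total disconnectedness is inherited as well: $\pi$ is open, so the image $\pi(V)$ of any compact open subgroup $V\sleq G$ is a compact open subgroup of $\ol{G}$, giving $\ol{G}$ a neighbourhood basis at the identity consisting of compact open subgroups. Thus $\ol{G}$ meets the hypotheses of Theorem~\ref{thm:cg_dcomp_intro}.

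Applying that theorem to $\ol{G}$, I obtain a finite series of closed characteristic subgroups $\{1\}=K_0\sleq K_1\sleq \dots \sleq K_m\sleq \ol{G}$ with $\ol{G}/K_m\in \Es$ and, for $0\sleq k\sleq m-1$, the quotient $(K_{k+1}/K_k)/\Rad{\Es}(K_{k+1}/K_k)$ a quasi-product of finitely many, but at least one, topologically characteristically simple non-elementary quasi-factors. I would then set $H_k:=\pi^{-1}(K_k)$ for $0\sleq k\sleq m$ and put $n:=m$. Each $H_k$ is closed (preimage of a closed set) and normal in $G$ (preimage of a normal subgroup), and $\pi^{-1}(K_0)=H_0$, so the series $\{1\}\sleq H_0\sleq H_1\sleq \dots \sleq H_n\sleq G$ has the required form. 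The third isomorphism theorem gives the topological isomorphisms $G/H_n\cong \ol{G}/K_m$ and $H_{k+1}/H_k\cong K_{k+1}/K_k$; since membership in $\Es$, the elementary radical, quasi-products, and topological characteristic simplicity are all invariants of topological isomorphism, conclusions (2) and (3) transfer verbatim from the series for $\ol{G}$.

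I expect the only genuine point needing care to be the first step---producing $H_0$ and verifying that passage to $G/H_0$ preserves compact generation and total disconnectedness---since everything afterward is bookkeeping through the isomorphism theorems. One subtlety worth flagging is that the pulled-back subgroups $H_k$ need only be claimed \emph{normal}, not characteristic: an automorphism of $G$ need not descend to $\ol{G}$ because $H_0$ is not asserted to be characteristic, but normality of $K_k$ in $\ol{G}$ is enough to force normality of $\pi^{-1}(K_k)$ in $G$, which is exactly what the corollary demands.
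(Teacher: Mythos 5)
Your proof is correct and follows essentially the same route as the paper: obtain a compact normal $H_0$ with second countable quotient via \cite[(8.7)]{HR79}, apply the compactly generated decomposition theorem to $G/H_0$, and pull the series back along the projection. The extra details you supply (verifying $G/H_0$ is a compactly generated t.d.l.c.s.c.\ group, and the remark on why the pulled-back terms are only normal rather than characteristic) are accurate and consistent with the statement of the corollary.
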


\begin{rmk} 
We provide examples showing $n$ can be arbitrarily large and the topologically characteristically simple quasi-factors cannot be taken to be topologically simple.
\end{rmk}

In the non-compactly generated case, we obtain a much weaker, nevertheless quite useful structure result; cf. \cite{W_2_14}.

\begin{defn}
A t.d.l.c.s.c. group is said to be \textbf{elementary-free} if it has no non-trivial elementary closed normal subgroups and no non-trivial elementary Hausdorff quotients. 
\end{defn}

\begin{thm}\label{thm:dcomp}
Let $G$ be a t.d.l.c.s.c. group. Then there is a sequence of closed characteristic subgroups 
\[
\{1\}\sleq N\sleq Q\sleq G
\]
such that $N$ and $G/Q$ are elementary and $Q/N$ is elementary-free.
\end{thm}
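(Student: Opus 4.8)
The plan is to read the series directly off the elementary radical and elementary residual isolated earlier. Set $N:=\Rad{\Es}(G)$, which is closed, characteristic, and elementary by definition, and pass to $\bar{G}:=G/N$. The first thing I would record is that $\bar{G}$ has trivial elementary radical: if $M/N$ were a non-trivial closed normal elementary subgroup of $\bar{G}$, then $M\trianglelefteq G$ would be a closed normal subgroup with $N\in\Es$ and $M/N\in\Es$, so closure under group extension forces $M\in\Es$; maximality of $N=\Rad{\Es}(G)$ then gives $M=N$, a contradiction. Now let $Q$ be the preimage in $G$ of $\Res{\Es}(\bar{G})$. Then $Q$ is closed, $N\sleq Q\sleq G$, and $Q$ is characteristic in $G$ because $\Res{\Es}(\bar{G})$ is characteristic in $\bar{G}$ while $N$ is characteristic in $G$, so every automorphism of $G$ descends to $\bar{G}$ and preserves the residual. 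Moreover $G/Q\cong \bar{G}/\Res{\Es}(\bar{G})$ is elementary by the defining property of the residual, and $N$ is elementary, so two of the three required properties are immediate.

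It remains to show that $P:=Q/N=\Res{\Es}(\bar{G})$ is elementary-free, which is the heart of the argument and splits into the two clauses of the definition. For the absence of non-trivial elementary closed normal subgroups, I would consider the elementary radical $\Rad{\Es}(P)$ of $P$ taken in its own right. Since $P\trianglelefteq\bar{G}$ and $\Rad{\Es}(P)$ is characteristic in $P$, it is normal in $\bar{G}$; it is also elementary. But $\bar{G}$ has trivial elementary radical, so $\Rad{\Es}(P)=\{1\}$. As $\Rad{\Es}(P)$ is the largest closed normal elementary subgroup of $P$, this shows $P$ has no non-trivial elementary closed normal subgroup at all.

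For the absence of non-trivial elementary Hausdorff quotients, I would prove $\Res{\Es}(P)=P$. Suppose instead $P':=\Res{\Es}(P)\subsetneq P$, so that $P/P'$ is elementary and non-trivial. Again $P'$ is characteristic in $P$, hence normal in $\bar{G}$, and the extension $1\to P/P'\to \bar{G}/P'\to \bar{G}/P\to 1$ has elementary ends, since $P/P'$ is elementary and $\bar{G}/P=\bar{G}/\Res{\Es}(\bar{G})$ is elementary. By closure under group extension, $\bar{G}/P'$ is elementary. But then $P'\subsetneq P=\Res{\Es}(\bar{G})$ is a closed normal subgroup of $\bar{G}$ with elementary quotient, contradicting the minimality of the residual. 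Hence $\Res{\Es}(P)=P$, i.e. $P$ admits no non-trivial elementary Hausdorff quotient. Together with the previous clause, $Q/N=P$ is elementary-free, completing the series.

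The main obstacle is conceptual rather than computational: everything rests on two ``absorption/idempotence'' features of the canonical subgroups, namely that $\Rad{\Es}(\bar{G})=\{1\}$ (so any elementary normal subgroup produced inside $P$ is swallowed) and that splicing an elementary quotient of $P$ against the elementary quotient $\bar{G}/\Res{\Es}(\bar{G})$ contradicts minimality. The point requiring care is that each auxiliary subgroup I form, $\Rad{\Es}(P)$ and $\Res{\Es}(P)$, is genuinely normal in $\bar{G}$; for this I rely throughout on the elementary fact that a characteristic subgroup of a normal subgroup is normal, together with the characteristicity of the radical and residual guaranteed by their uniqueness. Beyond knowing these canonical subgroups are closed and that quotient maps are continuous and open, no delicate topological estimates are needed, so once the permanence theorem and the existence of $\Rad{\Es}$ and $\Res{\Es}$ are in hand the proof is short.
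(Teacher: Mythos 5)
Your construction is exactly the paper's ascending elementary series: $N=\Rad{\Es}(G)$, $Q=\pi^{-1}(\Res{\Es}(G/N))$, with the same two arguments (maximality of the radical kills $\Rad{\Es}(Q/N)$, and splicing $\Res{\Es}(Q/N)$ against the elementary quotient $\bar{G}/\Res{\Es}(\bar{G})$ via extension-closure contradicts minimality of the residual). The proof is correct and matches the paper's Theorem on the ascending elementary series, which is one of the two series the paper uses to deduce this statement.
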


\indent The next two applications are so called local-to-global structure results. These are structure results for t.d.l.c. groups with hypotheses on the compact open subgroups. Over the last two decades, there has been a compelling number of such theorems; cf. \cite{BEW11}, \cite{BM00}, \cite{CRW_1_13}, \cite{CRW_2_13}, \cite{Will07}. We first consider groups with an open solvable subgroup.

\begin{thm}
If $G$ is a t.d.l.c.s.c. group and $G$ has an open solvable subgroup, then $G\in \Es$.
\end{thm}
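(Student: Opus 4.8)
The plan is to split the statement into two essentially independent pieces: first, that the open solvable subgroup $O$ is itself elementary, and second, that possessing an open elementary subgroup forces $G\in\Es$. Since $O$ is open in the t.d.l.c.s.c. group $G$, it is again t.d.l.c.s.c., so both pieces are statements internal to the class $\Es$.

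For the first piece I would begin with the abelian case. If $A$ is an abelian t.d.l.c.s.c. group, van Dantzig's theorem furnishes a compact open subgroup $U\le A$; then $U$ is second countable profinite, $A/U$ is a countable discrete group, and $U\trianglelefteq A$, so $A\in\Es$ by the defining closure of $\Es$ under discrete quotients. For general solvable $O$ of derived length $n$ I pass to the closed derived series $A_i:=\ol{O^{(i)}}$. Continuity of the commutator map gives $[\ol{H},\ol{H}]\subseteq\ol{[H,H]}$ for any subgroup $H$, whence $[A_i,A_i]\subseteq A_{i+1}$, so each $A_{i+1}\trianglelefteq A_i$ and each quotient $A_i/A_{i+1}$ is an abelian t.d.l.c.s.c. group, hence elementary by the previous step. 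Since $A_n=\{1\}$, a downward induction using closure of $\Es$ under group extension (property (1)) yields $A_i\in\Es$ for every $i$; in particular $O=A_0\in\Es$.

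For the second piece the normal case is immediate: if the open elementary subgroup were normal, then $G/O$ would be countable discrete and $G\in\Es$ by extension closure. In general, let $N:=\bigcap_{g\in G}gOg^{-1}$ be the normal core of $O$; it is closed and normal, and, being a closed subgroup of the elementary group $O$, lies in $\Es$ by closure under continuous injective images (property (2)). Since recovering $G$ from $G/N$ and $N$ again uses extension closure, I may replace $G$ by $G/N$, where the image of $O$ is open, elementary, and of trivial core. A further reduction to $G=\ngrp{O}$ is harmless, as $G/\ngrp{O}$ is discrete. Enumerating a transversal $(g_i)_{i\in\omega}$ of $O$ and setting $C_n:=\cgrp{g_0Og_0^{-1},\dots,g_nOg_n^{-1}}$, I obtain an $\subseteq$-increasing sequence of open subgroups with $N_G(C_n)\supseteq C_n$ open and $\ol{\bigcup_n C_n}=\ngrp{O}=G$, so property (7) closes the argument provided each $C_n\in\Es$.

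The main obstacle is precisely this last proviso: a subgroup generated by finitely many open elementary subgroups is not visibly elementary, and the soft closure properties alone are circular here, since $C_n$ itself contains the open elementary subgroup $O$. I would break the circularity using the decomposition rank provided by the well-founded-tree characterization of $\Es$, establishing the open-subgroup permanence ``$O\le G$ open and $O\in\Es\Rightarrow G\in\Es$'' by transfinite induction on $\rk(O)$. The inductive step peels off a proper closed characteristic subgroup of strictly smaller rank (detected by the elementary residual $\Res{\Es}$), passes to a quotient in which the image of $O$ is open elementary of smaller rank, and then reassembles $G$ by extension closure; this rank induction, rather than any of the formal manipulations above, is the technical heart of the proof.
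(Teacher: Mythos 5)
Your first piece is fine and agrees with the paper (abelian groups are compact\nobreakdash-by\nobreakdash-discrete, and the closed derived series plus extension closure handles the general solvable case; this is Proposition~\rm\ref{prop:first_examples}). The fatal problem is the second piece. The permanence property you reduce to --- ``$O\sleq G$ open and $O\in\Es$ implies $G\in\Es$'' --- is not merely hard to prove by the rank induction you sketch; it is false. By van Dantzig's theorem \emph{every} t.d.l.c.s.c. group has a compact open subgroup, which is second countable profinite and hence elementary by clause (i) of the definition of $\Es$. So every t.d.l.c.s.c. group has an open elementary subgroup, and your claimed permanence property would force every t.d.l.c.s.c. group to be elementary, contradicting Proposition~\rm\ref{prop:ex_topsimple} (e.g. $PSL_3(\Qp)$ is non-elementary). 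Concretely, the base case of your proposed induction on $\rk(O)$ already fails: $\rk(O)=0$ with $O$ profinite places no restriction whatsoever on $G$. The circularity you correctly detect in the $C_n$ step is therefore not an artifact of ``soft'' closure properties; it reflects the fact that the information you discarded when abstracting from ``open solvable'' to ``open elementary'' is exactly the information the theorem needs.

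The paper's proof keeps solvability in play throughout. It inducts on the solvable rank $r_s(G)$, the minimal derived length of a compact open solvable subgroup, and at rank $k+1$ builds a synthetic subgroup: the core $N_{k+1}$ of the conjugation-invariant hereditary family $\mc{A}_{k+1}$ of closed subgroups that normalize $W^{(k)}$ for arbitrarily small solvable open $W$. The abelianness of $U^{(k)}$ is what puts $U^{(k)}$ inside $N_{k+1}$, so that $N:=\ol{N_{k+1}}$ is a closed \emph{normal} subgroup with $r_s(G/N)\sleq k$; and $N$ itself is shown elementary by exhausting it by compactly generated pieces $M_i\in\mc{A}_{k+1}$ whose quotients $M_i/\Rad{\mc{LE}}(M_i)$ again have solvable rank at most $k$. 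If you want to repair your argument, you must replace the abstract ``open elementary subgroup'' hypothesis by an induction that tracks the derived length of the compact open subgroups and produces normal subgroups from that structure, as the synthetic-subgroup construction does.
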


We then consider $[A]$-regular t.d.l.c.s.c. groups. These groups are identified in \cite{CRW_1_13} and are defined by a technical condition on the compact open subgroups. These groups are of interest as they are the barrier to applying the powerful new theory developed in \cite{CRW_1_13}.

\begin{thm} 
If $G$ is a t.d.l.c.s.c. group that is $[A]$-regular, then $G\in \Es$. 
\end{thm}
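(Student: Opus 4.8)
The plan is to run the same local-to-global strategy used for the preceding theorem on open solvable subgroups, exploiting the fact that $[A]$-regularity is a condition on the compact open subgroups and hence a \emph{local} property: the compact open subgroups of an open subgroup $O\sleq G$ are exactly the compact open subgroups of $G$ lying in $O$, and they form a neighbourhood basis of the identity in both, so $[A]$-regularity is inherited by every open subgroup. I would first reduce to the compactly generated case. Fix a compact open subgroup $U\sleq G$ and a countable dense set $(g_i)_{i\in\omega}$, and put $O_i:=\grp{U,g_0,\dots,g_i}$. Then $(O_i)_{i\in\omega}$ is an $\subseteq$-increasing sequence of compactly generated open subgroups, and since $\bigcup_{i\in\omega}O_i$ is an open (hence closed) subgroup containing a dense set, $G=\bigcup_{i\in\omega}O_i$. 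Each $O_i$ is open, hence again $[A]$-regular, so by the countable increasing union property it suffices to prove the theorem for compactly generated $[A]$-regular groups.

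Assume now $G$ is compactly generated and $[A]$-regular. I would apply Theorem~\ref{thm:cg_dcomp_intro} to obtain a finite series of closed characteristic subgroups $\{1\}=H_0\sleq\dots\sleq H_n\sleq G$ with $G/H_n\in\Es$ and, for each $0\sleq k\sleq n-1$, the factor $(H_{k+1}/H_k)/\Rad{\Es}(H_{k+1}/H_k)$ a quasi-product of $0<n_{k+1}<\infty$ topologically characteristically simple non-elementary quasi-factors. The entire content of the proof is then to show that $[A]$-regularity forbids the occurrence of \emph{any} topologically characteristically simple non-elementary quasi-factor. This forces $n=0$, so the series degenerates to $\{1\}=H_0\sleq G$ and $G=G/H_0\in\Es$.

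The decisive step, and the point where the theory of \cite{CRW_1_13} enters, is to show that $[A]$-regularity is incompatible with the presence of a non-trivial topologically characteristically simple non-elementary quasi-factor. The plan here is twofold: first establish that $[A]$-regularity descends to the closed characteristic subquotients $H_{k+1}/H_k$ and survives the further quotient by $\Rad{\Es}(H_{k+1}/H_k)$, so that the quasi-product appearing in Theorem~\ref{thm:cg_dcomp_intro} is itself $[A]$-regular; and second, invoke the structural dichotomy of \cite{CRW_1_13} to conclude that such a quasi-factor would carry non-trivial local structure (a non-trivial centralizer lattice, i.e. the behaviour excluded by $[A]$-regularity). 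Equivalently, one shows that a non-trivial $[A]$-regular topologically characteristically simple t.d.l.c.s.c.\ group must be elementary, and hence cannot be a non-elementary quasi-factor; this is precisely the sense in which $[A]$-regular groups are the ``barrier'' isolated in \cite{CRW_1_13}.

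The main obstacle will be the inheritance-and-incompatibility claims of the previous paragraph: one must verify that the technical $[A]$-regularity condition on compact open subgroups passes to the relevant normal subquotients and survives quotienting by the elementary radical, and then match the resulting local data against the CRW classification of topologically characteristically simple groups. An alternative packaging that sidesteps tracking the property through every factor is to prove instead that $\Res{\Es}(G)=\{1\}$. The elementary residual is closed and characteristic, hence inherits $[A]$-regularity more readily, and by construction admits no non-trivial elementary Hausdorff quotient; showing that a non-trivial group with these two features cannot be $[A]$-regular would give that $G$ is residually elementary, whence $G\in\Es$ by the inverse-limit permanence property.
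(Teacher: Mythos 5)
There is a genuine gap, and it sits exactly where you locate "the main obstacle": the claim that a non-trivial $[A]$-regular topologically characteristically simple group must be elementary is not something you can read off from \cite{CRW_1_13} by "matching local data against a classification" — it is the entire content of the theorem, and the paper has to prove it from scratch. The actual engine is Lemma~\rm\ref{lem:qc_A_E}: a t.d.l.c.s.c.\ group with trivial quasi-centre and a non-trivial locally normal $[A]$-subgroup has a \emph{non-trivial elementary normal subgroup}, namely $\qc{G}{U/[A](U)}$. Establishing this takes most of Section~\rm\ref{sec:a-reg}: one needs that $[A](U)$ is commensurated (Proposition~\rm\ref{prop:A_comm}), that quasi-centralizers over commensurated virtually nilpotent locally normal subgroups are elementary (Lemmas~\rm\ref{lem:quasi_center_quot}, \rm\ref{lem:vnil_E} and \rm\ref{lem:qc_E}, proved by an induction on nilpotence class using the synthetic-subgroup machinery), and a gluing construction over conjugates (Lemma~\rm\ref{lem:CRW}). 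None of this is supplied or even sketched in your proposal; "$[A]$-regularity excludes a non-trivial centralizer lattice, hence the quasi-factor cannot occur" does not by itself yield elementarity of anything.

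A second, smaller problem is the inheritance you rely on. Your route needs $[A]$-regularity to descend to the quasi-factors (closed normal subgroups of subquotients), or to $\Res{\Es}(G)$; but from the definition one only gets cheap inheritance to open subgroups and to Hausdorff quotients (a compact open subgroup of $G/N$ is a quotient of one of $G$), not to closed normal subgroups. The paper sidesteps this entirely: it passes to $G/\Rad{\Es}(G)$, which is again $[A]$-regular, has trivial quasi-centre and trivial elementary radical, and then Lemma~\rm\ref{lem:qc_A_E} produces a non-trivial elementary normal subgroup --- an immediate contradiction. In particular the reduction to compactly generated groups and the appeal to Theorem~\rm\ref{thm:cg_dcomp_intro} are unnecessary detours. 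Your alternative packaging via $\Res{\Es}(G)$ also does not fit the available lemma: the obstruction produced is an elementary normal \emph{subgroup}, not an elementary Hausdorff \emph{quotient}, so the elementary radical, not the elementary residual, is the right object to quotient by.
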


\begin{rmk} There are already a number of further applications of the theory of elementary groups. In \cite{W_3_14}, the author shows a large group extension stable superclass of $\Es$ is contained in the class $\ms{X}$. The class $\ms{X}$ is the class of locally compact groups for which relative amenable closed subgroups are amenable; $\ms{X}$ is identified by Caprace and Monod in \cite{CM_rel_13}. In a second application \cite{W_2_14}, the author shows l.c.s.c. $p$-adic Lie groups can be decomposed into elementary groups and compactly generated topologically simple groups via group extension. Contributing to this line of study, H. Gl\"{o}ckner \cite{G14} obtains detailed structure theorems for elementary $p$-adic Lie groups. 
\end{rmk}

\begin{acknowledgements} 
Many of the results herein form part of author's thesis work at the University of Illinois at Chicago. The author wishes to thank his thesis adviser Christian Rosendal and the University of Illinois at Chicago. The author also thanks Pierre-Emmanuel Caprace for his many helpful remarks. The author finally thanks Fran\c{c}ois Le Ma\^{i}tre for suggesting improvements to the proof of Theorem~\rm\ref{thm:cg_dcomp} and the anonymous referee for his or her detailed suggestions.
\end{acknowledgements}

\subsection{Structure of the paper}
Section~\rm\ref{sec:generalities} lists the background material necessary for our arguments. The reader familiar with the literature on t.d.l.c. groups may safely skip Section~\rm\ref{sec:generalities} with the exception of Subsection~\rm\ref{subsec:synthetic} where the notion of a synthetic subgroup is developed. Section~\rm\ref{sec:elementary} establishes the first permanence properties of the class of elementary groups. We then characterize elementary groups in Section~\rm\ref{sec:characterization} via descriptive-set-theoretic trees. This characterization gives a new rank, the decomposition rank, which allows for more subtle induction arguments. Section~\rm\ref{sec:furtherperm} uses the decomposition rank to prove an additional permanence property. We present a number of examples in Section~\rm\ref{sec:examples}. We encourage the reader to periodically look ahead to Section~\rm\ref{sec:examples}; in particular, after reading the definition of the decomposition rank, it is enlightening to see the computations of this rank performed in Section~\rm\ref{sec:examples}. We finally consider applications. The applications span Sections~\rm\ref{sec:structure},~\rm\ref{sec:locsolv}, and~\rm\ref{sec:a-reg}. The applications do not rely on one another, so the reader may skip any of these sections.

\section{Generalities on totally disconnected locally compact groups}\label{sec:generalities}
We begin with a brief overview of necessary background. The notations, definitions, and facts discussed here are used frequently and, typically, without reference.

\subsection{Notations} All groups are taken to be Hausdorff topological groups and are written multiplicatively. Topological group isomorphism is denoted $\simeq$. We use ``t.d.", ``l.c.", and ``s.c." for ``totally disconnected", ``locally compact", and ``second countable", respectively.\par

\indent For a topological group $G$, $S(G)$ and $\Uc(G)$ denote the collection of closed subgroups of $G$ and the collection of compact open subgroups of $G$, respectively. All subgroups are taken to be closed unless otherwise stated. We write $H\leqslant _oG$ and $H\sleq_{cc} G$ to indicate $H$ is an open subgroup and a cocompact subgroup of $G$, respectively. Recall $H\sleq G$  is cocompact if the quotient space $G/H$ is compact in the quotient topology. 

\indent For a subset $K\subseteq G$, $C_G(K)$ denotes the collection of elements of $G$ that centralize every element of $K$. We denote the collection of elements of $G$ that normalize $K$ by $N_G(K)$. The topological closure of $K$ in $G$ is denoted by $\ol{K}$. For $A,B\subseteq G$, we put 
\[
\begin{array}{c}
A^B:=\left\{bab^{-1}\mid a\in A\text{ and }b\in B\right\}, \\
\left[A,B\right]:=\grp{aba^{-1}b^{-1}\mid a\in A\text{ and }b\in B}, \text{ and }\\
A^n:=\left\{a_1\dots a_n\mid a_i\in A\right\}.
\end{array}
\]
For $a,b\in G$, $[a,b]:=aba^{-1}b^{-1}$.\par

\indent If $X$ is a set and $G$ acts on $X$, we write $G\acts X$. For $g\in G$ and $x\in X$, $g.x$ denotes the action of $g$ on $x$. For $x\in X$, the stabilizer of $x$ in $G$ is denoted $G_{(x)}$. \par

\indent If $D$ is a countable discrete group and $X$ a countable set, we put
\[
D^{<X}:=\left\{f\in D^X\mid f(x)=1\text{ for all but finitely many }x\right\}.
\]
\noindent The group $D^{<X}$ is again a countable discrete group.\par

\indent We consider $0$ to be a limit ordinal. The first countable transfinite ordinal is denoted by $\omega$; as we consider $\Nb$ to contain zero, $\omega=\Nb$ as linear orders. The first uncountable ordinal is denoted by $\omega_1$. This work will make frequent use of ordinal numbers; see \cite{Ku80} for a nice introduction to ordinal numbers and ordinal arithmetic.  

\subsection{Basic theory} The foundational theorem in the study of t.d.l.c. groups is an old result of D. van Dantzig.

\begin{thm}[(van Dantzig {\cite[(7.7)]{HR79}})]
 A t.d.l.c. group admits a basis at $1$ of compact open subgroups.
\end{thm}

It follows from van Dantzig's theorem that compact t.d.l.c. groups are profinite; in particular, the compact open subgroups of a t.d.l.c. group are profinite. Since profinite groups are inverse limits of finite groups, they admit a basis at $1$ of open \textit{normal} subgroups. We say $(U_i)_{i\in \omega}$ is a \textbf{normal basis at} $1$ for a profinite group $U$, if $U_0=U$, $(U_i)_{i\in \omega}$ is $\subseteq$-decreasing with trivial intersection, and for each $i$, $U_i\trianglelefteq_o U$. \par

\indent The obvious topological analogues of the familiar isomorphism theorems for groups hold for t.d.l.c.s.c. groups with the exception of the first isomorphism theorem. We thus recall its statement.

\begin{thm}[({\cite[(5.33)]{HR79}})] Let $G$ be a t.d.l.c.s.c. group, $A\sleq G$ a closed subgroup, and $H\trianglelefteq G$ a closed normal subgroup. If $AH$ is closed, then $AH/H\simeq A/(A\cap H)$ as topological groups.
\end{thm}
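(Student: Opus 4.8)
The plan is to construct the evident algebraic isomorphism and then promote it to a homeomorphism by an open mapping argument, with the local compactness and second countability carrying the topological weight.

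First I would check that every group occurring in the statement is again t.d.l.c.s.c., so that the standard machinery is available. As $H\trianglelefteq G$, the set $AH=HA$ is a subgroup of $G$, and by hypothesis it is closed; hence $AH$, being a closed subgroup of a t.d.l.c.s.c. group, is itself t.d.l.c.s.c. Since $1\in A$ we have $H\sleq AH$, and $H$ is a closed normal subgroup of $AH$, so the Hausdorff quotient $AH/H$ is t.d.l.c.s.c. Similarly $A\cap H$ is a closed normal subgroup of $A$ (normality passing down from $G$), so $A/(A\cap H)$ is t.d.l.c.s.c.

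Next I would form the comparison map. Writing $\iota\colon A\hookrightarrow AH$ for the inclusion and $q\colon AH\to AH/H$ for the quotient map, the composite $\phi:=q\circ\iota\colon A\to AH/H$ is a continuous homomorphism with kernel exactly $A\cap H$. It is surjective, since any element $ahH$ of $AH/H$ with $a\in A$ and $h\in H$ equals $aH=\phi(a)$. Thus $\phi$ factors through a continuous bijective homomorphism $\bar\phi\colon A/(A\cap H)\to AH/H$.

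It remains to see that $\bar\phi$ is a topological isomorphism, i.e. that its inverse is continuous; this is the step I expect to be the crux. Here I would appeal to the open mapping theorem for Polish groups: a continuous surjective homomorphism between Polish groups is open, a consequence of the Baire category theorem via Pettis' theorem; cf. \cite{K95}. Both the source and the target of $\bar\phi$ are t.d.l.c.s.c., hence Polish, so $\bar\phi$ is open, and a continuous open bijective homomorphism is an isomorphism of topological groups. The genuine content lives entirely in this openness, a continuous bijective homomorphism of topological groups failing to be a homeomorphism in general; it is the completeness and second countability encoded in ``t.d.l.c.s.c." that make the Baire argument go through. Finally, the hypothesis that $AH$ is closed is indispensable precisely because it guarantees $AH/H$ is a Hausdorff, locally compact, Polish group to which the open mapping theorem applies, rather than a pathological quotient.
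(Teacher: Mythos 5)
Your proof is correct. The paper itself offers no argument for this statement --- it is quoted directly from Hewitt--Ross (5.33) --- and your route is essentially the standard one behind that citation: the algebraic comparison map $\bar\phi\colon A/(A\cap H)\to AH/H$ is a continuous bijective homomorphism, and its openness is forced by a Baire category argument, which is available precisely because second countability makes everything in sight Polish (equivalently, $\sigma$-compact locally compact, which is the form of the open mapping theorem Hewitt--Ross actually invoke). Your closing remarks correctly locate where the hypotheses enter: closedness of $AH$ is what makes the target a Polish (rather than merely metrizable, non-completely-metrizable) group, and some countability assumption is genuinely needed, as the identity map from $\Rb$ with the discrete topology to $\Rb$ shows.
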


\indent In the category of t.d.l.c.s.c. groups, care must be taken with infinite unions. Suppose $(G_i)_{i\in \omega}$ is a countable increasing sequence of t.d.l.c.s.c. groups such that $G_i\sleq_oG_{i+1}$ for each $i$. The group $G:=\bigcup_{i\in \omega} G_i$ becomes a t.d.l.c.s.c. group under the \textbf{inductive limit topology}: $A\subseteq G$ is defined to be open if and only if $A\cap G_i$ is open in $G_i$ for each $i$. We remark that the condition $G_i\sleq _o G_{i+1}$ is required. \par

\indent Our notion of infinite union allows us to define an infinite direct product that stays in the category of t.d.l.c.s.c. groups.

\begin{defn}\label{def:ldp} Suppose $A$ is a countable set, $(G_a)_{a\in A}$ is a sequence of t.d.l.c.s.c. groups, and for each $a\in A$, there is a distinguished $U_a\in \Uc(G_a)$. Letting $\{a_i\}_{i\in \omega}$ enumerate $A$, set
\begin{enumerate}[$\bullet$]
\item $S_0:=\prod_{i\in \omega}U_{a_i}$ with the product topology and
\item $S_{n+1}:=G_{a_0}\times\dots\times G_{a_n}\times \prod_{i\sgeq n+1}U_{a_i}$ with the product topology.
\end{enumerate}
The \textbf{local direct product of $(G_a)_{a\in A}$ over $(U_a)_{a\in A}$} is defined to be 
\[
\bigoplus_{a\in A}\left(G_a,U_a\right):=\bigcup_{n\in \omega}S_n
\]
with the inductive limit topology.
\end{defn}

\indent The isomorphism type of a local direct product is independent of the enumeration of $A$ used in the definition. Since $S_n\sleq_o S_{n+1}$ for each $n\in \omega$, $\bigoplus_{a\in A}(G_a,U_a)$ is again a t.d.l.c.s.c. group with $\prod_{a\in A}U_a$ as a compact open subgroup. \par

\indent We mention an alternative definition of the local direct product: let $(G_a)_{a\in A}$ and $(U_a)_{a\in A}$ be as in Definition~\rm\ref{def:ldp}. The local direct product of $(G_a)_{a\in A}$ over $(U_a)_{a\in A}$ is the collection of functions $f:A\rightarrow \bigsqcup_{a\in A}G_a$ so that $f(a)\in G_a$ for all $a\in A$ and for all but finitely many $a\in A$, $f(a)\in U_a$. The topology is given by declaring $\prod_{a\in A}U_a$ to be open.\par

\indent This definition of the local direct product, while obfuscating the topological structure, is more useful when building examples. Indeed, suppose the $G_a$ are copies of the same t.d.l.c.s.c. group $G$ and the $U_a$ are copies of $U\in \Uc(G)$. Suppose further $H$ is a t.d.l.c.s.c. group with a continuous action by permutations on $A$. The group $H$ now has a continuous action by topological group automorphisms on $\bigoplus_{a\in A}(G_a,U_a)$ by shifting the coordinates: for $h\in H$ and $f\in \bigoplus_{a\in A}(G_a,U_a)$, the element $h$ acts on $f$ by $h.f(a):=f(h^{-1}.a)$. We call this action the \textbf{shift action} of $H$ on $\bigoplus_{a\in A}(G_a,U_a)$. The shift action allows us to form $\bigoplus_{a\in A}(G_a,U_a)\rtimes H$, which is again a t.d.l.c.s.c. group when given the product topology. \par

\indent We now recall a number of properties that a t.d.l.c.s.c. group $G$ may display. The group $G$ is said to be \textbf{compactly generated} if there is a compact set $K\subseteq G$ such that $G=\grp{K}$. Compactly generated groups have many nice features; we note two here. Any closed cocompact subgroup of a compactly generated group is again compactly generated \cite{MS59}. The next property we note is a folklore result; a proof is included for completeness.

\begin{prop}[(Folklore)]\label{prop:factor} 
Suppose $G$ is a compactly generated t.d.l.c. group, $X$ is a compact generating set, and $U\in \Uc(G)$. Then there is a finite symmetric set $A\subseteq G$ containing $1$ such that $X\subseteq AU$ and $UAU=AU$. Further, for any finite symmetric set $A$ containing $1$ with $X\subseteq AU$ and $UAU=AU$, it is the case that $G=\grp{A}U$.
\end{prop}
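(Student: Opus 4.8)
The plan is to treat the two assertions separately: first to construct a witnessing set $A$ by playing the compactness of $X$ against the openness of $U$, and then to show, by a purely algebraic argument, that the stated properties of $A$ force $\grp{A}U$ to be a subgroup, which must then exhaust $G$. For the existence, I would symmetrize at the outset. Put $Y:=X\cup X^{-1}\cup\{1\}$, a compact symmetric set containing $1$, and set $V:=UYU$. This $V$ is compact, being the image of $U\times Y\times U$ under multiplication, and open, being a union of the open left cosets $uyU$; moreover $VU=V$, so $V$ is a union of left cosets of $U$. Covering the compact set $V$ by the open left cosets that comprise it yields $V=\bigcup_{i=1}^n a_iU$ with each $a_i\in V$. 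I then take $A:=\{a_1,\dots,a_n\}\cup\{a_1^{-1},\dots,a_n^{-1}\}\cup\{1\}$, which is finite, symmetric, and contains $1$. Since $V=V^{-1}$ (as $Y$ is symmetric) we have $a_i^{-1}\in V$ and hence $a_i^{-1}U\subseteq VU=V$, while $U\subseteq V$; combined with $\bigcup_i a_iU=V$ this gives $AU=V$, so $X\subseteq Y\subseteq V=AU$. Finally $UAU=U(AU)=UV=V=AU$ because $V$ is left-$U$-invariant, so $A$ witnesses the first claim.

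For the second assertion, fix any finite symmetric $A\ni 1$ with $X\subseteq AU$ and $UAU=AU$. The engine of the argument is to upgrade the one-sided relation $UA\subseteq UAU=AU$ to the exact commutation $UA=AU$. Here the symmetry of $A$ is essential: taking inverses in $UA\subseteq AU$ and using $A=A^{-1}$, $U=U^{-1}$ gives $AU=(UA)^{-1}\subseteq(AU)^{-1}=UA$, whence $UA=AU$. An easy induction on $n$, with $UA^{n+1}=(UA)A^n=(AU)A^n=A(UA^n)=A(A^nU)=A^{n+1}U$, then yields $UA^n=A^nU$ for all $n\geq 1$; since $1\in A$ gives $A^n\subseteq A^{n+1}$ and $\grp{A}=\bigcup_{n\geq 1}A^n$, this shows $U\grp{A}=\grp{A}U$. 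This commutation makes $H:=\grp{A}U$ closed under inverses and products, i.e. a subgroup of $G$. As $X\subseteq AU\subseteq\grp{A}U=H$ and $G=\grp{X}$, we conclude $G\subseteq H\subseteq G$, so $G=\grp{A}U$.

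The only genuinely delicate point is the implication $UA\subseteq AU\Rightarrow UA=AU$; everything else is bookkeeping with unions of cosets. I expect this to be the step where the symmetry hypothesis on $A$ does the real work, as it is precisely what converts the product of the subgroups $\grp{A}$ and $U$ into a subgroup. I would also take care in the first part to choose the coset representatives $a_i$ inside $V$, since that is what makes the symmetrization $A\mapsto A\cup A^{-1}\cup\{1\}$ preserve the identity $AU=V$ and hence the relation $UAU=AU$.
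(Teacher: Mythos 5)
Your proof is correct and follows essentially the same strategy as the paper's: a compactness/covering argument produces the finite symmetric set $A$, and the commutation relation $UA^nU=A^nU$ shows that $\grp{A}U$ is a subgroup containing $X$ and hence all of $G$. The only cosmetic difference is that you cover the compact open set $UYU$ in one step, whereas the paper first covers $X$ to get $B$ and then enlarges $B$ to absorb $UB$.
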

\begin{proof}
Since $\{xU:x\in X\}$ is an open cover of $X$, we may find $B$ a finite, symmetric set containing $1$ such that $X\subseteq BU$. The set $UB$ is again compact, so there is a finite, symmetric $A$ with $B\subseteq A\subseteq UBU$ and $UB\subseteq AU$. We now see that
\[
UAU=UBU\subseteq AUU=AU,
\]
hence, $UAU=AU$. Since $X\subseteq BU\subseteq UBU=UAU=AU$, we have proved the first claim. \par

\indent Suppose $X\subseteq AU$ and $UAU=AU$ with $A$ a finite symmetric set containing $1$. By induction, $(UAU)^n=A^nU$ for all $n\sgeq 1$. We conclude that
\[
G=\langle X \rangle=\langle UAU \rangle =\bigcup_{n\sgeq 1}(UAU)^n=\bigcup_{n\sgeq 1}A^nU=\langle A\rangle U
\]
verifying the second claim.
\end{proof}

\indent We say $G$ is a \textbf{small invariant neighbourhood} group, denoted SIN group, if $G$ admits a basis at $1$ of compact open \emph{normal} subgroups. This definition is equivalent to the usual definition of a t.d.l.c. SIN group; cf. \cite[Corollary 4.1]{CM11}. We will make frequent use of the following characterization of compactly generated SIN groups: A compactly generated t.d.l.c. group is SIN if and only if it is residually discrete \cite[Corollary 4.1]{CM11}. Recall a group is residually discrete if every non-trivial element is non-trivial in some discrete quotient. More generally, $G$ is \textbf{residually $Q$} for $Q$ some property of groups if for all $g\in G\setminus \{1\}$, there is $N\trianglelefteq G$ such that $G/N$ has property $Q$ and the image of $g$ in $G/N$ is non-trivial.  \par

\indent We call $G$ \textbf{locally elliptic} if every finite subset generates a relatively compact subgroup. There is again a useful characterization: A l.c. group is locally elliptic if and only if it is a directed union of compact open subgroups \cite{Plat66}.\par

\indent We say $G$ is a \textbf{quasi-product} with \textbf{quasi-factors} $N_1,\dots,N_k$ if $N_i\trianglelefteq G$ for each $1\sleq i\sleq k$ and the multiplication map $m:N_1\times\dots\times N_k\rightarrow G$ is injective with dense image.\par

\indent Two subgroups $M\sleq G$ and $N\sleq G$ are \textbf{commensurate}, denoted $M\sim_c N$, if $|M:M\cap N|$ and $|N:M\cap N|$ are finite. It is easy to check $\sim_c$ is an equivalence relation on $S(G)$ and is preserved under the action by conjugation of $G$ on $S(G)$. The commensurability relation allows us to define the commensurator subgroup: For $N\leqslant G$, the \textbf{commensurator subgroup} of $N$ in $G$ is 
\[
Comm_G(N):=\left\{g\in G\mid gNg^{-1}\sim_cN\right\}.
\]
If $Comm_G(N)=G$, we say $N$ is \textbf{commensurated}.

\subsection{Normal subgroups} A \textbf{filtering family} $\mc{F}$ of subgroups of a group $G$ is a collection of subgroups such that for all $N,M\in \mc{F}$, there is $K\sleq M\cap N$ with $K\in \mc{F}$.

\begin{prop}[(Caprace, Monod {\cite[Proposition 2.5]{CM11}})]\label{prop:CM_minimal}
Let $G$ be a compactly generated t.d.l.c. group. Then for any compact open subgroup $V$, the subgroup $Q_V:=\bigcap_{g\in G}gVg^{-1}$ is such that any filtering family of non-discrete closed normal subgroups of $G/Q_V$ has non-trivial intersection. 
\end{prop}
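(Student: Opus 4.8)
The plan is to reduce to the case $Q_V=\{1\}$ and then to show that the traces on $V$ of the members of the family cannot shrink to the trivial group. Passing to $\ol{G}:=G/Q_V$ and writing $\ol{V}$ for the (compact open) image of $V$, one has $\bigcap_{\ol g}\ol g\,\ol V\,\ol g^{-1}=(\bigcap_g gVg^{-1})/Q_V=\{1\}$, so after renaming $\ol G,\ol V$ back to $G,V$ it suffices to treat a filtering family $\mathcal F$ of non-discrete closed normal subgroups of a compactly generated $G$ with $Q_V=\{1\}$. The first observation is that for non-discrete closed normal $N\trianglelefteq G$ the subgroup $N\cap V$ is non-trivial: it is open in $N$, so were it trivial $N$ would be discrete; in fact it is infinite. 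Since $N$ is normal, $N\cap V$ is a closed normal subgroup of $V$, and $\bigcap_{g\in G}g(N\cap V)g^{-1}=N\cap Q_V=\{1\}$, i.e. $N\cap V$ has trivial core in $G$. As $\bigcap_{N\in\mathcal F}N=\{1\}$ would give $\bigcap_{N\in\mathcal F}(N\cap V)=\{1\}$, it is enough to show the latter fails.

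Next I would run a compactness argument inside the profinite group $V$. The family $\{N\cap V:N\in\mathcal F\}$ is downward directed (because $\mathcal F$ is filtering) and consists of non-trivial closed subgroups of the compact group $V$. If its intersection were trivial then, for each open normal $W\trianglelefteq_o V$, the compact sets $(N\cap V)\setminus W$ with $N\in\mathcal F$ would form a downward-directed family with empty total intersection (since $1\in W$), so one of them is already empty; that is, for every $W\trianglelefteq_o V$ there is $N\in\mathcal F$ with $N\cap V\subseteq W$. The task thus becomes: rule out non-discrete closed normal subgroups whose trace on $V$ is arbitrarily small.

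Compact generation enters here. By Proposition~\ref{prop:factor} I would write $G=\grp{A}V$ with $A$ finite symmetric, $1\in A$, and $VAV=AV$. Each double coset $VaV$ ($a\in A$) is then a union of at most $|A|$ left cosets of $V$, so $[V:V\cap aVa^{-1}]\le|A|$. For normal $N$ one has $a(N\cap V)a^{-1}=N\cap aVa^{-1}$, whence $(N\cap V)\cap a(N\cap V)a^{-1}=N\cap V\cap aVa^{-1}$ and therefore $[\,N\cap V:(N\cap V)\cap a(N\cap V)a^{-1}\,]\le|A|$ for every $a\in A$. Consequently every $A$-conjugate of $N\cap V$ is commensurate to $N\cap V$, and since $V$ normalises $N\cap V$ we obtain $Comm_G(N\cap V)=\grp{A}V=G$: the trace $K:=N\cap V$ is a commensurated compact subgroup of $G$ with trivial core. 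Taking $N=\ol{\grp{K^G}}$ as the minimal candidate, one checks $\ol{\grp{K^G}}\cap V=K$.

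The main obstacle is the final step: deriving a contradiction from the existence, for arbitrarily small $W\trianglelefteq_o V$, of a commensurated compact $K=N\cap V\subseteq W$ with trivial core and $\ol{\grp{K^G}}\cap V=K$. Equivalently, I must produce a single open $W^{*}\trianglelefteq_o V$ below which no such trace can lie, i.e. a uniform lower bound forcing $K$ to be ``large'' whenever $\ol{\grp{K^G}}$ is non-discrete. This is where I expect the real work to be, and where the geometry of compact generation is indispensable: I would pass to the Cayley--Abels graph $\Gamma$ of $(G,V)$, a connected locally finite graph on which $G$ acts faithfully (as $Q_V=\{1\}$) with compact open vertex stabilisers, identify $N\cap V$ with the $N$-stabiliser of the base vertex, and use local finiteness together with the commensuration bound $|A|$ to show that the $V$-trace of the normal closure of such a stabiliser cannot be annihilated by passing deeper into the filtration. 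Making this uniform bound precise, so that the normal closures $\ol{\grp{(N\cap V)^G}}$ share a common non-trivial subgroup, is the crux, and it is exactly the point at which the structure theory of compactly generated t.d.l.c. groups is brought to bear.
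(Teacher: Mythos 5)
Your proposal does not reach a proof: you say explicitly that the final step is ``where I expect the real work to be,'' and indeed everything before that point is preparatory. The reduction to $Q_V=\{1\}$, the observation that $N\cap V\neq\{1\}$ for non-discrete closed normal $N$, and the compactness argument showing that triviality of $\bigcap_{N\in\mathcal F}(N\cap V)$ would force $N\cap V$ to lie inside arbitrarily small open normal subgroups of $V$ are all correct and are the right framing. But the commensuration computation ($Comm_G(N\cap V)=G$) is a detour that is never used, and the statement you actually need --- a single open subgroup $W^{*}\trianglelefteq_o V$ that no trace $N\cap V$ can avoid --- is asserted as the crux rather than proved. Note also that the paper itself gives no proof of this proposition; it is quoted from Caprace--Monod, so there is no in-paper argument to match, and your attempt must stand on its own.

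The missing idea is a short geometric lemma on the Cayley--Abels graph $\Gamma$ with vertex set $G/V$ (on which $G$ acts faithfully once $Q_V=\{1\}$). For a non-discrete closed normal $N$, the vertex stabilizers $N_{(gv_0)}=g(N\cap V)g^{-1}$ are all non-trivial, and faithfulness provides, for each vertex $v$, a vertex $w$ moved by $N_{(v)}$; choosing $w$ at minimal distance $d$ from $v$ (this $d$ is independent of $v$ by vertex-transitivity and normality) and looking at the penultimate vertex $w'$ on a geodesic from $v$ to $w$, one sees $N_{(w')}$ fixes $w'$ but moves its neighbour $w$, forcing $d=1$. Hence every non-discrete closed normal $N$ satisfies $N\cap V\not\subseteq V_1$, where $V_1$ is the (open) fixator in $V$ of the ball of radius $1$ about $v_0$. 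This $V_1$ is exactly your $W^{*}$: the compact sets $(N\cap V)\setminus V_1$ are non-empty and downward filtering, so their intersection is non-empty and $\bigcap\mathcal F\neq\{1\}$. Without this uniform ``radius one'' bound your argument cannot close, since a priori the traces $N\cap V$ could indeed shrink to the identity.
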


\begin{cor}\label{cor:CM_comp-by-dis} 
Suppose $G$ is a compactly generated t.d.l.c.s.c. group and $\mc{N}$ is a filtering family of closed normal subgroups of $G$. If $\bigcap \mc{N}$ is trivial, then some element of $\mc{N}$ is compact-by-discrete.
\end{cor}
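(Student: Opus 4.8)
The plan is to pass to the quotient by a maximal compact normal subgroup contained in a compact open subgroup, apply Proposition~\ref{prop:CM_minimal} there, and then transport the conclusion back to $G$ by a compactness argument. Fix $V\in\Uc(G)$ and set $Q_V:=\bigcap_{g\in G}gVg^{-1}$. As a closed subgroup of the profinite group $V$ that is normalized by every element of $G$, the subgroup $Q_V$ is compact and normal in $G$; write $\pi\colon G\to G/Q_V$ for the quotient map. Since $G/Q_V$ is again a compactly generated t.d.l.c. group, Proposition~\ref{prop:CM_minimal} applies to it directly: every filtering family of non-discrete closed normal subgroups of $G/Q_V$ has non-trivial intersection.

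The reduction rests on one observation: a closed normal subgroup $N\trianglelefteq G$ is compact-by-discrete if and only if $\pi(N)$ is discrete in $G/Q_V$. Indeed, $Q_V$ is compact so $NQ_V$ is closed, and the first isomorphism theorem gives $\pi(N)=NQ_V/Q_V\simeq N/(N\cap Q_V)$; as $N\cap Q_V$ is a compact normal subgroup of $N$, the quotient $\pi(N)$ is discrete exactly when $N$ possesses a compact open normal subgroup. It therefore suffices to exhibit one $N\in\mc N$ whose image $\pi(N)$ is discrete.

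I would argue by contradiction, assuming every $N\in\mc N$ has non-discrete image. Then $\mc N':=\{\pi(N)\mid N\in\mc N\}$ is a family of non-discrete closed normal subgroups of $G/Q_V$, and it is filtering because $\pi$ is surjective: given $N,M\in\mc N$, a member $K\in\mc N$ with $K\sleq N\cap M$ yields $\pi(K)\sleq\pi(N)\cap\pi(M)$. Proposition~\ref{prop:CM_minimal} then forces $\bigcap\mc N'\neq\{1\}$, so that $\pi^{-1}(\bigcap\mc N')=\bigcap_{N\in\mc N}NQ_V$ strictly contains $Q_V$.

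The remaining and most delicate step is to descend this non-triviality to $\bigcap\mc N$ itself. I claim $\bigcap_{N\in\mc N}NQ_V=(\bigcap\mc N)Q_V$. The inclusion $\supseteq$ is clear. For $\subseteq$, fix $x$ in the left-hand side; then the compact set $xQ_V$ meets every $N\in\mc N$, and the family of closed sets $\{xQ_V\cap N\}_{N\in\mc N}$ has the finite intersection property because $\mc N$ is filtering. Compactness of $xQ_V$ then produces a point of $xQ_V\cap\bigcap_{N\in\mc N}N$, giving $x\in(\bigcap\mc N)Q_V$. Hence $(\bigcap\mc N)Q_V\supsetneq Q_V$, so $\bigcap\mc N\neq\{1\}$, contradicting the hypothesis; thus some $N\in\mc N$ is compact-by-discrete. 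The crux is this last compactness argument, the only point at which the filtering hypothesis and the compactness of $Q_V$ are genuinely used; everything else is formal manipulation of Proposition~\ref{prop:CM_minimal} and the isomorphism $\pi(N)\simeq N/(N\cap Q_V)$.
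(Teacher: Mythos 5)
Your proof is correct and follows the same overall strategy as the paper: fix $V$, pass to $G/Q_V$, apply Proposition~\ref{prop:CM_minimal}, and use compactness of $Q_V$ to prevent $\bigcap_{N}NQ_V$ from exceeding $Q_V$. The execution of the compactness step differs in a worthwhile way. The paper first uses the Lindel\"{o}f property to extract a countable $\subseteq$-decreasing chain $(N_i)$ from $\mc{N}$ and then shows $\bigcap_i N_iQ_V=Q_V$ by a convergent-subsequence argument; you instead keep the full filtering family and prove $\bigcap_{N\in\mc{N}}NQ_V=\left(\bigcap\mc{N}\right)Q_V$ directly via the finite intersection property of the closed sets $xQ_V\cap N$ inside the compact set $xQ_V$. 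Your version dispenses with the second-countability reduction entirely and so works verbatim for arbitrary (not necessarily second countable) compactly generated t.d.l.c. groups, and it is arguably cleaner. One small inaccuracy: your claimed equivalence ``$N$ is compact-by-discrete if and only if $\pi(N)$ is discrete'' is false in the ``only if'' direction (a compact normal $N$ not contained in $Q_V$ has non-discrete image even though it is trivially compact-by-discrete); fortunately you only ever use the true direction, namely that $\pi(N)$ discrete implies $N\cap Q_V$ is a compact open normal subgroup of $N$, so the argument is unaffected. You should state only that implication.
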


\begin{proof} Suppose $\mc{N}$ is a filtering family of closed normal subgroups of $G$ such that $\bigcap \mc{N}=\{1\}$.  The collection $\{N^c\mid N\in \mc{N}\}$ is an open cover of $G\setminus \{1\}$, and since $G\setminus \{1\}$ is Lindel\"{o}f, there is a countable subcover. We may thus take $\mc{N}$ to be a countable filtering family of $G$ such that $\bigcap\mc{N}=\{1\}$. It follows from the countability of $\mc{N}$ that there is $(N_i)_{i\in \omega}$ an $\subseteq$-decreasing sequence of members of $\mc{N}$ such that $\bigcap_{i\in \omega}N_i=\{1\}$.\par

\indent Fix $V\in \Uc(G)$ and let $Q_V$ be as given by Proposition~\rm\ref{prop:CM_minimal}. Since $Q_V$ is compact, $N_iQ_V$ is a closed normal subgroup of $G$. Furthermore, $\bigcap_{i\in \omega}N_iQ_V=Q_V$. Indeed, take $x\in \bigcap_{i\in \omega}N_iQ_V$, so $x=n_iq_i\in N_iQ_V$ for each $i\in\omega$. Since $Q_V$ is compact, there is a subsequence $q_{i_j}$ that converges to some $q\in Q_V$, whereby $n_{i_j}$ also converges to some $n$. For each $k$, we see that $n_{i_j}\in N_k$ for all $i_j\sgeq k$ since $(N_i)_{i\in \omega}$ is decreasing. It follows $n\in N_k$ for all $k$ and, therefore, is trivial. Thus, $x\in Q_V$.\par

\indent We now have that $(N_iQ_V/Q_V)_{i\in \omega}$ is a filtering family in $G/Q_V$ with $\bigcap_{i\in \omega}N_iQ_V/Q_V=\{1\}$. Applying Proposition~\rm\ref{prop:CM_minimal}, some $N_iQ_V/Q_V$ is discrete, and as $N_iQ_V/Q_V\simeq N_i/(N_i\cap Q_V)$, some $N_i$ is compact-by-discrete.
\end{proof}

\indent There are a number of canonical normal subgroups that arise in a t.d.l.c.s.c. group $G$. Following M. Burger and S. Mozes \cite{BM00}, the \textbf{quasi-centre} of $G$ is 
\[
QZ(G):=\left\{g\in G\mid C_G(g)\text{ is open}\right\}.
\]
The subgroup $QZ(G)$ is a characteristic subgroup but is not necessarily closed; for example, consider the profinite group $F^{\Nb}$ for $F$ a non-abelian finite simple group \cite{BM00}. \par

\indent The \textbf{discrete residual} of $G$, denoted $\Res{}(G)$, is the intersection of all open normal subgroups. The quotient by $\Res{}(G)$ has a nice structure.

\begin{prop}\label{prop:res}
If $G$ is a t.d.l.c.s.c. group, then $G/\Res{}(G)$ is a countable increasing union of SIN groups. 
\end{prop}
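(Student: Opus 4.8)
The plan is to reduce the statement to the characterization of compactly generated SIN groups recorded above: a compactly generated t.d.l.c. group is SIN if and only if it is residually discrete \cite[Corollary 4.1]{CM11}. Write $H:=G/\Res{}(G)$; this is again a t.d.l.c.s.c. group. The first step is to observe that $H$ is residually discrete. Since $\Res{}(G)$ is by definition the intersection of all open normal subgroups of $G$, every open normal subgroup of $G$ contains $\Res{}(G)$, and the correspondence $N\mapsto N/\Res{}(G)$ is a bijection between the open normal subgroups of $G$ and those of $H$. Hence the intersection of all open normal subgroups of $H$ equals $\Res{}(G)/\Res{}(G)=\{1\}$, so every non-trivial element of $H$ survives in some discrete quotient $H/N$ with $N\trianglelefteq_o H$.

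Next I would exhaust $H$ by open compactly generated subgroups. Van Dantzig's theorem provides a compact open subgroup $U\in \Uc(H)$, and second countability yields a countable dense subset $\{h_i\}_{i\in \omega}$. Setting $O_n:=\grp{U,h_0,\dots,h_n}$ produces an $\subseteq$-increasing sequence of subgroups that are open (each contains $U$) and compactly generated (each is generated by the compact set $U\cup\{h_0,\dots,h_n\}$). Being open, each $O_n$ is also closed, so $\bigcup_{n\in \omega}O_n$ is an open, hence closed, subgroup containing the dense set $\{h_i\}_{i\in \omega}$; it therefore equals $H$.

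It then remains to verify that each $O_n$ is SIN. By the criterion above it suffices to show $O_n$ is residually discrete, as it is already compactly generated. Given $g\in O_n\setminus\{1\}$, residual discreteness of $H$ supplies an open normal subgroup $N\trianglelefteq_o H$ with $g\notin N$; then $N\cap O_n$ is an open normal subgroup of $O_n$ avoiding $g$. Thus $O_n$ is residually discrete and hence SIN, exhibiting $H=\bigcup_{n\in \omega}O_n$ as a countable increasing union of SIN groups, as required.

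The argument is essentially an assembly of standard facts, so I do not anticipate a genuine obstacle. The one point needing slight care is confirming that residual discreteness descends from $H$ to each open subgroup $O_n$ by intersecting open normal subgroups with $O_n$, since this is precisely what licenses the application of the Caprace--Monod criterion to each piece of the exhaustion.
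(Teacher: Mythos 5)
Your argument is correct and follows essentially the same route as the paper: both exhaust by compactly generated open subgroups of the form $\grp{U,h_0,\dots,h_n}$ with $\{h_i\}$ dense and $U$ compact open, and both invoke the Caprace--Monod criterion that compactly generated residually discrete groups are SIN. The only (immaterial) difference is that the paper builds the $O_i$ inside $G$ and passes to their images in $G/\Res{}(G)$, whereas you work directly in the quotient; your extra care in checking that residual discreteness passes to open subgroups is exactly the point the paper also relies on.
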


\begin{proof} 
Let $(g_i)_{i\in \omega}$ list a countable dense subset of $G$, fix $U\in \Uc(G)$, and set $O_i:=\grp{U,g_0,\dots,g_i}$. Certainly, $G=\bigcup_{i\in \omega}O_i$, and since $G/\Res{}(G)$ is residually discrete, 
\[
O_i\Res{}(G)/\Res{}(G)\simeq O_i/O_i\cap \Res{}(G)
\]
is residually discrete as well as compactly generated. We conclude that $O_i\Res{}(G)/\Res{}(G)$ is a SIN group, and the proposition follows.
\end{proof}

\indent By work of Platonov \cite{Plat66}, a t.d.l.c. group $G$ admits a unique maximal closed normal subgroup that is locally elliptic. This subgroup is called the \textbf{locally elliptic radical} of $G$ and is denoted by $\Rad{\mc{LE}}(G)$.\par

\indent Following Caprace, C. Reid, and G. Willis \cite{CRW_1_13}, the \textbf{quasi-centralizer of $K$ modulo $L$} for $K,L\sleq G$ is
\[
QC_G\left(K/L\right):=\left\{g\in G\mid \exists\;U\in \Uc(G)\text{ such that } [g,U\cap  K]\subseteq L\right\}.
\]
Quasi-centralizers are not necessarily subgroups; however,

\begin{prop}[(Caprace, Reid, Willis {\cite[Lemma 6.5]{CRW_1_13}})]\label{prop:quasicentralizer} 
Let $G$ be a t.d.l.c. group, let $L$ and $K$ be non-trivial compact subgroups of $G$ such that $L\trianglelefteq K$, and suppose $L$ and $K$ are commensurated. Then $QC_G(K/L)$ is a normal subgroup of $G$ that contains $L$. 
\end{prop}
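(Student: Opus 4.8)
The plan is to verify the three assertions separately: that $L \subseteq QC_G(K/L)$, that $QC_G(K/L)$ is closed under products and inverses, and that it is normal in $G$. The containment $L \subseteq QC_G(K/L)$ is immediate: for $g \in L$ and \emph{any} $U \in \Uc(G)$, if $u \in U \cap K$ then $ug^{-1}u^{-1} \in L$ because $L \trianglelefteq K$, so $[g,u] = g(ug^{-1}u^{-1}) \in L$. The genuine content is the subgroup and normality claims, and in every case the same obstruction appears: the relevant commutator identities produce an element of a \emph{conjugate} $hLh^{-1}$ of $L$ rather than of $L$ itself. The device for overcoming this is a refinement step that trades the commensuration hypothesis for continuity, and it is the crux of the argument.

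The refinement step I would isolate first is the following. Suppose $g$ is witnessed by $U$, i.e.\ $[g, U \cap K] \subseteq L$, and let $L_0 \leq L$ be any open subgroup of $L$. The map $u \mapsto [g,u]$ is continuous and sends $U \cap K$ into $L$ with $1 \mapsto 1$, so the preimage of $L_0$ is an open neighbourhood of $1$ in the profinite group $U \cap K$; since every identity neighbourhood in a profinite group contains an open subgroup, there is an open subgroup $M \leq U \cap K$ with $[g, M] \subseteq L_0$. Finally, because compact open subgroups form a basis at $1$ in $G$, there is $V \in \Uc(G)$ with $V \cap K \subseteq M$, so $[g, V \cap K] \subseteq L_0$. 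In short: an element $g \in QC_G(K/L)$ may be re-witnessed so that its commutators with $V \cap K$ lie in any prescribed open subgroup of $L$.

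With this in hand the subgroup properties are routine. For inverses, $[g^{-1}, u] = g^{-1}[g,u]^{-1}g$; applying the refinement step with $L_0 = L \cap gLg^{-1}$ — which is open in $L$ because $L$ is commensurated and finite-index closed subgroups of profinite groups are open — gives $[g,u]^{-1} \in L \cap gLg^{-1}$ for $u$ in a suitable $V \cap K$, whence $g^{-1}[g,u]^{-1}g \in g^{-1}Lg \cap L \subseteq L$. For products, $[g_1 g_2, u] = g_1[g_2,u]g_1^{-1}\,[g_1,u]$; re-witnessing $g_2$ so that $[g_2, u] \in L \cap g_1^{-1}Lg_1$ (again open in $L$ by commensuration) forces the first factor into $L$, while $[g_1,u] \in L$ already, so the product lies in $L$.

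Normality is the step I expect to be the main obstacle, since it must use the commensuration of both $K$ and $L$ at once and must transport the witnessing subgroup across a conjugation. Given $g$ witnessed by $U$ and $h \in G$, the key computation is the identity $[hgh^{-1}, hwh^{-1}] = h[g,w]h^{-1}$. I would first shrink: using that $K \cap hKh^{-1}$ has finite index in $K$ (commensuration of $K$), one checks that $\{v \in K : h^{-1}vh \in M\} = K \cap hMh^{-1}$ is open in $K$ for a suitable open subgroup $M \leq U \cap K$, so there is $V \in \Uc(G)$ with $h^{-1}(V \cap K)h \subseteq M$. Choosing $M$ via the refinement step so that $[g, M] \subseteq L \cap h^{-1}Lh$ (open in $L$ by commensuration of $L$), we obtain, for $v \in V \cap K$ and $w = h^{-1}vh \in M$, that $[hgh^{-1}, v] = h[g,w]h^{-1} \in h(L \cap h^{-1}Lh)h^{-1} = hLh^{-1} \cap L \subseteq L$. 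Hence $hgh^{-1} \in QC_G(K/L)$. Throughout, the repeated use of ``a finite-index closed subgroup of a profinite group is open'' is precisely what converts the weak commensuration hypotheses into the open subgroups the refinement step consumes.
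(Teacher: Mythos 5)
The paper does not prove this proposition at all: it is imported verbatim from Caprace--Reid--Willis \cite[Lemma 6.5]{CRW_1_13}, so there is no internal proof to compare against. Your argument is correct and self-contained: the refinement step (re-witnessing $g$ so that $[g,V\cap K]$ lands in any prescribed open subgroup $L_0\sleq L$, using continuity of $u\mapsto[g,u]$ and the fact that a closed finite-index subgroup of a compact group is open) is exactly the right device, and the commutator identities $[g^{-1},u]=g^{-1}[g,u]^{-1}g$, $[g_1g_2,u]=g_1[g_2,u]g_1^{-1}[g_1,u]$, and $[hgh^{-1},hwh^{-1}]=h[g,w]h^{-1}$ are all verified correctly, with the commensuration of $L$ and of $K$ each used where needed. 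This matches the standard proof in the cited source in spirit, and I see no gap.
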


\indent The group $QC_G(K/L)$ often fails to be closed, so we define 
\[
\qc{G}{K/L}:=\ol{QC_G(K/L)}.
\]
When $K=U$ with $U\in \Uc(G)$, the compact open subgroup $V:=U\cap \qc{G}{U/L}$ of $\qc{G}{K/L}$ has a useful property: Since $U$ is open, $U\cap QC_G(U/L)$ is dense in $V$, and for each $g\in U\cap QC_G(U/L)$, there is $W\sleq_o U$ such that $[g,w]\in L$ for all $w\in W$. Therefore, $QZ(V/L)$ is dense in the group $V/L$. This observation will be used in an essential way in Section~\rm\ref{sec:a-reg}.

\subsection{Synthetic subgroups}\label{subsec:synthetic}
The inchoate idea of a synthetic subgroup appears in work of I.V. Protasov and V.S. \u{C}ar\={\i}n \cite{PC78} from the late seventies. Specifically, for a t.d.l.c. group $G$ and 
\[
P_1(G):=\{g\in G\mid \cgrp{g}\text{ is compact}\},
\]
Protasov and \u{C}ar\={\i}n define the periodic radical to be the collection of $g\in G$ such that $gP_1(G)\subseteq P_1(G)$. This collection is easily shown to be a normal subgroup of $G$. We abstract this idea to show a general relationship between sets of closed subgroups and normal subgroups. The key idea we borrow from Protasov and \u{C}ar\={\i}n is to consider the elements that can be \emph{added} to the members of a collection of subgroups. \par

\indent For a t.d.l.c. group $G$, a subset $\mathcal{A}\subseteq S(G)$ is \textbf{conjugation invariant} if $\mathcal{A}$ is fixed setwise under the action by conjugation of $G$ on $S(G)$. We say $\mc{A}$ is \textbf{hereditary} if for all $C\in \mathcal{A}$, $S(C)\subseteq \mathcal{A}$.

\begin{defn} 
Suppose $\mathcal{A}\subseteq S(G)$ is conjugation invariant and hereditary. The $\mathcal{A}$\textbf{-core}, denoted $N_{\mathcal{A}}$, is the collection of $g\in G$ such that for all $C\in \mc{A}$, $\cgrp{g,C}\in\mathcal{A}$.
\end{defn}

\begin{prop}\label{prop:syn_subgroup}
If $\mathcal{A}\subseteq S(G)$ is conjugation invariant and hereditary, then $N_{\mathcal{A}}$ is a normal subgroup of $G$.
\end{prop}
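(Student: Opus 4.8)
The plan is to verify the three subgroup axioms together with normality directly from the definition of $N_{\mc{A}}$, using hereditariness to establish closure under multiplication and conjugation invariance to establish normality.

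First I would dispose of the easy checks. The identity lies in $N_{\mc{A}}$, since for every $C\in \mc{A}$ we have $\cgrp{1,C}=C\in \mc{A}$. For inverses, note that $\grp{g,C}=\grp{g^{-1},C}$ as abstract groups, so their closures coincide; hence $g\in N_{\mc{A}}$ immediately gives $g^{-1}\in N_{\mc{A}}$.

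The key step is closure under multiplication, which I expect to be the only genuinely substantive point. Given $g,h\in N_{\mc{A}}$ and an arbitrary $C\in \mc{A}$, I would first apply the membership of $h$ to produce a new member $D:=\cgrp{h,C}\in \mc{A}$, and then apply the membership of $g$ to this enlarged member to obtain $\cgrp{g,D}\in \mc{A}$. Since $D$ is topologically generated by $h$ and $C$, a short inclusion argument shows $\cgrp{g,D}=\cgrp{g,h,C}$. Now $\cgrp{gh,C}$ is a closed subgroup of $\cgrp{g,h,C}$, because $gh$ and $C$ both lie in the latter; hereditariness of $\mc{A}$ then forces $\cgrp{gh,C}\in \mc{A}$. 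As $C$ was arbitrary, this yields $gh\in N_{\mc{A}}$. The heart of the matter is thus this ``two-stage absorption followed by cutting down via hereditariness''; everything else is formal.

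Finally, for normality I would fix $g\in N_{\mc{A}}$, $x\in G$, and $C\in \mc{A}$. Conjugation invariance gives $x^{-1}Cx\in \mc{A}$, so membership of $g$ yields $\cgrp{g,x^{-1}Cx}\in \mc{A}$. Since conjugation by $x$ is a topological automorphism of $G$, it commutes with taking closures and carries $\cgrp{g,x^{-1}Cx}$ onto $\cgrp{xgx^{-1},C}$; a second application of conjugation invariance places the latter in $\mc{A}$. Hence $xgx^{-1}\in N_{\mc{A}}$, and $N_{\mc{A}}$ is a normal subgroup of $G$.
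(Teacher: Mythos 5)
Your argument is correct and follows essentially the same route as the paper: the two-stage absorption $\cgrp{h,C}\in\mc{A}$, then $\cgrp{g,h,C}\in\mc{A}$, then cutting down to $\cgrp{gh,C}$ by hereditariness, and the conjugation-invariance argument for normality are exactly what the paper does (the paper merely compresses the absorption step into ``it is clear that $\cgrp{h,g,C}\in\mc{A}$''). Your extra details on the identity, inverses, and the equality $\cgrp{g,\cgrp{h,C}}=\cgrp{g,h,C}$ are harmless elaborations of the same proof.
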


\begin{proof}
Certainly, $N_{\mathcal{A}}$ is closed under inverses. For multiplication, take $h,g\in N_{\mc{A}}$ and $C\in \mc{A}$. It is clear that $\cgrp{h,g,C}\in \mc{A}$, and since $\mc{A}$ is hereditary, $\cgrp{hg,C}\in \mc{A}$. Therefore, $hg\in N_{\mc{A}}$, and $N_{\mc{A}}$ is a subgroup.\par

\indent For normality, take $g\in N_{\mc{A}}$, $k\in G$, and $C\in \mc{A}$. Since $\mathcal{A}$ is conjugation invariant, we have that $k^{-1}Ck\in \mc{A}$, so $\cgrp{g,k^{-1}Ck}\in \mc{A}$. Conjugating with $k$, we see that $\cgrp{kgk^{-1},C}\in \mc{A}$. Therefore, $kgk^{-1}\in N_{\mc{A}}$ verifying that $N_{\mathcal{A}}$ is also normal.
\end{proof}

\begin{defn}
A subgroup of a t.d.l.c. group $G$ of the form $N_{\mathcal{A}}$ for some conjugation invariant and hereditary $\mathcal{A}\subseteq S(G)$ is called a \textbf{synthetic subgroup} of $G$.
\end{defn}

\begin{rmk}
The technique of synthetic subgroups gives a method to identify normal \emph{but not necessarily closed} subgroups. Additionally, the closure of a synthetic subgroup often has a regular internal structure that aids in the study of ``decompositions".
\end{rmk}

We give four examples of synthetic subgroups. The latter three appear regularly in this work. \par
\begin{flushenumerate}
\item  Let $N\trianglelefteq G$ be a closed normal subgroup of a t.d.l.c. group $G$, so $S(N)\subseteq S(G)$ is conjugation invariant and hereditary. One checks $N$ is the $S(N)$-core.\par

\item Let $G$ be a t.d.l.c. group and consider the quasi-centre of $G$, $QZ(G)$. Putting 
\[
\mc{QZ}:=\{C\in S(G)\mid C_G(C)\text{ is open}\},
\]
$QZ(G)$ is the $\mc{QZ}$-core. \par

\item Platonov's locally elliptic radical in the t.d.l.c. setting is an example in a non-trivial manner. Let $G$ be a t.d.l.c. group and $\mc{C}\subseteq S(G)$ be the collection of compact subgroups. The set $\mc{C}$ is conjugation invariant and hereditary, so we may form $N_{\mc{C}}$. As $\{1\}\in \mc{C}$, $\cgrp{X}$ is compact for all $X\subseteq N_{\mc{C}}$ finite. Via results of \cite{Plat66} or as an exercise, $\ol{N_{\mc{C}}}$ is locally elliptic and, therefore, is contained in $\Rad{\mc{LE}}(G)$. Conversely, take $r\in \Rad{\mc{LE}}(G)$ and $C\in \mc{C}$. Since $r^C\subseteq \Rad{\mc{LE}}(G)$ is compact, $r^C$ generates a compact subgroup. We infer that $\cgrp{r,C}\sleq \cgrp{r^C}C$ is a compact subgroup and $r\in N_{\mc{C}}$. Therefore, $N_{\mc{C}}=\Rad{\mc{LE}}(G)$, and $\Rad{\mc{LE}}(G)$ is the $\mc{C}$-core.\par

\item Our last example has not been previously isolated. Consider the following family of subgroups in a t.d.l.c.s.c. group $G$:
\[
\mc{SIN}(G):=\left\{C\in S(G)\mid \forall \hspace{1pt}V\in \mc{U}(G) \;\exists W\leqslant_oV\text{ such that }C\leqslant N_G(W)\right\}
\]
We call $\mc{SIN}(G)$ the \textbf{SIN-family}. The set $\mc{SIN}(G)$ is conjugation invariant and hereditary, so we may form a synthetic subgroup. We put $\SIN(G):=N_{\mc{SIN}(G)}$ where $N_{\mc{SIN}(G)}$ is the $\mc{SIN}(G)$-core and call $\SIN(G)$ the \textbf{SIN-core} of $G$.
\end{flushenumerate}

\indent The SIN-core plays an important role in this work, and therefore, we make a few observations. We first note that $QZ(G)\sleq \SIN(G)$ and that $\SIN(G)$ is a characteristic subgroup. Our next observation shows the quasi-centre is often a proper subgroup of $\SIN(G)$.

\begin{prop} 
If $G$ is a t.d.l.c.s.c. group, then $\SIN(G)$ is closed.
\end{prop}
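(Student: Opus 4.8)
The plan is to prove the equivalent inclusion $\ol{\SIN(G)}\sleq \SIN(G)$. Since $G$ is metrizable it suffices to take $g_n\to g$ with each $g_n\in\SIN(G)$ and show $g\in\SIN(G)$; unwinding the definition of the core, this reduces to the local assertion that for every $C\in\mc{SIN}(G)$ and every $V\in\Uc(G)$ there is an open subgroup $W\sleq_o V$ normalized by both $g$ and $C$. Two preliminary reductions streamline this. First, since $W\sleq_o V'\sleq_o V$ implies $W\sleq_o V$, I may assume $V$ ranges only over a fixed countable neighbourhood basis at $1$. Second, because each normalizer $N_G(W)$ is closed, $D\sleq N_G(W)$ forces $\ol{D}\sleq N_G(W)$; hence $\mc{SIN}(G)$ is stable under closures, and once I produce such a $W$ it follows that $\cgrp{g,C}\sleq N_G(W)$, placing $\cgrp{g,C}\in\mc{SIN}(G)$ and giving $g\in\SIN(G)$.

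To exploit $C$, note that as $C\in\mc{SIN}(G)$ there is a $C$-invariant $W_C\sleq_o V$; replacing $V$ by $W_C$ I may assume $C$ normalizes $V$ and seek $W\sleq_o V$ lying inside $V$. For each $n$ the core property gives $\cgrp{g_n,C}\in\mc{SIN}(G)$, hence an open $W_n\sleq_o V$ normalized by $g_n$ and by $C$. The engine is compactness of the space $S(V)$ of closed subgroups of the profinite group $V$ together with continuity of the conjugation action: after passing to a subsequence $W_n\to W$ for some closed $W\sleq V$, and since $g_n\to g$ with $g_nW_ng_n^{-1}=W_n$ and $cW_nc^{-1}=W_n$ for all $c\in C$, these identities pass to the limit to yield $gWg^{-1}=W$ and $cWc^{-1}=W$ for all $c\in C$. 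Thus $W$ is a closed subgroup of $V$ normalized by $g$ and by $C$.

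The main obstacle is that a limit of open subgroups in $S(V)$ need not be open, so a priori $W$ may have infinite index in $V$; I expect this to be the crux of the whole argument, and to resolve it by controlling the indices $[V:W_n]$. The device is to pass to $V$-normal cores: writing $g=g_nw_n$ with $w_n:=g_n^{-1}g\to 1$, and replacing $W_n$ by $W_n^{*}:=\bigcap_{x\in V}xW_nx^{-1}\trianglelefteq_o V$, the $C$-invariance of $W_n^{*}$ survives because $C$ normalizes both $V$ and $W_n$, while the identity $gW_n^{*}g^{-1}=g_n(w_nW_n^{*}w_n^{-1})g_n^{-1}$ together with $w_n\in V\sleq N_G(W_n^{*})$ shows that $g$ normalizes $W_n^{*}$ as soon as $g_n$ does. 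Securing $g_n\in N_G(W_n^{*})$ — equivalently arranging that the relevant $g_n$ normalizes $V$ itself — and thereby bounding the indices is the delicate point; once the $W_n^{*}$ have bounded index their Chabauty limit $W$ is open, and then $\cgrp{g,C}\sleq N_G(W)$ completes the argument. Equivalently, one may package the conclusion through the general fact that a subgroup meeting some open subgroup in a closed set is itself closed: the compactness argument above is precisely what shows that $\SIN(G)\cap V$ contains its limit points, whence $\SIN(G)$ is closed.
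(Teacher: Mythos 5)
Your overall strategy (show $\ol{\SIN(G)}\sleq\SIN(G)$ by reducing to: for each $C\in\mc{SIN}(G)$ and $V\in\Uc(G)$, produce $W\sleq_o V$ normalized by $g$ and $C$) is the right one, and your reduction to the case where $C$ normalizes $V$ is fine. But the proposal has a genuine gap at exactly the point you flag as ``the delicate point,'' and the machinery you set up does not close it. Writing $g=g_nw_n$ with $w_n\in V$, you need a single open $W\sleq_o V$ that is normalized simultaneously by $g_n$, by $C$, \emph{and by $V$} (or at least by $w_n$); only then does $gWg^{-1}=g_n(w_nWw_n^{-1})g_n^{-1}=W$ go through. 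The definition of $\cgrp{g_n,C}\in\mc{SIN}(G)$ hands you a $W_n\sleq_o V$ normalized by $g_n$ and $C$ but gives no control by $V$, and your normal-core $W_n^{*}:=\bigcap_{x\in V}xW_nx^{-1}$ is not normalized by $g_n$ unless $g_n$ normalizes $V$, since $g_nW_n^{*}g_n^{-1}$ is the $g_nVg_n^{-1}$-core of $W_n$, not the $V$-core. The Chabauty fallback fails independently: the indices $[V:W_n]$ need not be bounded (the $W_n$ may shrink to a non-open closed subgroup), and nothing in the hypotheses bounds them, so the limit $W$ can fail to be open. Note also that if the single-$n$ argument worked, no limit would be needed at all.

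The missing ingredient is the first half of the paper's proof: if $C\in\mc{SIN}(G)$ and $C\sleq N_G(U)$ for some $U\in\Uc(G)$, then $CU\in\mc{SIN}(G)$. (Proof sketch: given $V$, choose $W\trianglelefteq_o U$ with $W\sleq V$, then $L\sleq_o W$ with $C\sleq N_G(L)$; the intersection $\bigcap_{c\in C}cWc^{-1}$ contains $L$, hence is open, and is normalized by $CU$.) With this in hand no limiting process is required: pick $i$ with $g_i^{-1}g\in U$; then $g\in\grp{g_i,CU}$, so $\grp{g,CU}\sleq\grp{g_i,CU}$, and since $g_i\in\SIN(G)$ gives $\cgrp{g_i,CU}\in\mc{SIN}(G)$, hereditariness yields $\cgrp{g,C}\in\mc{SIN}(G)$. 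I recommend you prove the $CU$ lemma and discard the Chabauty apparatus.
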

\begin{proof}
Take $C\in \mc{SIN}(G)$ and let $U\in \Uc(G)$ be such that $C\sleq N_G(U)$. For $V\in \Uc(G)$, we may find $W\trianglelefteq_o U$ such that $W\sleq V$. Since $C\in \mc{SIN}(G)$, we may additionally find $L\sleq_oW$ with $C\sleq N_G(L)$. Thus,
\[
\bigcap_{cu\in CU}cuWu^{-1}c^{-1}=\bigcap_{c\in C}cWc^{-1}\sgeq L,
\]
so $\bigcap_{cu\in CU}cuWu^{-1}c^{-1}$ is an open subgroup of $V$ normalized by $CU$. We conclude that $CU\in \mc{SIN}(G)$. \par

\indent Now say $g_i\rightarrow g$ with $g_i\in \SIN(G)$ for all $i$. Take $C\in \mc{SIN}(G)$, find $U\in \Uc(G)$ so that $CU\in \mc{SIN}(G)$, and fix $i$ with $g_i^{-1}g\in U$. Since $g_i\in \SIN(G)$, we have that $\grp{g_i,CU}\in \mc{SIN}(G)$ and by choice of $i$, $\grp{g,CU}\in \mc{SIN}(G)$. The hereditary property now implies $\cgrp{g,C}\in \mc{SIN}(G)$, and the proposition follows.
\end{proof}

\indent The SIN-core also has a well-behaved internal structure.

\begin{prop}\label{prop:SIN}
If $G$ is a t.d.l.c.s.c. group, then $\SIN(G)$ is an increasing union of compactly generated relatively open SIN groups.
\end{prop}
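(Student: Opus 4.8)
The plan is to exhaust $\SIN(G)$ by an $\subseteq$-increasing sequence of topologically compactly generated, relatively open subgroups and to show each of them is a SIN group by exploiting its membership in the SIN-family. Since $\SIN(G)$ is closed, it is itself a t.d.l.c.s.c.\ group; fix a countable dense set $(g_i)_{i\in\omega}$ of $\SIN(G)$ and a compact open subgroup $W_0\in\Uc(G)$, and set $V_0:=W_0\cap\SIN(G)$, a compact open subgroup of $\SIN(G)$. I would then define
\[
O_i:=\cgrp{g_0,\dots,g_i,V_0}.
\]
By construction $(O_i)_{i\in\omega}$ is $\subseteq$-increasing; each $O_i$ is topologically generated by the compact set $\{g_0,\dots,g_i\}\cup V_0$, hence is compactly generated; and each $O_i$ contains the relatively open subgroup $V_0$, hence is open in $\SIN(G)$. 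As each $O_i$ is open, $\bigcup_{i\in\omega}O_i$ is an open, and therefore closed, subgroup of $\SIN(G)$ containing the dense set $(g_i)_{i\in\omega}$, so $\bigcup_{i\in\omega}O_i=\SIN(G)$. It then remains only to prove that every $O_i$ is a SIN group.

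The crux is the following sub-claim: every closed subgroup $C$ with $C\in\mc{SIN}(G)$ is a SIN group. To see this, let $P$ be an arbitrary compact open subgroup of $C$; these form a basis at $1$ in $C$ by van Dantzig's theorem. Since $\{V\cap C\mid V\in\Uc(G)\}$ is a basis at $1$ in $C$, I would choose $V\in\Uc(G)$ with $V\cap C\sleq P$. As $C\in\mc{SIN}(G)$, there is $W\sleq_o V$ with $C\sleq N_G(W)$; then $W\cap C$ is a compact open subgroup of $C$ normalized by $C$, so $W\cap C\trianglelefteq C$, while $W\cap C\sleq V\cap C\sleq P$. Thus every compact open subgroup of $C$ contains a compact open normal subgroup of $C$, and $C$ is SIN. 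I will also use that every $W_0\in\Uc(G)$ lies in $\mc{SIN}(G)$: given $V\in\Uc(G)$, the normal core of $W_0\cap V$ in $W_0$ is a finite intersection of open finite-index subgroups of $W_0$, hence an open subgroup $W\sleq_o V$ of $W_0$ normalized by $W_0$, witnessing $W_0\in\mc{SIN}(G)$.

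Finally I would verify $O_i\in\mc{SIN}(G)$ for each $i$. Since $W_0\in\mc{SIN}(G)$ and $\mc{SIN}(G)$ is hereditary, the closed subgroup $V_0=W_0\cap\SIN(G)$ lies in $\mc{SIN}(G)$. Now each $g_j$ belongs to $\SIN(G)=N_{\mc{SIN}(G)}$, so by the very definition of the $\mc{SIN}(G)$-core, $\cgrp{g_j,A}\in\mc{SIN}(G)$ whenever $A\in\mc{SIN}(G)$. Adjoining the $g_j$ one at a time therefore keeps us inside the family: inductively $\cgrp{g_0,\dots,g_j,V_0}\in\mc{SIN}(G)$, whence $O_i\in\mc{SIN}(G)$, and by the sub-claim $O_i$ is a SIN group. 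This completes the construction. I expect the main obstacle to be the sub-claim—that membership in the SIN-family forces a closed subgroup to be a SIN group; once it is isolated, the remaining points (relative openness, compact generation, and staying inside $\mc{SIN}(G)$ via heredity and the core property) are routine.
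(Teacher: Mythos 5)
Your proposal is correct and follows essentially the same route as the paper: exhaust $\SIN(G)$ by the open, compactly generated subgroups generated by $U\cap\SIN(G)$ together with finitely many elements of a countable dense set, observe these lie in $\mc{SIN}(G)$ via the core property and heredity, and deduce the SIN property from membership in $\mc{SIN}(G)$. The only difference is that you spell out details the paper leaves implicit (that $W_0\in\mc{SIN}(G)$ and the induction placing each $O_i$ in $\mc{SIN}(G)$), which is harmless.
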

\begin{proof}
Fix $U\in \Uc(G)$, let $(n_i)_{i\in \omega}$ list a countable dense subset of $N:=\SIN(G)$, and put $H_i:=\grp{N\cap U,n_0,\dots,n_i}$. Certainly, $N=\bigcup_{i\in \omega}H_i$ with $(H_i)_{i \in \omega}$ an $\subseteq$-increasing sequence of compactly generated relatively open subgroups of $N$. It remains to show each $H_i$ is a SIN group. To that end, take $O\subseteq H_i$ a neighbourhood of $1$ in $H_i$ and find $W\in \Uc(G)$ such that $W\cap H_i\subseteq O$. We have that $H_i\in \mc{SIN}(G)$ by our choice of $n_0,\dots,n_i$, so there is $L\sleq_oW$ with $H_i\sleq N_G(L)$. The subgroup $L\cap H_i$ is thus an open normal subgroup of $H_i$ contained in  $O$ verifying that $H_i$ is a SIN group.
\end{proof}

\section{Elementary groups}\label{sec:elementary}
As it is the central idea of this work, we reiterate the definition of the class of elementary groups.

\begin{defn}
The class of \textbf{elementary groups} is the smallest class $\Es$ of t.d.l.c.s.c. groups such that
\begin{enumerate}[(i)]

\item $\Es$ contains all second countable profinite groups and countable discrete groups;

\item $\Es$ is closed under taking group extensions of second countable profinite groups and countable discrete groups. I.e. if $G$ is a t.d.l.c.s.c. group and $H\trianglelefteq G$ is a closed normal subgroup with $H\in \Es$ and $G/H$ either profinite or discrete, then $G\in \Es$; and

\item If $G$ is a t.d.l.c.s.c. group and $G=\bigcup_{i\in \omega}O_i$ where $(O_i)_{i\in \omega}$ is an $\subseteq$-increasing sequence of open subgroups of $G$ with $O_i\in\Es$ for each $i$, then $G\in\Es$. We say $\Es$ is \textbf{closed under countable increasing unions}.
\end{enumerate}
\end{defn}

\subsection{The construction rank} For $G\in \mathscr{E}$, define:

\begin{enumerate}[$\bullet$]
\item $G\in \Es_0$ if and only if $G$ is either profinite or discrete.

\item Suppose $\Es_{\alpha}$ is defined. Put $G\in \Es_{\alpha}^e$ if and only if there exists $N\trianglelefteq G$ such that $N\in \Es_{\alpha}$ and $G/N\in \Es_{0}$. Put $G\in \Es_{\alpha}^l$ if and only if $G=\bigcup_{i\in \omega}H_i$ where $(H_i)_{i\in \omega}$ is an $\subseteq$-increasing sequence of open subgroups of $G$ with $H_i\in \Es_{\alpha}$ for each $i\in \omega$. Define $\Es_{\alpha+1}:=\Es_{\alpha}^e\cup \Es_{\alpha}^l$.

\item For $\lambda$ a limit ordinal, $\Es_{\lambda}:=\bigcup_{\beta<\lambda}\Es_{\beta}$.
\end{enumerate}
\noindent It is easy to verify $\Es=\bigcup_{\alpha <\omega_1}\Es_{\alpha}$. We may thus define 
\[
\rk(G):=\min\{\alpha\mid G\in \Es_{\alpha}\}.
\]
We call this rank the \textbf{construction rank} of $G$; the construction rank is inspired by work of D. Osin \cite{Os02}. Induction on the construction rank is one of the primary tools for analysis of elementary groups. The reader is directed to Section~\rm\ref{sec:examples} to see computations of this rank in examples.\par

\indent We now collect a number of basic facts about the construction rank. The proof of our first observation is straightforward and, therefore, left to the reader.

\begin{obs} Let $G$ be elementary.
\begin{enumerate}[(1)]
\item If $\rk(G)=\alpha>0$ and $G$ is compactly generated, then $G$ is a group extension of either a profinite group or a discrete group by an elementary group of strictly lower rank. 

\item The construction rank is either zero or a successor ordinal less than $\omega_1$.

\item If $G\simeq H$ as topological groups, then $H\in \Es$ and $\rk(G)=\rk(H)$.
\end{enumerate}
\end{obs}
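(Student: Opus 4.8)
The plan is to run everything off the inductive structure of the chain $(\Es_\alpha)_{\alpha<\omega_1}$, so I would first record that this chain is $\subseteq$-increasing and continuous at limits. For the increasing property, if $G\in\Es_\alpha$ then taking $N=G$ in the definition of $\Es_\alpha^e$ works, since $G/G$ is the trivial group, which is simultaneously profinite and discrete, so $G\in\Es_\alpha^e\subseteq\Es_{\alpha+1}$; continuity at limits is immediate from $\Es_\lambda=\bigcup_{\beta<\lambda}\Es_\beta$. Granting the stated fact that $\Es=\bigcup_{\alpha<\omega_1}\Es_\alpha$, the rank is a genuine ordinal below $\omega_1$, which already gives the bound in part (2). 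For the remainder of (2), suppose toward a contradiction that $\rk(G)=\lambda$ with $\lambda>0$ a limit ordinal. Then $G\in\Es_\lambda=\bigcup_{\beta<\lambda}\Es_\beta$, so $G\in\Es_\beta$ for some $\beta<\lambda$, contradicting minimality of $\lambda=\rk(G)$. Hence $\rk(G)$ is $0$ or a successor.

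For part (3) I would argue by transfinite induction that each $\Es_\alpha$ is closed under topological isomorphism; this is routine because the defining clauses of $\Es_0$, $\Es_\alpha^e$, $\Es_\alpha^l$, and the limit stage refer only to the topological group structure. Fixing a topological isomorphism $\phi\colon G\to H$: the base case is clear, as being profinite or discrete is preserved; if $G\in\Es_\alpha^e$ witnessed by $N\trianglelefteq G$, then $\phi(N)\trianglelefteq H$ satisfies $\phi(N)\in\Es_\alpha$ by the inductive hypothesis and $H/\phi(N)\simeq G/N\in\Es_0$, so $H\in\Es_\alpha^e$; and if $G=\bigcup_iH_i\in\Es_\alpha^l$ then $H=\bigcup_i\phi(H_i)$ exhibits $H\in\Es_\alpha^l$, since $\phi$ sends the increasing open subgroups $H_i$ to increasing open subgroups of $H$ lying in $\Es_\alpha$. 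Limit stages are immediate. By symmetry $G\in\Es_\alpha\iff H\in\Es_\alpha$ for every $\alpha$, so $H\in\Es$ and $\rk(G)=\rk(H)$.

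For part (1), use (2) to write $\alpha=\beta+1$. Since $\rk(G)=\beta+1$ we have $G\in\Es_\beta^e\cup\Es_\beta^l$ but $G\notin\Es_\beta$. The key step is to rule out $G\in\Es_\beta^l$: if $G=\bigcup_{i\in\omega}H_i$ with the $H_i$ an increasing sequence of open subgroups in $\Es_\beta$, then a compact generating set $K$ of $G$ is covered by the open sets $(H_i)_{i\in\omega}$, so by compactness and the increasing property $K\subseteq H_i$ for some $i$; but then $G=\grp{K}\sleq H_i\sleq G$ forces $G=H_i\in\Es_\beta$, contradicting $\rk(G)=\beta+1$. Therefore $G\in\Es_\beta^e$, so there is $N\trianglelefteq G$ with $N\in\Es_\beta$ and $G/N$ profinite or discrete. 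Since $N\in\Es_\beta$ gives $\rk(N)\sleq\beta<\alpha$, this realizes $G$ as a group extension of a profinite or discrete group by an elementary group of strictly smaller rank, as required.

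The whole argument is bookkeeping over the transfinite construction, and the only place with genuine content is the compactness step in part (1): a compactly generated group cannot be a strictly increasing union of proper open subgroups, because its compact generating set is absorbed into one term of the union, forcing the union to stabilize. This is exactly the obstruction that selects the extension alternative $\Es_\beta^e$ over the union alternative $\Es_\beta^l$.
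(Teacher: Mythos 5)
Your proof is correct; the paper explicitly leaves this Observation to the reader, and your argument is the intended routine one: transfinite induction over the stages $\Es_\alpha$ for parts (2) and (3), and for part (1) the standard compactness argument showing a compactly generated group cannot arise as a strictly increasing union of proper open subgroups, which forces the extension alternative $\Es_\beta^e$.
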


\begin{prop}\label{prop:open_sgrp}
If $G\in \Es$ and $O\sleq_oG$, then $O\in \Es$ and $\rk(O)\sleq \rk(G)$.
\end{prop}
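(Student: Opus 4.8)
The plan is to induct on $\rk(G)$, exploiting the recursive construction of the classes $\Es_\alpha$.

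For the base case $\rk(G)=0$, the group $G$ is profinite or discrete. If $G$ is discrete then every subgroup is discrete; if $G$ is profinite then the open subgroup $O$ is closed of finite index, hence compact and so profinite. In either case $O\in\Es_0$ and $\rk(O)=0=\rk(G)$.

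For the inductive step, suppose the statement holds for every elementary group of rank strictly less than $\alpha$, and let $\rk(G)=\alpha>0$. By the preceding Observation $\alpha$ is a successor, say $\alpha=\beta+1$, so $G\in\Es_\beta^e\cup\Es_\beta^l$, and I would treat the two cases separately. In the extension case there is $N\trianglelefteq G$ with $N\in\Es_\beta$ and $G/N\in\Es_0$. The key points are that $O\cap N$ is an open subgroup of $N$ and is normal in $O$, so the inductive hypothesis applied to $N$ gives $O\cap N\in\Es$ with $\rk(O\cap N)\sleq\beta$; and that $ON$ is open in $G$ (it contains $O$) hence closed, so the isomorphism theorem \cite[(5.33)]{HR79} yields a topological isomorphism $O/(O\cap N)\simeq ON/N$. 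Since $ON/N$ is an open subgroup of the profinite-or-discrete group $G/N$, it is itself profinite or discrete, i.e. lies in $\Es_0$. Thus $O$ is an extension of a group in $\Es_\beta$ by a group in $\Es_0$, so $O\in\Es_\beta^e\subseteq\Es_{\beta+1}$ and $\rk(O)\sleq\beta+1=\rk(G)$.

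In the union case $G=\bigcup_{i\in\omega}H_i$ with $(H_i)_{i\in\omega}$ an increasing sequence of open subgroups, each $H_i\in\Es_\beta$. Here $O=\bigcup_{i\in\omega}(O\cap H_i)$, each $O\cap H_i$ is open in $O$, and the inductive hypothesis applied to the open subgroup $O\cap H_i$ of $H_i$ gives $O\cap H_i\in\Es_\beta$. As $(O\cap H_i)_{i\in\omega}$ is increasing, we get $O\in\Es_\beta^l\subseteq\Es_{\beta+1}$, and again $\rk(O)\sleq\rk(G)$. Combining the cases completes the induction. The only point that requires genuine care — and the step I expect to be the main obstacle — is verifying in the extension case that $O/(O\cap N)$ carries the correct quotient topology, namely that it is topologically isomorphic to $ON/N$; this is precisely where I need $ON$ closed so that the isomorphism theorem applies, together with the fact that open subgroups of profinite groups remain profinite. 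The remaining work is routine bookkeeping with the construction rank.
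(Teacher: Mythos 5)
Your proof is correct and follows essentially the same induction on the construction rank as the paper's, splitting the successor case into the extension and union subcases exactly as the paper does. The only difference is that you spell out why $O/(O\cap N)\simeq ON/N$ topologically (via $ON$ being open, hence closed), a point the paper leaves implicit.
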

 
\begin{proof}
We induct on $\rk(G)$ for the proposition. As the base case is immediate, suppose the lemma holds for elementary groups with rank at most $\alpha$. Consider a group $G$ with rank $\alpha+1$ and $O\sleq_oG$. Suppose first the rank of $G$ is given by a group extension; say $N\trianglelefteq G$ is so that $\rk(N)= \alpha$ and $\rk(G/N)=0$. Certainly, $\rk(ON/N)=0$, and $O/O\cap N\simeq ON/N$ is either profinite or discrete. On the other hand, $O\cap N\sleq_oN$, so the induction hypothesis implies $O\cap N\in \Es$ with construction rank at most $\alpha$. We conclude that $O\in\Es$ with $\rk(O)\sleq \alpha+1$.\par
 
\indent Suppose the rank of $G$ is given by a union. Say $G=\bigcup_{i\in\omega}H_i$ where $(H_i)_{i\in \omega}$ is an $\subseteq$-increasing sequence of open subgroups with $\rk(H_i)\sleq \alpha$. Certainly, $O=\bigcup_{i\in \omega}O\cap H_i$, and applying the induction hypothesis, $O\cap H_i\in \Es$ with $\rk(O\cap H_i)\sleq \alpha$ for each $i$. We thus see that $O\in \Es$ with $\rk(O)\sleq \alpha+1$ completing the induction.
\end{proof}
 
\begin{prop}\label{prop:comp_quot}
If $G\in\Es$ and $K\trianglelefteq G$ is compact, then $G/K\in \Es$ and $\rk(G/K)\sleq \rk(G)$.
\end{prop}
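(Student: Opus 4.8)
The plan is to induct on $\rk(G)$, closely following the proof of Proposition~\ref{prop:open_sgrp}. Throughout I would write $\pi\colon G\to G/K$ for the quotient map and use repeatedly that, since $K$ is compact, $HK$ is a closed (indeed closed normal) subgroup whenever $H\sleq G$ is closed, so that the first isomorphism theorem yields $HK/K\simeq H/(H\cap K)$. For the base case $\rk(G)=0$, the group $G$ is profinite or discrete. If $G$ is profinite, then $G/K$ is a Hausdorff quotient of a profinite group and hence profinite; if $G$ is discrete, then $K$ is a compact subgroup of a discrete group, so $K$ is finite and $G/K$ is discrete. Either way $G/K\in\Es_0$, and $\rk(G/K)=0$.

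For the inductive step I would assume the result for rank at most $\alpha$ and take $\rk(G)=\alpha+1$, splitting into the two ways the rank can be witnessed. If $G$ is an extension, with $N\trianglelefteq G$ of rank $\alpha$ and $G/N\in\Es_0$, then $\pi(N)=NK/K\simeq N/(N\cap K)$ is elementary of rank at most $\alpha$ by the induction hypothesis applied to the compact normal subgroup $N\cap K\trianglelefteq N$. The corresponding quotient is $(G/K)/\pi(N)\simeq G/NK\simeq (G/N)/(NK/N)$, and here $NK/N\simeq K/(K\cap N)$ is compact, so this is the quotient of $G/N$ by a compact normal subgroup, hence again profinite or discrete. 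Thus $G/K$ is an extension of an $\Es_0$-group by an elementary group of rank at most $\alpha$, giving $\rk(G/K)\sleq\alpha+1$. If instead $G=\bigcup_{i\in\omega}H_i$ is an increasing union of open subgroups of rank at most $\alpha$, then $G/K=\bigcup_{i\in\omega}H_iK/K$ is an increasing union of open subgroups, each $H_iK/K\simeq H_i/(H_i\cap K)$ elementary of rank at most $\alpha$ by the induction hypothesis applied to the compact normal subgroup $H_i\cap K\trianglelefteq H_i$; hence $G/K\in\Es_\alpha^l$ and $\rk(G/K)\sleq\alpha+1$, completing the induction.

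The argument is otherwise routine isomorphism-theorem bookkeeping, and I expect the only genuinely delicate point to be keeping the top quotient inside $\Es_0$. This is exactly where compactness of $K$ is essential: in the discrete case one needs that a compact normal subgroup of a discrete group is finite, so that the quotient stays discrete, and in the extension case one needs that $NK/N\simeq K/(K\cap N)$ is compact so that $(G/N)/(NK/N)$ remains profinite-or-discrete. Once these closure facts for $\Es_0$ are in hand, together with the observation that $HK$ is closed because $K$ is compact, the induction goes through verbatim.
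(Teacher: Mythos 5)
Your proof is correct and follows essentially the same route as the paper's: induction on $\rk(G)$, with the extension case handled via the short exact sequence $1\to NK/K\to G/K\to G/NK\to 1$ and the union case via $G/K=\bigcup_{i\in\omega}H_iK/K$, using $HK/K\simeq H/(H\cap K)$ throughout. The paper merely declares the base case obvious and states $G/NK$ is profinite or discrete without elaboration; your explicit justifications (a compact subgroup of a discrete group is finite, and $NK/N\simeq K/(K\cap N)$ is compact) fill in exactly the right details.
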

 
\begin{proof} 
We induct on $\rk(G)$ for the proposition. As the base case is obvious, consider $G$ with $\rk(G)=\alpha+1$. Suppose the rank of $G$ is given by a group extension; say $N\trianglelefteq G$ is such that $\rk(N)= \alpha$ and $\rk(G/N)=0$. The group $NK$ is a closed normal subgroup of $G$, and $G/NK$ is either profinite or discrete. On the other hand, applying the induction hypothesis, $NK/K\simeq N/(N\cap K)$ is elementary of rank at most $\alpha$. We now have a short exact sequence of topological groups:
\[
  \xymatrix{ 1 \ar[r] & NK/K \ar[r] & G/K \ar[r] &  G/NK \ar[r]& 1}.
\]
The group $G/K$ is thus a group extension of a rank zero group by a rank at most $\alpha$ group, hence $G/K\in \Es$ with $\rk(G/K)\sleq \alpha+1$.\par
 
\indent Suppose the rank of $G$ is given by a union. Say $G=\bigcup_{i\in \omega}H_i$ with $(H_i)_{i\in \omega}$ an $\subseteq$-increasing sequence of open subgroups with $\rk(H_i)\sleq \alpha$. We see that $G/K=\bigcup_{i\in \omega}H_iK/K$, and the induction hypothesis implies $H_iK/K\simeq H_i/H_i\cap K\in \Es$ with 
\[
\rk(H_i/H_i\cap K)\sleq \rk(H_i)
\]
for each $i$. Therefore, $G/K\in \Es$ with $\rk(G/K)\sleq \alpha+1$ finishing the induction.
\end{proof}

\subsection{Permanence properties}
For the main results of this subsection, we keep track of how the construction rank increases when using a permanence property to produce a new group in $\Es$. It is important to remember the arithmetic on the construction ranks is \emph{ordinal arithmetic}. Arithmetic for transfinite ordinals has a number of peculiarities. For example, it is non-commutative; see \cite{Ku80} for further details.\par

\begin{prop}\label{prop:grp_ext}
Let $G$ be a t.d.l.c.s.c. group. If $H\trianglelefteq G$ with $H,G/H\in \Es$, then $G\in \Es$ and 
\[
\rk(G)\sleq \rk(H)+\rk(G/H)+1.
\]
\end{prop}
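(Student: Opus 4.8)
Prove that $\Es$ is closed under group extension, with the quantitative bound $\rk(G)\sleq \rk(H)+\rk(G/H)+1$.

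The plan is to induct on the construction rank $\rk(G/H)$, quantifying over all pairs $H\trianglelefteq G$ with $H\in\Es$ and $G/H\in\Es$. The key structural fact I would exploit is the Observation that every construction rank is $0$ or a successor, so the induction has only a base case and a successor step; in the successor step $\rk(G/H)=\gamma+1$ forces $G/H$ into $\Es_\gamma^e$ or $\Es_\gamma^l$, which I handle separately. Writing $\pi\colon G\to G/H$ for the (continuous, open) quotient map, the whole argument rests on pulling subgroups of $G/H$ back through $\pi$.

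For the base case $\rk(G/H)=0$, the quotient is profinite or discrete, so $G$ is a group extension of $H\in\Es_{\rk(H)}$ by a rank-zero group; by definition $G\in\Es_{\rk(H)}^e\subseteq\Es_{\rk(H)+1}$, which gives $\rk(G)\sleq\rk(H)+1=\rk(H)+0+1$. For the successor step with $\rk(G/H)=\gamma+1$, I would split into two cases. If $G/H\in\Es_\gamma^e$, pick $M\trianglelefteq G/H$ with $\rk(M)\sleq\gamma$ and $(G/H)/M$ of rank $0$, and set $K:=\pi^{-1}(M)$; then $H\trianglelefteq K\trianglelefteq G$ with $K/H\simeq M$, so the induction hypothesis gives $K\in\Es$ and $\rk(K)\sleq\rk(H)+\gamma+1$, while $G/K\simeq(G/H)/M$ has rank $0$, so $G\in\Es_{\rk(K)}^e$ and $\rk(G)\sleq\rk(K)+1\sleq\rk(H)+\gamma+2$. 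If instead $G/H\in\Es_\gamma^l$, write $G/H=\bigcup_{i\in\omega}O_i$ with $(O_i)$ an increasing sequence of open subgroups of rank at most $\gamma$ and put $G_i:=\pi^{-1}(O_i)$; each $G_i$ is open in $G$ and contains $H$ with $G_i/H\simeq O_i$, so the induction hypothesis gives $G_i\in\Es$ with $\rk(G_i)\sleq\rk(H)+\gamma+1=:\delta$, and then $G=\bigcup_{i\in\omega}G_i\in\Es_\delta^l$, so $\rk(G)\sleq\delta+1$. In both cases $\rk(H)+\gamma+2=\rk(H)+(\gamma+1)+1$ is the claimed bound.

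I expect the only delicate points to be bookkeeping rather than genuine obstacles. First, the ordinal arithmetic: since ordinal addition is non-commutative I must keep the summands in the fixed order $\rk(H)+\rk(G/H)+1$ and use associativity to read $(\rk(H)+\gamma+1)+1$ as $\rk(H)+(\gamma+1)+1$, and I use the monotonicity $\Es_\alpha\subseteq\Es_\beta$ for $\alpha\sleq\beta$ (immediate since $\Es_\alpha\subseteq\Es_\alpha^l$) to collect all the $G_i$ into a single $\Es_\delta$ before applying the union clause. Second, the topological identifications $K/H\simeq M$, $G_i/H\simeq O_i$, and $G/K\simeq(G/H)/M$: each preimage under $\pi$ is closed (respectively open) and normal because $\pi$ is continuous (respectively open), and the resulting quotient maps are continuous bijective homomorphisms of t.d.l.c.s.c.\ groups, hence topological isomorphisms by the isomorphism theorems recalled above. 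The real content is simply that the extension and union clauses defining $\Es$ can be interleaved, and pulling back along $\pi$ makes this transparent.
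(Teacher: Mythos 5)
Your proof is correct and follows essentially the same route as the paper's: induction on $\rk(G/H)$, with the successor step split into the extension and union subcases and handled by pulling the witnessing subgroups of $G/H$ back through $\pi$, together with the same ordinal bookkeeping $(\rk(H)+\gamma+1)+1=\rk(H)+(\gamma+1)+1$. No gaps.
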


\begin{proof}
We induct on $\rk(G/H)$ for the proposition. Since the base case holds by definition, consider $G$ with $\rk(G/H)=\alpha+1$. Suppose the rank of $G/H$ is given by an increasing union; that is $G/H=\bigcup_{i\in \omega }O_i$ such that $(O_i)_{i\in \omega}$ is an $\subseteq$-increasing sequence of open subgroups with $\rk(O_i)\sleq \alpha$ for each $i\in\omega$. So $G=\bigcup_{i\in \omega}\pi^{-1}(O_i)$ where $\pi:G\rightarrow G/H$ is the usual projection. Since $\rk\left(\pi^{-1}(O_i)/H\right)\sleq \alpha$ for each $i\in \omega$, the induction hypothesis implies $\pi^{-1}(O_i)\in \Es$ with 
\[
\rk(\pi^{-1}(O_i))\sleq \rk(H)+\alpha+1.
\]
We conclude that $G\in \Es$ with $\rk(G)\sleq \rk(H)+\rk(G/H)+1$. \par

\indent Suppose the rank of $G/H$ is given by a group extension; say it is given by $L\trianglelefteq G/H$. Certainly, $\pi^{-1}(L)\trianglelefteq G$, and $\rk(\pi^{-1}(L)/H)\leq \alpha$. Applying the induction hypothesis, $\pi^{-1}(L)\in \Es$ with
\[
\rk(\pi^{-1}(L))\sleq \rk(H)+\alpha+1.
\] 
On the other hand, $\rk(G/\pi^{-1}(L))=0$, so $G\in \Es$ with $\rk(G)\sleq \rk(H)+\rk(G/H)+1$. This completes the induction, and we conclude the proposition.
\end{proof}

\begin{rmk} 
Proposition~\rm\ref{prop:grp_ext} shows we may define $\Es$ by allowing all group extensions. This makes for a somewhat easier definition to parse; however, the given definition is easier to use in practice.
\end{rmk}

For our next permanence properties, a subsidiary lemma is required.

\begin{lem} \label{lem:basic_E_lemmas} Let $G$ be a t.d.l.c.s.c. group.
\begin{enumerate}[(1)]
\item If $G$ is residually discrete, then $G\in \Es$ with $\rk(G)\sleq 2$.
\item If $G=\ol{\bigcup_{i\in \omega}L_{i}}$ with $(L_{i})_{i\in \omega}$  an $\subseteq$-increasing sequence of closed elementary subgroups of $G$ such that there is $U\in \Uc(G)$ with $U\sleq N_G(L_{i})$ for all $i$, then $G\in \Es$.
\end{enumerate}
\end{lem}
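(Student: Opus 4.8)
The plan is to treat the two parts in turn, in both cases reducing to the closure of $\Es$ under countable increasing unions of open subgroups.

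For (1), first observe that since $G$ is residually discrete, every non-trivial element survives in some discrete quotient, so the intersection of the open normal subgroups of $G$ is trivial; that is, $\Res{}(G)=\{1\}$. Proposition~\ref{prop:res} then writes $G=\bigcup_{i\in\omega}O_i$ as an increasing union of open subgroups $O_i=\grp{U,g_0,\dots,g_i}$ (with $U\in\Uc(G)$ and $(g_i)_{i\in\omega}$ listing a dense subset), each of which is a compactly generated SIN group. Being SIN, each $O_i$ admits a compact open normal subgroup $W_i$; then $W_i$ is profinite and $O_i/W_i$ is discrete, so $O_i\in\Es_0^e$ and $\rk(O_i)\sleq 1$. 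As $G=\bigcup_{i\in\omega}O_i$ is an increasing union of open subgroups of rank at most $1$, it lies in $\Es_1^l$, whence $G\in\Es$ with $\rk(G)\sleq 2$.

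For (2), the idea is to thicken each $L_i$ by $U$ so as to trade the closure in the hypothesis for the automatic closedness of an open subgroup. Set $H_i:=L_iU$. Since $U$ is compact and $L_i$ is closed, $H_i$ is closed; since $U\sleq H_i$ is open, $H_i$ is open. Because $U\sleq N_G(L_i)$, we have $L_i\trianglelefteq H_i$, and the first isomorphism theorem~\cite[(5.33)]{HR79} gives $H_i/L_i\simeq U/(U\cap L_i)$, a quotient of the profinite group $U$ and hence profinite. Thus $H_i$ is an extension of a profinite group by the elementary group $L_i$, so $H_i\in\Es$ by Proposition~\ref{prop:grp_ext}. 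As $L_i\sleq L_{i+1}$, the $H_i$ form an $\subseteq$-increasing sequence of open elementary subgroups, so their union $\bigcup_{i\in\omega}H_i$ is an open, hence closed, subgroup of $G$ containing $\bigcup_{i\in\omega}L_i$. Consequently $G=\ol{\bigcup_{i\in\omega}L_i}\sleq\bigcup_{i\in\omega}H_i\sleq G$, so $G=\bigcup_{i\in\omega}H_i$ is a countable increasing union of open elementary subgroups and $G\in\Es$ by closure under countable increasing unions.

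The arguments are short once the right objects are in hand; the one genuinely load-bearing observation is the interplay between openness and closedness. In (2) the hypothesis only presents $G$ as the closure of $\bigcup_i L_i$, but after replacing $L_i$ by the open subgroup $L_iU$ the union becomes open, hence closed, and the closure is recovered for free---this is exactly what permits the appeal to closure under increasing unions. The only points requiring care are that $L_iU$ is closed (a product of a closed set and a compact set) and that $H_i/L_i$ is genuinely profinite, for which the first isomorphism theorem and the compactness of $U$ are used.
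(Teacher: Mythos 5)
Your proof is correct and follows essentially the same route as the paper: part (1) is the paper's argument via Proposition~\ref{prop:res} and the observation that SIN groups are profinite-by-discrete, and part (2) is the paper's one-line argument with the implicit steps (closedness and openness of $UL_i$, profiniteness of $UL_i/L_i$, and why the open union exhausts $G$) written out. No gaps.
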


\begin{proof} 
For $(1)$, $\Res{}(G)=\{1\}$, so via Proposition~\rm\ref{prop:res}, $G$ is a countable increasing union of open SIN groups. A SIN group is compact-by-discrete and hence is elementary with rank at most $1$. As $G$ is a countable increasing union of such groups, $G\in \Es$ with $\rk(G)\sleq 2$.\par

\indent For $(2)$, $UL_{i}\sleq_oG$ is elementary for each $i$, and since $G=\bigcup_{i\in \omega}UL_{i}$, we conclude that $G\in \Es$.
\end{proof}

\begin{thm}\label{thm:closure_sgrp}
If $G\in \Es$, $H$ is a t.d.l.c.s.c. group, and $\psi:H\rightarrow G$ is a continuous, injective homomorphism, then $H\in \Es$ with $\rk(H)\sleq 3(\rk(G)+1)$. In particular, if $H\sleq G$ is a closed subgroup, then $H\in \Es$ with $\rk(H)\sleq 3(\rk(G)+1)$.
\end{thm}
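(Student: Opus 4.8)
The plan is to induct on $\rk(G)$, exploiting that a continuous injective homomorphism pulls back closed normal subgroups to closed normal subgroups and open subgroups to open subgroups. This lets me transport the witnessing structure of $G$ as an elementary group to $H$ through $\psi^{-1}$. The point to keep in mind throughout is that $\psi$ need not be a homeomorphism onto its image, so I cannot simply identify $H$ with a closed subgroup of $G$; I must argue directly with preimages and with the maps they induce on quotients.

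For the base case $\rk(G)=0$, the group $G$ is profinite or discrete. If $G$ is discrete, then $\{1\}=\psi^{-1}(\{1\})$ is open in $H$, so $H$ is discrete and countable, giving $\rk(H)=0$. If $G$ is profinite, then for each $h\in H\setminus\{1\}$ I choose an open normal $O\trianglelefteq G$ with $\psi(h)\notin O$; the subgroup $\psi^{-1}(O)$ is open and normal in $H$, with $H/\psi^{-1}(O)$ finite (as it injects into $G/O$) and not containing the image of $h$. Hence $H$ is residually discrete and Lemma~\ref{lem:basic_E_lemmas}(1) yields $\rk(H)\sleq 2$. Either way $\rk(H)\sleq 2\sleq 3(\rk(G)+1)$, and I record this rank-$0$ estimate for reuse: any $H$ admitting a continuous injection into a profinite or discrete group has $\rk(H)\sleq 2$.

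For the inductive step, since the Observation guarantees $\rk(G)$ is never a limit ordinal, I treat only $\rk(G)=\alpha+1$ and assume the theorem for all elementary groups of rank at most $\alpha$. If $G\in\Es_{\alpha}^e$, fix $N\trianglelefteq G$ with $\rk(N)\sleq\alpha$ and $G/N\in\Es_0$, and set $M:=\psi^{-1}(N)$, a closed normal subgroup of $H$. Restricting $\psi$ gives a continuous injection $M\rightarrow N$, so the induction hypothesis yields $M\in\Es$ with $\rk(M)\sleq 3(\alpha+1)$; meanwhile $\psi$ descends to a continuous injection $H/M\rightarrow G/N$ into a rank-$0$ group, so the recorded estimate gives $\rk(H/M)\sleq 2$. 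Proposition~\ref{prop:grp_ext} then places $H\in\Es$ with $\rk(H)\sleq \rk(M)+\rk(H/M)+1\sleq 3(\alpha+1)+2+1=3(\alpha+2)=3(\rk(G)+1)$, the last two equalities being ordinal arithmetic. If instead $G\in\Es_{\alpha}^l$, write $G=\bigcup_{i\in\omega}O_i$ with $(O_i)_{i\in\omega}$ increasing open and $\rk(O_i)\sleq\alpha$; the preimages $H_i:=\psi^{-1}(O_i)$ form an increasing sequence of open subgroups exhausting $H$, and the induction hypothesis gives $\rk(H_i)\sleq 3(\alpha+1)$, whence $H\in\Es$ with $\rk(H)\sleq 3(\alpha+1)+1\sleq 3(\rk(G)+1)$.

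The main obstacle is the base case, where I must recognize that $\psi$ being merely a continuous injection forces $H$ to be residually discrete rather than profinite, so Lemma~\ref{lem:basic_E_lemmas}(1) is the right tool. The second point demanding care is that the rank bookkeeping survive the extension step with the sharp factor of $3$: the $+2$ from the residually discrete quotient together with the $+1$ from Proposition~\ref{prop:grp_ext} must be absorbed into the jump from $3(\alpha+1)$ to $3(\alpha+2)$, which works precisely because ordinal multiplication satisfies $3\cdot((\alpha+1)+1)=3\cdot(\alpha+1)+3$. I will need to invoke the correct one-sided monotonicity of ordinal addition, since $\rk(M)$ occupies the left summand in Proposition~\ref{prop:grp_ext}.
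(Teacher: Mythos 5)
Your proof is correct and follows the paper's argument essentially step for step: induction on $\rk(G)$, the base case via pulled-back open normal subgroups and Lemma~\ref{lem:basic_E_lemmas}(1), and preimages of the witnessing structure in the inductive step. The only (harmless) divergence is in the extension case with profinite quotient, where you apply Proposition~\ref{prop:grp_ext} once to $1\rightarrow M\rightarrow H\rightarrow H/M\rightarrow 1$ using the recorded bound $\rk(H/M)\sleq 2$, whereas the paper unfolds $H/\psi^{-1}(N)$ into an increasing union of SIN groups and applies Proposition~\ref{prop:grp_ext} piecewise before taking the union; both yield $3(\alpha+1)+3=3(\alpha+2)$.
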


\begin{proof} 
We induct on $\rk(G)$. For the base case, $G$ is either profinite or discrete. If $G$ is discrete, then $H$ is discrete and we are done. Suppose $G$ is profinite with $(U_i)_{i\in \omega}$ a normal basis at $1$ for $G$. The sequence $(\psi^{-1}(U_i))_{i\in\omega}$ thus consists of open normal subgroups of $H$ with trivial intersection. We conclude that $H$ is residually discrete and via Lemma~\rm\ref{lem:basic_E_lemmas}, that $H\in \Es$ with $\rk(H)\sleq 2$.\par

\indent Suppose the proposition holds up to construction rank $\alpha$ and $\rk(G)=\alpha+1$. Suppose first the rank of $G$ is given by an increasing union; say $G=\bigcup_{i\in \omega}O_i$ with $\rk(O_i)\sleq \alpha$ for each $i$. Certainly, $H=\bigcup_{i\in \omega}\psi^{-1}(O_i)$, and the induction hypothesis implies $\psi^{-1}(O_i)\in \Es$ with 
\[
\rk\left(\psi^{-1}(O_i)\right)\sleq 3(\alpha+1)
\]
for each $i \in \omega$. We conclude that 
\[
\rk(H)\sleq 3(\alpha+1)+1< 3(\alpha+2)
\]
verifying the induction claim in this case.\par

\indent Suppose $\rk(G)$ is given by a group extension; say $N\trianglelefteq G$ is such that $\rk(N)=\alpha$ and $\rk(G/N)=0$. Since $\psi:\psi^{-1}(N)\rightarrow N$, the induction hypothesis implies $\psi^{-1}(N)\in \Es$ with $\rk\left(\psi^{-1}(N)\right)\sleq 3(\alpha+1)$. On the other hand, $\psi$ induces a continuous, injective map $\tilde{\psi}:H/\psi^{-1}(N)\rightarrow G/N$. If $G/N$ is discrete, then $H/\psi^{-1}(N)$ is discrete and 
\[
\rk(H)\sleq 3(\alpha+1)+1<3(\alpha+2).
\]
If $G/N$ is profinite, then $H/\psi^{-1}(N)$ is residually discrete as in the base case. The group $H/\psi^{-1}(N)$ is thus an increasing union of open SIN groups, hence $H=\bigcup_{i\in \omega} O_i$ with $O_i/\psi^{-1}(N)$ a SIN group for each $i$. Applying Proposition~\rm\ref{prop:grp_ext}, $O_i\in \Es$ with 
\[
\rk(O_i)\sleq 3(\alpha +1)+1+1.
\]
Since $H=\bigcup_{i\in \omega}O_i$, it follows that $H\in \Es$ with $\rk(H)\sleq 3(\alpha+1)+3$. This completes the induction, and we conclude the theorem.
\end{proof}

We note an easy but useful corollary.

\begin{cor} \label{cor:sgrp_rank}
Suppose $G\in \Es$. If $H\sleq G$ and $\rk(G)<\beta$ for $\beta$ a limit ordinal, then $\rk(H)<\beta$.
\end{cor}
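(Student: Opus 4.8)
The plan is to reduce the statement to a fact about ordinal arithmetic and then verify that fact. The entire content of the inequality $\rk(H)<\beta$ is already prepared by Theorem~\ref{thm:closure_sgrp}: since $H\sleq G$ is a closed subgroup of the elementary group $G$, that theorem gives $H\in\Es$ together with the explicit bound $\rk(H)\sleq 3(\rk(G)+1)$. Thus it suffices to show that the right-hand side stays below $\beta$; that is, I would prove that $3(\rk(G)+1)<\beta$ whenever $\rk(G)<\beta$ and $\beta$ is a limit ordinal, and then conclude $\rk(H)\sleq 3(\rk(G)+1)<\beta$.

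First I would record the elementary observation that a limit ordinal $\beta$ is closed under the successor operation on the ordinals below it: if $\gamma<\beta$ then $\gamma+1\sleq\beta$, and equality is impossible since $\beta$ is a limit, so $\gamma+1<\beta$. Applying this with $\gamma=\rk(G)$ gives $\rk(G)+1<\beta$, so writing $\delta:=\rk(G)+1$ I am reduced to showing $3\delta<\beta$ for an arbitrary $\delta<\beta$.

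Here I would exploit the precise shape of the bound. Recall that the ranks are compared using ordinal arithmetic, so the factor $3$ multiplies $\delta$ on the left. Writing $\delta=\lambda+m$ with $\lambda$ a limit ordinal (or $0$) and $m<\omega$ its finite tail, left multiplication by a natural number fixes the limit part and merely rescales the tail: $3\delta=3\lambda+3m=\lambda+3m$, using that $3\lambda=\lambda$ for $\lambda$ a limit ordinal or $0$. Since $\lambda\sleq\delta<\beta$ we have $\lambda<\beta$, and a limit ordinal is closed under the addition of a natural number (iterate the successor observation above $3m$ times); hence $\lambda+3m<\beta$. Therefore $\rk(H)\sleq 3\delta<\beta$, as required.

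The only genuine subtlety, and the step I would flag as the main obstacle, is that ordinal multiplication is non-commutative, so one must keep the finite factor on the left. The analogous statement with the factor on the right is false: for instance $\beta=\omega\cdot 2$ is a limit ordinal and $\delta=\omega<\beta$, yet $\delta\cdot 3=\omega\cdot 3>\beta$. It is precisely because Theorem~\ref{thm:closure_sgrp} delivers the bound in the form $3(\rk(G)+1)$, with the constant on the left, that only the finite tail grows while the limit part is preserved, keeping the value below $\beta$.
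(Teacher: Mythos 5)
Your proof is correct, but it takes a genuinely different route from the paper's. Both arguments reduce, via Theorem~\ref{thm:closure_sgrp}, to the purely ordinal-arithmetic fact that $3\gamma+3<\beta$ whenever $\gamma<\beta$ and $\beta$ is a limit ordinal; the difference lies in how that fact is established. The paper enumerates the non-zero limit ordinals as $(\beta_\alpha)_\alpha$ and runs a transfinite induction on $\alpha$, in the course of which it effectively derives the absorption $\sup_{\xi<\beta_\alpha}3\xi=\beta_\alpha$ from the induction hypothesis. You instead invoke the normal form $\delta=\lambda+m$ ($\lambda$ a limit or $0$, $m<\omega$) together with left distributivity and the standard identity $3\lambda=\lambda$, so that $3\delta=\lambda+3m$, which lies below $\beta$ because $\beta$ is closed under finite successors. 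Your computation is shorter and makes the mechanism transparent --- only the finite tail of the ordinal is inflated, while the limit part is absorbed --- whereas the paper's induction is self-contained in that it does not presuppose the absorption identity but re-derives it. The one assertion you leave unproved, $3\lambda=\lambda$ for limit $\lambda$, is standard (write $\lambda=\omega\cdot\mu$ and use associativity, since $3\cdot\omega=\omega$), so there is no gap. Your closing remark correctly identifies why the constant must sit on the left of the product; this is exactly the point the paper is implicitly relying on as well.
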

\begin{proof} In view of Theorem~\rm\ref{thm:closure_sgrp}, it suffices to show for any non-zero limit ordinal $\beta$ and ordinal $\gamma<\beta$, $3\gamma+3<\beta$. Letting $(\beta_{\alpha})_{\alpha}$ list the non-zero limit ordinals, we induct on $\alpha$. For the base case, $\beta_0=\omega$, the result is immediate. \par

\indent In the case $\beta=\beta_{\lambda}$ with $\lambda$ a limit ordinal, there is $\alpha<\lambda$ so that $\gamma<\beta_{\alpha}$. By the induction hypothesis, we conclude that $3\gamma+3<\beta_{\alpha}<\beta_{\lambda}$. In the case $\beta=\beta_{\alpha+1}$, there is $m<\omega$ so that $\gamma<\beta_{\alpha}+m$. Hence, 
\[
3\gamma+3<3\beta_{\alpha}+3m+3\sleq \sup_{\xi<\beta_{\alpha}}3\xi+3m+3=\beta_{\alpha}+3m+3<\beta_{\alpha+1}
\]
where $\sup_{\xi<\beta_{\alpha}}3\xi=\beta_{\alpha}$ follows by the induction hypothesis. This finishes the induction, and we conclude the corollary.
\end{proof}

\begin{rmk}\label{rmk:construction_rank}
It is easy to build examples of elementary groups $G$ and $H$ such that $H\hookrightarrow G$ and $\rk(H)>\rk(G)$. For example, let $F_i$ for $i\in \Zb$ list copies of some non-trivial finite group. For $i\sleq 0$, let $U_i=F_i$ and for $i>0$, put $U_i=1$. We now see $\Tsumz{F_i}{U_i}\hookrightarrow \prod_{i\in \Zb}F_i$ with $\rk(\Tsumz{F_i,U_i})=1$ while $\rk( \prod_{i\in \Zb}F_i)=0$. One can build more complicated examples with $\rk(H)-\rk(G)>1$. \par

\indent We also remark that the bound in Theorem~\rm\ref{thm:closure_sgrp} is not sharp. For example, the rank zero case of the induction shows if $G$ is profinite, then $\rk(H)\sleq 2$.
\end{rmk}

\indent The class of elementary groups is closed under taking quotients. Towards demonstrating this permanence property, we require an additional lemma.

\begin{lem}\label{lem:closure_quot}
Let $G$ be a t.d.l.c.s.c. group. If $P\trianglelefteq G$ is elementary, $L\trianglelefteq G$, and $[P,L]=\{1\}$, then $\ol{PL}/L\in \Es$.
\end{lem}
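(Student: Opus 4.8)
The plan is to induct on the construction rank $\rk(P)$. Write $K:=\ol{PL}$, which is a closed normal subgroup of $G$ because $[P,L]=\{1\}$ makes $PL$ a subgroup while $P$ and $L$ are normal; let $\pi\colon K\rightarrow Q:=K/L$ be the quotient map and set $\psi:=\pi\rest P$. Since $L$ centralizes $P$, the map $\psi\colon P\rightarrow Q$ is a continuous homomorphism with kernel $P\cap L$ central in $P$, and its image $\psi(P)=PL/L$ is dense and normal in $Q$; thus $Q=\ol{\psi(P)}$. The base case $\rk(P)=0$ splits in two. If $P$ is profinite then $PL$, being the product of a compact set and a closed set, is closed, so $Q=PL/L\simeq P/(P\cap L)$ is profinite and hence elementary. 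If $P$ is discrete then $P\sleq QZ(G)$; one checks that $\psi(P)\sleq QZ(Q)$, so $QZ(Q)$ is dense in $Q$.

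The engine of the inductive step will be the following descent for open subgroups: if $Q_0\sleq_oQ$ then $S_0:=\pi^{-1}(Q_0)\sleq_oK$ and $P_0:=P\cap S_0\sleq_oP$, and since $\psi(P_0)=\psi(P)\cap Q_0$ is dense in $Q_0$ we obtain $Q_0=\ol{P_0L}/L$ with $P_0\sleq_oP$ and $[P_0,L]=\{1\}$, where $P_0\in\Es$ with $\rk(P_0)\sleq\rk(P)$ by Proposition~\ref{prop:open_sgrp}. Fixing $V\in\Uc(Q)$ and a sequence $(\psi(p_i))_{i\in\omega}$ with dense union, the open compactly generated subgroups $Q_i:=\grp{V,\psi(p_1),\dots,\psi(p_i)}$ increase to $Q$, so by closure under countable increasing unions it suffices to treat the $Q_i$; that is, I may assume $Q$ is compactly generated. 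For the successor case $\rk(P)=\alpha+1$ given by an extension, choose $N\trianglelefteq P$ with $\rk(N)=\alpha$ and $P/N$ profinite or discrete. As $L$ centralizes $N$ we have $N\trianglelefteq K$, hence $\ol{NL}\trianglelefteq K$ and $Q_N:=\ol{NL}/L\trianglelefteq Q$; the triple $(K,N,L)$ is an instance of the lemma of strictly lower rank, so $Q_N\in\Es$ by the induction hypothesis. The quotient $Q/Q_N\simeq K/\ol{NL}$ receives a dense image of $P/N$: when $P/N$ is profinite this image is compact, forcing $K/\ol{NL}$ to be profinite, and Proposition~\ref{prop:grp_ext} then gives $Q\in\Es$. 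When the rank of $P$ is given by a union, one again applies the descent to reduce to compactly generated $Q=\ol{P_0L}/L$ and seeks to replace $P_0$ by a compactly generated open subgroup, which lands in a single member of the defining union and so has rank at most $\alpha$, closing the induction.

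The hard part will be the interaction of the quotient with the closure in $K=\ol{PL}$. Decompositions of $P$ do not transport cleanly to $Q$: images of open, or of normal, subgroups of $P$ need not be open in $Q$, and, most strikingly, the compact open subgroups of $Q$ need not be visible inside $P$ at all, since a dense discrete subgroup may close up to a compact group (a dense copy of $\Zb$ can close to $\Zb_p$). The descent identity $Q_0=\ol{P_0L}/L$ with $P_0=P\cap\pi^{-1}(Q_0)$ is precisely what tames this for open subgroups and secures the reduction to compactly generated $Q$; the genuinely delicate points that remain are the discrete base case and the discrete top-quotient $K/\ol{NL}$, in both of which $\psi(P)\sleq QZ(Q)$ is dense. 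The crux there is to show that a compactly generated t.d.l.c.s.c.\ group with dense quasi-centre is residually discrete, whence it is elementary with rank at most $2$ by Lemma~\ref{lem:basic_E_lemmas}(1); isolating this quasi-central mechanism, and verifying that the compactly generated reduction of $P_0$ survives the closure, are the two steps I expect to require the most care.
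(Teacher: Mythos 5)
Your overall architecture --- induction on $\rk(P)$, the profinite base case via closedness of $PL$, and the use of dense quasi-central images to force the $\SIN$-core to be everything --- is the same as the paper's, and your extension case is essentially correct: $N\cap P_0$ is normalized by $\ol{P_0L}$ because $L$ centralizes it, the induction hypothesis handles $\ol{(N\cap P_0)L}/L$, and the top quotient either is profinite (a compact dense image is everything) or carries a countable dense normal subgroup, hence a dense quasi-centre, hence is residually discrete once compactly generated. The step you flag as ``verifying that the compactly generated reduction of $P_0$ survives the closure'' is, however, a genuine gap, and it is exactly where your route diverges from the paper's. You reduce to $Q=\ol{PL}/L$ being compactly generated, which leaves $P_0$ arbitrary; when $\rk(P_0)=\alpha+1$ is realized as a union $P_0=\bigcup_j H_j$, replacing $P_0$ by a compactly generated open subgroup $P_1$ (which lies in some $H_j$ and so has rank at most $\alpha$) only yields, via the induction hypothesis, that $\ol{P_1L}/L$ is elementary. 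But $\ol{P_1L}/L$ need not be dense, open, or normal in $Q$: choosing a finite $A\subseteq \psi(P_0)$ with $Q=\grp{A}V$ as in Proposition~\ref{prop:factor} only shows that $\ol{P_1L}/L$ is a \emph{cocompact closed} subgroup of $Q$, and a cocompact closed elementary subgroup does not force elementarity of the ambient group --- a cocompact lattice in $PSL_2(\Qp)$ is discrete, hence elementary, while $PSL_2(\Qp)$ is compactly generated, topologically simple, and non-discrete, hence not elementary. So the induction does not close as written.

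The paper resolves this by making $P$, not $Q$, compactly generated: fix $U\in \Uc(\ol{PL})$, $V\in \Uc(P)$, and a dense sequence $(p_i)_{i\in\omega}$ in $P$, and set $P_i:=\grp{V^U,p_0^U,\dots,p_i^U}$. Because $P_i$ is normalized by $U$ and centralized by $L$, the set $P_iUL$ is an \emph{open} subgroup of $\ol{PL}$; these exhaust $\ol{PL}$, and $\ol{P_iL}/L$ is cocompact and normal in $P_iUL/L$, so it suffices to show $\ol{P_iL}/L\in\Es$ with $P_i$ compactly generated and $\rk(P_i)\sleq\alpha+1$. At that point the union/extension dichotomy for the rank of $P$ disappears, since a compactly generated elementary group of rank $\alpha+1$ always admits $M\trianglelefteq P_i$ with $\rk(M)\sleq\alpha$ and $\rk(P_i/M)=0$, and the rest of your extension argument goes through verbatim. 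Grafting this $U$-conjugation device onto your write-up, in place of the reduction to compactly generated $Q$, turns your proposal into the paper's proof.
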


\begin{proof}
We induct on $\rk(P)$ for the lemma. For the base case, $\rk(P)=0$, $P$ is either compact or discrete. When $P$ is compact, $PL$ is closed, so $PL/L\simeq P/P\cap L$ and is elementary. When $P$ is discrete, every element of $P$ has an open centralizer since $P$ discrete and normal. Every element of $P$ in $\ol{PL}$ thus has an open centralizer. It follows 
\[
\SIN(\ol{PL}/L)=\ol{PL}/L,
\]
and in view of Proposition~\rm\ref{prop:SIN}, $\ol{PL}/L$ is elementary.\par

\indent Suppose $\rk(P)=\alpha+1$. We first reduce to a compactly generated group. Fix $U\in \Uc(\ol{PL})$, fix $V\in \Uc(P)$, and let $(p_i)_{i\in \omega}$ list a countable dense subset of $P$. For each $i\in \omega$, set
\[
P_i:=\grp{V^U,p_0^U,\dots,p_i^U}.
\]
Each $P_i$ is normalized by $U$, so $P_iUL$ is an open subgroup of $\ol{PL}$. The sequence $(P_iUL)_{i\in \omega}$ is therefore an $(\subseteq)$-increasing exhaustion of $\ol{PL}$ by open subgroups, hence the induction may be completed by showing each $P_iUL\in \Es$. We may indeed reduce further: $\ol{P_iL}/L\cc P_iUL/L$, hence the induction will be completed by showing $\ol{P_iL}/L\in \Es$ for each $i$. \par

\indent To this end, fix $i\in \omega$, put $Q:=P_i$, and set $R:=\ol{QL}$. We now show $R/L$ is elementary which will verify the induction claim. Since $Q\sleq_oP$, $\rk(Q)\sleq \alpha+1$, and since $Q$ is compactly generated, there is $M\trianglelefteq Q$ such that $\rk(M)\sleq \alpha$ and $\rk(Q/M)=0$. Furthermore, $Q\trianglelefteq R$ and $M\trianglelefteq R$, because $L$ centralizes $P$. Passing to $R/M$, we have that $Q/M\trianglelefteq R/M$ and $\ol{ML}/M\trianglelefteq R/M$. The group $Q/M$ is either profinite or discrete, and $[Q/M,\ol{ML}/M]=\{1\}$. By the base case and construction of $R$, 
\[
\ol{\left( \ol{ML}/M\right)\left( Q/M \right)}/ \left( \ol{ML}/M\right)=(R/M)/(\ol{ML}/M)\simeq R/\ol{ML}
\]
is elementary. \par

\indent On the other hand, $\rk(M)\sleq \alpha$, $[M,L]=\{1\}$, and $M,L\trianglelefteq R$. The induction hypothesis therefore implies $\ol{ML}/L$ is also elementary. Since 
\[
(R/L)/(\ol{ML}/L)\simeq R/\ol{ML}\in \Es,
\]
we apply Proposition~\rm\ref{prop:grp_ext} to conclude that $R/L$ is elementary. The induction is thus completed.
\end{proof}

\begin{thm}\label{thm:closure_quot} 
If $G\in \Es$ and $L\trianglelefteq G$, then $G/L\in \Es$.
\end{thm}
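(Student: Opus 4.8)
The plan is to induct on $\rk(G)$, mirroring the structure used in Theorem~\ref{thm:closure_sgrp} and Lemma~\ref{lem:closure_quot}. The base case $\rk(G)=0$ is immediate: if $G$ is profinite then $G/L$ is profinite, and if $G$ is discrete then $G/L$ is discrete, so in either case $G/L\in\Es$. For the successor case, suppose the result holds for all elementary groups of rank at most $\alpha$ and that $\rk(G)=\alpha+1$. I would split according to which operation produces the rank.

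The union case should be easy. If $G=\bigcup_{i\in\omega}O_i$ with $(O_i)_{i\in\omega}$ an $\subseteq$-increasing sequence of open subgroups of rank at most $\alpha$, then $G/L=\bigcup_{i\in\omega}O_iL/L$. Each $O_iL/L\simeq O_i/(O_i\cap L)$ is a Hausdorff quotient of the elementary group $O_i$ (of rank $\leq\alpha$) by the closed normal subgroup $O_i\cap L$, hence elementary by the induction hypothesis; moreover $O_iL/L$ is open in $G/L$. Thus $G/L$ is a countable increasing union of open elementary subgroups and so lies in $\Es$.

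The extension case is where the real work sits, and it is exactly the situation Lemma~\ref{lem:closure_quot} was designed to handle, though not directly: suppose $N\trianglelefteq G$ with $\rk(N)=\alpha$ and $G/N\in\Es_0$. Here $N$ and $L$ are both normal in $G$ but there is no commuting hypothesis, so Lemma~\ref{lem:closure_quot} does not apply verbatim. My plan is to pass to $G/L$ and analyze the image of $N$. Consider $\ol{NL}/L$, a closed normal subgroup of $G/L$. One has $\ol{NL}/L\simeq$ (a Hausdorff quotient of) $N/(N\cap L)$-type object; more precisely, I would like to say $\ol{NL}/L$ is elementary by applying the induction hypothesis to $N$. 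The clean route is: $NL/L\simeq N/(N\cap L)$ when $NL$ is closed, and $N/(N\cap L)$ is elementary by induction since $\rk(N)=\alpha$. When $NL$ is not closed one must instead show $\ol{NL}/L$ is elementary, and this is the genuine obstacle, because taking closures can enlarge the group beyond a quotient of $N$. I expect to handle this by first reducing, as in Lemma~\ref{lem:closure_quot}, to compactly generated pieces: fix $U\in\Uc(G)$, exhaust $N$ by $U$-invariant compactly generated subgroups $N_i:=\cgrp{V^U,n_0^U,\dots,n_i^U}$, so that $N_iUL/L$ is open in $G/L$ and $\ol{N_iL}/L\cc N_iUL/L$; it then suffices to show each $\ol{N_iL}/L$ is elementary, which by the induction hypothesis applied to $N_i$ (of rank $\leq\alpha$) together with closure under extensions (Proposition~\ref{prop:grp_ext}) should follow.

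Granting that $\ol{NL}/L\in\Es$, I would finish by forming the short exact sequence
\[
\xymatrix{ 1 \ar[r] & \ol{NL}/L \ar[r] & G/L \ar[r] & (G/L)/(\ol{NL}/L) \ar[r] & 1}.
\]
The quotient $(G/L)/(\ol{NL}/L)\simeq G/\ol{NL}$ is a Hausdorff quotient of $G/N$ (a profinite or discrete group), hence is itself profinite or discrete, so lies in $\Es_0$. Then $G/L$ is an extension of a rank-$0$ group by the elementary group $\ol{NL}/L$, so $G/L\in\Es$ by Proposition~\ref{prop:grp_ext}, completing the induction. The main obstacle, as noted, is controlling $\ol{NL}/L$ when $NL$ fails to be closed; the compactly-generated reduction borrowed from Lemma~\ref{lem:closure_quot}, combined with the fact that commensurated (here $\cc$) subgroups let one pass between $\ol{N_iL}/L$ and the open subgroup $N_iUL/L$, is the key device for surmounting it.
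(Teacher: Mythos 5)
Your base case and union case are fine and match the paper. The problem is in the extension case, at precisely the point you flag as ``the genuine obstacle'': you never actually close the gap there. Reducing to the compactly generated pieces $N_i$ does not make $N_iL$ closed, so $\ol{N_iL}/L$ is still potentially strictly larger than the quotient $N_i/(N_i\cap L)$, and the induction hypothesis (which only gives you that \emph{quotients} of $N_i$ are elementary) says nothing about it. The relation $\ol{N_iL}/L\cc N_iUL/L$ runs the wrong way for your purposes: it would let you pass from ``$\ol{N_iL}/L\in\Es$'' to ``$N_iUL/L\in\Es$'' via group extension, but it gives no mechanism for establishing the former. What you are really facing is the dense-normal-image problem (a continuous injection of the elementary group $N_i/(N_i\cap L)$ onto a dense normal subgroup of $\ol{N_iL}/L$), and the tool for that in the paper, Theorem~\rm\ref{thm:dense_normal}, is proved much later using machinery that depends on quotient closure, so it cannot be invoked here.

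The paper's resolution is a trick you are missing: since $N$ and $L$ are both normal in $G$, one has $[N,L]\subseteq N\cap L$, so after passing to $\tilde{G}:=G/(N\cap L)$ the images $\tilde{N}$ and $\tilde{L}$ \emph{centralize} each other. The image $\tilde{N}=N/(N\cap L)$ is elementary by the induction hypothesis, and now Lemma~\rm\ref{lem:closure_quot} applies verbatim to give $\ol{\tilde{N}\tilde{L}}/\tilde{L}\simeq\ol{NL}/L\in\Es$. The commuting hypothesis is exactly what Lemma~\rm\ref{lem:closure_quot} exploits to control the closure $\ol{PL}$ (e.g.\ in its base case, centrality forces open centralizers and hence $\SIN(\ol{PL}/L)=\ol{PL}/L$); without it, the compactly generated reduction you borrow from that lemma's proof has no way to get started. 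Once $\ol{NL}/L\in\Es$ is in hand, your concluding short exact sequence argument is correct and is what the paper does.
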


\begin{proof} We induct on $\rk(G)$ for the theorem. As the base case is immediate, we suppose $\rk(G)=\alpha+1$. Consider first the case in which $\rk(G)$ is given by a union. That is $G=\bigcup_{i\in \omega}H_i$ with $(H_i)_{i\in\omega}$ an $\subseteq$-increasing sequence of open subgroups each with rank at most $\alpha$. Now $G/L=\bigcup_{i\in \omega}H_iL/L$, and since $H_iL/L\simeq H_i/H_i\cap L$ for each $i$, the induction hypothesis implies $H_iL/L\in \Es$ for each $i$. We conclude that $G/L\in \Es$.

\indent Suppose $\rk(G)$ is given by a group extension; say $H\trianglelefteq G$ is such that $\rk(H)=\alpha$ and $\rk(G/H)= 0$. Pass to $\tilde{G}:=G/L\cap H$ and let $\tilde{H}$ and $\tilde{L}$ be the images of $H$ and $L$ in $\tilde{G}$, respectively. Since $\rk(H)= \alpha$, the group $\tilde{H}$ is elementary by the induction hypothesis, and the construction of $\tilde{G}$ ensures that $[\tilde{H},\tilde{L}]=\{1\}$. Lemma~\rm\ref{lem:closure_quot} thus implies $\ol{\tilde{H}\tilde{L}}/\tilde{L}$ is elementary. Seeing as $\ol{\tilde{H}\tilde{L}}/\tilde{L}\simeq \ol{HL}/L$, we conclude that $\ol{HL}/L$ is indeed elementary. \par

\indent On the other hand, $\ol{HL}/H\trianglelefteq G/H$ and $G/H$ is either profinite or discrete. Hence, 
\[
(G/H)/(\ol{HL}/H)\simeq G/\ol{HL}\simeq (G/L)/(\ol{HL}/L)
\]
is either profinite or discrete. The group $G/L$ is thus a group extension of either a profinite group or a discrete group by an elementary group and, thus, is elementary. This completes the induction, and we conclude the theorem.
\end{proof}

\begin{rmk}
A bound on the construction rank of $G/L$ for $L\trianglelefteq G\in \Es$ is given in Corollary~\rm\ref{cor:rk_bound_quot}.
\end{rmk}

The reader familiar with the theory of elementary amenable discrete groups recalls that the class of elementary amenable groups can be defined analogously to the class of elementary groups; see \cite{Ch80}. Our closure properties hitherto mirror those of the class of elementary amenable groups defined as such. The next theorem, however, is surprisingly disanalogous. Indeed, the analogous statement is patently false in the class of elementary amenable groups.

\begin{thm}\label{thm:closure_residual}
If $G$ is a t.d.l.c.s.c. group that is residually elementary, then $G\in \Es$ with
\[
\rk(G)\sleq \sup\{\rk(G/N)\mid N\trianglelefteq G\text{ with }G/N\in \Es\}+3.
\]
In particular, $\Es$ is closed under inverse limits that result in t.d.l.c.s.c. groups.
\end{thm}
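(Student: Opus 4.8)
The plan is to extract from the residual elementariness of $G$ the family
\[
\mathcal{N}:=\{N\trianglelefteq G\mid G/N\in\Es\}
\]
of closed normal subgroups, which by definition of ``residually elementary'' satisfies $\bigcap\mathcal{N}=\{1\}$, and then to feed suitable restrictions of $\mathcal{N}$ into Corollary~\ref{cor:CM_comp-by-dis} after passing to compactly generated open subgroups. The first thing I would record is that $\mathcal{N}$ is closed under finite intersections: given $N,N'\in\mathcal{N}$, the diagonal map $G/(N\cap N')\to (G/N)\times(G/N')$ is a continuous injective homomorphism, the target is elementary by Proposition~\ref{prop:grp_ext} (it is an extension of $G/N'$ by $G/N$), and so Theorem~\ref{thm:closure_sgrp} gives $G/(N\cap N')\in\Es$. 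Setting $\beta:=\sup\{\rk(G/N)\mid N\in\mathcal{N}\}$, a countable ordinal, I aim for $\rk(G)\sleq\beta+3$.

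Next I would reduce to the compactly generated setting. Fix $U\in\Uc(G)$, list a dense sequence $(g_i)_{i\in\omega}$, and put $O_i:=\grp{U,g_0,\dots,g_i}$, so each $O_i$ is a compactly generated open (hence closed) subgroup with $G=\bigcup_{i\in\omega}O_i$. For each $i$ consider $\mathcal{N}_i:=\{O_i\cap N\mid N\in\mathcal{N}\}$, a family of closed normal subgroups of $O_i$. Using the finite-intersection closure of $\mathcal{N}$ one checks $\mathcal{N}_i$ is filtering, and $\bigcap\mathcal{N}_i=O_i\cap\bigcap\mathcal{N}=\{1\}$. Corollary~\ref{cor:CM_comp-by-dis}, applied to the compactly generated group $O_i$, then yields some $N\in\mathcal{N}$ with $O_i\cap N$ compact-by-discrete; such a group is elementary of construction rank at most $1$ directly from the definition. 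Meanwhile $O_i/(O_i\cap N)\simeq O_iN/N$ (first isomorphism theorem, $O_iN$ being open hence closed) is an open subgroup of the elementary group $G/N$, so by Proposition~\ref{prop:open_sgrp} it is elementary with $\rk(O_iN/N)\sleq\rk(G/N)\sleq\beta$. Proposition~\ref{prop:grp_ext} now gives $O_i\in\Es$ with $\rk(O_i)\sleq 1+\beta+1\sleq\beta+2$.

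Finally, since $G=\bigcup_{i\in\omega}O_i$ is an increasing union of open elementary subgroups of rank at most $\beta+2$, closure under countable increasing unions gives $G\in\Es$ with $\rk(G)\sleq\beta+3$, as claimed. The ``in particular'' statement then follows: if $G=\varprojlim G_\alpha$ is an inverse limit of elementary groups that happens to be t.d.l.c.s.c., each projection $G\to G_\alpha$ has closed kernel $N_\alpha$ with $G/N_\alpha$ mapping continuously and injectively into the elementary $G_\alpha$, whence $G/N_\alpha\in\Es$ by Theorem~\ref{thm:closure_sgrp}, while $\bigcap_\alpha N_\alpha=\{1\}$; thus $G$ is residually elementary and the main assertion applies.

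I expect the main obstacle to be the sharp rank bound rather than mere membership in $\Es$. The temptation is to apply Corollary~\ref{cor:CM_comp-by-dis} to the full family of normal subgroups of $O_i$ with elementary quotient, but that family can have quotients of rank exceeding $\beta$; it is essential to restrict to $\mathcal{N}_i=\{O_i\cap N\}$ arising from $\mathcal{N}$ (this is exactly where finite-intersection closure is needed to keep $\mathcal{N}_i$ filtering) so that the quotient rank stays $\sleq\beta$. One must also be careful with the noncommutative ordinal arithmetic at the extension and union steps, namely $1+\beta+1\sleq\beta+2$ and then a final $+1$ from the union, to land precisely at $\beta+3$.
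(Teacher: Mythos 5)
Your proposal is correct and follows essentially the same route as the paper: establish that the family of closed normal subgroups with elementary quotient is filtering, restrict it to a compactly generated open exhaustion, apply Corollary~\ref{cor:CM_comp-by-dis} to extract a compact-by-discrete kernel, and conclude via group extension and countable increasing union, with the same ordinal bookkeeping $1+\beta+1\sleq\beta+2$ and a final $+1$. The only cosmetic difference is that you verify closure under intersection via the diagonal map into $(G/N)\times(G/N')$, whereas the paper embeds $M/(M\cap N)$ into $G/N$ and then uses group extension; both are valid.
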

\begin{proof} 
Let $\mc{F}$ be the collection of closed normal subgroups of $G$ with elementary quotient. We first claim $\mc{F}$ is a filtering family; it suffices to show $\mc{F}$ is closed under intersection. Take $N,M\in \mc{F}$ and consider $M/M\cap N$. The restriction of the usual projection gives a continuous, injective homomorphism $M/M\cap N\rightarrow G/N$. Since $G/N$ is elementary, Theorem~\rm\ref{thm:closure_sgrp} implies $M/M\cap N$ is elementary, so $M/M\cap N$ is a closed normal elementary subgroup of $G/N\cap M$ with elementary quotient. As $\Es$ is closed under group extension, we conclude that $G/M\cap N\in \Es$ and, therefore, that $\mc{F}$ is closed under intersection. \par

\indent Fix $O\sleq_oG$ a compactly generated open subgroup of $G$ and put $\mc{F}_O:=\{N\cap O\mid  N\in \mc{F}\}$. For each $N\in \mc{F}$, we have that $ON/N \simeq O/N\cap O$ and, therefore, that $O/N\cap O$ is elementary. The family $\mc{F}_O$ is thus a filtering family of closed normal subgroups of $O$ with elementary quotient. Since $G$ is residually elementary, $\bigcap \mc{F}_O=\{1\}$, so Corollary~\rm\ref{cor:CM_comp-by-dis} implies there is $N\in \mc{F}_O$ that is profinite-by-discrete and, hence, elementary. The group $O/N$ is also elementary, whereby Proposition~\rm\ref{prop:grp_ext} implies $O$ is elementary with 
\[
\rk(O)\sleq 1+\rk(G/N)+1\sleq \rk(G/N)+2.
\]

\indent Since $G$ has a countable $\subseteq$-increasing exhaustion by such $O$, $G\in \Es$ with 
\[
\begin{array}{ccl}
\rk(G) & \sleq & \sup\{\rk(G/N)+2\mid N\trianglelefteq G\text{ with }G/N\in \Es\}+1\\
	   & \sleq & \sup\{\rk(G/N)\mid N\trianglelefteq G\text{ with }G/N\in \Es\}+3.
\end{array}
\]

\indent For the inverse limit claim, suppose $(C_{\alpha},\phi_{\alpha})_{\alpha \in I}$ is an inverse system of elementary groups with a t.d.l.c.s.c. group $H$ as the inverse limit. The projection onto each coordinate $\alpha\in I$, $\pi_{\alpha}:H\rightarrow C_{\alpha}$, is a continuous homomorphism into an elementary group $C_{\alpha}$. Theorem~\rm\ref{thm:closure_sgrp} thus implies $H/\ker(\pi_{\alpha})$ is elementary for all $\alpha \in I$. Since 
\[
\bigcap_{\alpha\in I}\ker(\pi_{\alpha})=\{1\},
\]
$H$ is residually elementary and, therefore, elementary.
\end{proof}

We conclude with three easy but useful permanence properties of $\Es$; we do not compute bounds for the construction rank as such bounds are not used herein and are easily computed.

\begin{prop}\label{cons:closure_quasi}
If $G$ is a quasi-product with quasi-factors $N_1,\dots, N_k$ such that $N_i\in \Es$ for each $i$, then $G\in \Es$.
\end{prop}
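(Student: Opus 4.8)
The plan is to induct on the number $k$ of quasi-factors, feeding everything through the commuting-subgroup machinery already in place. First I would record the purely algebraic consequences of the defining conditions. Injectivity of the multiplication map $m\colon N_1\times\dots\times N_k\to G$ forces $N_i\cap N_j=\{1\}$ whenever $i\neq j$: an element $x\in N_i\cap N_j$ has the same image under $m$ whether it is placed in coordinate $i$ or coordinate $j$, so injectivity forces $x=1$. Since each $N_i\trianglelefteq G$, the standard commutator computation then gives $[N_i,N_j]\subseteq N_i\cap N_j=\{1\}$ for $i\neq j$; that is, distinct quasi-factors commute elementwise.

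With this in hand I would set up the induction on $k$. For $k=1$ the quasi-factor $N_1$ is closed and dense in $G$, hence $G=N_1\in\Es$. For the inductive step, put $P:=\overline{N_1\cdots N_{k-1}}$. I claim $P$ is again a quasi-product, now with quasi-factors $N_1,\dots,N_{k-1}$: each $N_i$ (for $i<k$) is a closed normal subgroup of the closed, hence t.d.l.c.s.c., group $P$, and the restriction of $m$ to $N_1\times\dots\times N_{k-1}$ is injective with image the dense subset $N_1\cdots N_{k-1}$ of $P$. Since each $N_i\in\Es$, the induction hypothesis yields $P\in\Es$.

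It remains to combine $P$ and $N_k$. The subgroup $P$ is normal in $G$, being the closure of the normal subgroup $N_1\cdots N_{k-1}$, and $N_k$ commutes with every $N_i$ for $i<k$; because the centralizer of $N_k$ is closed, $N_k$ in fact commutes with all of $P$, so $[P,N_k]=\{1\}$. Moreover $\overline{PN_k}=\overline{N_1\cdots N_k}=G$. Applying Lemma~\ref{lem:closure_quot} with the elementary normal subgroup $P$ and the normal subgroup $N_k$ yields $G/N_k=\overline{PN_k}/N_k\in\Es$. Finally $N_k\in\Es$ and $N_k\trianglelefteq G$, so Proposition~\ref{prop:grp_ext} (closure under extension) gives $G\in\Es$, completing the induction.

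The conceptual content is carried entirely by Lemma~\ref{lem:closure_quot}; the only points demanding care are bookkeeping ones, namely verifying that $P$ genuinely satisfies the definition of a quasi-product so that the induction hypothesis applies, and upgrading ``$N_k$ commutes with each $N_i$'' to ``$[P,N_k]=\{1\}$'' via the closedness of $C_G(N_k)$. I expect neither to be a real obstacle, so the full argument should be short.
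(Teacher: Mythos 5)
Your proof is correct: the algebraic preliminaries ($N_i\cap N_j=\{1\}$ from injectivity of $m$, hence $[N_i,N_j]=\{1\}$ by normality), the verification that $P:=\overline{N_1\cdots N_{k-1}}$ is itself a quasi-product of its $k-1$ elementary factors, the upgrade from ``$N_k$ centralizes each $N_i$'' to $[P,N_k]=\{1\}$ via closedness of $C_G(N_k)$, and the final application of Lemma~\ref{lem:closure_quot} followed by Proposition~\ref{prop:grp_ext} are all sound, and nothing you invoke appears later in the paper than this proposition. The route is, however, not quite the one the paper takes. The paper also inducts on $k$ and forms the same closed normal subgroup $M=\overline{\langle N_1,\dots,N_k\rangle}$, but it realizes $G$ as an extension of $G/M$ by $M$: it observes that $G=UN_{k+1}M$ for $U\in\Uc(G)$, so that $G/M\simeq UN_{k+1}/(UN_{k+1}\cap M)$ is a Hausdorff quotient of the elementary group $UN_{k+1}$ and is handled by the full quotient theorem (Theorem~\ref{thm:closure_quot}). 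You instead realize $G$ as an extension of $G/N_k$ by $N_k$, proving $G/N_k=\overline{PN_k}/N_k\in\Es$ directly from the commuting-subgroups Lemma~\ref{lem:closure_quot}. Since Theorem~\ref{thm:closure_quot} is itself deduced from Lemma~\ref{lem:closure_quot}, your argument is marginally more economical in what it assumes, at the cost of having to establish the commutation $[P,N_k]=\{1\}$ explicitly; the paper's version avoids that computation entirely by routing through the compact open subgroup $U$ and the general quotient closure. Both are short and both are valid.
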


\begin{proof}
We induct on $k$ for the proposition. If $k=1$, then the result obviously holds. Suppose $G$ is a quasi-product with elementary quasi-factors $N_1,\dots, N_{k+1}$. Fix $U\in \Uc(G)$ and put $M:=\cgrp{N_1,\dots ,N_k}$. We now have that
\[
G/M\simeq UN_{k+1}M/M\simeq UN_{k+1}/(UN_{k+1}\cap M).
\]
The rightmost group is a quotient of an elementary group, hence Theorem~\rm\ref{thm:closure_quot} implies $G/M\simeq UN_{k+1}/(UN_{k+1}\cap M)\in \Es$.\par

\indent The group $M$, on the other hand, is a quasi-product with elementary quasi-factors $N_1,\dots, N_k$. The induction hypothesis thereby implies $M\in \Es$, and as $\Es$ is closed under group extension, $G\in \Es$ completing the induction.
\end{proof}

\begin{prop}\label{prop:sum_rank} 
If $(G_n)_{n\in \omega}$ is a sequence of elementary groups with a distinguished $U_n\in \Uc(G_n)$ for each $n$, then $\Tsumw{G_n}{U_n}\in \Es$.
\end{prop}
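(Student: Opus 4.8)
The plan is to recognise $\Tsumw{G_n}{U_n}$ as a countable increasing union of open subgroups, each of which is a finite direct product of elementary groups, and then appeal to the permanence properties already in hand. As a preliminary, I would record that $\Es$ is closed under \emph{finite} direct products: if $A,B\in \Es$, then $A$ is a closed normal subgroup of $A\times B$ with $(A\times B)/A\simeq B\in \Es$, so Proposition~\ref{prop:grp_ext} gives $A\times B\in \Es$; iterating, every finite direct product of elementary groups is elementary.

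Next I would unwind Definition~\ref{def:ldp}. Fixing an enumeration $\{a_i\}_{i\in \omega}$ of the index set, we have $\Tsumw{G_n}{U_n}=\bigcup_{n\in \omega}S_n$ with $S_0=\prod_{i\in \omega}U_{a_i}$ and $S_{n+1}=G_{a_0}\times\dots\times G_{a_n}\times\prod_{i\sgeq n+1}U_{a_i}$, and moreover $S_n\sleq_o S_{n+1}$ for every $n$. Since each $S_n$ is open in the inductive limit topology and the $S_n$ form an $\subseteq$-increasing exhaustion of the whole group, the closure of $\Es$ under countable increasing unions (clause (iii) of the definition of $\Es$) reduces the theorem to the single task of showing $S_n\in \Es$ for each $n\in \omega$.

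Finally I would verify that each $S_n$ is elementary. The group $S_0=\prod_{i\in \omega}U_{a_i}$ is a countable product of second countable profinite groups, hence is itself a second countable profinite group and so lies in $\Es$. For the remaining $S_{n+1}$, the tail factor $\prod_{i\sgeq n+1}U_{a_i}$ is again a second countable profinite group, hence elementary, while each $G_{a_0},\dots,G_{a_n}$ is elementary by hypothesis; thus $S_{n+1}$ is a finite direct product of elementary groups and is elementary by the preliminary observation. This gives $S_n\in \Es$ for all $n$, and the theorem follows from clause (iii).

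I do not expect a genuine obstacle here: the argument is a direct assembly of closure under group extension and closure under countable increasing unions. The only points requiring care are the routine verifications that the $S_n$ are open subgroups (immediate from the inductive limit topology) and that a countable product of second countable profinite groups is again second countable profinite, so that it qualifies as a rank-zero elementary group.
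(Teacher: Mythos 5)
Your argument is correct and is essentially the paper's own proof: both write $\Tsumw{G_n}{U_n}$ as the increasing union of the open subgroups $S_n=G_{a_0}\times\dots\times G_{a_{n-1}}\times\prod_{i\sgeq n}U_{a_i}$, observe that each $S_n$ is obtained from the profinite tail and finitely many elementary factors by repeated group extension (Proposition~\ref{prop:grp_ext}), and conclude by closure under countable increasing unions. No gaps.
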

 
\begin{proof}
 By construction, $\Tsumw{G_n}{U_n}$ is an increasing union of groups of the form 
\[
 G_{0}\times\dots\times G_{n}\times \prod_{i\sgeq n+1}U_{i}.
\]
These groups are built via repeated group extension from profinite groups and $G_{0},\dots ,G_{n}$, so Proposition~\rm\ref{prop:grp_ext} implies $G_{0}\times\dots\times G_{n}\times \prod_{i\sgeq n+1}U_{i}\in \Es$. It now follows that $\Tsumw{G_n}{U_n}\in \Es$.
\end{proof}
 
\begin{prop}
If $G$ is a t.d.l.c.s.c. group and $(C_i)_{i\in \omega}$ is an $\subseteq$-increasing sequence of elementary subgroups of $G$ such that $N_G(C_i)$ is open for each $i$ and $\ol{\bigcup_{i\in \omega}C_i}=G$, then $G\in \Es$.
\end{prop}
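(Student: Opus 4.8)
The plan is to peel the problem down to a compactly generated group and then feed it into the Caprace--Monod machinery of Proposition~\ref{prop:CM_minimal} and Corollary~\ref{cor:CM_comp-by-dis}, finishing with the residual-elementarity criterion of Theorem~\ref{thm:closure_residual}. The first observation is that the hypotheses are inherited by open subgroups: if $O\sleq_o G$, then each $C_i\cap O$ is a closed elementary subgroup of $O$ by Theorem~\ref{thm:closure_sgrp}, its normalizer $N_O(C_i\cap O)\sgeq N_G(C_i)\cap O$ is open, and since $O$ is open and $\bigcup_i C_i$ is dense we get $\ol{\bigcup_i(C_i\cap O)}=O$. As $G$ is a countable increasing union of compactly generated open subgroups, clause (iii) of the definition lets me assume from now on that $G$ is compactly generated.

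A Baire-category remark disposes of an easy case. The dense subgroup $\bigcup_i C_i$ is a countable union of the closed subgroups $C_i$, so if it is non-meager then some $C_{i_0}$ is non-meager, hence has non-empty interior, hence is open. Then $C_i$ is open for all $i\sgeq i_0$ and $G=\bigcup_{i\sgeq i_0}C_i$ is an increasing union of open elementary subgroups, so $G\in\Es$ by (iii). Thus the substantive situation is $G$ compactly generated with $\bigcup_i C_i$ meager, and here I must actually produce normal structure rather than read it off topologically.

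In the compactly generated case I would first use Proposition~\ref{prop:factor}: fixing $U\in\Uc(G)$ and noting that the increasing open sets $C_iU$ cover any compact generating set, one arranges $G=\grp{C_N,U}$ for a single $N$. I then form the family $\mc{F}:=\{K\trianglelefteq G\text{ closed}\mid G/K\in\Es\}$, which is filtering by exactly the intersection argument used in Theorem~\ref{thm:closure_residual}: if $G/K,G/K'\in\Es$, then $K/(K\cap K')\hookrightarrow G/K'$ is elementary by Theorem~\ref{thm:closure_sgrp}, so $G/(K\cap K')$ is elementary as an extension. If $\bigcap\mc{F}=\{1\}$, then $G$ is residually elementary and Theorem~\ref{thm:closure_residual} yields $G\in\Es$; equivalently, Corollary~\ref{cor:CM_comp-by-dis} applied to the filtering family $\mc{F}$ produces a compact-by-discrete $K\in\mc{F}$, which is elementary, and $G\in\Es$ follows from Proposition~\ref{prop:grp_ext}. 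So everything reduces to the single assertion $\bigcap\mc{F}=\{1\}$.

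The hard part is this residual-elementarity statement, and it is exactly where the density of $\bigcup_i C_i$ and the openness (but \emph{not} commonality) of the normalizers $N_G(C_i)$ must be exploited. To attack it I would fix $V\in\Uc(G)$ and pass to $G/Q_V$, where $Q_V=\bigcap_{g\in G}gVg^{-1}$; this is legitimate since $Q_V$ is compact, hence elementary, the hypotheses descend to $G/Q_V$, and $G/Q_V\in\Es$ gives $G\in\Es$ by Proposition~\ref{prop:grp_ext} (using Proposition~\ref{prop:comp_quot} to keep track of the rank). Having arranged $Q_V=\{1\}$, Proposition~\ref{prop:CM_minimal} tells me that any filtering family of \emph{non-discrete} closed normal subgroups of $G$ has non-trivial intersection; thus, were $\bigcap\mc{F}$ non-trivial, no member of $\mc{F}$ could be discrete (a discrete $K\in\mc{F}$ is elementary and forces $\{1\}\in\mc{F}$). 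The crux is therefore to manufacture, from the dense system $(C_i)$, a non-discrete closed \emph{normal} subgroup of $G$ with elementary quotient lying strictly inside $\bigcap\mc{F}$, contradicting minimality; concretely, I would try to convert the local data --- elementarity of each $C_i$ together with the finitely many $V$-conjugates afforded by $[U:U\cap N_G(C_i)]<\infty$ --- into a genuine normal section via Corollary~\ref{cor:CM_comp-by-dis}. I expect this conversion to be the main obstacle: the $C_i$ are not themselves normal and their normalizers, while open, need not share a common compact open subgroup (indeed a simple tree-automorphism example shows that $\ol{\grp{C_i^{\,U}}}$ can fail to be elementary in isolation), so the argument cannot be reduced to the common-normalizer Lemma~\ref{lem:basic_E_lemmas}(2) and must genuinely use the global density hypothesis to rule out a non-elementary normal core.
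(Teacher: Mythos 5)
Your reduction to a compactly generated open subgroup $O$ is sound, and so is the observation that density forces $O=\grp{C_N,U}$ for a single $N$; but from that point the proposal does not contain a proof. All of the weight is placed on the assertion $\bigcap\mc{F}=\{1\}$, i.e. that $O$ is residually elementary, and you explicitly leave this as something you ``would try'' to do, flagging the conversion of the non-normal $C_i$ into normal data as the main obstacle without resolving it. That obstacle is real for the route you chose: the hypotheses hand you no closed normal subgroups with elementary quotient to begin with, and Proposition~\ref{prop:CM_minimal} and Corollary~\ref{cor:CM_comp-by-dis} give no purchase on $\mc{F}$ until its intersection is already known to be small. The proposal therefore stops exactly where the proof has to start.

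The missing idea is that one never needs normality in $O$, only finite index. Having written $O=\grp{B}U$ with $B\subseteq C_N$ finite, symmetric, and containing $1$ (Proposition~\ref{prop:factor} together with density, exactly as you set it up), use the openness of $N_G(C_N)$ to choose $V\sleq_o U$ with $V\sleq N_G(C_N)$. Then $B^V\subseteq C_N$, so $\cgrp{B^V}$ is a closed subgroup of the elementary group $C_N$ and hence elementary by Theorem~\ref{thm:closure_sgrp}; it is a cocompact closed normal subgroup of $\grp{B^V}V$, so $\grp{B^V}V$ is elementary-by-compact and therefore elementary by Proposition~\ref{prop:grp_ext}. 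Since $V$ has finite index in $U$ and $O=\grp{B}U$, the open subgroup $\grp{B^V}V$ has finite index in $O$, and its normal core in $O$ is a finite-index closed normal elementary subgroup; thus $O$ is elementary-by-finite, hence elementary, and $G\in\Es$ as an increasing union of such $O$. Note that this argument works with a single $C_N$ at a time, so the absence of a common normalizer for the whole family, which you correctly identified as blocking an appeal to Lemma~\ref{lem:basic_E_lemmas}(2), never becomes an issue.
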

 
\begin{proof}
Fix $O\sleq G$ open and generated by a compact $X\subseteq O$ and let $U\in \Uc(O)$. Via Proposition~{\rm\ref{prop:factor}}, there is $A\subset O$ finite, symmetric, and containing $1$ so that $X\subseteq AU$ and $UAU=AU$. Since $\bigcup_{i\in \omega}C_i$ is dense, we may find $B\subseteq C_i$ for a large enough $i$ so that $BU=AU$ and $B$ is finite and symmetric and contains $1$. We see that $UBU=BU$, and hence, $O=\grp{B}U$.\par

\indent Find $V\sleq_o U$ with $V\sleq N_G(C_i)$. We see that $\cgrp{B^V}\sleq C_i\in \Es$, and since $\cgrp{B^V}\trianglelefteq_{cc} \grp{B^V}V$, we infer that $\grp{B^V}V\in \Es$. On the other hand, the group $\grp{B^V}V$ is finite index in $O$, so we may take the normal core $L$ of $\grp{B^V}V$ in $O$. Theorem~\rm\ref{thm:closure_sgrp} implies $L\in \Es$, whence $O$ is elementary-by-finite and, therefore, elementary. Since $G$ is an increasing union of compactly generated open elementary subgroups, $G\in \Es$ proving the proposition.
\end{proof}

Collecting our results, we have the following theorem.

\begin{thm}\label{thm:closure_main}
$\Es$ enjoys the following permanence properties:
\begin{enumerate}[(1)]
\item $\Es$ is closed under group extension.

\item If $G\in \Es$, $H$ is a t.d.l.c.s.c. group, and $\psi:H\rightarrow G$ is a continuous, injective homomorphism, then $H\in \Es$. In particular, $\Es$ is closed under taking closed subgroups.

\item $\Es$ is closed under taking quotients by closed normal subgroups.

\item If $G$ is a residually elementary t.d.l.c.s.c. group, then $G\in \Es$. In particular, $\Es$ is closed under inverse limits that result in t.d.l.c.s.c. groups.

\item $\Es$ is closed under taking quasi-products.

\item $\Es$ is closed under local direct products.

\item If $G$ is a t.d.l.c.s.c. group and $(C_i)_{i\in \omega}$ is an $\subseteq$-increasing sequence of elementary subgroups of $G$ such that $N_G(C_i)$ is open for each $i$ and $\ol{\bigcup_{i\in \omega}C_i}=G$, then $G\in \Es$.
\end{enumerate}
\end{thm}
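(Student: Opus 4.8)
The plan is to observe that every clause of the theorem has already been established in the preceding discussion, so that the proof amounts to recording which earlier result yields each item. First, item (1) is exactly Proposition~\ref{prop:grp_ext}. Item (2) is Theorem~\ref{thm:closure_sgrp}; the closed-subgroup assertion follows at once, since if $H\sleq G$ is closed then the inclusion $H\hookrightarrow G$ is a continuous injective homomorphism. Item (3) is Theorem~\ref{thm:closure_quot}, item (4) is Theorem~\ref{thm:closure_residual} together with its stated inverse-limit consequence, and item (5) is Proposition~\ref{cons:closure_quasi}.

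The only clauses needing a word of reconciliation are (6) and (7). For (6), I recall that a local direct product is indexed by a countable set $A$, whereas Proposition~\ref{prop:sum_rank} is phrased for an $\omega$-indexed family. If $A$ is infinite, I would fix an enumeration $\{a_i\}_{i\in\omega}$ of $A$ and note, using that the isomorphism type of a local direct product does not depend on the chosen enumeration, that $\bigoplus_{a\in A}(G_a,U_a)\simeq \bigoplus_{i\in\omega}(G_{a_i},U_{a_i})$, which lies in $\Es$ by Proposition~\ref{prop:sum_rank}. If $A$ is finite, the local direct product is simply the finite direct product $G_{a_0}\times\dots\times G_{a_n}$, which lies in $\Es$ by iterating item (1). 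Item (7) is precisely the proposition immediately preceding this theorem, so nothing further is required.

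Since each item is thus an instance of an already-proved statement, there is no genuine obstacle to overcome: the substance of the theorem lies in the individual results, and the present statement merely collects them. The one place where minor care is warranted is clause (6), where one must match the general countable index set against the $\omega$-indexed formulation of Proposition~\ref{prop:sum_rank}; this is handled by the enumeration argument above.
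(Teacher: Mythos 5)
Your proposal is correct and matches the paper exactly: the paper introduces this theorem with "Collecting our results, we have the following theorem" and offers no further proof, since each clause is an instance of Proposition~\ref{prop:grp_ext}, Theorem~\ref{thm:closure_sgrp}, Theorem~\ref{thm:closure_quot}, Theorem~\ref{thm:closure_residual}, Proposition~\ref{cons:closure_quasi}, Proposition~\ref{prop:sum_rank}, and the immediately preceding proposition. Your extra care in clause (6), matching the general countable index set against the $\omega$-indexed formulation, is a reasonable and harmless elaboration.
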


\subsection{Elementarily robust classes}
We conclude this section by observing a generalization of our results for elementary groups.

\begin{defn}\label{def:e_close}
Given a class of t.d.l.c.s.c. groups $\ms{G}$, the \textbf{elementary closure} of $\ms{G}$, denoted $\ms{EG}$, is the smallest class of t.d.l.c.s.c. groups such that
\begin{enumerate}[(i)]
\item $\ms{EG}$ contains $\ms{G}$, all second countable profinite groups, and countable discrete groups.

\item $\ms{EG}$ is closed under group extensions of second countable profinite groups, countable discrete groups, and groups in $\ms{G}$.

\item If $G$ is a t.d.l.c.s.c. group and $G=\bigcup_{i\in \omega}G_i$ where $(G_i)_{i\in \omega}$ is an $\subseteq$-increasing sequence of open subgroups of $G$ with $G_i\in\ms{EG}$ for each $i$, then $G\in \ms{EG}$. We say $\ms{EG}$ is closed under countable increasing unions.
\end{enumerate}
\end{defn}

The next observation follows from the proof of Theorem~\rm\ref{thm:closure_main}.

\begin{thm}Suppose $\ms{G}$ is a class of t.d.l.c.s.c. groups that is closed under isomorphism of topological groups, taking closed subgroups, and taking Hausdorff quotients. Suppose further $\ms{G}$ satisfies the following:
\begin{enumerate}[(a)] 
\item If $H$ is a t.d.l.c.s.c. group and $ \psi:H\rightarrow G$ is a continuous, injective homomorphism with $G\in \ms{G}$, then $H\in \ms{EG}$.
\item If $G$ is a t.d.l.c.s.c. group, $H,L\trianglelefteq G$, $[H,L]=\{1\}$, and $H\in \ms{G}$, then $\ol{HL}/L\in \ms{EG}$.
\end{enumerate}
Then the permanence properties stated in Theorem~\rm\ref{thm:closure_main} hold of $\ms{EG}$.
\end{thm}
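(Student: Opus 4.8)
The plan is to mirror the development of Section~\ref{sec:elementary} essentially verbatim, replacing $\Es$ throughout by $\ms{EG}$ and the two-element list ``profinite or discrete'' by the three-element list ``profinite, discrete, or a member of $\ms{G}$''. Concretely, I would first introduce an $\ms{EG}$-analogue of the construction rank: declare $G\in\ms{EG}_0$ iff $G$ is profinite, discrete, or in $\ms{G}$; form $\ms{EG}_\alpha^e$ by allowing extensions of profinite groups, discrete groups, and $\ms{G}$-groups, form $\ms{EG}_\alpha^l$ by countable increasing unions, and set $\ms{EG}=\bigcup_{\alpha<\omega_1}\ms{EG}_\alpha$. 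This equips $\ms{EG}$ with a rank, and every argument below will be an induction on it. I would also record at the outset that $\Es\subseteq\ms{EG}$, since $\ms{EG}$ satisfies the three defining clauses of $\Es$; hence anything shown to be elementary is automatically in $\ms{EG}$. Finally, the basic rank facts recorded after the definition of the construction rank transfer unchanged: a compactly generated group that is an increasing union of open subgroups equals one of them, so a compactly generated $\ms{EG}$-group of positive rank is still an extension of a rank-$0$ group by an $\ms{EG}$-group of strictly smaller rank.

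Next I would verify that the preliminary rank facts and the ``formal'' permanence properties survive with no new input. Propositions~\ref{prop:open_sgrp} and~\ref{prop:comp_quot} go through because their only use of the rank-$0$ hypothesis is that an open subgroup of a rank-$0$ group is again rank-$0$, and that a compact quotient of a rank-$0$ group is again rank-$0$; for $\ms{G}$-groups these are exactly the closure of $\ms{G}$ under closed subgroups and under Hausdorff quotients. Proposition~\ref{prop:grp_ext} (closure under arbitrary extensions) carries over because its base case is precisely clause (ii) of the definition of $\ms{EG}$. With extension closure in hand, Lemma~\ref{lem:basic_E_lemmas} and Theorem~\ref{thm:closure_residual} transfer without further assumptions (the latter still uses only Corollary~\ref{cor:CM_comp-by-dis}, Proposition~\ref{prop:res}, and the already-transferred subgroup- and extension-closure), and Propositions~\ref{cons:closure_quasi},~\ref{prop:sum_rank}, together with the final proposition of the subsection, are then formal consequences of extension, quotient, subgroup, and union closure.

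The two steps that genuinely require new input are Theorem~\ref{thm:closure_sgrp} and Lemma~\ref{lem:closure_quot}, and this is where hypotheses (a) and (b) enter. For the $\ms{EG}$-version of Theorem~\ref{thm:closure_sgrp} I would run the same induction on the $\ms{EG}$-rank of $G$; the union and extension steps are identical, but the base case now has a third possibility, $G\in\ms{G}$, and the conclusion $H\in\ms{EG}$ in that case is exactly what hypothesis (a) supplies. Likewise, for the $\ms{EG}$-version of Lemma~\ref{lem:closure_quot} I would induct on the $\ms{EG}$-rank of $P$; the successor step reproduces the original reduction to a compactly generated $P$ and the extension argument via Proposition~\ref{prop:grp_ext}, while the base case splits into $P$ compact, $P$ discrete, and $P\in\ms{G}$---the first two handled exactly as before (the quotient $\ol{PL}/L$ is elementary, hence in $\ms{EG}$) and the third supplied verbatim by hypothesis (b). Theorem~\ref{thm:closure_quot} then follows by the same induction as in the original, invoking the transferred Lemma~\ref{lem:closure_quot} and Proposition~\ref{prop:grp_ext}.

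I expect the main obstacle---indeed the reason (a) and (b) must be \emph{hypotheses} rather than conclusions---to be precisely these two base cases. For a profinite rank-$0$ group the subgroup argument relied on pulling back a normal basis to obtain residual discreteness, and the quotient lemma relied on $PL$ being closed in the compact case and on a quasi-centre/SIN-core argument in the discrete case; none of this is available for an arbitrary member of $\ms{G}$, so the required conclusions must simply be assumed. Once they are granted, the entire inductive scaffolding of Section~\ref{sec:elementary} is insensitive to the enlarged base class, and all the permanence properties of Theorem~\ref{thm:closure_main} hold for $\ms{EG}$.
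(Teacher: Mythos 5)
Your proposal is correct and takes essentially the same route as the paper, which simply asserts that the result ``follows from the proof of Theorem~\rm\ref{thm:closure_main}''; your walkthrough fills in exactly the intended details, correctly locating the only two points where the new hypotheses are needed, namely (a) for the base case of the subgroup-closure argument (Theorem~\rm\ref{thm:closure_sgrp}) and (b) for the base case of the quotient lemma (Lemma~\rm\ref{lem:closure_quot}), with everything else transferring formally via the enlarged rank.
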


Classes $\ms{G}$ that are closed under isomorphism of topological groups, taking closed subgroups, and taking Hausdorff quotients and, additionally, satisfy $(a)$ and $(b)$ are called \textbf{elementarily robust}. In \cite[Theorem 6.3]{W_2_14}, we show $\ms{P}$, the collection of all l.c.s.c. $p$-adic Lie groups for all primes $p$, is elementarily robust.

\section{A characterization of elementary groups}\label{sec:characterization}
Our work above, while showing the class of elementary groups is robust, does not give an easy way to identify elementary groups. We here characterize elementary groups in terms of well-founded descriptive-set-theoretic trees giving a criterion by which to identify elementary groups. As a consequence of this characterization, we obtain a new rank on elementary groups.

\subsection{Preliminaries}
 
\begin{defn}
A topological space $X$ is \textbf{Polish} if the topology is separable and completely metrizable.
\end{defn}

\indent By classical results, \cite[(5.3)]{K95}, t.d.l.c.s.c. groups are Polish; i.e. as topological spaces these groups are Polish spaces. \par

\indent For a Polish space $X$, $F(X)$ denotes the set of all closed subsets of $X$. This is the so-called \textbf{Effros Borel space}; we remark, but do not use, that $F(X)$ comes with a canonical Borel sigma algebra \cite[12.C]{K95}.

\begin{thm}[(Kuratowski, Ryll-Nardzewski {\cite[(12.13)]{K95}})]\label{selector}
Let $X$ be a Polish space. There is a sequence of Borel functions $d_n:F(X)\rightarrow X$ such that for nonempty $F\in F(X)$, $\{d_n(F)\}_{n\in \omega}$ is dense in $F$.
\end{thm}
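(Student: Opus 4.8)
The final statement is the Kuratowski--Ryll-Nardzewski selection theorem, a classical result in descriptive set theory. Let me sketch how I would prove it.

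The plan is to construct the Borel functions $d_n\colon F(X)\to X$ by a successive-refinement procedure on a fixed countable basis, selecting at each stage the ``first'' basic open set that meets the given closed set and is small enough.

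First I would fix a complete compatible metric on $X$ and a countable basis $(V_m)_{m\in\omega}$ of nonempty open sets. The crucial observation is that the sets $\{F\in F(X)\mid F\cap V_m\neq\emptyset\}$ are Borel (indeed they generate the Effros Borel structure by definition), so any function defined by case distinctions of the form ``does $F$ meet $V_m$?'' will be Borel provided only countably many such conditions are consulted to determine each value. I would define, for each finite string $s$, a basic open set $W_s$ together with a Borel set of closed sets on which a partial selector is active, maintaining two invariants along branches: the diameters of the chosen $W_s$ shrink to zero, and each $W_s$ meets $F$ whenever the branch is being followed for $F$. Concretely, to define $d_n(F)$ for nonempty $F$, I would scan $m=0,1,2,\dots$ and locate the least index of a basic set of diameter at most $2^{-k}$ meeting $F$, nesting these choices so the selected sets form a decreasing sequence with diameters tending to $0$; completeness then forces the intersection to be a single point, which I take as the value. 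The key steps, in order, are: (1) set up the basis and metric; (2) verify the primitive predicates ``$F\cap V_m\neq\emptyset$'' are Borel; (3) define the nested selection recursively so that each coordinate function depends on only countably many of these Borel predicates and is therefore Borel; (4) use completeness to see the nested intersection is a singleton, giving a well-defined point of $X$; and (5) check density of $\{d_n(F)\}_{n\in\omega}$ in $F$ by arranging that every basic open set meeting $F$ eventually becomes the ``first admissible'' choice along some branch, so that its selected point lies in it.

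The main obstacle I expect is the bookkeeping in step (3)--(5): one must organize the enumeration of the $d_n$ so that density is guaranteed (every small basic open set meeting $F$ is hit) while simultaneously ensuring each $d_n$ is a genuine Borel function of $F$ rather than merely a choice that exists. The tension is that a naive ``choose the least admissible basic set'' gives only one point, so I would index the construction by finite strings, letting $n$ range over (a coding of) strings, and define $d_n(F)$ to be the limit point of the nested sequence dictated by following string $n$ as far as it remains admissible for $F$, defaulting to $d_0(F)$ once it ceases to be admissible. Verifying that this default patching keeps each function Borel and total, and that the resulting countable family is dense, is the delicate part; everything else reduces to the standard fact that countable Boolean combinations and countable case-splits of Borel predicates remain Borel.
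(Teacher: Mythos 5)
The paper does not actually prove this statement: it is imported directly from Kechris \cite[(12.13)]{K95} as a classical fact, so there is no internal proof to compare against. Your sketch is the standard Kuratowski--Ryll-Nardzewski argument and is correct in outline: the generators $\{F\in F(X)\mid F\cap V_m\neq\emptyset\}$ of the Effros Borel structure are Borel, a greedy nested choice of basic open sets of shrinking diameter meeting $F$ determines a single point via completeness, and density is arranged by relativizing to basic open sets. Two remarks. First, you never say why the limit point lies in $F$; this needs the invariant you maintain (each chosen set meets $F$): pick witnesses $x_k\in V_{m_k}\cap F$, note they are Cauchy because $\mathrm{diam}(V_{m_k})\rightarrow 0$, and use that $F$ is closed. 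Second, the string-indexed bookkeeping and the ``default to $d_0(F)$'' patching in your steps (3)--(5) are more delicate than necessary. The cleaner organization is to first produce a \emph{single} Borel selector $d$ with $d(F)\in F$ for nonempty $F$ (this is exactly your nested construction), and then to set $d_n(F):=d\left(\overline{F\cap V_n}\right)$ when $F\cap V_n\neq\emptyset$ and $d_n(F):=d(F)$ otherwise. The map $F\mapsto \overline{F\cap V_n}$ is Borel because $\overline{F\cap V_n}\cap V_m\neq\emptyset$ if and only if $F\cap V_n\cap V_m\neq\emptyset$, and density follows by taking the basis fine enough that every open set meeting $F$ contains some $\overline{V_n}$ with $F\cap V_n\neq\emptyset$. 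With that reorganization your argument is exactly the textbook proof; as written it has no fatal gap, only avoidable complexity.
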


\indent The functions $d_n$ are called \textbf{selector} functions for $F(X)$. Given $Y$ a closed subset of $X$, we have that $F(Y)\subseteq F(X)$. A set of selector functions $D$ for $F(X)$ then restricts to a set of selector functions for $F(Y)$. We will abuse notion and say $D$ is also a set of selector functions for $F(Y)$.\par

\indent We will also require the notion of a descriptive-set-theoretic tree; this notion of a tree differs from the usual graph-theoretic definition. The definitions given here are restricted to the collection of finite sequences of natural numbers. See \cite[2.A]{K95} for an excellent, general account. \par

\indent  Denote the collection of finite sequences of natural numbers by $\wbaire$. For sequences $s:=(s_0,\dots,s_n)\in \wbaire$ and $r:=(r_0,\dots,r_m)\in \wbaire$, we write $s\subseteq r$ if $s$ is an initial segment of $r$. That is to say, $n\sleq m$, and $s_i=r_i$ for $0\sleq i\sleq n$. The empty sequence, denoted $\emptyset$, is considered to be an element of $\wbaire$ and is an initial segment of any $t\in \wbaire$. We define
\[
s\conc r:=(s_0,\dots,s_n,r_0,\dots,r_m).
\]
For $t=(t_0,\dots, t_k)\in \wbaire$, the \textbf{length} of $t$, denoted $|t|$, is the number of coordinates; i.e. $|t|:=k+1$.  If $|t|=1$, we write $t$ as a natural number as opposed to a sequence of length one. For $0\sleq i\sleq |t|-1$, $t(i):=t_i$. For $\alpha\in \baire$, we set $\alpha\rest_n:=(\alpha(0),\dots,\alpha(n-1))$, so $\alpha \rest_n\in \wbaire$ for any $n\sgeq 0$. 

\begin{defn} 
$T\subseteq \wbaire$ is a \textbf{tree} if it is closed under taking initial segments. We call the elements of $T$ the \textbf{nodes} of $T$. If $s\in T$ and there is no $n\in \Nb$ such that $s\conc n\in T$, we say $s$ is a \textbf{terminal node} of $T$. An \textbf{infinite branch} of $T$ is a sequence $\alpha\in \baire$ such that $\alpha\rest_n\in T$ for all $n$. If $T$ has no infinite branches, we say $T$ is \textbf{well-founded}.
\end{defn}

For $T$ a tree and $s\in T$, we put $T_s:=\{r\in \wbaire\mid s\conc r\in T\}$. The set $T_s$ is the tree obtained by taking the elements in $T$ that extend $s$ and deleting the initial segment $s$ from each.\par

\indent For $T$ a well-founded tree, there is an ordinal valued rank $\rho_T$ on the nodes of $T$ defined inductively as follows: If $s\in T$ is terminal, $\rho_T(s)=0$. For a non-terminal node $s$,
\[
\rho_T(s):=\sup\left\{\rho_T(s\conc n)+1\mid n\in \Nb\text{ and }s\conc n \in T\right\}.
\]
The \textbf{rank} of a well-founded tree $T$ is then defined to be
\[
\rho(T):=\sup\{\rho_T(s)+1\mid s\in T\}.
\]
In the case $T\neq \emptyset$, it is easy to verify $\rho(T)=\rho_T(\emptyset)+1$. We thus see that $\rho(T)$ is always either a successor ordinal or zero. We make two further observations. \par

\begin{obs}\label{obs:wf_rank} Suppose $T\subseteq \wbaire$ is a well-founded tree and $s\in T$. Then
\begin{enumerate}[(1)]
\item $\rho_T(s)+1=\rho(T_s)$ and
\item $\rho(T)=\sup\{\rho(T_{i})\mid  i\in T\}+1$.
\end{enumerate}
\end{obs}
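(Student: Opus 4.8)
The plan is to reduce everything to a single shift lemma describing how the nodewise rank $\rho_T$ transforms under passage to the subtree $T_s$, and then to read off both parts from the identity $\rho(T)=\rho_T(\emptyset)+1$ for nonempty $T$, which is recorded just above the statement.

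The key step is to show that for every $r\in T_s$ we have $\rho_{T_s}(r)=\rho_T(s\conc r)$; informally, the map $r\mapsto s\conc r$ is a tree isomorphism from $T_s$ onto the set of nodes of $T$ extending $s$, and this isomorphism preserves the nodewise rank. First I would record the two structural facts that make the two recursions agree: by associativity of concatenation, $r\conc n\in T_s$ if and only if $(s\conc r)\conc n\in T$, so $r$ has exactly the same indexed family of children in $T_s$ as $s\conc r$ has in $T$, and in particular $r$ is terminal in $T_s$ precisely when $s\conc r$ is terminal in $T$. I would then prove the identity by transfinite induction on the ordinal $\rho_T(s\conc r)$; this is legitimate because each child $(s\conc r)\conc n\in T$ satisfies $\rho_T((s\conc r)\conc n)<\rho_T(s\conc r)$, a strict decrease built into the definition of $\rho_T$ at a non-terminal node, and well-foundedness of $T$ guarantees that every node carries such an ordinal rank. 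In the base case $s\conc r$ is terminal, so $r$ is terminal in $T_s$ and both ranks are $0$; in the inductive step I substitute the induction hypothesis $\rho_{T_s}(r\conc n)=\rho_T((s\conc r)\conc n)$ into the defining supremum for $\rho_{T_s}(r)$ and invoke the children-bijection to recover the defining supremum for $\rho_T(s\conc r)$.

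Part (1) is then immediate: taking $r=\emptyset$, which lies in $T_s$ because $s\conc\emptyset=s\in T$, gives $\rho_{T_s}(\emptyset)=\rho_T(s)$, and since $T_s\neq\emptyset$ the stated identity applied to $T_s$ reads $\rho(T_s)=\rho_{T_s}(\emptyset)+1$, whence $\rho(T_s)=\rho_T(s)+1$. For part (2) I would apply $\rho(T)=\rho_T(\emptyset)+1$, unfold $\rho_T(\emptyset)=\sup\{\rho_T(i)+1\mid i\in\Nb,\ (i)\in T\}$ from the definition of the nodewise rank at the root (interpreting this supremum over the empty set as $0$ in the degenerate case where $\emptyset$ is terminal, which matches $\rho(T)=1$), and then rewrite each $\rho_T(i)+1=\rho(T_i)$ using part (1). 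This yields $\rho(T)=\sup\{\rho(T_i)\mid i\in T\}+1$, where, as the contradiction $\rho(T)\geq\rho(T)+1$ obtained from $T_\emptyset=T$ forces, the index $i$ must range over the length-one nodes of $T$ rather than all of $T$.

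I expect no genuine obstacle here: the content is a routine transfinite induction packaged into the shift lemma. The only points demanding care are bookkeeping ones, namely handling the terminal-node and empty-supremum degenerate cases so the displayed formulas remain valid, correctly using associativity of concatenation so that the children of $r$ in $T_s$ biject with those of $s\conc r$ in $T$, and running the induction on the genuinely decreasing measure $\rho_T(s\conc r)$ rather than attempting a circular induction on $\rho_{T_s}$ itself.
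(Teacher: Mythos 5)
Your proof is correct: the shift lemma $\rho_{T_s}(r)=\rho_T(s\conc r)$, established by transfinite induction on $\rho_T(s\conc r)$, combined with the identity $\rho(T)=\rho_T(\emptyset)+1$, is exactly the routine argument the paper has in mind when it states this as an Observation without proof. Your attention to the degenerate cases and to reading $i\in T$ as ranging over the length-one nodes matches the paper's conventions, so there is nothing to add.
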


\subsection{Decomposition trees}

Fix $G$ a t.d.l.c.s.c. group, $U\in \Uc(G)$, and $D:=\{d_n\}_{n\in \Nb}$ selector functions for $F(G)$. For $H\sleq G$ and $n\in \Nb$, define
\[
R^{(U,D)}_n(H):=\grp{U\cap H,d_0(H),\dots,d_{n}(H)}
\]
where the $d_i$ are selector functions from $D$. We now define a tree $T_{(U,D)}(G)$ and associated subgroups of $G$. Put

\begin{enumerate}[$\bullet$]
\item $\emptyset\in T_{(U,D)}(G)$ and $G_{\emptyset}:=G$.
\item Suppose we have defined $s\in T_{(U,D)}(G)$ and $G_{s}\sleq G$. Put $s\conc n\in T_{(U,D)}(G)$ and 
\[
G_{s\conc n}:=\Res{}\left(R^{(U,D)}_n(G_s)\right)
\]
if and only if $G_s\neq \{1\}$. 
\end{enumerate}

We call $T_{(U,D)}(G)$ the \textbf{decomposition tree} of $G$ with respect to $U$ and $D$. This tree is always non-empty, and any terminal node corresponds to the trivial group. We make one further observation; the proof is straightforward and, therefore, omitted.
 
\begin{obs}\label{obs:key_drank}
For any $s\in T_{(U,D)}(G)$, $T_{(U,D)}(G)_s=T_{(G_s\cap U, D)}(G_s)$. Further, for $r\in T_{(G_s\cap U, D)}(G_s)$, the associated subgroup $(G_s)_r$ is the same as the subgroup $G_{s\conc r}$ associated to $s\conc r\in T_{(U,D)}(G)$.
\end{obs}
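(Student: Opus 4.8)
The plan is to unwind the two recursive definitions and match them node by node via an induction on the length of $r$. First I would record the well-definedness of the right-hand object. Since $G_s$ is either $G$ (when $s=\emptyset$) or an iterated discrete residual, it is a closed subgroup of $G$, hence a t.d.l.c.s.c. group in its own right; moreover $G_s\cap U\in\Uc(G_s)$, and (per the abuse of notation recorded in the preliminaries, where selectors for $F(X)$ restrict to selectors for $F(Y)$ when $Y$ is closed in $X$) $D$ serves as a set of selector functions for $F(G_s)$. Thus $T_{(G_s\cap U,D)}(G_s)$ and its associated subgroups $(G_s)_r$ are genuinely defined. I would also note, by a trivial induction on $|r|$, that every $G_{s\conc r}$ lies inside $G_s$: this holds for $r=\emptyset$, and it is preserved under $R^{(U,D)}_n(\cdot)$ (whose generators $U\cap G_{s\conc r}$ and $d_i(G_{s\conc r})$ all lie in $G_{s\conc r}\sleq G_s$) and under $\Res{}(\cdot)$.

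The crux is a single identity. For any $H\sleq G_s$ one has $U\cap H=(G_s\cap U)\cap H$, because $H\sleq G_s$ forces $U\cap H\sleq G_s$. Since $D$ is the same family of selector functions in both trees and $G_{s\conc r}\in F(G_s)\subseteq F(G)$, this gives
\[
R^{(U,D)}_n(H) = \grp{U\cap H, d_0(H), \dots, d_n(H)} = \grp{(G_s\cap U)\cap H, d_0(H),\dots,d_n(H)} = R^{(G_s\cap U,D)}_n(H)
\]
for every $H\sleq G_s$ and every $n$. In words, the subgroup-building operator $R_n$ does not see the difference between $U$ and $G_s\cap U$ when applied inside $G_s$. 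This is the one genuinely substantive point; everything else is bookkeeping.

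With the identity in hand I would prove both assertions simultaneously by induction on $|r|$: that $r\in T_{(U,D)}(G)_s$ if and only if $r\in T_{(G_s\cap U,D)}(G_s)$, and that in this case $(G_s)_r=G_{s\conc r}$. The base case $r=\emptyset$ is immediate, as both trees contain the empty node and $(G_s)_\emptyset=G_s=G_{s\conc\emptyset}$. For the inductive step write $r=r'\conc n$. Membership of $r$ in either tree requires that its predecessor $r'$ already belong to that tree, so by the induction hypothesis the two memberships of $r'$ agree and $(G_s)_{r'}=G_{s\conc r'}$. By the tree construction $r'\conc n$ is adjoined to either tree precisely when the corresponding $r'$-subgroup is non-trivial; since those subgroups coincide, the two memberships of $r$ again agree. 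Finally, when $r$ lies in both trees,
\[
G_{s\conc r'\conc n} = \Res{}\bigl(R^{(U,D)}_n(G_{s\conc r'})\bigr) = \Res{}\bigl(R^{(G_s\cap U, D)}_n((G_s)_{r'})\bigr) = (G_s)_{r'\conc n},
\]
where the middle equality uses $G_{s\conc r'}=(G_s)_{r'}\sleq G_s$ together with the displayed identity. This closes the induction and yields both parts of the observation.

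I expect no serious obstacle here; the only care needed is in setting up the simultaneous induction cleanly, tracking node-memberships and subgroup equalities together, and in verifying the elementary identity $U\cap H=(G_s\cap U)\cap H$ for $H\sleq G_s$, which is exactly what makes the two $R_n$ operators agree inside $G_s$.
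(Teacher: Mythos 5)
Your proof is correct; the paper omits the argument as ``straightforward,'' and what you give is exactly the intended one: the identity $U\cap H=(G_s\cap U)\cap H$ for $H\sleq G_s$ (together with the fact that $D$ restricts to selectors for $F(G_s)$) makes the two $R_n$ operators coincide inside $G_s$, and the simultaneous induction on $|r|$ then transfers both node-membership and the associated subgroups. No gaps.
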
 

We now characterize elementary groups in terms of decomposition trees. This requires a technical result.

\begin{lem}\label{lem:wf}
Suppose $G\in \Es$ and $T_{(U,D)}(G)$ is the decomposition tree for $G$ with respect to $U\in \Uc(G)$ and $D$ a set of selector functions for $F(G)$. If $\rk(G)\sleq \beta+m$ for $\beta$ a limit ordinal and $m\sgeq 1$ a finite ordinal, then $\rk(G_s)\sleq \beta$ for all $s\in T_{(U,D)}(G)$ with $|s|\sgeq m$.
\end{lem}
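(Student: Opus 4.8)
The plan is to prove the statement by induction on the length $|s|$, reducing the entire combinatorial content to a single descent fact about discrete residuals. The mechanism is the self-similarity recorded in Observation~\ref{obs:key_drank}: since $T_{(U,D)}(G)_s = T_{(G_s\cap U,D)}(G_s)$ with matching associated subgroups, it suffices to understand one step of the recursion $G_{s\frown n} = \Res{}(R^{(U,D)}_n(G_s))$ and then iterate. Writing $H := R^{(U,D)}_n(G_s)$, observe that $H$ contains $U\cap G_s$ and so is open in $G_s$, and that $H$ is compactly generated (generated by the compact set $U\cap G_s$ together with finitely many points). Hence Proposition~\ref{prop:open_sgrp} gives $\rk(H)\sleq\rk(G_s)$, and the only thing left to control is how far $\Res{}$ drops the rank.

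Concretely, I would isolate the following \emph{descent claim}: if $H$ is compactly generated and elementary with $\rk(H)\sleq\beta+k$ for $\beta$ a limit ordinal and $k\sgeq 0$ finite, then $\rk(\Res{}(H))\sleq\beta+\max(k-1,0)$. Granting this, the induction is immediate: one shows $\rk(G_s)\sleq\beta+\max(m-|s|,0)$ by induction on $|s|$ (the base case $|s|=0$ being the hypothesis), since the descent claim applied to $H=R^{(U,D)}_n(G_s)$ together with $\rk(H)\sleq\rk(G_s)$ carries the bound from $s$ to each child $s\frown n$; at $|s|\sgeq m$ this reads $\rk(G_s)\sleq\beta$, which is the lemma. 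To prove the descent claim I would invoke the basic observation on the construction rank: if $\rk(H)>0$ and $H$ is compactly generated, there is $N\trianglelefteq H$ with $\rk(N)<\rk(H)$ and $H/N$ profinite or discrete. The crucial structural point is that $\Res{}(H)\sleq N$ in either case — if $H/N$ is discrete then $N$ is open normal and $\Res{}(H)$, lying in every open normal subgroup, lies in $N$; if $H/N$ is profinite then the open normal subgroups of $H$ containing $N$ intersect to $N$ by residual finiteness of $H/N$, again forcing $\Res{}(H)\sleq N$. Thus $\Res{}(H)$ is a closed subgroup of a group of rank $\rk(N)<\rk(H)$.

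The crux — and the step where the limit hypothesis on $\beta$ is indispensable — is converting ``$\Res{}(H)$ sits inside a normal subgroup $N$ of strictly smaller rank'' into the numerical bound, since rank is \emph{not} monotone under arbitrary closed subgroups (Remark~\ref{rmk:construction_rank}). When $\rk(N)<\beta$, Corollary~\ref{cor:sgrp_rank} applies directly and yields $\rk(\Res{}(H))<\beta$. This already settles the case $k\sleq 1$ cleanly: when $k=1$ we have $\rk(N)<\rk(H)\sleq\beta+1$, so $\rk(N)\sleq\beta$, and since ranks are $0$ or successors while $\beta$ is a limit, in fact $\rk(N)<\beta$; the case $k=0$ is similar. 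The genuine difficulty is the regime $k\sgeq 2$, where $\rk(N)$ may satisfy $\beta\sleq\rk(N)\sleq\beta+(k-1)$ and Corollary~\ref{cor:sgrp_rank} no longer closes the gap above the limit.

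For that regime I anticipate needing a dedicated rank estimate for the discrete residual rather than the tree recursion itself, and I expect this to be the main obstacle. My intended route is to exploit the sandwich $\Res{}(N)\sleq\Res{}(H)\sleq N$ — the left inclusion holding because $N/\Res{}(H)$, a closed subgroup of the residually discrete group $H/\Res{}(H)$, is itself residually discrete — so that the quotient $\Res{}(H)/\Res{}(N)$ is residually discrete and hence of rank $\sleq 2$ by Lemma~\ref{lem:basic_E_lemmas}, and then to run an inner induction on $\rk(H)$ controlling $\rk(\Res{}(N))$. The delicate point will be absorbing the bounded overheads produced by Proposition~\ref{prop:grp_ext} into the finite offset without overshooting $\beta+(k-1)$; it is precisely here that the limit structure of $\beta$ (which swallows all finite additive error once one drops below it) must be combined with the strict rank decrease supplied by the construction-rank observation.
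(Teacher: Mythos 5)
Your reduction of the lemma to a single ``descent claim'' ($H$ compactly generated, $\rk(H)\sleq\beta+k$ implies $\rk(\Res{}(H))\sleq\beta+\max(k-1,0)$) is where the argument breaks. For $k\sleq 1$ your reasoning is fine, but for $k\sgeq 2$ the claim is left unproven, and the route you sketch cannot close it. From $\Res{}(N)\sleq\Res{}(H)\sleq N$ with $\Res{}(H)/\Res{}(N)$ residually discrete (hence of rank $\sleq 2$ by Lemma~\ref{lem:basic_E_lemmas}), Proposition~\ref{prop:grp_ext} gives $\rk(\Res{}(H))\sleq\rk(\Res{}(N))+3$; even an optimal inner induction yielding $\rk(\Res{}(N))\sleq\beta+(k-2)$ then produces $\beta+k+1$, which overshoots the target $\beta+(k-1)$ rather than meeting it. There is no mechanism in the construction-rank calculus to absorb this overhead above the limit $\beta$: the only tools available are Theorem~\ref{thm:closure_sgrp} (which multiplies ranks by $3$) and Corollary~\ref{cor:sgrp_rank} (which only applies strictly below $\beta$). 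The exact one-step descent you want is essentially the statement $\xi(\Res{}(H))=\xi(H)-1$ of Lemma~\ref{lem:xi_union}, but that machinery is built \emph{on top of} the present lemma (via the well-foundedness of decomposition trees), so invoking anything of that kind here would be circular.

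The paper sidesteps the problem by never bounding $\rk(G_s)$ at intermediate levels of the tree. It strengthens the induction hypothesis to: if $H\sleq O\sleq G$ with $\rk(O)\sleq\beta+m$, then $\rk(H_s)\sleq\beta$ for $|s|\sgeq m$, and inducts on $m$. One step down the tree does not decrease the rank of the node's group; it decreases the rank of an \emph{ambient} group containing it, via the containment $H_{s(0)}=\Res{}\left(R^{(H\cap U,D)}_{s(0)}(H_{\emptyset})\right)\sleq\Res{}(O)\sleq L$, where $L$ is either the normal subgroup witnessing $\rk(O)$ as an extension or an open term of a union exhausting $O$ (the latter capturing the compactly generated subgroup $R^{(H\cap U,D)}_{s(0)}(H_{\emptyset})$); in either case $\rk(L)\sleq\beta+(m-1)$. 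The actual rank of $H_s$ is computed only at the last level, where the ambient rank has dropped to $\sleq\beta+1$ and Corollary~\ref{cor:sgrp_rank} applies. If you reorganize your induction to carry this ambient group rather than a direct bound on $\rk(G_s)$, the troublesome descent claim disappears entirely.
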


\begin{proof}
We induct on $m$ for a stronger hypothesis: Suppose $H\sleq G$ and form $T_{(H\cap U,D)}(H)$. If there is $O\sleq G$ with $H\sleq O\sleq G$ so that $\rk(O)\sleq \beta+m$ with $\beta$ some limit ordinal and $m\sgeq 1$ a finite ordinal, then $\rk(H_s)\leq \beta$ for any $s\in T_{(H\cap U,D)}(H)$ with $|s|\sgeq m$. In view of Corollary~\rm\ref{cor:sgrp_rank} and the fact that $H_{s\conc n}\sleq H_s$, it suffices to show the induction hypothesis for $s$ such that $|s|=m$. \par

\indent For the base case, suppose $\rk(O)\sleq \beta+1$ and fix $i\in \Nb$. We now have two cases: $\beta=0$ and $\beta$ a non-zero limit ordinal. If $\beta$ is a non-zero limit ordinal, then either $\rk(O)<\beta$ or $\rk(O)=\beta+1$. In the latter case, $O$ is an increasing union of open subgroups of rank strictly less than $\beta$ since $\beta$ is a limit ordinal. In either case, we may thus write $O=\bigcup_{j\in \omega}L_j$ with $(L_j)_{j\in \omega}$ an $\subseteq$-increasing sequence of open subgroups with $\rk(L_j)<\beta$ for each $j\in \omega$. Since $R^{(H\cap U,D)}_i(H_{\emptyset})$ is compactly generated, $R^{(H\cap U, D)}_i(H_{\emptyset})\sleq L_j$ for some $j$ and, therefore, has rank strictly less than $\beta$ via Corollary~\rm\ref{cor:sgrp_rank}. A second application of Corollary~\rm\ref{cor:sgrp_rank} now gives that $\rk(H_i)<\beta$. \par

\indent Suppose $\beta=0$. If $O$ has construction rank zero, then we are done, so we suppose $\rk(O)=1$. If $O$ is an increasing union of open subgroups of rank zero, then $R^{(H\cap U, D)}_i(H_{\emptyset})$ has rank zero since compactly generated, and therefore, $H_i$ has rank zero as required. Suppose $O$ is a group extension of rank zero groups; say $L\trianglelefteq O$ is such that $\rk(L),\rk(O/L)$ are zero. We see here that
\[
H_i=\Res{}\left(R^{(H\cap U, D)}_i(H_{\emptyset})\right)\sleq \Res{}(O)\sleq L,
\]
hence $\rk(H_i)=0$. We have thus verified the base case. \par

\indent Now suppose $\rk(O)\sleq\beta+m+1$ and fix $s\in T_{(H\cap U,D)}(H)$ with $|s|=m+1$. If $\rk(O)$ is given by an increasing union, there is $L\sleq_o O$ such that $R^{(H\cap U,D)}_{s(0)}(H_{\emptyset})\sleq L$ and $\rk(L)\sleq \beta+m$. Therefore, $H_{s(0)}\sleq L$ with $\rk(L)\sleq \beta+m$, so the induction hypothesis implies $\rk((H_{s(0)})_r)\sleq \beta$ for 
\[
r=(s(1),\dots,s(m))\in T_{(H_{s(0)}\cap U,D)}(H_{s(0)}).
\]
In view of Observation~\rm\ref{obs:key_drank}, we conclude that $(H_{s(0)})_r=H_s$, so the induction claim holds when $\rk(O)$ is given by an increasing union. \par

\indent Suppose $\rk(O)$ is given by a group extension; say $L\trianglelefteq O$ is such that $\rk(L)\sleq \beta +m$ and $O/L$ is either discrete or profinite. Then,
\[
H_{s(0)}=\Res{}\left(R^{(H\cap U,D)}_{s(0)}(H_{\emptyset})\right)\sleq \Res{}(O)\sleq L.
\]
We may now apply the induction hypothesis to $H_{s(0)}\sleq L\sleq G$ to conclude that $\rk((H_{s(0)})_r)\sleq \beta$ for $r=(s(1),\dots,s(m))$. Observation~\rm\ref{obs:key_drank} implies $(H_{s(0)})_r=H_s$, and the induction is finished. \par

\indent The desired lemma is now in hand: Suppose $\rk(G)\sleq \beta+m$ for $\beta$ a limit ordinal and $m\sgeq 1$ a finite ordinal. Applying our work above to $G=H=O$, we conclude that $\rk(G_s)\sleq \beta$ for all $s\in T_{(U,D)}(G)$ with $|s|\sgeq m$. 
\end{proof}

\begin{thm} 
Suppose $G$ is a  t.d.l.c.s.c. group, $U\in \Uc(G)$, and $D$ is a set of selector functions for $F(G)$. Then $G$ is elementary if and only if $T_{(U,D)}(G)$ is well-founded.
\end{thm}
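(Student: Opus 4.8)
The plan is to prove both implications by transfinite induction: the forward direction (elementary $\Rightarrow$ well-founded) on the construction rank $\rk(G)$, and the reverse direction (well-founded $\Rightarrow$ elementary) on the node rank $\rho_T$ of the tree. Throughout I would use the standing structural facts about $R^{(U,D)}_n(G_s)$: it is a compactly generated \emph{open} subgroup of $G_s$ (it contains the compact open subgroup $U\cap G_s$), the family $\left(R^{(U,D)}_n(G_s)\right)_{n\in\omega}$ is $\subseteq$-increasing, and $G_s=\bigcup_{n}R^{(U,D)}_n(G_s)$ (the selectors yield a dense subset, so this union is open, hence closed, hence all of $G_s$).

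For the forward direction I would induct on $\rk(G)$. When $\rk(G)=0$ the group $G$ is profinite or discrete, so each $R^{(U,D)}_n(G)$ is itself profinite or discrete and therefore has trivial discrete residual; thus every child $G_n$ is trivial and $T_{(U,D)}(G)$ has height at most two. When $\rk(G)>0$ the decisive point is that, by the earlier Observation, the construction rank is never a nonzero limit ordinal, so $\rk(G)$ is a successor and may be written $\rk(G)=\beta+m$ with $\beta$ a limit ordinal and $m\sgeq 1$ finite. Lemma~\ref{lem:wf} then gives $\rk(G_s)\sleq\beta<\rk(G)$ for every node $s$ with $|s|=m$. By Observation~\ref{obs:key_drank} the subtree $T_{(U,D)}(G)_s$ equals $T_{(G_s\cap U,D)}(G_s)$, the decomposition tree of the elementary group $G_s$ of strictly smaller rank, so it is well-founded by the induction hypothesis. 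As any infinite branch of $T_{(U,D)}(G)$ would pass through a node of length $m$ and continue as an infinite branch of that node's subtree, $T_{(U,D)}(G)$ is well-founded.

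For the reverse direction, assuming $T_{(U,D)}(G)$ well-founded, I would prove by induction on $\rho_T(s)$ that $G_s\in\Es$ for every $s\in T_{(U,D)}(G)$; applying this to $s=\emptyset$ yields $G\in\Es$. A terminal node has $\rho_T(s)=0$ and $G_s=\{1\}\in\Es$. For non-terminal $s$, each child $s\conc n$ has strictly smaller node rank, so $G_{s\conc n}\in\Es$ by the induction hypothesis. Now $R^{(U,D)}_n(G_s)/G_{s\conc n}=R^{(U,D)}_n(G_s)/\Res{}\left(R^{(U,D)}_n(G_s)\right)$ is residually discrete and hence elementary by Lemma~\ref{lem:basic_E_lemmas}(1); since $G_{s\conc n}=\Res{}\left(R^{(U,D)}_n(G_s)\right)$ is a closed normal subgroup, Proposition~\ref{prop:grp_ext} (closure under extension) gives $R^{(U,D)}_n(G_s)\in\Es$. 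Finally $G_s=\bigcup_n R^{(U,D)}_n(G_s)$ is an $\subseteq$-increasing union of open elementary subgroups, so clause (iii) of the definition of $\Es$ shows $G_s\in\Es$, completing the induction.

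The main obstacle is the forward direction, and it is essentially discharged by the already-established Lemma~\ref{lem:wf}; the delicate part is the ordinal bookkeeping guaranteeing a \emph{strict} drop in rank after finitely many levels. This succeeds precisely because the construction rank is never a nonzero limit, which lets us take $m\sgeq 1$ in $\rk(G)=\beta+m$ and hence conclude $\beta<\rk(G)$. Without this feature the bound $\rk(G_s)\sleq\beta$ supplied by Lemma~\ref{lem:wf} would only yield $\rk(G_s)\sleq\rk(G)$ and the induction would fail to terminate. By comparison the reverse direction is routine, its one genuine input being the identification of $R^{(U,D)}_n(G_s)/\Res{}\left(R^{(U,D)}_n(G_s)\right)$ as an elementary (residually discrete) group.
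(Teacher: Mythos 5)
Your proposal is correct and follows essentially the same route as the paper: the forward direction by induction on the construction rank via Lemma~\ref{lem:wf} and the decomposition $\rk(G)=\beta+m$, and the reverse direction by induction on the tree rank using that $R^{(U,D)}_n(G_s)/\Res{}\bigl(R^{(U,D)}_n(G_s)\bigr)$ is residually discrete together with closure under extensions and increasing unions. The only cosmetic differences are that you induct on the node rank within a single tree rather than on $\rho(T_{(U,D)}(G))$ across all groups, and you cite Lemma~\ref{lem:basic_E_lemmas}(1) where the paper invokes the SIN property of that quotient; both are equivalent.
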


\begin{proof} Suppose $G\in \Es$; we argue by induction on $\rk(G)$ that $T_{(U,D)}(G)$ is well-founded simultaneously for all $G$, $U$, and $D$. For the base case, $\rk(G)=0$, $G$ is residually discrete. Each element in $T_{(U,D)}(G)$ therefore has length at most $2$, and $T_{(U,D)}(G)$ is well-founded.\par

\indent Suppose $\rk(G)=\alpha+1$. We may write $\rk(G)=\beta+m$ for $\beta\sleq\alpha$ some limit ordinal and $m\sgeq 1$ a finite ordinal. Applying Lemma~\rm\ref{lem:wf}, $\rk(G_s)\sleq \beta$ for all $s\in T_{(U,D)}(G)$ with $|s|=m$. The induction hypothesis now implies $T_{(G_s\cap U,D)}(G_s)$ is well-founded, and from Observation~\rm\ref{obs:key_drank}, we infer $T_{(U,D)}(G)_s$ is well-founded. Since any infinite branch of $T_{(U,D)}(G)$ gives an infinite branch in $T_{(U,D)}(G)_s$ for $s$ the initial segment of length $m$, $T_{(U,D)}(G)$ must be well-founded. This completes the induction, and we conclude the forward implication.\\
\\
\indent Suppose $T_{(U,D)}(G)$ is well-founded. We induct on $\rho(T_{(U,D)}(G))$ simultaneously for all $G$, $U\in \Uc(G)$, and $D$ selector functions for $F(G)$ for the claim that $G\in \Es$. If $\rho(T_{(U,D)}(G))=1$, then $G=\{1\}$ and is obviously elementary. Suppose $\rho(T_{(U,D)}(G))=\beta+1$. Observation~\rm\ref{obs:wf_rank} and Observation~\rm\ref{obs:key_drank} imply $\rho(T_{(G_i\cap U, D)}(G_i))\sleq \beta$ for each $i\in \Nb$, so the induction hypothesis implies $G_i\in \Es$ for each $i$. Since $R^{(U,D)}_i(G)/G_i$ is a SIN group, we further have that $R_i^{(U,D)}(G)\in \Es$ for each $i$, and since the $(R_i^{(U,D)}(G))_{i\in \Nb}$ is an $\subseteq$-increasing exhaustion of $G$, we conclude that $G\in \Es$. The induction is now finished, and we conclude the reverse implication.
\end{proof}

\subsection{The decomposition rank}
The rank of a decomposition tree $T_{(U,D)}(G)$ as a well-founded tree gives another rank on $\Es$. To verify this, we need to show the rank is independent of our choice of $U$ and $D$. \par

\begin{prop}\label{prop:drank} 
Suppose $G\in \Es$, $U\in \Uc(G)$, and $D$ is a set of selector functions for $F(G)$. Suppose additionally, $H\in \Es$, $W\in \Uc(H)$, and $C$ is a set of selector functions for $F(H)$. If $\psi:H\rightarrow G$ is a continuous, injective homomorphism, then 
\[
\rho(T_{(W,C)}(H))\sleq \rho (T_{(U,D)}(G)).
\]
In particular, for $G\in \Es$, $U$ and $V$ in $\Uc(G)$, and $D$ and $C$ sets of selector functions for $F(G)$, $\rho (T_{(U,D)}(G))=\rho (T_{(V,C)}(G))$.
\end{prop}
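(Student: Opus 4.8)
The plan is to prove the displayed inequality by transfinite induction on $\rho(T_{(U,D)}(G))$, the rank of the \emph{target} tree, carried out simultaneously over all elementary $H$, all $W\in\Uc(H)$, all selector sets $C$ for $F(H)$, and all continuous injective $\psi\colon H\to G$. The ``in particular'' clause is then immediate: applying the inequality to $\mathrm{id}_G\colon G\to G$ with data $(V,C)$ on the source and $(U,D)$ on the target yields $\rho(T_{(V,C)}(G))\sleq\rho(T_{(U,D)}(G))$, and reversing the roles of the two data sets gives the reverse inequality, hence equality.

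The heart of the matter is a transfer step: for each $n\in\Nb$ I would produce an $m\in\Nb$ with $\psi(H_n)\sleq G_m$, where $H_n$ and $G_m$ are the node subgroups attached to the length-one nodes $n$ and $m$ of the respective trees. To find $m$, set $K:=R^{(W,C)}_n(H)=\grp{W,c_0(H),\dots,c_n(H)}$, a compactly generated clopen subgroup of $H$, so that $\ol{\psi(K)}=\cgrp{\psi(W),\psi(c_0(H)),\dots,\psi(c_n(H))}$ with $\psi(W)$ compact. Since $\{d_j(G)\}_{j\in\Nb}$ is dense in $G$ and $U$ is open, each point $\psi(c_i(H))$ lies in some coset $d_j(G)U$, and the compact set $\psi(W)$ is covered by finitely many such cosets; taking $m$ to be the largest index appearing, this generating set lies in the closed subgroup $R^{(U,D)}_m(G)$, whence $\ol{\psi(K)}\sleq R^{(U,D)}_m(G)=:L$. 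The second ingredient is functoriality of the discrete residual: for any continuous homomorphism $\phi\colon A\to B$ one has $\phi(\Res{}(A))\sleq\Res{}(B)$, because the preimage of each open normal subgroup of $B$ is open and normal in $A$. Applying this to $\psi\colon K\to L$ gives $\psi(H_n)=\psi(\Res{}(K))\sleq\Res{}(L)=G_m$, as desired. The restriction $\psi\rest_{H_n}\colon H_n\to G_m$ is again a continuous injective homomorphism between elementary groups, closed subgroups of elementary groups being elementary by Theorem~\ref{thm:closure_sgrp}.

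With the transfer step in hand the induction assembles routinely. For the base case $\rho(T_{(U,D)}(G))=1$ we have $G=\{1\}$, so $H=\{1\}$ by injectivity and the inequality is trivial. For the inductive step we may assume $G\neq\{1\}$, and if $H=\{1\}$ the claim is again trivial, so assume $H\neq\{1\}$. Observation~\ref{obs:wf_rank} and Observation~\ref{obs:key_drank} give $\rho(T_{(W,C)}(H))=\sup_{n}\rho(T_{(H_n\cap W,C)}(H_n))+1$ and the analogous identity for $G$, and they identify the subtree at a length-one node $m$ with the decomposition tree of $G_m$, whose rank is strictly below $\rho(T_{(U,D)}(G))$. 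Applying the induction hypothesis to $\psi\rest_{H_n}\colon H_n\to G_m$ with the inherited data $H_n\cap W$, $G_m\cap U$, $C$, and $D$ yields $\rho(T_{(H_n\cap W,C)}(H_n))\sleq\rho(T_{(G_m\cap U,D)}(G_m))$, and since the chosen index $m=m(n)$ ranges within $\Nb$, taking the supremum over $n$ and adding $1$ gives $\rho(T_{(W,C)}(H))\sleq\rho(T_{(U,D)}(G))$.

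The main obstacle is precisely the transfer step, and within it the passage through the discrete residual: $\Res{}(\cdot)$ is \emph{not} monotone under inclusion of closed subgroups, so one cannot simply combine $\ol{\psi(K)}\sleq L$ with an inclusion of residuals. It is the functoriality of $\Res{}$ along the continuous homomorphism $\psi\colon K\to L$, rather than monotonicity, that bridges the two trees; pinning down the correct index $m$ via density of the selector values together with compactness of $\psi(W)$ is the other delicate point.
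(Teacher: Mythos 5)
Your proposal is correct and follows essentially the same route as the paper: induction on the rank of the target tree, transferring each node $H_n$ into some $G_{n(i)}$ via compact generation of $R^{(W,C)}_n(H)$ and the fact that continuous homomorphisms carry discrete residuals into discrete residuals, then concluding with Observations~\ref{obs:wf_rank} and~\ref{obs:key_drank}. The only difference is that you spell out two steps the paper leaves implicit (the coset-covering argument locating the index $m$, and the functoriality of $\Res{}$), both of which are verified correctly.
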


\begin{proof} We induct on $\rho(T_{(U,D)}(G))$ simultaneously for all $G\in \Es$, $U\in \Uc(G)$, and $D$ a set of selector functions for $F(G)$. The base case is obvious since $\rho(T_{(U,D)}(G))=1$ implies $G=\{1\}$.\par

\indent Suppose $\rho(T_{(U,D)}(G))=\beta+1$. For each $i$, $R^{(W,C)}_i(H)$ is compactly generated, so there is $n(i)$ with $\psi\left(R^{(W,C)}_i(H)\right)\sleq R^{(U,D)}_{n(i)}(G)$. We thus have that
\[
\psi(H_i)=\psi\left(\Res{}\left(R^{(W,C)}_i(H)\right)\right)\sleq \Res{}\left(R^{(U,D)}_{n(i)}(G)\right)=:G_{n(i)}.
\]
The map $\psi$ thereby restricts to $\psi:H_i\rightarrow G_{n(i)}$, and Observation~\rm\ref{obs:wf_rank} and Observation~\rm\ref{obs:key_drank} imply
\[
\rho\left(T_{(G_{n(i)}\cap U,D)}(G_{n(i)})\right)=\rho\left(T_{(U,D)}(G)_i\right)\sleq \beta.
\]
Applying the induction hypothesis,
\[
\rho\left(T_{(H_i\cap W,C)}(H_i)\right)\sleq \rho\left(T_{(G_{n(i)}\cap U,D)}(G_{n(i)})\right).
\]
Therefore,
\[
\rho\left(T_{(W,C)}(H)\right)=\sup_{i\in \omega}\rho\left(T_{(H_i\cap W,C)}(H_i)\right)+1\sleq \sup_{i\in \omega}\rho\left(T_{(G_{n(i)}\cap U,D)}(G_{n(i)})\right)+1\sleq\rho\left(T_{(U,D)}(G)\right).
\]
This finishes the induction, and we conclude the proposition.
\end{proof}

\begin{rmk} The inequality $\sup_{i\in \omega}\rho(T_{(G_{n(i)}\cap U,D)}(G_{n(i)}))+1\sleq\rho(T_{(U,D)}(G))$ appearing above is an equality provided $n(i)$ is unbounded as $i\rightarrow \infty$. Indeed, if $n(i)$ is unbounded, then for all $k\in \omega$, there is $i$ so that $k\leq n(i)$. It follows $G_k\hookrightarrow G_{n(i)}$, so appealing to Proposition~\rm\ref{prop:drank}, $\rho(T_{(G_{k}\cap U,D)}(G_{k}))\leq \rho(T_{(G_{n(i)}\cap U,D)}(G_{n(i)}))$. Therefore,
\[
\rho(T_{(U,D)}(G))=\sup_{k\in \omega}\rho\left(T_{(G_{k}\cap U,D)}(G_{k})\right)+1\leq \sup_{i\in \omega}\rho\left(T_{(G_{n(i)}\cap U,D)}(G_{n(i)})\right)+1
\]
implying equality holds.
\end{rmk}

We define the \textbf{decomposition rank} of $G\in \Es$ to be
\[
\xi(G):=\rho(T_{(U,D)}(G))
\] 
for some $U\in \Uc(G)$ and $D$ a set of selector functions for $F(G)$. By Proposition~\rm\ref{prop:drank}, $\xi(G)$ is independent of the choice of $U$ and $D$. \par

\indent Applying Proposition~\rm\ref{prop:drank} again, we see the decomposition rank is indeed a group invariant.

\begin{cor}\label{cor:xi_monotone}
If $G\in \Es$, $H$ is a t.d.l.c.s.c. group, and $\psi:H\rightarrow G$ is a continuous, injective homomorphism, then $\xi(H)\sleq \xi(G)$. In particular, if $G\in \Es$ and $G\simeq H$, then $\xi(G)=\xi(H)$.
\end{cor}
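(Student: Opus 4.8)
The plan is to treat this as an almost immediate consequence of Proposition~\ref{prop:drank}, the only genuine preliminary being to verify that $\xi(H)$ is even defined. The decomposition rank is a priori only attached to elementary groups, so before anything else I would argue that $H\in\Es$: since $G\in\Es$ and $\psi:H\rightarrow G$ is a continuous, injective homomorphism, Theorem~\ref{thm:closure_sgrp} gives $H\in\Es$. This legitimizes the symbol $\xi(H)$ and places both $G$ and $H$ squarely within the hypotheses of Proposition~\ref{prop:drank}.

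With that in hand, I would fix $U\in\Uc(G)$ and a set $D$ of selector functions for $F(G)$, together with $W\in\Uc(H)$ and a set $C$ of selector functions for $F(H)$. By definition $\xi(G)=\rho(T_{(U,D)}(G))$ and $\xi(H)=\rho(T_{(W,C)}(H))$, these values being independent of the choices by the ``in particular'' clause of Proposition~\ref{prop:drank}. Applying the main inequality of Proposition~\ref{prop:drank} to the map $\psi$ then yields
\[
\xi(H)=\rho(T_{(W,C)}(H))\sleq\rho(T_{(U,D)}(G))=\xi(G),
\]
which is precisely the asserted monotonicity.

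For the ``in particular'' statement, suppose $\psi:H\rightarrow G$ is a topological group isomorphism with $G\in\Es$. Then $\psi$ is in particular a continuous, injective homomorphism, so the first part gives $\xi(H)\sleq\xi(G)$; and since $H\in\Es$, the inverse $\psi^{-1}:G\rightarrow H$ is likewise a continuous, injective homomorphism between elementary groups, so the first part applied again gives $\xi(G)\sleq\xi(H)$. Combining the two inequalities yields $\xi(G)=\xi(H)$.

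I do not expect any real obstacle: the entire substance is carried by Proposition~\ref{prop:drank} (which already performs the induction on $\rho(T_{(U,D)}(G))$ comparing decomposition trees across a continuous injection) and by Theorem~\ref{thm:closure_sgrp}. The single point requiring care is the \emph{order} of invocation—one must first cite Theorem~\ref{thm:closure_sgrp} to guarantee $H\in\Es$, so that $\xi(H)$ makes sense, before quoting the rank comparison.
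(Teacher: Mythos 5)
Your proof is correct and matches the paper's: the corollary is stated there as an immediate application of Proposition~\ref{prop:drank}, with Theorem~\ref{thm:closure_sgrp} implicitly supplying $H\in\Es$ so that $\xi(H)$ is defined. Your explicit attention to the order of invocation is a sound reading of what the paper leaves tacit.
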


\indent We note another important feature of the decomposition rank.

\begin{obs}\label{obs:res_dis_ele}
Suppose $G$ is a non-trivial t.d.l.c.s.c. group and $(O_i)_{i\in \omega}$ is an increasing exhaustion of $G$ by open subgroups. If each $O_i$ is residually discrete, then $G\in \Es$ with $\xi(G)=2$, and any decomposition tree for $G$ consists of the root along with its children.
\end{obs}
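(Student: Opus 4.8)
The plan is to first place $G$ in $\Es$ by an appeal to the union axiom and then to compute \emph{any} decomposition tree of $G$ directly, reading off both $\xi(G)$ and the shape of the tree. For membership, each $O_i$ is residually discrete, hence elementary with $\rk(O_i)\sleq 2$ by Lemma~\ref{lem:basic_E_lemmas}(1). Since $(O_i)_{i\in\omega}$ is an $\subseteq$-increasing exhaustion of $G$ by \emph{open} elementary subgroups, closure of $\Es$ under countable increasing unions gives $G\in\Es$.

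The heart of the argument is the computation of the decomposition tree; fix $U\in\Uc(G)$ and a set $D$ of selector functions for $F(G)$. For each $n$, the subgroup $R^{(U,D)}_n(G)=\grp{U,d_0(G),\dots,d_n(G)}$ contains $U$, hence is open, and is generated by the compact set $U\cup\{d_0(G),\dots,d_n(G)\}$, hence is compactly generated. Because $U$ is compact and the $O_i$ are open and $\subseteq$-increasing, some $O_i$ contains $U$ together with $d_0(G),\dots,d_n(G)$, so $R^{(U,D)}_n(G)\sleq O_i$. The key subsidiary fact is that a closed subgroup of a residually discrete group is again residually discrete: if $N\trianglelefteq O_i$ is open normal, then $N\cap R^{(U,D)}_n(G)$ is open normal in $R^{(U,D)}_n(G)$, and intersecting over all such $N$ removes every non-trivial element. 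Consequently $R^{(U,D)}_n(G)$ is residually discrete, its discrete residual is trivial, and $G_n=\Res{}\left(R^{(U,D)}_n(G)\right)=\{1\}$.

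With $G_n=\{1\}$ for every $n$, the tree-building rule adds no node of the form $n\conc m$, so each child $n$ of the root is terminal and $T_{(U,D)}(G)=\{\emptyset\}\cup\{(n)\mid n\in\Nb\}$ is precisely the root together with its children; as $U$ and $D$ were arbitrary, this holds for any decomposition tree. Since $G\neq\{1\}$, the root is not terminal, so $\rho_T(n)=0$ for each child, $\rho_T(\emptyset)=\sup_{n}(\rho_T(n)+1)=1$, and therefore $\xi(G)=\rho(T_{(U,D)}(G))=\rho_T(\emptyset)+1=2$. The only step needing genuine verification---rather than bookkeeping with the tree---is the passage from residual discreteness of $O_i$ to that of the compactly generated open subgroup $R^{(U,D)}_n(G)$, and this is the short open-normal-subgroup argument indicated above.
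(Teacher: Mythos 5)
Your proof is correct and follows the evident intended argument: the paper states this as an Observation without proof, and the natural justification is exactly yours---every $R^{(U,D)}_n(G)$ lies in some $O_i$ and is therefore residually discrete, so $G_n=\Res{}\bigl(R^{(U,D)}_n(G)\bigr)=\{1\}$ for all $n$, forcing the tree to be the root with its children and $\xi(G)=\rho_T(\emptyset)+1=2$. The subsidiary fact you verify (an open normal subgroup of $O_i$ meets $R^{(U,D)}_n(G)$ in an open normal subgroup, so residual discreteness passes to subgroups) is the same mechanism the paper uses elsewhere, e.g.\ in the base case of Theorem~\ref{thm:closure_sgrp}.
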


\indent We conclude this subsection by showing the decomposition rank may be computed by taking \textit{any} countable increasing exhaustion by compactly generated open subgroups. This shows we need not find selector functions to compute the decomposition rank.

\begin{lem}\label{lem:xi_union} 
Suppose $G$ is a non-trivial elementary group. Then,
\begin{enumerate}[(1)]
\item If $G=\bigcup_{i\in \omega}O_i$ with $(O_i)_{i\in \omega}$ an $\subseteq$-increasing sequence of compactly generated open subgroups of $G$, then $\xi(G)=\sup_{i\in \omega}\xi(\Res{}(O_i))+1$. In particular, $\xi(\Res{}(H))<\xi(G)$ for any compactly generated $H\sleq G$. 
\item If $G$ is compactly generated, then $\xi(G)=\xi(\Res{}(G))+1$.
\end{enumerate}
\end{lem}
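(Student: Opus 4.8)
The plan is to read the claimed formula directly off the recursive structure of the decomposition tree and then swap the ``canonical'' exhaustion coming from selector functions for an arbitrary one. Fixing $U\in\Uc(G)$ and selector functions $D$, I would write $R_i:=R^{(U,D)}_i(G)$ and note that $(R_i)_{i\in\omega}$ is an $\subseteq$-increasing exhaustion of $G$ by compactly generated open subgroups, and that by construction the children of the root of $T_{(U,D)}(G)$ are exactly the nodes $i\in\Nb$ (as $G\neq\{1\}$) with associated subgroups $G_i=\Res{}(R_i)$. Observation~\ref{obs:wf_rank}(2) together with Observation~\ref{obs:key_drank} then yields
\[
\xi(G)=\rho(T_{(U,D)}(G))=\sup_{i\in\omega}\rho\big(T_{(G_i\cap U,D)}(G_i)\big)+1=\sup_{i\in\omega}\xi(\Res{}(R_i))+1.
\]
Thus (1) reduces to proving $\sup_{i}\xi(\Res{}(O_i))=\sup_{i}\xi(\Res{}(R_i))$ for an arbitrary exhaustion $(O_i)_{i\in\omega}$ of the prescribed form.

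The key auxiliary fact I would establish is that the discrete residual is monotone under passage to closed subgroups: if $H\sleq K$ is a closed subgroup of a t.d.l.c. group $K$, then $\Res{}(H)\sleq\Res{}(K)$. Indeed, for any $N\trianglelefteq_oK$ the subgroup $N\cap H$ is open in $H$ and is normalised by $H$ (as $H$ normalises both itself and $N$), so $\Res{}(H)\sleq N\cap H\sleq N$; intersecting over all such $N$ gives $\Res{}(H)\sleq\Res{}(K)$. With this in hand I would compare the two exhaustions by a cofinality argument: since each $R_i$ is compactly generated and $(O_j)_j$ is an open exhaustion, there is $f(i)$ with $R_i\sleq O_{f(i)}$, whence $\Res{}(R_i)\sleq\Res{}(O_{f(i)})$ and Corollary~\ref{cor:xi_monotone} gives $\xi(\Res{}(R_i))\sleq\xi(\Res{}(O_{f(i)}))\sleq\sup_j\xi(\Res{}(O_j))$. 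Taking the supremum over $i$ yields one inequality, and the reverse follows symmetrically using that each $O_i$ is compactly generated and $(R_j)_j$ exhausts $G$. This establishes the displayed formula in (1).

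For the ``in particular'' clause I would take $H\sleq G$ compactly generated and set $O:=\grp{U,H}$, a compactly generated open (hence closed) subgroup; since $(R_j)_j$ exhausts $G$, $O\sleq R_k$ for some $k$, and combining the auxiliary fact with Corollary~\ref{cor:xi_monotone} gives
\[
\xi(\Res{}(H))\sleq\xi(\Res{}(O))\sleq\xi(\Res{}(R_k))\sleq\sup_i\xi(\Res{}(R_i))<\xi(G),
\]
the final inequality being strict by the formula just proved. Part (2) is then the special case of (1) for the constant exhaustion $O_i:=G$.

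The cofinality bookkeeping and the reductions in the last paragraph are routine; the only step carrying genuine content is the residual-monotonicity fact $\Res{}(H)\sleq\Res{}(K)$, since it is exactly what lets me transfer the computation between different compactly generated exhaustions while keeping the discrete residual on \emph{both} sides (a plain subgroup inclusion $\Res{}(R_i)\sleq O_{f(i)}$ would only bound $\xi(\Res{}(R_i))$ by $\xi(O_{f(i)})$, which is too weak). I expect no obstacle beyond this, provided I check that the root-level recursion of the decomposition tree genuinely produces the ``$\sup+1$'' shape and that each $\Res{}(O_i)$ is elementary (a closed subgroup of $G\in\Es$, via Theorem~\ref{thm:closure_sgrp}) so that Corollary~\ref{cor:xi_monotone} applies.
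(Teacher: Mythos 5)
Your proposal is correct and follows essentially the same route as the paper: both compute $\xi(G)=\sup_i\xi(G_i)+1$ from the root-level recursion of the decomposition tree and then interleave the canonical exhaustion $(R_i^{(U,D)}(G))$ with the arbitrary one $(O_i)$ via compact generation, monotonicity of the discrete residual under subgroup inclusion, and Corollary~\ref{cor:xi_monotone}. The only differences are presentational: you isolate and prove the residual-monotonicity fact that the paper uses silently, and you obtain (2) from the constant exhaustion rather than by observing that an arbitrary compactly generated exhaustion stabilizes.
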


\begin{proof} For $(1)$, fix $U\in \Uc(G)$ and $D$ selector functions for $F(G)$. For each $i$, there is $n(i)$ such that $O_i\sleq R_{n(i)}^{(U,D)}(G)$ since $O_i$ is compactly generated. Therefore, 
\[
\Res{}(O_i)\sleq \Res{}\left(R_{n(i)}^{(U,D)}(G)\right)=G_{n(i)},
\]
and Corollary~\rm\ref{cor:xi_monotone} implies $\xi(\Res{}(O_i))\sleq \xi(G_{n(i)})$. We conclude that
\[
\sup_{i\in \omega}\xi(\Res{}(O_i))+1\sleq \sup_{i\in \omega}\xi(G_i)+1=\xi(G).
\]

\indent On the other hand, $(O_i)_{i\in \omega}$ is an exhaustion of $G$ by open subgroups, so for each $k$, there is $n(k)$ with $R_k^{(U,D)}(G)\sleq O_{n(k)}$. Therefore, $G_k\sleq \Res{}(O_{n(k)})$, and applying Corollary~\rm\ref{cor:xi_monotone} again,
\[
\xi(G)=\sup_{k\in \omega}\xi(G_k)+1\sleq \sup_{i\in \omega}\xi(\Res{}(O_{i}))+1.
\]
Hence, $\xi(G)=\sup_{i\in \omega}\xi(\Res{}(O_i))+1$ as required.\par

\indent For $(2)$, suppose $G$ is a non-trivial compactly generated elementary group and let $(O_i)_{i\in \omega}$ be an $\subseteq$-increasing exhaustion of $G$ by compactly generated open subgroups. Observe that $(\Res{}(O_i))_{i\in\omega}$ also forms an $\subseteq$-increasing sequence, so in view of Corollary~\rm\ref{cor:xi_monotone}, $(\xi(\Res{}(O_i)))_{i\in \omega}$ is a monotone increasing sequence of ordinals. Since $G$ is compactly generated, there is $N$ such that $O_i=G$ for all $i\sgeq N$, hence $\Res{}(O_i)=\Res{}(G)$ for all $i\sgeq N$. We conclude that $(\xi(\Res{}(O_i)))_{i\in \omega}$ is a monotone increasing sequence eventually equal to $\xi(\Res{}(G))$, and therefore, $\sup_{i\in \omega}\xi(\Res{}(O_i))=\xi(\Res{}(G))$. Applying part $(1)$, $\xi(G)=\xi(\Res{}(G))+1$.
\end{proof}

\indent Section~\rm\ref{sec:examples} applies Lemma~\rm\ref{lem:xi_union} to compute the decomposition rank of examples; it is worthwhile to look ahead to these examples to gain some intuition for the decomposition rank.

\begin{rmk} 
The decomposition rank uses the second countability assumption. Second countability ensures the existence of selector functions, which allow us to build the decomposition trees in a uniform way. The decomposition trees are also countably branching via second countability.
\end{rmk}

\indent We observe a consequence of Lemma~\rm\ref{lem:xi_union}. This consequence, while obvious from the statement of the lemma, holds because well-founded decomposition trees characterize elementary groups. 

\begin{cor}\label{cor:residual_ele}
If $G$ is non-trivial, compactly generated, and elementary, then $\Res{}(G)\lneq G$.
\end{cor}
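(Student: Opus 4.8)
The plan is to derive this directly from Lemma~\rm\ref{lem:xi_union}(2) by an ordinal-arithmetic contradiction, exploiting the fact that the decomposition rank is a genuine ordinal invariant of elementary groups. Before starting, I would note that all the relevant ranks are defined: $G$ is elementary by hypothesis, so $\xi(G)$ is a well-defined ordinal, and $\Res{}(G)$ is a closed subgroup of $G$, hence elementary by Theorem~\rm\ref{thm:closure_sgrp}, so $\xi(\Res{}(G))$ is defined as well.

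First I would argue by contradiction and suppose $\Res{}(G)=G$. Since $G$ is non-trivial, compactly generated, and elementary, the hypotheses of Lemma~\rm\ref{lem:xi_union}(2) are exactly met, and it yields
\[
\xi(G)=\xi(\Res{}(G))+1.
\]
Substituting the assumed equality $\Res{}(G)=G$ into the right-hand side gives $\xi(G)=\xi(G)+1$. The concluding step is then purely formal: no ordinal $\alpha$ satisfies $\alpha=\alpha+1$, since always $\alpha<\alpha+1$. This contradiction shows $\Res{}(G)\neq G$, and because $\Res{}(G)\sleq G$ holds for any $G$, we obtain $\Res{}(G)\lneq G$.

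There is essentially no obstacle to overcome here; the only points requiring care are bookkeeping. One must confirm that $\xi$ is defined on $\Res{}(G)$ (immediate from closure of $\Es$ under closed subgroups), and one should observe that the non-triviality hypothesis is genuinely necessary rather than cosmetic: for trivial $G$ the rank identity of Lemma~\rm\ref{lem:xi_union}(2) does not hold (indeed $\xi(\{1\})=1\neq 2$), and the desired strict inequality $\Res{}(G)\lneq G$ would fail outright. As the paper's remark indicates, although the statement reads as though it should follow from soft considerations, the argument really does pass through the characterization of elementary groups by well-founded decomposition trees, since that is precisely what makes $\xi(G)$ a strictly decreasing ordinal under the passage to a proper discrete residual.
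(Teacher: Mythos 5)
Your proof is correct and follows exactly the route the paper intends: the corollary is stated as an immediate consequence of Lemma~\rm\ref{lem:xi_union}(2), and the ordinal identity $\xi(G)=\xi(\Res{}(G))+1$ rules out $\Res{}(G)=G$ precisely because no ordinal satisfies $\alpha=\alpha+1$. Your side remarks on the non-triviality hypothesis and on the role of the well-founded tree characterization match the paper's own commentary.
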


Non-trivial compactly generated groups $G$ so that $G=\Res{}(G)$ therefore cannot be elementary; to build intuition, we encourage the reader to consider how this makes the decomposition trees ill-founded. One naturally questions if this is the \textit{only} barrier to being elementary. More precisely,
\begin{quest}
Suppose $G$ is a t.d.l.c.s.c. group so that $\Res{}(H)\lneq H$ for any non-trivial compactly generated closed subgroup $H\sleq G$. Then is $G$ elementary?
\end{quest}

\subsection{Properties of the decomposition rank}
We conclude this section by exploring the decomposition rank further. We shall see the decomposition rank is better behaved than the construction rank but, surprisingly, the two ranks are closely related.\par

\indent We begin with a general lemma regarding SIN groups.

\begin{lem}\label{lem:SIN}
If $G$ is a compactly generated t.d.l.c.s.c. group and $N\trianglelefteq_{cc} G$ is a SIN group, then $G$ is a SIN group.
\end{lem}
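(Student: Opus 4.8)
The plan is to prove that $G$ is SIN directly from the definition: I will show that inside an arbitrary $U\in\Uc(G)$ there is a compact open subgroup of $G$ that is normal in $G$. Since the compact open subgroups of $G$ form a basis at $1$ by van Dantzig's theorem, producing such a subgroup inside every $U$ yields a basis at $1$ of compact open normal subgroups, which is exactly the definition of SIN. (One could instead try to show $\Res{}(G)=\{1\}$ and invoke the Caprace--Monod characterization of compactly generated SIN groups, but the direct construction seems cleaner.) The construction splits into two stages corresponding to the ``compact part'' and the ``discrete part'' of $N$: first I extract from $N$ a compact subgroup $W_1$ that is already normal in $G$, and then I pass to $G/W_1$, where the image of $N$ becomes discrete, and treat that case separately.

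For the first stage, fix $U\in\Uc(G)$ and set $V:=U\cap N$, a compact open subgroup of $N$. Because $N$ is SIN I may choose a compact open $W\trianglelefteq N$ with $W\sleq V$; then $W\trianglelefteq V$, and since every $u\in U$ normalizes both $U$ and $N$ it normalizes $V$, so $uVu^{-1}=V$. The crucial step, and the main obstacle, is to show that $N_G(W)$ is \emph{open} in $G$. I would prove this by a tube-lemma argument: the map $h\colon U\times W\to V/W$, $h(u,w):=(uwu^{-1})W$, is continuous into the finite discrete group $V/W$ and sends $\{1\}\times W$ to the identity $1_{V/W}$; since $W$ is compact, the open set $h^{-1}(1_{V/W})$ contains a tube $O\times W$ for some open $O\ni 1$ in $U$, so $uWu^{-1}\sleq W$ for all $u\in O$. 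As conjugation by $u$ is an automorphism of $V$ preserving the finite index $[V:W]$, this forces $uWu^{-1}=W$, whence $O\sleq U\cap N_G(W)$ and $N_G(W)$ is open. Since $N_G(W)$ contains the cocompact $N$ and is open, it has finite index in $G$; therefore $W$ has only finitely many $G$-conjugates, and $W_1:=\bigcap_{g\in G}gWg^{-1}$ is a finite intersection of compact open subgroups of $N$. Hence $W_1$ is a compact open subgroup of $N$, is normal in $G$, and satisfies $W_1\sleq W\sleq U$.

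For the second stage I pass to $\ol G:=G/W_1$, with $\ol N:=N/W_1$. Here $\ol N$ is discrete (as $W_1$ is open in $N$), normal, and cocompact, and being cocompact in the compactly generated $\ol G$ it is compactly generated, hence finitely generated; moreover $\ol G$ is again t.d.l.c.s.c., because its open subgroup $U/W_1$ is a quotient of the profinite group $U$ by the closed normal subgroup $W_1$ and is thus profinite, so $\ol G$ has a totally disconnected open subgroup. The discrete case I handle by a centralizer argument: writing $\ol N=\grp{n_1,\dots,n_r}$, each $C_{\ol G}(n_i)$ is open since $g\mapsto gn_ig^{-1}$ is continuous into the discrete $\ol N$, so $C_{\ol G}(\ol N)=\bigcap_{i}C_{\ol G}(n_i)$ is open and normal. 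Then $U_0:=(U/W_1)\cap C_{\ol G}(\ol N)$ is a compact open subgroup centralized by $\ol N$, so $N_{\ol G}(U_0)\supseteq \ol N U_0$ is open and contains the cocompact $\ol N$, hence has finite index; its normal core is a finite intersection of conjugates of $U_0$, so it is a compact open subgroup normal in $\ol G$ and contained in $U/W_1$. Pulling its preimage $M$ back along $G\to\ol G$, the group $M$ is open and normal in $G$, is compact (an extension of the compact $W_1$ by a compact group), and lies in $U$ because $W_1\sleq U$. As $U$ was arbitrary, $G$ is SIN. The only two places needing genuine care are the openness of $N_G(W)$, where compactness of $W$ is precisely what powers the tube lemma, and the openness of $C_{\ol G}(\ol N)$, where finite generation of the cocompact subgroup is essential; everything else reduces to routine finite-index core manipulations.
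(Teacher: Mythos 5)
Your proof is correct and follows essentially the same two-stage strategy as the paper: first produce a compact open subgroup of $N$ that is normal in $G$ and contained in the given $U$ (you via a tube-lemma argument for the openness of $N_G(W)$ followed by a finite normal core, the paper via directly intersecting the $UN$-conjugates of $W$), then pass to the quotient where the image of $N$ is discrete, finitely generated, and cocompact, and use its open centralizer together with a finite-index normal core to finish. The differences are only in the technical execution of the first stage, not in the architecture of the argument.
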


\begin{proof}
Fix $U\in \Uc(G)$ and form the subgroup $UN$. Since $N$ is a SIN group, we may find $W\in \Uc(N)$ with $W\sleq U$ and $W\trianglelefteq N$. On the other hand, $W\sleq_oN\cap U$, so there is $V\trianglelefteq_o U$ with $V\cap N\sleq_o W$. We now have that
\[
V\cap N \sleq \bigcap_{un\in UN}unWn^{-1}u^{-1}=:J,
\]
$J\trianglelefteq UN$, and $J\in \Uc(N)$. \par

\indent Since $N$ is cocompact in $G$, $UN$ has finite index, so $N_G(J)$ has finite index. Letting $g_1,\dots,g_n$ list left coset representatives for $N_G(J)$ in $G$, we see that
\[
\bigcap_{g\in G}gJg^{-1}=\bigcap_{i=1}^ng_iJg_i^{-1}.
\]
Defining $K:=\bigcap_{g\in G}gJg^{-1}$, it follows that $K\in \Uc(N)$. \par

\indent Passing to $G/K$, $\pi(N)\trianglelefteq G/K$ is discrete where $\pi:G\rightarrow G/K$ is the usual projection. The subgroup $N$ is compactly generated since cocompact in a compactly generated group, hence the subgroup $\pi(N)$ is finitely generated. It follows that $\pi(N)$ has an open centralizer in $G/K$. Say $Q\sleq_o \pi(U)$ centralizes $\pi(N)$. Clearly, $Q\trianglelefteq Q\pi(N)$, and using that $\pi(N)$ is cocompact in $G/K$, we additionally see that $Q\pi(N)$ has finite index in $G/K$. Just as in the previous paragraph, there is $L\sleq_oQ$ with $L\trianglelefteq G/K$. It now follows that $\pi^{-1}(L)$ is an open normal subgroup of $G$ contained in $U$.\par

\indent We conclude that inside every compact open subgroup $U$ of $G$, we may find a compact open normal subgroup of $G$. That is to say, $G$ is a SIN group.
\end{proof}

\begin{lem}\label{lem:xi_quot} 
If $G$ is a t.d.l.c.s.c. group, $H,L\trianglelefteq G$, and $H\in \Es$, then $\ol{HL}/L\in \Es$ and $\xi\left(\ol{HL}/L\right)\sleq \xi(H)$.
\end{lem}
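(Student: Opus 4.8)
The plan is to induct on $\xi(H)$, proving simultaneously for \emph{all} t.d.l.c.s.c. groups $G$ and all closed normal subgroups $H,L\trianglelefteq G$ with $H\in\Es$ that $\ol{HL}/L\in\Es$ and $\xi(\ol{HL}/L)\sleq\xi(H)$. Write $\pi:\ol{HL}\rightarrow \ol{HL}/L$ and $\bar Q:=\ol{HL}/L=\ol{\pi(H)}$. The base case $\xi(H)=1$ is trivial, since then $H=\{1\}$ and $\bar Q=\{1\}$. For the inductive step, write $\gamma:=\xi(H)=\delta+1$ (every decomposition rank is a successor). First I would produce a good exhaustion of $\bar Q$ by compactly generated open subgroups: fix $U\in\Uc(G)$, set $W:=U\cap\ol{HL}\in\Uc(\ol{HL})$, let $(h_n)_n$ enumerate a dense subset of $H$ (so $(\pi(h_n))_n$ is dense in $\bar Q$), and put $O_i:=\grp{h_0,\dots,h_i,W}$ and $P_i:=\pi(O_i)$. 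Then $O_i$ is compactly generated open in $\ol{HL}$ and $P_i$ is compactly generated open in $\bar Q$; since $\bigcup_iP_i$ is an open, hence closed, subgroup containing the dense set $\{\pi(h_n)\}$, it equals $\bar Q$. By Lemma~\ref{lem:xi_union}, $\xi(\bar Q)=\sup_i\xi(\Res{}(P_i))+1$, so it suffices to show $\xi(\Res{}(P_i))\sleq\delta$ for each $i$.

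The crux is controlling $\Res{}(P_i)$, and I would use two facts. The first is a residual identity: for a continuous open surjection $q:A\rightarrow B$ of compactly generated t.d.l.c.s.c. groups, $\Res{}(B)=\ol{q(\Res{}(A))}$. The inclusion $\ol{q(\Res{}(A))}\sleq\Res{}(B)$ follows by pulling back open normal subgroups; conversely $B/\ol{q(\Res{}(A))}$ is a Hausdorff quotient of $A/\Res{}(A)$, which is compactly generated and residually discrete, hence SIN, so $B/\ol{q(\Res{}(A))}$ is a compactly generated SIN group and has trivial residual, giving $\Res{}(B)\sleq\ol{q(\Res{}(A))}$. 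Applied to $q=\pi\rest_{O_i}$ (whose kernel is $O_i\cap L$) together with $P_i\simeq O_i/(O_i\cap L)$, this yields
\[
\Res{}(P_i)\simeq\ol{\Res{}(O_i)(O_i\cap L)}/(O_i\cap L).
\]
The second fact transfers a rank bound into $H$: since $H\trianglelefteq\ol{HL}$ and $W$ is compact, the set $\{h_0,\dots,h_i\}^W$ is a compact subset of $H$, so $H_i:=\grp{\{h_0,\dots,h_i\}^W}$ is a compactly generated subgroup of $H$ with $O_i=H_iW$ and $O_i/H_i$ profinite; hence $\Res{}(O_i)\sleq H_i$. Moreover $H_i/\Res{}(H_i)$ is a compactly generated residually discrete, hence SIN, cocompact normal subgroup of $O_i/\Res{}(H_i)$, so Lemma~\ref{lem:SIN} makes $O_i/\Res{}(H_i)$ a SIN group and forces $\Res{}(O_i)\sleq\Res{}(H_i)$.

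These combine to finish the step. As $H_i\sleq H$ is compactly generated, Lemma~\ref{lem:xi_union} gives $\xi(\Res{}(H_i))<\gamma=\delta+1$, i.e. $\xi(\Res{}(H_i))\sleq\delta$; since $\Res{}(O_i)\sleq\Res{}(H_i)\sleq H$ it is elementary (Theorem~\ref{thm:closure_sgrp}) with $\xi(\Res{}(O_i))\sleq\delta<\gamma$. I then apply the induction hypothesis \emph{inside the ambient group $O_i$} to the pair of closed normal subgroups $\Res{}(O_i),\,O_i\cap L\trianglelefteq O_i$: it yields $\Res{}(P_i)\simeq\ol{\Res{}(O_i)(O_i\cap L)}/(O_i\cap L)\in\Es$ with $\xi(\Res{}(P_i))\sleq\xi(\Res{}(O_i))\sleq\delta$. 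Consequently each $P_i$ is an extension of the residually discrete group $P_i/\Res{}(P_i)$ by the elementary group $\Res{}(P_i)$, so $P_i\in\Es$ by Proposition~\ref{prop:grp_ext}, whence $\bar Q=\bigcup_iP_i\in\Es$, and finally $\xi(\bar Q)=\sup_i\xi(\Res{}(P_i))+1\sleq\delta+1=\gamma$.

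The main obstacle, and the reason for quantifying the induction over all ambient groups at once, is that the subgroup $\Res{}(O_i)$ whose quotient realizes $\Res{}(P_i)$ is normal in $O_i$ but not in $G$; running the induction inside $O_i$, where $\Res{}(O_i)$ and $O_i\cap L$ are automatically normal, is exactly what bypasses this. The second delicate point is that $O_i$ is a subgroup of $\ol{HL}$ and need not lie in $H$, so its residual cannot be bounded directly via Lemma~\ref{lem:xi_union}; passing to the compactly generated $H_i\sleq H$ with $O_i=H_iW$ and invoking Lemma~\ref{lem:SIN} to obtain $\Res{}(O_i)\sleq\Res{}(H_i)$ is the device that pushes the bound back into $H$.
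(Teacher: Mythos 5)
Your proof is correct and follows essentially the same route as the paper's: an induction on $\xi(H)$ in which $\ol{HL}/L$ is exhausted by compactly generated open pieces, the discrete residual of each piece is identified with the closure of the image of the discrete residual of a compactly generated subgroup of $H$ (via the Caprace--Monod SIN characterization together with Lemma~\ref{lem:SIN}), and the induction hypothesis is applied inside that piece to its residual and the trace of $L$. The only differences are cosmetic: you extract the compactly generated subgroup $H_i\sleq H$ a posteriori from $O_i$ rather than a priori via selector functions, and you package the residual computation as a general identity for open surjections rather than as the paper's ad hoc Claim.
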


\begin{proof} We induct on $\xi(H)$ for the lemma. As the base case is obvious, suppose $\xi(H)=\beta+1$. Fix $U\in \Uc(\ol{HL})$, $V\in \Uc(H)$, and $D$ selector functions for $F(H)$. Put
\[
O_i:=\grp{V^U,d_0(H)^U,\dots, d_{i}(H)^U}
\]
with $d_0,\dots,d_{i}\in D$. \par

\indent For each $i$, the set $UO_i$ is a compactly generated open subgroup of $\ol{HL}$ since $O_i$ is normalized by $U$. Put $L_i:=L\cap UO_i\trianglelefteq UO_i$ and observe that since the discrete residual is a characteristic subgroup, it is also the case that $\Res{}(O_i)\trianglelefteq UO_i$.

\begin{claim*}
$\Res{}(UO_i/L_i)=\ol{\Res{}(O_i)L_i}/L_i$.
\end{claim*}
\begin{proof}[of Claim]
The group $O_i/\Res{}(O_i)$ is a SIN group, and $O_i/\Res{}(O_i)\trianglelefteq_{cc} UO_i/\Res{}(O_i)$. Lemma~\rm\ref{lem:SIN} therefore implies $UO_i/\Res{}(O_i)$ is a SIN group. Now
\[
\left(UO_i/L_i\right)/\left(\ol{\Res{}(O_i)L_i}/L_i\right)\simeq UO_i/\ol{\Res{}(O_i)L_i},
\]
and since the group on the right is a quotient of the SIN group $UO_i/\Res{}(O_i)$, we infer that $\left(UO_i/L_i\right)/\left(\ol{\Res{}(O_i)L_i}\right)$ is residually discrete. Therefore,
\[
\Res{}\left(UO_i/L_i\right)\sleq \ol{\Res{}(O_i)L_i}/L_i.
\]

\indent On the other hand, letting $\pi:UO_i\rightarrow UO_i/L_i$ be the usual projection, $\Res{}(O_i)\sleq \pi^{-1}(K)$ for any open normal subgroup $K$ of $UO_i/L_i$. Thus, 
\[
\ol{\Res{}(O_i)L_i}/L_i=\ol{\pi(\Res{}(O_i))}\sleq \Res{}\left(UO_i/L_i\right),
\]
and we conclude that $\Res{}(UO_i/L_i)=\ol{\Res{}(O_i)L_i}/L_i$.
\end{proof}

\indent The group $O_i$ is a compactly generated subgroup of $H$, so Lemma~\rm\ref{lem:xi_union} implies $\xi(\Res{}(O_i))\sleq \beta$. We may thus apply the claim along with the induction hypothesis to conclude that $\Res{}(UO_i/L_i)=\ol{\Res{}(O_i)L_i}/L_i\in \Es$ with
\[
\xi\left(\Res{}(UO_i/L_i)\right)=\xi\left(\ol{\Res{}(O_i)L_i}/L_i\right)\sleq \xi\left(\Res{}(O_i)\right)\sleq\beta.
\]
The quotient $UO_i/L_i$ is thus elementary-by-SIN and, therefore, elementary. Furthermore, the discrete residual of $UO_i/L_i$ has rank at most $\beta$.\par

\indent We now see that
\[
\ol{HL}/L=\bigcup_{i\in \omega}UO_iL/L
\]
with each $UO_iL/L\simeq UO_i/L_i$ an elementary compactly generated open subgroup, so $\ol{HL}/L\in \Es$. Furthermore, by the previous paragraph, the decomposition rank of the discrete residual of each $UO_iL/L$ is bounded by $\beta$. Lemma~\rm\ref{lem:xi_union} therefore implies $\xi(\ol{HL}/L)\sleq \beta+1=\xi(H)$, and the induction is complete.
\end{proof}

\begin{thm}\label{thm:xi_monotone_quot} 
If $G\in \Es$ and $L\trianglelefteq G$, then $\xi(G/L)\sleq \xi(G)$.
\end{thm}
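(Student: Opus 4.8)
The plan is to obtain this as an immediate corollary of Lemma~\rm\ref{lem:xi_quot}, which already does all of the real work. Lemma~\rm\ref{lem:xi_quot} takes an ambient t.d.l.c.s.c.\ group, two of its closed normal subgroups $H$ and $L$ with $H\in\Es$, and bounds $\xi(\ol{HL}/L)\sleq \xi(H)$. The observation is that the desired quotient $G/L$ fits this template in the most degenerate possible way: take the ambient group to be $G$ itself and set $H:=G$.

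Concretely, I would argue as follows. Since $G\in\Es$ and $L\trianglelefteq G$, both $G\trianglelefteq G$ and $L\trianglelefteq G$ hold, and $H:=G$ is elementary by hypothesis, so the hypotheses of Lemma~\rm\ref{lem:xi_quot} are met with the ambient group $G$, normal subgroup $H=G$, and normal subgroup $L$. Because $L\sleq G$ and $G$ is already closed, we have $GL=G$ and hence $\ol{HL}=\ol{GL}=G$, so that $\ol{HL}/L=G/L$. (That $G/L\in\Es$ is of course already guaranteed by Theorem~\rm\ref{thm:closure_quot}, and is also re-delivered by the lemma.) Applying Lemma~\rm\ref{lem:xi_quot} then yields
\[
\xi(G/L)=\xi\!\left(\ol{HL}/L\right)\sleq \xi(H)=\xi(G),
\]
which is exactly the claim.

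There is essentially no obstacle at this level: the entire difficulty has been isolated in Lemma~\rm\ref{lem:xi_quot}, whose proof reduced the bound to a countable increasing exhaustion by compactly generated open subgroups $UO_i$, identified $\Res{}(UO_i/L_i)$ with $\ol{\Res{}(O_i)L_i}/L_i$ via the SIN-transfer Lemma~\rm\ref{lem:SIN}, and then combined the inductive bound $\xi(\Res{}(O_i))\sleq\beta$ with Lemma~\rm\ref{lem:xi_union} to control the decomposition rank of the quotient. Consequently the proof of the theorem is a one-line specialization, and I would write it as such rather than re-deriving anything.
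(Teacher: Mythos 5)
Your proof is correct, and it is in fact shorter than the paper's. The paper does not simply specialize Lemma~\ref{lem:xi_quot}: it runs a fresh induction on $\xi(G)$, exhausting $G$ by compactly generated open subgroups $O_i$, identifying $\Res{}\left(O_i/(L\cap O_i)\right)$ with $\ol{\Res{}(O_i)(L\cap O_i)}/(L\cap O_i)$ as in the claim inside the proof of Lemma~\ref{lem:xi_quot}, applying that lemma only to the pair $\Res{}(O_i),\,L\cap O_i\trianglelefteq O_i$, and then reassembling the bound via Lemma~\ref{lem:xi_union}. You instead observe that the theorem is literally the case $H=G$ of Lemma~\ref{lem:xi_quot}: the hypotheses $H,L\trianglelefteq G$ and $H\in\Es$ hold with $H=G$, and $\ol{GL}=G$ so that $\ol{HL}/L=G/L$, whence $\xi(G/L)\sleq\xi(G)$ immediately. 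Nothing in the statement or proof of the lemma excludes this degenerate choice --- one only needs $U\in\Uc(\ol{HL})=\Uc(G)$ and $V\in\Uc(H)=\Uc(G)$, and the lemma's induction is on $\xi(H)$ and nowhere invokes the theorem, so there is no circularity. Your route buys brevity by isolating all the work in the lemma; the paper's separate induction is, given the lemma as stated, strictly speaking redundant, and the two arguments ultimately rest on exactly the same machinery (the SIN-transfer identification of discrete residuals and Lemma~\ref{lem:xi_union}).
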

\begin{proof} 
We induct on $\xi(G)$. As the base case is obvious, suppose $\xi(G)=\beta +1$. Let $(O_i)_{i\in \omega}$ be an $\subseteq$-increasing sequence of compactly generated open subgroups of $G$ with $G=\bigcup_{i\in\omega}O_i$ and for each $i$, put $L_i:=L\cap O_i$. As in the claim in the previous proof, it follows that
\[
\Res{}\left(O_i/L_i\right)=\ol{\Res{}(O_i)L_i}/L_i.
\]
In view of Lemma~\rm\ref{lem:xi_quot}, we conclude that $O_i/L_i\in \Es$ with
\[
\xi\left(\Res{}\left(O_i/L_i\right)\right)=\xi\left(\overline{\Res{}(O_i)L_i}/L_i\right)\sleq \xi(\Res{}(O_i))\sleq \beta
\]
where the right-most inequality follows from Lemma~\rm\ref{lem:xi_union}.\par

\indent We also have that
\[
G/L=\bigcup_{i\in \omega}O_iL/L
\]
with each $O_iL/L\simeq O_i/L_i$ compactly generated, open, and elementary. Thus, $G/L\in \Es$, and since $\xi\left(\Res{}\left(O_i/L_i\right)\right)\sleq \beta$ for each $i$, Lemma~\rm\ref{lem:xi_union} implies $\xi(G/L)\sleq \beta +1$ completing the induction.
\end{proof}

\begin{prop}\label{prop:rk_xi}
If $G\in \Es$, then $\rk(G)\sleq 3\xi(G)$ and $\xi(G)\sleq \rk(G)+2$.
\end{prop}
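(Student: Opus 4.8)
The plan is to establish the two inequalities separately, each by transfinite induction---one on $\xi(G)$ and the other on $\rk(G)$. In both arguments the workhorse is Lemma~\ref{lem:xi_union}(1): I fix a countable $\subseteq$-increasing exhaustion $G=\bigcup_{i\in\omega}O_i$ by compactly generated open subgroups (take $O_i:=\grp{U,g_0,\dots,g_i}$ for $U\in\Uc(G)$ and $(g_i)_{i\in\omega}$ dense), so that $\xi(G)=\sup_{i\in\omega}\xi(\Res{}(O_i))+1$, and I analyze each compactly generated piece $O_i$ through its discrete residual.

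For $\rk(G)\sleq 3\xi(G)$ I would induct on $\xi(G)$. The base case $\xi(G)=1$ forces $G=\{1\}$, whence $\rk(G)=0\sleq 3$. When $\xi(G)=\beta+1$, Lemma~\ref{lem:xi_union}(1) gives $\xi(\Res{}(O_i))\sleq\beta$, so the induction hypothesis yields $\rk(\Res{}(O_i))\sleq 3\beta$. The quotient $O_i/\Res{}(O_i)$ is compactly generated and residually discrete, hence SIN by \cite[Corollary 4.1]{CM11}, and a SIN group is compact-by-discrete and so of construction rank at most $1$ (as observed in the proof of Lemma~\ref{lem:basic_E_lemmas}). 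Proposition~\ref{prop:grp_ext} then bounds $\rk(O_i)\sleq 3\beta+1+1=3\beta+2$, and since $G$ is an increasing union of the open subgroups $O_i$, the construction rank rises by only one, giving $\rk(G)\sleq 3\beta+3=3(\beta+1)=3\xi(G)$.

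For $\xi(G)\sleq\rk(G)+2$ I would induct on $\rk(G)$, which is always zero or a successor. If $\rk(G)=0$, then $G$ is profinite or discrete, hence residually discrete, and Observation~\ref{obs:res_dis_ele} gives $\xi(G)\sleq 2$. If $\rk(G)=\alpha+1$, I pass again to the exhaustion $(O_i)$ and bound $\xi(\Res{}(O_i))$ for each compactly generated $O_i$, recalling $\rk(O_i)\sleq\rk(G)$ by Proposition~\ref{prop:open_sgrp}. When $\rk(O_i)=0$ we have $\Res{}(O_i)=\{1\}$; otherwise, since $O_i$ is compactly generated of positive rank, the basic observation on the construction rank provides an elementary normal subgroup $N_i\trianglelefteq O_i$ with $\rk(N_i)<\rk(O_i)\sleq\rk(G)$ and $O_i/N_i$ profinite or discrete. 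As profinite and discrete groups are residually discrete, the intersection of the open normal subgroups of $O_i$ containing $N_i$ is exactly $N_i$, so $\Res{}(O_i)\sleq N_i$. Corollary~\ref{cor:xi_monotone} with the induction hypothesis then gives $\xi(\Res{}(O_i))\sleq\xi(N_i)\sleq\rk(N_i)+2\sleq\alpha+2$, and feeding this into Lemma~\ref{lem:xi_union}(1) yields $\xi(G)\sleq(\alpha+2)+1=\rk(G)+2$.

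The main obstacle is arithmetic rather than structural: the constants must land exactly, and both the increasing-union step and the extension step each contribute a stubborn $+1$ in ordinal arithmetic. For the first inequality this is precisely where compact generation is essential---bounding $O_i/\Res{}(O_i)$ merely as residually discrete (rank $\sleq 2$) would push the final estimate to $3\beta+4$ and break it, whereas upgrading to SIN gives rank $\sleq 1$ and closes the gap. For the second inequality the crux is locating the strictly-lower-rank normal subgroup $N_i$ containing $\Res{}(O_i)$, so that monotonicity of $\xi$ transfers the induction hypothesis; the containment $\Res{}(O_i)\sleq N_i$, resting on the residual discreteness of $O_i/N_i$, is the linchpin of the argument.
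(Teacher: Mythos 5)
Your proof is correct and follows the paper's argument essentially step for step: both inequalities are proved by the same inductions, through the same exhaustion by compactly generated open subgroups, with Lemma~\ref{lem:xi_union}(1) and the SIN-quotient bound doing exactly the work you describe. The only cosmetic difference is in the second inequality, where the paper case-splits on whether $\rk(G)$ is witnessed by a union or an extension and applies the induction hypothesis to $O_i$ or to the normal subgroup $H$ accordingly, whereas you push that dichotomy down to each compactly generated piece $O_i$ via the observation that a compactly generated group of positive rank is an extension of a rank-zero group by a group of strictly lower rank; either way one lands on $\xi(\Res{}(O_i))\sleq\alpha+2$ and concludes identically via Lemma~\ref{lem:xi_union}(1).
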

\begin{proof} For the first claim, we induct on $\xi(G)$. The base case, $\xi(G)=1$, implies $G=\{1\}$; hence, $\rk(G)=0$ and the proposition holds. Suppose $\xi(G)=\beta+1$. Let $(O_i)_{i\in \omega}$ be an $\subseteq$-increasing sequence of compactly generated open subgroups of $G$ with $G=\bigcup_{i\in\omega}O_i$. By Lemma~\rm\ref{lem:xi_union}, $\xi(\Res{}(O_i))\sleq \beta$, so the induction hypothesis implies $\rk(\Res{}(O_i))\sleq 3\beta$. Since $O_i/\Res{}(O_i)$ is a SIN group, it follows that
\[
\rk\left(O_i\right)\sleq 3\beta+2.
\]
We conclude that $\rk(G)\sleq 3(\beta+1)=3\xi(G)$ finishing the induction.\par

\indent For the second claim, we induct on $\rk(G)$. For the base case, $G$ is either profinite or discrete, so $\xi(G)\sleq 2$. Suppose $\rk(G)=\alpha+1$. Consider first the case $\rk(G)$ is given by an increasing union. Say $G=\bigcup_{i\in \omega}O_i$ with $(O_i)_{i\in \omega}$ an $\subseteq$-increasing sequence of open subgroups such that $\rk(O_i)\sleq \alpha$ for each $i$; without loss of generality, we may take the $O_i$ to be compactly generated. By the induction hypothesis, $\xi(O_i)\sleq \alpha+2$, so Corollary~\rm\ref{cor:xi_monotone} implies $\xi(\Res{}(O_i))\sleq \alpha+2$. Lemma~\rm\ref{lem:xi_union} now implies $\xi(G)\sleq \alpha +3$ as required.\par

\indent Suppose $\rk(G)$ is given by a group extension. Say $H\trianglelefteq G$ is such that $\rk(H)\sleq \alpha$ and $G/H$ is either compact or discrete. Write $G=\bigcup_{i\in \omega}O_i$ with $(O_i)_{i\in \omega}$ an $\subseteq$-increasing sequence of compactly generated open subgroups of $G$. Since $O_i/H\cap O_i$ is residually discrete, $\Res{}(O_i)\sleq H\cap O_i$. In view of Corollary~\rm\ref{cor:xi_monotone} and the induction hypothesis, 
\[
\xi(\Res{}(O_i))\sleq \xi(H)\sleq \alpha+2.
\]
Applying Lemma~\rm\ref{lem:xi_union}, $\xi(G)\sleq \alpha +3$, and the induction is complete.
\end{proof}

As a corollary, we obtain a bound on the construction rank of a quotient.

\begin{cor}\label{cor:rk_bound_quot}
If $G\in \Es$ and $L\trianglelefteq G$, then $\rk(G/L)\sleq 3(\rk(G)+2)$.
\end{cor}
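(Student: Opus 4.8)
The plan is to chain the two comparisons between the construction rank and the decomposition rank supplied by Proposition~\ref{prop:rk_xi} with the monotonicity of $\xi$ under quotients established in Theorem~\ref{thm:xi_monotone_quot}. First I would note that, since $G\in\Es$ and $L\trianglelefteq G$, Theorem~\ref{thm:closure_quot} guarantees $G/L\in\Es$; hence both ranks are defined on $G/L$ and Proposition~\ref{prop:rk_xi} may be applied to it.

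Next I would assemble the inequalities. Applying the first bound of Proposition~\ref{prop:rk_xi} to $G/L$ gives $\rk(G/L)\sleq 3\xi(G/L)$. Theorem~\ref{thm:xi_monotone_quot} gives $\xi(G/L)\sleq \xi(G)$, and the second bound of Proposition~\ref{prop:rk_xi}, applied to $G$, gives $\xi(G)\sleq \rk(G)+2$. Stringing these together yields
\[
\rk(G/L)\sleq 3\xi(G/L)\sleq 3\xi(G)\sleq 3(\rk(G)+2),
\]
which is precisely the claimed bound.

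The only point requiring care is the ordinal arithmetic hidden in the last two steps: I pass from $\xi(G/L)\sleq\xi(G)$ to $3\xi(G/L)\sleq 3\xi(G)$, and from $\xi(G)\sleq\rk(G)+2$ to $3\xi(G)\sleq 3(\rk(G)+2)$. Both are instances of left-multiplication by the fixed nonzero ordinal $3$, which is weakly monotone in its right factor; that is, $\alpha\sleq\beta$ implies $3\alpha\sleq 3\beta$. This is the standard right-factor monotonicity of ordinal multiplication, so no commutativity of ordinal arithmetic is invoked and the chain is legitimate. I expect this to be the main (and essentially the only) subtlety, as the remainder is a direct substitution of the previously established inequalities.
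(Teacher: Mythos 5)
Your proof is correct and follows exactly the paper's own argument: apply Proposition~\ref{prop:rk_xi} to $G/L$, then Theorem~\ref{thm:xi_monotone_quot}, then Proposition~\ref{prop:rk_xi} to $G$, chaining $\rk(G/L)\sleq 3\xi(G/L)\sleq 3\xi(G)\sleq 3(\rk(G)+2)$. Your remark on the weak monotonicity of left multiplication by $3$ in the right factor is the correct justification for the middle and final steps, and nothing further is needed.
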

\begin{proof}
Via Proposition~\rm\ref{prop:rk_xi}, $\rk(G/L)\sleq 3\xi(G/L)$. Theorem~\rm\ref{thm:xi_monotone_quot} implies $3\xi(G/L)\sleq 3\xi(G)$. Applying Proposition~\rm\ref{prop:rk_xi} again, we conclude that $3\xi(G)\sleq 3(\rk(G)+2)$ proving the corollary.
\end{proof}

\begin{rmk} 
While more technical to define, the decomposition rank is the more useful rank. It has better regularity properties and may be computed algorithmically. 
\end{rmk}

\section{A Further Permanence Property}\label{sec:furtherperm}

\subsection{Preliminaries} Our first preliminary result is well known; we include a proof for completeness.
\begin{prop}[(Folklore)]
If $G$ is a t.d.l.c.s.c. group and $g\in G$ has a countable conjugacy class, then $C_G(g)$ is open.
\end{prop}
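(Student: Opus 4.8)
The plan is to exploit the standard bijection between the conjugacy class of $g$ and the coset space $G/C_G(g)$, and then run a Baire category argument. First I would observe that $C_G(g)$ is a \emph{closed} subgroup: it is the preimage of the closed set $\{1\}$ under the continuous map $h\mapsto hgh^{-1}g^{-1}$. The map $hC_G(g)\mapsto hgh^{-1}$ is then a well-defined bijection from $G/C_G(g)$ onto the conjugacy class of $g$, since $hgh^{-1}=h'gh'^{-1}$ exactly when $h^{-1}h'\in C_G(g)$. Thus the hypothesis that the conjugacy class is countable says precisely that $C_G(g)$ has countably many left cosets in $G$.

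Next I would write $G=\bigcup_{i\in\omega}h_iC_G(g)$ as a countable union of left cosets. Because $C_G(g)$ is closed and left translation is a homeomorphism, each coset $h_iC_G(g)$ is a closed subset of $G$. Since $G$ is a t.d.l.c.s.c. group it is Polish, and hence a Baire space (a locally compact Hausdorff space is Baire in any case). Therefore the countably many closed sets $h_iC_G(g)$ cannot all be nowhere dense, so some coset $h_iC_G(g)$ has nonempty interior; translating by $h_i^{-1}$ shows $C_G(g)$ itself has nonempty interior.

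Finally I would invoke the folklore fact that a subgroup of a topological group with nonempty interior is open: if $u_0$ lies in the interior of $C_G(g)$, then $u_0^{-1}\cdot\mathrm{int}(C_G(g))$ is an open neighbourhood of $1$ contained in $C_G(g)$, whence $C_G(g)=\bigcup_{h\in C_G(g)}h u_0^{-1}\,\mathrm{int}(C_G(g))$ is open. This gives the claim.

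I do not expect a genuine obstacle here; the argument is driven entirely by the Baire category step. The only points requiring minor care are the verification that $G$ is Baire (automatic for Polish or locally compact Hausdorff groups) and the bookkeeping that a countable conjugacy class is the same as countable index of $C_G(g)$.
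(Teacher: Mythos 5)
Your proposal is correct and follows essentially the same route as the paper: identify the countable conjugacy class with the coset space $G/C_G(g)$, write $G$ as a countable union of closed cosets of the closed subgroup $C_G(g)$, apply the Baire category theorem to get a coset (hence $C_G(g)$ itself) with nonempty interior, and conclude that a subgroup with nonempty interior is open. No gaps.
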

\begin{proof} 
Since $G/C_G(g)\leftrightarrow g^G$, we see that $|G:C_G(g)|=\aleph_0$. Fixing $(g_i)_{i\in \omega}$ coset representatives for $C_G(g)$ in $G$, we thus have that
\[
G=\bigcup_{i\in\omega}g_i C_G(g).
\]
The Baire category theorem now implies there is some $i\in \omega$ for which $g_iC_G(g)$ is non-meagre, and it follows that $C_G(g)$ is non-meagre. As $C_G(g)$ is closed, $C_G(g)$ has non-empty interior and, therefore, is open.
\end{proof}

We require two additional facts from the literature.
\begin{thm}[({\cite[Theorem (9.10)]{K95}})]\label{thm:automatic} Let $G$ and $H$ be topological groups and $\phi:G\rightarrow H$ a homomorphism. If $G$ is Baire, $H$ is separable, and $\phi$ is Baire measurable, then $\phi$ is continuous.
\end{thm}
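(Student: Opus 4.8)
The plan is to deduce continuity from two classical ingredients: the reduction of continuity of a homomorphism to continuity at the identity, and Pettis's theorem on non-meager sets with the Baire property in a Baire group. Recall that $\phi$ being \emph{Baire measurable} means $\phi^{-1}(O)$ has the Baire property in $G$ for every open $O\subseteq H$, and that \emph{Pettis's theorem} asserts that whenever $A\subseteq G$ has the Baire property and is non-meager, the set $A^{-1}A$ is a neighbourhood of $1_G$.

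First I would reduce to continuity at $1_G$. Since $\phi$ is a homomorphism and $G$, $H$ are topological groups, continuity at the identity transfers to an arbitrary point $g$ by writing $\phi(gx)=\phi(g)\phi(x)$ and composing with the homeomorphisms $x\mapsto gx$ and $y\mapsto \phi(g)y$. Thus it suffices, given an open neighbourhood $V$ of $1_H$, to produce an open $U\ni 1_G$ with $\phi(U)\subseteq V$.

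Next I would use continuity of the operations in $H$ to choose an open $W\ni 1_H$ with $W^{-1}W\subseteq V$. Here separability of $H$ enters: fixing a countable dense set $\{h_n\}_{n\in\omega}\subseteq H$, the open translates $\{h_nW\}_{n\in\omega}$ cover $H$, because for any $h$ the open set $hW^{-1}$ is non-empty and so meets the dense set. Pulling back along $\phi$ gives $G=\bigcup_{n\in\omega}\phi^{-1}(h_nW)$. Each $\phi^{-1}(h_nW)$ has the Baire property by Baire measurability, and since $G$ is Baire it is non-meager in itself; a countable cover of a non-meager space cannot consist entirely of meager pieces, so some $A:=\phi^{-1}(h_nW)$ is non-meager with the Baire property. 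Pettis's theorem then makes $A^{-1}A$ a neighbourhood of $1_G$, and I take $U$ to be its interior. Finally, for $g=a^{-1}a'\in A^{-1}A$ with $a,a'\in A$ one has $\phi(a),\phi(a')\in h_nW$, so
\[
\phi(g)=\phi(a)^{-1}\phi(a')=(h_nw)^{-1}(h_nw')=w^{-1}w'\in W^{-1}W\subseteq V
\]
for suitable $w,w'\in W$; hence $\phi(U)\subseteq V$, giving continuity at $1_G$ and therefore everywhere.

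The only non-bookkeeping step, and the place where the Baire hypothesis on $G$ is genuinely used, is Pettis's theorem, so I regard it as the main obstacle. If one prefers a self-contained argument rather than a citation, write $A=O\triangle M$ with $O$ open non-empty and $M$ meager, so that $O\setminus A$ is meager. For $g$ in the open neighbourhood $O^{-1}O$ of $1_G$ the right translate $Og$ meets $O$ in a non-empty open set $P$; since right translation is a homeomorphism, $A$ and $Ag$ are both comeager in $P$, and as $P$ is non-meager in the Baire space $G$ they must intersect. A point of $A\cap Ag$ witnesses $g\in A^{-1}A$, proving $O^{-1}O\subseteq A^{-1}A$ and hence that $A^{-1}A$ is a neighbourhood of $1_G$.
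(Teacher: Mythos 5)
The paper does not prove this statement; it is quoted directly from Kechris \cite[Theorem (9.10)]{K95}, whose proof is exactly the Pettis-theorem argument you give. Your write-up is correct: the reduction to continuity at $1_G$, the covering $G=\bigcup_n\phi^{-1}(h_nW)$ forcing some non-meager piece with the Baire property, and the application (and self-contained verification) of Pettis's theorem that $A^{-1}A$ is a neighbourhood of the identity all go through, with the only implicit point being that non-meagerness of $A$ is what guarantees the open set $O$ in $A=O\triangle M$ is non-empty.
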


\begin{thm}[(Bergman, Lenstra {\cite[Theorem 3]{BL89}})]\label{thm:BL}
Let $G$ be a group and $H$ a subgroup. Then the following are equivalent:
\begin{enumerate}[(1)]
\item The set of indices  $\{|H:H\cap gHg^{-1}|:g\in G\}$ has a finite upper bound.
\item There is $K\trianglelefteq G$ such that $K\sim_c H$.
\end{enumerate}
\end{thm}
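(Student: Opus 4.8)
The plan is to prove the two implications separately, with $(2)\Rightarrow(1)$ a short index computation and $(1)\Rightarrow(2)$ carrying all the real content.

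For $(2)\Rightarrow(1)$ I would assume $K\trianglelefteq G$ with $K\sim_c H$ and set $n:=|H:H\cap K|$ and $m:=|K:H\cap K|$, both finite. First record that for every $g\in G$, normality of $K$ gives $K\cap gHg^{-1}=g(K\cap H)g^{-1}$, whence $|K:K\cap gHg^{-1}|=|K:K\cap H|=m$. Since $H\cap K$ and $K\cap gHg^{-1}$ are subgroups of $K$, the elementary inequality $|A:A\cap B|\le|K:B|$ (for $A,B\le K$) gives $|H\cap K:(H\cap K)\cap gHg^{-1}|\le m$. Multiplying by $|H:H\cap K|=n$ and using $(H\cap K)\cap gHg^{-1}\subseteq H\cap gHg^{-1}\subseteq H$ yields $|H:H\cap gHg^{-1}|\le nm$ uniformly in $g$, the desired finite bound.

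For $(1)\Rightarrow(2)$ I would first translate the hypothesis into the language of the coset action. Writing $\Omega:=G/H$ with $G$ acting by left translation, the stabilizer of the base coset is $H$ and the $H$-orbit of $gH$ has size $|H:H\cap gHg^{-1}|$; thus $(1)$ says exactly that every $H$-orbit on $\Omega$ (equivalently, every $(H,H)$-double coset, counted in left cosets) has size at most $N$. The goal becomes to produce $K\trianglelefteq G$ in the commensurability class of $H$. The two obvious candidates both fail, and seeing why guides the construction: the normal core $\bigcap_{g}gHg^{-1}$ need not lie in the class. Taking $G=\mathbb{Z}/2\wr\mathbb{Z}$ and $H=\{x\in\bigoplus_{\mathbb{Z}}\mathbb{Z}/2\mid x_0=0\}$, every conjugate is $\{x_n=0\}$, so $(1)$ holds with $N=2$, yet the core is trivial and of infinite index in $H$, and the correct $K$ is the whole base group. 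Dually, the normal closure $\langle\langle H\rangle\rangle$ is too large when $H$ is finite (take $H=\langle e_0\rangle$ in the same $G$: the core $\{1\}$ works, but $\langle\langle H\rangle\rangle$ is the infinite base group). So $K$ must in general lie strictly between the core and the normal closure.

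The strategy I would pursue is to pass to the Schlichting completion: since $Comm_G(H)=G$, the action $\phi\colon G\to\mathrm{Sym}(\Omega)$ has $\overline{\phi(H)}=:U$ a compact open subgroup of the t.d.l.c. group $\Gamma:=\overline{\phi(G)}$, in which $\phi(G)$ is dense. Using openness of $U$ together with density, the uniform bound $(1)$ transfers to $\sup_{\gamma\in\Gamma}|U:U\cap\gamma U\gamma^{-1}|\le N$, so $U$ is a uniformly commensurated compact open subgroup. The crux is then to show that such a $\Gamma$ admits a compact open \emph{normal} subgroup $V$ commensurate with $U$; pulling back, $K:=\phi^{-1}(V)$ is normal in $G$ and, matching indices through $\phi$, commensurate with $H$. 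I expect this to be the main obstacle: the core $\bigcap_\gamma\gamma U\gamma^{-1}$ again need not be open, since a profinite group may have infinitely many open subgroups of index $\le N$, so one cannot simply intersect. Instead I would build $V$ from the bounded orbital structure, trying to show that the $G$-invariant equivalence relation generated by the size-$\le N$ suborbits has uniformly bounded classes; such a relation corresponds to a subgroup $H\le M\le G$ with $|M:H|<\infty$ whose conjugates respect a common bounded block system, from which a genuinely normal commensurate subgroup can be extracted. Controlling this block system uniformly, rather than merely finitely at each stage, is the delicate point.
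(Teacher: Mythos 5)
First, a point of reference: the paper does not prove this statement at all --- it is quoted verbatim from Bergman and Lenstra \cite[Theorem 3]{BL89} and used as a black box in the proof of Lemma~\ref{lem:perm_2_norm} --- so there is no internal proof to compare against; your attempt has to be judged on its own. Your direction $(2)\Rightarrow(1)$ is correct and complete: normality of $K$ gives $K\cap gHg^{-1}=g(K\cap H)g^{-1}$, hence $|K:K\cap gHg^{-1}|=m$ uniformly, and the index computation $|H:H\cap gHg^{-1}|\sleq |H:H\cap K|\cdot|H\cap K:H\cap K\cap gHg^{-1}|\sleq nm$ is sound. Your two counterexamples showing that neither the normal core nor the normal closure can serve as $K$ in general are also correct and well chosen.

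The direction $(1)\Rightarrow(2)$, however, has a genuine gap, and you essentially flag it yourself. The passage to the Schlichting completion is fine (the transfer of the bound to $\sup_{\gamma\in\Gamma}|U:U\cap\gamma U\gamma^{-1}|\sleq N$ works because $\phi(G)U=\Gamma$ and $H$-orbits on the discrete set $\Omega$ agree with $U$-orbits), but it does not reduce the difficulty: the assertion that a t.d.l.c. group with a uniformly commensurated compact open subgroup $U$ possesses a compact open \emph{normal} subgroup commensurate with $U$ is precisely the theorem restated in topological language, and in the literature that statement is normally \emph{deduced from} Bergman--Lenstra (or Schlichting's earlier theorem), not proved independently. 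The substitute you propose --- that the $G$-invariant equivalence relation generated by the suborbits of size at most $N$ has uniformly bounded classes --- is exactly where all the content lives, and it is neither carried out nor obviously true: a priori, chaining together boundedly many points at each step can produce unbounded classes, and ruling this out requires the kind of careful combinatorial/minimality argument (a bounded-orbit fixed-point argument in the lattice of commensurable subgroups, together with a lemma controlling the index of the join of a uniformly commensurated family) that constitutes Bergman and Lenstra's actual proof. As written, the hard implication is an honest plan with the central lemma missing, not a proof.
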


\subsection{The permanence property}
Suppose $H$ and $G$ are t.d.l.c.s.c. groups and $\psi:H\rightarrow G$ is an injective, continuous homomorphism such that $\psi(H)$ is normal and dense in $G$. 

\begin{lem}\label{lem:perm_2_comm}
For each $W\in \Uc(H)$, $Comm_G(\psi(W))=G$.
\end{lem}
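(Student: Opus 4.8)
The plan is to show that an arbitrary $g\in G$ lies in $Comm_G(\psi(W))$ by transporting the conjugation back to $H$, where $\psi(W)$ genuinely is a compact open subgroup, and then invoking the elementary fact that any two compact open subgroups of a t.d.l.c.\ group are commensurate. The starting observation is that since $\psi(H)\trianglelefteq G$, conjugation $c_g\colon x\mapsto gxg^{-1}$ maps $\psi(H)$ onto itself; hence $\alpha_g:=\psi^{-1}\circ (c_g\rest \psi(H))\circ \psi$ is a well-defined \emph{abstract} group automorphism of $H$, and moreover $g\psi(W)g^{-1}=\psi(\alpha_g(W))$.

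The crux, and the step I expect to be the main obstacle, is upgrading $\alpha_g$ from an abstract automorphism to a topological one; without this, $\alpha_g(W)$ is merely an abstract subgroup of $H$ and no commensurability can be extracted. Here I would appeal to automatic continuity. Since $H$ and $G$ are Polish and $\psi$ is a continuous injection, the Lusin--Souslin theorem \cite{K95} shows $\psi$ is a Borel isomorphism of $H$ onto its image, so $\psi^{-1}\colon\psi(H)\to H$ is Borel. Writing $\alpha_g$ as the composite of the continuous map $c_g\circ\psi\colon H\to\psi(H)$ with the Borel map $\psi^{-1}$, we get that $\alpha_g$ is a Borel, hence Baire measurable, group homomorphism $H\to H$. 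As $H$ is Polish it is both Baire and separable, so Theorem~\rm\ref{thm:automatic} forces $\alpha_g$ to be continuous. Running the same argument with $g^{-1}$ in place of $g$ gives that $\alpha_{g^{-1}}=\alpha_g^{-1}$ is also continuous, so $\alpha_g$ is a topological automorphism of $H$.

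With continuity in hand, the conclusion is routine. The subgroup $\alpha_g(W)$ is the image of the compact open subgroup $W$ under a topological automorphism, hence is itself compact and open in $H$. Any two compact open subgroups of a t.d.l.c.\ group are commensurate, since their intersection is open and therefore of finite index in each compact factor; thus $\alpha_g(W)\sim_c W$. Finally, because $\psi$ is injective it induces bijections on coset spaces and so preserves subgroup indices, carrying commensurate subgroups to commensurate subgroups; applying $\psi$ yields $g\psi(W)g^{-1}=\psi(\alpha_g(W))\sim_c\psi(W)$, i.e.\ $g\in Comm_G(\psi(W))$. As $g\in G$ was arbitrary, $Comm_G(\psi(W))=G$.
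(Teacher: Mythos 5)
Your proposal is correct and follows essentially the same route as the paper: the paper defines $\phi_g(h)=\psi^{-1}(g\psi(h)g^{-1})$, shows it is Borel via the Lusin--Souslin fact that continuous injective images of Borel sets are Borel, upgrades to a topological automorphism by Theorem~\rm\ref{thm:automatic} applied to $\phi_g$ and $\phi_{g^{-1}}$, and concludes $g\psi(W)g^{-1}=\psi(\phi_g(W))\sim_c\psi(W)$ exactly as you do. The only cosmetic difference is that you phrase the Borel-measurability step via $\psi^{-1}$ being Borel on $\psi(H)$ rather than computing $\phi_g^{-1}(O)=\psi^{-1}(g^{-1}\psi(O)g)$ directly.
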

\begin{proof}
Fix $g\in G$ and let $\phi_g:H\rightarrow H$ be defined by $\phi_g(h)=\psi^{-1}(g\psi(h)g^{-1})$. Since $\phi_g$ is a composition of bijective homomorphisms, $\phi_g$ is a bijective homomorphism. For $O\subseteq H$ open, 
\[
\phi_g^{-1}(O)=\psi^{-1}(g^{-1}\psi(O)g),
\]
is a Borel set since a continuous, injective image of a Borel set is a Borel set \cite[(15.1)]{K95}. The homomorphism $\phi_g$ is thus a Borel map, and applying Theorem~\rm\ref{thm:automatic}, $\phi_g$ is continuous. The same argument applied to $\phi_{g^{-1}}=\phi_g^{-1}$ gives that $\phi^{-1}_g$ is also continuous, hence $\phi_g$ is an isomorphism of topological groups.\par

\indent Fixing $W\in \Uc(H)$, we now have that $\phi_g(W)\in \Uc(H)$, hence $W\sim_c\phi_g(W)$. We conclude that
\[
\psi(W)\sim_c\psi(\phi_g(W))=g\psi(W)g^{-1},
\]
and it follows that $Comm_G(\psi(W))=G$.
\end{proof}

\begin{lem}\label{lem:perm_2_norm}
For each $U\in \Uc(G)$, there is $W\in \Uc(H)$ with $\psi(W)\trianglelefteq U$.
\end{lem}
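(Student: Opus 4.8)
The plan is to reduce the statement to finding a single compact open subgroup of $G$ that is commensurate with $\psi(W_0)$ and \emph{genuinely} normalized by $U$, and then to extract from it a $U$-invariant open subgroup lying inside $\psi(W_0)$. First I would fix $U\in\Uc(G)$ and, using continuity of $\psi$ together with van Dantzig's theorem, choose $W_0\in\Uc(H)$ with $W_0\sleq\psi^{-1}(U)$; set $V:=\psi(W_0)\sleq U$. Since $\psi\rest_{W_0}\colon W_0\to V$ is a continuous bijection between compact Hausdorff groups, it is an isomorphism of topological groups, so open subgroups of $V$ pull back to compact open subgroups of $H$. It therefore suffices to produce a $U$-invariant open subgroup $N\sleq V$: then $W:=(\psi\rest_{W_0})^{-1}(N)\in\Uc(H)$ and $\psi(W)=N\trianglelefteq U$, as desired. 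Note that by Lemma~\ref{lem:perm_2_comm}, $V$ is commensurated in $G$, so $V\sim_c uVu^{-1}$ for every $u\in U$.

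The main obstacle is upgrading this pointwise commensuration to the uniform bound $\sup_{u\in U}|V:V\cap uVu^{-1}|<\infty$, since without it we cannot pass to a normal subgroup. I would first record that the index function $f(u):=|V:V\cap uVu^{-1}|$ is submultiplicative, $f(uw)\sleq f(u)f(w)$: with $B:=uVu^{-1}$ and $C:=uwV(uw)^{-1}$ the standard estimate
\[
|V:V\cap C|\sleq |V:V\cap B|\cdot|B:B\cap C|
\]
applies, and conjugating by $u^{-1}$ shows the second factor equals $|V:V\cap wVw^{-1}|=f(w)$. Next, writing $X_n:=\{u\in U:f(u)\sleq n\}$, each $X_n$ is closed, because its complement consists of those $u$ admitting $v_0,\dots,v_n\in V$ with $u^{-1}v_i^{-1}v_ju\notin V$ for all $i\neq j$, and this is a union of open sets since $V$ is closed and conjugation is continuous. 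As $U=\bigcup_n X_n$ by Lemma~\ref{lem:perm_2_comm} and $U$ is compact, hence Baire, some $X_N$ has nonempty interior and thus contains a coset $u_0U_1$ with $U_1\in\Uc(U)$. Decomposing $U$ into the finitely many cosets of the finite-index subgroup $U_1$ and invoking submultiplicativity then bounds $f$ uniformly on $U$.

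With the uniform bound established, Theorem~\ref{thm:BL} applied to $V\sleq U$ furnishes $K\trianglelefteq U$ with $K\sim_c V$; in particular $K$ is compact and $V_1:=K\cap V$ is open in both $K$ and $V$. Finally I would exploit that $U$ is compact and acts on the profinite group $K$ by jointly continuous conjugation (this is internal to $G$, so no continuity of $\psi^{-1}$ is needed). Choosing $K_2\trianglelefteq K$ open normal with $K_2\sleq V_1$ and, via the tube lemma applied to the compact set $U\times\{1\}$, an open subgroup $N_0\sleq K$ with $uN_0u^{-1}\sleq K_2$ for all $u\in U$, the subgroup $N:=\grp{uN_0u^{-1}\mid u\in U}$ is $U$-invariant and satisfies $N_0\sleq N\sleq K_2\sleq V_1$. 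Hence $N$ is open in $K$ and therefore open in $V$, while $N\trianglelefteq U$ by construction. Pulling $N$ back through $\psi\rest_{W_0}$ yields the required $W\in\Uc(H)$ with $\psi(W)=N\trianglelefteq U$, completing the proof.
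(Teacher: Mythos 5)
Your overall strategy is the same as the paper's: use a Baire category argument on $U$ to upgrade the pointwise commensuration supplied by Lemma~\ref{lem:perm_2_comm} to a uniform bound on the indices $|V:V\cap uVu^{-1}|$, invoke Theorem~\ref{thm:BL} to obtain $K\trianglelefteq U$ commensurate with $V$, and then cut down to a $U$-invariant open subgroup of $V$. Several of your local arguments are in fact cleaner than the paper's: the explicit submultiplicativity of $f$, the description of the complement of $X_n$ as a union of open sets rather than a sequential compactness argument, and the tube-lemma construction of the $U$-invariant open subgroup $N$ at the end, which replaces the paper's choice of an open normal $L\trianglelefteq_o U$ with $\ol{K}\cap L\sleq_o \ol{K}\cap V$.

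There is, however, one genuine gap: the assertion that the subgroup $K$ furnished by Theorem~\ref{thm:BL} is compact, and consequently that $V_1:=K\cap V$ is open in $K$ and in $V$. The Bergman--Lenstra theorem is a purely algebraic statement; it produces an \emph{abstract} normal subgroup $K$ of $U$ with $K\sim_c V$, and nothing forces $K$ to be closed. Indeed, $K\cap V$ is merely a finite-index subgroup of the profinite group $V$, and finite-index subgroups of profinite groups need not be closed (consider the index-$p$ subgroups of $(\Zb/p\Zb)^{\Nb}$ arising from discontinuous $\mathbb{F}_p$-linear functionals); since $K$ is a finite union of cosets of $K\cap V$, it need not be closed either, so compactness does not follow and ``open in $V$'' fails in general. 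Your subsequent steps --- choosing $K_2\trianglelefteq K$ open normal, applying the tube lemma to the conjugation action of $U$ on $K$ --- all presuppose that $K$ is a profinite group containing $V_1$ as an open subgroup. The repair is exactly the paragraph you skipped and which the paper supplies: replace $K$ by $\ol{K}$, which is still normal in $U$ and compact, note that $|V:\ol{K}\cap V|\sleq |V:K\cap V|<\infty$ with $\ol{K}\cap V$ now closed and hence open in $V$, and verify $|\ol{K}:\ol{K}\cap V|<\infty$ by observing that $K\subseteq\bigcup_{i=1}^{n}k_iV$ is contained in a closed set, whence $\ol{K}=\bigcup_{i=1}^{n}k_i\left(\ol{K}\cap V\right)$. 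With $\ol{K}$ in place of $K$, your tube-lemma endgame goes through verbatim.
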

\begin{proof}
Fix $U\in \Uc(G)$. Since $\psi$ is continuous, we may find $V\in \Uc(H)$ such that $\psi(V)\sleq U$. For each $n\sgeq 1$, put
\[
\Omega_n:=\left\{g\in U\mid |\psi(V):\psi(V)\cap g\psi(V)g^{-1}|\sleq n\right\}.
\]
\begin{claim*}
For each $n\sgeq 1$, $\Omega_n$ is closed.
\end{claim*}
\begin{proof}[of Claim] Suppose $(g_i)_{i\in \omega}\subseteq \Omega_n$ converges to $g$. For each $i$, let $k_1^i,\dots, k_n^i$ be coset representatives, with possible repetition, for $\psi(V)\cap g_i\psi(V)g_i^{-1}$ in $\psi(V)$. We may assume $k_j^i\rightarrow k_j$ as $i\rightarrow \infty$ for each $1\sleq j\sleq n$ by passing to a subsequence.\par

\indent Consider $x\in \psi(V)$. For each $i$, there is $k^i_j$ and $y_i\in \psi(V)\cap g_i\psi(V)g_i^{-1}$ with $x=k^i_jy_i$. By passing to a subsequence, we may assume $j=j_0$ for all $i$. As $k^i_{j_0}\rightarrow k_{j_0}$, we conclude that $y_i\rightarrow y$ with $y\in \psi(V)\cap g\psi(V)g^{-1}$, so $x=k_{j_0}y$. It now follows that
\[
\psi(V)=\bigcup_{j=1}^nk_j\left(\psi(V)\cap g\psi(V)g^{-1}\right),
\] 
and $|\psi(V):\psi(V)\cap g\psi(V)g^{-1}|\sleq n$. Therefore, $g\in \Omega_n$.
\end{proof}

\indent Since $U=\bigcup_{n\sgeq 1} \Omega_n$, the Baire category theorem implies there is some $n$ such that $\Omega_n$ is non-meagre. In view of the claim, $\Omega_n$ has non-empty interior, and it follows there are $u_1,\dots, u_k\in U$ with $U=\bigcup_{i=1}^ku_i\Omega_n$. Let $m\sgeq n$ be such that $u_1,\dots, u_k\in \Omega_m$. We claim $U=\Omega_{mn}$.\par

\indent For $x\in U$, we may take $x=u_iw$ for some $1\sleq i\sleq k$ and $w\in \Omega_n$. Since $|\psi(V):\psi(V)\cap w\psi(V)w^{-1}|\sleq n$,
\[
|\psi(V)\cap u_i^{-1}\psi(V)u_i:\psi(V)\cap u_i^{-1}\psi(V)u_i\cap w\psi(V)w^{-1}|\sleq n.
\]
Conjugating with $u_i$, we obtain that
\[
|\psi(V)\cap u_i\psi(V)u_i^{-1}:\psi(V)\cap u_i\psi(V)u_i^{-1}\cap u_iw\psi(V)w^{-1}u_i^{-1}|\sleq n.
\]
It now follows that $|\psi(V):\psi(V)\cap u_i\psi(V)u_i^{-1}\cap x\psi(V)x^{-1}|\sleq mn$, and we infer that
\[
|\psi(V):\psi(V)\cap x\psi(V)x^{-1}|\sleq mn.
\]
Therefore, $U=\Omega_{mn}$. \par

\indent Theorem~\rm\ref{thm:BL} now gives $K\trianglelefteq U$ with $K\sim_c \psi(V)$. Form $\ol{K}$. Certainly, it remains the case that $|\psi(V):\ol{K}\cap \psi(V)|<\infty$. On the other hand, there are $k_1,\dots, k_n \in K $ with $
K\subseteq \bigcup_{i=1}^nk_i\psi(V)$. Since the latter set is closed, $\ol{K}\subseteq \bigcup_{i=1}^nk_i\psi(V)$, so 
\[
\ol{K}= \bigcup_{i=1}^nk_i\left(\psi(V)\cap \ol{K}\right).
\]
Therefore, $|\ol{K}:\ol{K}\cap \psi(V)|<\infty$, and $\ol{K}\sim_c \psi(V)$. \par

\indent Since $ \ol{K}\cap \psi(V)$ is open in $\ol{K}$, we may find $L\trianglelefteq_o U$ so that $\ol{K}\cap L \sleq_o\ol{K}\cap \psi(V)$. Since $ \ol{K}\cap\psi(V)$ is finite index in $\psi(V)$, $\ol{K}\cap L$ is also finite index in $\psi(V)$. The group $W:=\psi^{-1}\left(\ol{K}\cap L\right)$ is thus closed and finite index in $V$ and, therefore, is compact and open in $H$. On the other hand, $\psi(W)\trianglelefteq U$, so $W$ satisfies the lemma.
\end{proof}

\begin{thm}\label{thm:dense_normal} 
Suppose $H\in \Es$, $G$ is a t.d.l.c.s.c. group, and $\psi:H\rightarrow G$ is an injective, continuous homomorphism with $\psi(H)$ normal and dense in $G$. Then $G\in \Es$.
\end{thm}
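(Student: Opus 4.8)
The plan is to argue by induction on the decomposition rank $\xi(H)$, using that $\xi$ is monotone under continuous injective homomorphisms (Corollary~\ref{cor:xi_monotone}) and drops strictly on discrete residuals of compactly generated subgroups (Lemma~\ref{lem:xi_union}). The preliminary lemmas provide the geometric input: fixing $U\in\Uc(G)$, Lemma~\ref{lem:perm_2_norm} yields $W\in\Uc(H)$ with $\psi(W)\trianglelefteq U$. A first useful consequence is that $U\psi(H)=G$, since $U\psi(H)$ is an open (hence closed) subgroup containing the dense subgroup $\psi(H)$.

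First I would reduce to the compactly generated case. Let $(h_j)_{j\in\omega}$ enumerate a dense subset of $H$ and set $O_i:=\grp{W,h_0,\dots,h_i}$, a compactly generated open exhaustion of $H$ with $W\sleq O_i$. Then $G_i:=\grp{U,\psi(O_i)}$ is open and compactly generated, and $\bigcup_i G_i=\grp{U,\psi(H)}=U\psi(H)=G$. As $\Es$ is closed under countable increasing unions, it suffices to show each $G_i\in\Es$; thus I may assume $G=\grp{U,\psi(O)}$ with $O\sleq_o H$ compactly generated and $W\sleq O$.

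Now set $R:=\Res{}(O)$, so $\xi(R)<\xi(H)$ by Lemma~\ref{lem:xi_union}, and let $\Gamma:=\ol{\psi(R)^{G}}$ be the closure of the normal closure of $\psi(R)$ in $G$. Because each conjugate $g\psi(R)g^{-1}$ equals $\psi(\phi_g(R))$, where $\phi_g\in\mathrm{Aut}(H)$ is the topological automorphism from the proof of Lemma~\ref{lem:perm_2_comm}, the group $\Gamma$ equals $\ol{\psi(\ol B)}$ for the $G$-invariant closed subgroup $\ol B:=\ol{\grp{\phi_g(R)\mid g\in G}}\sleq H$; in particular $\psi(\ol B)$ is normal and dense in $\Gamma$, and $\ol B\in\Es$ as a closed subgroup of $H$. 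I would then finish by showing two things: that $\xi(\ol B)<\xi(H)$, so that the induction hypothesis applies to $\psi\colon\ol B\to\Gamma$ and gives $\Gamma\in\Es$; and that $G/\Gamma$ is residually discrete, hence elementary by Lemma~\ref{lem:basic_E_lemmas}. Closure of $\Es$ under group extension (Proposition~\ref{prop:grp_ext}) then yields $G\in\Es$, completing the induction.

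The main obstacle is precisely this last compactly generated analysis. For residual discreteness of $G/\Gamma$ the point is that $\psi(O)$ already normalises $\psi(R)$ (as $R\trianglelefteq O$), so modulo $\Gamma$ the image of $\psi(O)$ is a Hausdorff quotient of the SIN group $O/\Res{}(O)$; coupling this with the profinite image of $U$ and the normality $\psi(W)\trianglelefteq U$ should force $\Res{}(G)\sleq\Gamma$ (the reverse inclusion being routine, by pulling back open normal subgroups of $G$). The genuinely delicate point is the bound $\xi(\ol B)<\xi(H)$: a priori, passing to the normal closure could inflate the decomposition rank back up to that of $H$. This is avoided because we conjugate only within the compactly generated group $G=\grp{U,\psi(O)}$, where $\psi(O)$ fixes $\psi(R)$ and only the compact group $U$ — which normalises $\psi(W)$ — moves it, so that $\ol B$ is assembled from the $U$-translates $\phi_u(R)$ of a single subgroup of rank $\xi(R)<\xi(H)$. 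Turning this informal inflation-control into a rigorous rank bound, together with the residual discreteness of $G/\Gamma$, is where the substance of the theorem lies.
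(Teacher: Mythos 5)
Your induction scheme and use of the two preliminary lemmas match the paper's, but the two steps you yourself flag as ``the substance of the theorem'' are precisely where the argument breaks, and the route you propose for them does not go through. First, the rank bound $\xi(\ol{B})<\xi(H)$ for $\ol{B}=\ol{\grp{\phi_g(R)\mid g\in G}}$ is not available: a closed subgroup of $H$ topologically generated by infinitely many conjugates of $\Res{}(O)$, none of which normalises the others, carries no a priori control on its decomposition rank -- Lemma~\rm\ref{lem:xi_union} only bounds $\xi(\Res{}(L))$ for $L$ a \emph{compactly generated} subgroup, and Corollary~\rm\ref{cor:xi_monotone} only passes rank bounds \emph{downward} along injections. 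Your heuristic that ``only $U$ moves $\psi(R)$'' does not repair this, since $U$ need not normalise $\psi(O)$ or $\psi(R)$ and the $U$-orbit of $\psi(R)$ is in general infinite. The paper avoids the normal closure entirely: it first reduces, via Proposition~\rm\ref{prop:quasicentralizer}, to the case where $\psi(H)\cap \qci{G}{U/\psi(W)}$ is dense in $G$ (this quasi-centralizer step is absent from your proposal and is the crux), which permits choosing the finite generating data $A\subseteq \qci{G}{U/\psi(W)}$ so that $B:=\{\psi(W)a\mid a\in A\}$ has \emph{open} normaliser. Then $L:=\psi^{-1}(\grp{B})$ is a compactly generated open subgroup of $H$, so $\xi(\Res{}(L))<\xi(H)$ by Lemma~\rm\ref{lem:xi_union}, and $\Res{}(L)$, being characteristic in $L$, is already invariant under every $\phi_g$ with $g\in\grp{B}V$ -- no closure under conjugation is ever taken.

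Second, your claim that $G/\Gamma$ is residually discrete is both unproved and stronger than what can be expected. The image of $\psi(O)$ in the quotient is a countable dense (not obviously normal, since $\psi(O)$ need not be normal in $G$) subgroup; even in the paper's setting, where the corresponding image of $L/\psi^{-1}(R)$ in $\cgrp{B}/R/\chi(K)$ \emph{is} countable, dense and normal, the conclusion drawn is only that the quasi-centre is dense and hence $\SIN(\cdot)$ is the whole group, which yields elementarity via Proposition~\rm\ref{prop:SIN}; a group with dense quasi-centre need not be residually discrete. So both halves of your ``two things to show'' require the quasi-centralizer reduction and the $B$-construction that your proposal omits, and as written the argument has a genuine gap rather than a fillable routine verification.
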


\begin{proof}
Fix $U\in \Uc(G)$ and apply Lemma~\rm\ref{lem:perm_2_norm} to find $W\in \Uc(H)$ with $\psi(W)\trianglelefteq U$. Lemma~\rm\ref{lem:perm_2_comm} gives that $\psi(W)$ is commensurated, whereby
\[
\qci{G}{U/\psi(W)}:=\left\{g\in G\mid \exists\;V\in \Uc(G)\text{ such that } [g,V \cap U]\subseteq \psi(W)\right\}
\]
is a normal subgroup of $G$ via Proposition~\rm\ref{prop:quasicentralizer}. We observe that $\psi(H)\cap U\trianglelefteq U$ and $\psi(H)\cap U/\psi(W)$ is countable, so $\psi(H)\cap U/\psi(W)$ is quasi-central in $U/\psi(W)$. It follows that $\psi(H)\cap U\sleq \qci{G}{U/\psi(W)}$ and, therefore, 
\[
\ol{\psi(H)\cap \qci{G}{U/\psi(W)}}\trianglelefteq_oG.
\]
To prove the desired theorem, it thus suffices to show $\ol{\psi(H)\cap \qci{G}{U/\psi(W)}}\in \Es$, so we may assume, without loss of generality, that $\psi(H)\cap \qci{G}{U/\psi(W)}$ is dense in $G$. We may also assume $G$ is compactly generated since $G$ is an increasing union of compactly generated open subgroups.\par

\indent We induct on the decomposition rank of $H$ for the theorem. For the base case, $\xi(H)=1$, $H=\{1\}$, and the result is trivial. Suppose $H\in \Es$ with $\xi(H)=\beta+1$. By our reductions, $G$ is compactly generated; fix $X$ a compact generating set. Proposition~\rm\ref{prop:factor} gives a finite symmetric $A\subseteq G$ containing $1$ so that $X\subseteq AU$ and $UAU=AU$. We may take 
\[
A\subseteq \psi(H)\cap \qci{G}{U/\psi(W)}
\]
since $\psi(H)\cap \qci{G}{U/\psi(W)}$ is dense in $G$. Applying Proposition~\rm\ref{prop:factor} again, we have that $G=\grp{A}U$.\par

\indent Put $B:=\{\psi(W)a\mid a\in A\}$.

\begin{claim*} 
$N_G(B)$ is open.
\end{claim*}
\begin{proof}[of Claim]
Since $A\subseteq \qci{G}{U/\psi(W)}$ and is finite, there is $V\sleq_o U$ such that $[a,V]\subseteq \psi(W)$ for each $a\in A$. Taking $v\in V$ and $a\in A$, we have that
\[
v\psi(W)av^{-1}=\psi(W)vav^{-1},
\]
and since $ava^{-1}v^{-1}=[a,v]\in \psi(W)$, $\psi(W)vav^{-1}=\psi(W)a$. Hence, $V\sleq N_G(B)$, and $N_G(B)$ is open.
\end{proof}

In view of the claim, $\langle B \rangle\trianglelefteq \langle B\rangle V\sleq_o G$ where $V$ is as in the proof of the claim, so 
\[
\grp{B}\trianglelefteq \ol{\langle B \rangle}\trianglelefteq_{cc} \langle B\rangle V.
\]
Since $\psi^{-1}(B)$ is a compact open subset of $H$, $L:=\psi^{-1}(\grp{B})$ is a compactly generated open subgroup of $H$. Lemma~\rm\ref{lem:xi_union} implies $\xi(\Res{}(L))\sleq \beta$. As in the proof of Lemma~\rm\ref{lem:perm_2_comm}, for $g\in \grp{B}V$, the map $\phi_g:L\rightarrow L$ defined by $\phi_g(l):=\psi^{-1}(g\psi(l)g^{-1})$ is a topological group automorphism of $L$, hence $\phi_g(\Res{}(L))=\Res{}(L)$. We conclude that
\[
g\psi(\Res{}(L))g^{-1}=\psi\left(\phi_g(\Res{}(L))\right)=\psi(\Res{}(L)),
\]
and therefore, $\psi(\Res{}(L))\trianglelefteq \grp{B}V$. The group $\psi(\Res{}(L))$ is thus normal and dense in $\ol{\psi(\Res{}(L))}$, and $\xi(\Res{}(L))\sleq \beta$. The induction hypothesis thus implies $R:=\ol{\psi(\Res{}(L))}\in \Es$.\par

\indent The map $\psi$ induces an injective, continuous homomorphism 
\[
\chi:L/\psi^{-1}(R)\rightarrow \cgrp{B}/R
\]
with dense, normal image. Since a quotient of the SIN group $L/\Res{}(L)$, the group $L/\psi^{-1}(R)$ is a SIN group. Let $K\trianglelefteq L/\psi^{-1}(R)$ be a compact open normal subgroup. We now have that $\chi(K)\trianglelefteq \grp{B}R/R$ and since $\chi(K)$ is closed, $\chi(K)\trianglelefteq \cgrp{B}/R$. The image of $L/\psi^{-1}(R)$ in $\cgrp{B}/R/\chi(K)$ is then dense, normal, and countable. Hence, $QZ(\cgrp{B}/R/\chi(K))$ is dense in $\cgrp{B}/R/\chi(K)$, and 
\[
\SIN\left(\cgrp{B}/R/\chi(K)\right)=\cgrp{B}/R/\chi(K).
\]
In view of Theorem~\rm\ref{thm:closure_main}, it follows that $\cgrp{B}/R\in \Es$ and further, that $\langle B\rangle V\in \Es$. \par

\indent By choice of $B$ and $V$, $\langle B\rangle V$ is a finite index subgroup of $G=\grp{A}U$. Theorem~\rm\ref{thm:closure_main} implies the normal core of $\grp{B}V$ is elementary, whereby $G$ is elementary-by-finite and, thus, elementary. This completes the induction, and we conclude the theorem.
\end{proof}

\section{Examples of elementary groups}\label{sec:examples}

\subsection{First examples and non-examples}
\begin{prop}\label{prop:first_examples}
The following are elementary groups:
\begin{enumerate}[(1)]
\item T.d.l.c.s.c. SIN groups; in particular, abelian t.d.l.c.s.c. groups.
\item T.d.l.c.s.c. solvable groups.
\item Locally elliptic t.d.l.c.s.c. groups.
\item T.d.l.c.s.c. groups containing a compact open subgroup that has a dense quasi-centre. In particular, any t.d.l.c.s.c. group that contains $F^{\Nb}$ as a compact open subgroup for $F$ some finite group. 
\end{enumerate}
\end{prop}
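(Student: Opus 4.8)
The plan is to derive each item from a closure property of $\Es$ already in hand, saving (4) for last since it alone needs the SIN-core. For (1), I would first note that a t.d.l.c.s.c. SIN group carries a basis at $1$ of compact open \emph{normal} subgroups with trivial intersection; separating a nontrivial $g$ from such a subgroup $V$ exhibits the discrete quotient $G/V$ in which $g$ survives, so $G$ is residually discrete and Lemma~\ref{lem:basic_E_lemmas}(1) gives $G\in\Es$ (with $\rk(G)\sleq 2$). The abelian case is the special case in which every compact open subgroup supplied by van Dantzig's theorem is automatically normal, so abelian t.d.l.c.s.c. groups are SIN.

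For (2), I would induct on derived length along the \emph{closed} derived series $G=G_0\trianglerighteq G_1\trianglerighteq\dots$, where $G_{i+1}:=\overline{[G_i,G_i]}$. Continuity of the commutator map yields $[\overline{A},\overline{A}]\subseteq\overline{[A,A]}$, whence $G_i\sleq\overline{G^{(i)}}$ for the abstract derived series; as $G$ is solvable this forces $G_n=\{1\}$ for some finite $n$. Each quotient $G_i/G_{i+1}$ is an abelian t.d.l.c.s.c. group, hence elementary by (1), so repeatedly applying closure under group extension (Proposition~\ref{prop:grp_ext}) up the series yields $G\in\Es$. For (3), I would invoke the characterization of locally elliptic groups as directed unions of compact open subgroups. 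Second countability makes $G$ Lindel\"of, so a countable subfamily already covers $G$, and directedness lets me refine it to an $\subseteq$-increasing chain $(U_i)_{i\in\omega}$ of compact open subgroups with $G=\bigcup_i U_i$. Each $U_i$ is profinite, hence elementary of rank $0$, and open, so $G\in\Es$ directly by closure under countable increasing unions.

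Part (4) is where the real work lies, and I expect the main obstacle to be upgrading the \emph{local} hypothesis on $U$ to a statement about a normal subgroup of $G$. Let $U\in\Uc(G)$ with $QZ(U)$ dense in $U$. The crucial step is to show $U\sleq\SIN(G)$: if $g\in QZ(U)$ then $C_U(g)$ is open in $U$, hence open in $G$, so that $g\in QZ(G)$; thus $QZ(U)\sleq QZ(G)\sleq\SIN(G)$, and since $\SIN(G)$ is closed while $QZ(U)$ is dense in $U$, we get $U\sleq\SIN(G)$. Consequently $\SIN(G)$ is an open characteristic subgroup, and by Proposition~\ref{prop:SIN} it is a countable increasing union of relatively open SIN subgroups, each elementary by (1); closure under countable increasing unions gives $\SIN(G)\in\Es$. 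Finally, $\SIN(G)$ being open makes $G/\SIN(G)$ a countable discrete group, so Proposition~\ref{prop:grp_ext} yields $G\in\Es$. The stated special case is covered since the finitely supported elements of $F^{\Nb}$ have open centralizers and are dense, so $QZ(F^{\Nb})$ is dense in the compact open subgroup $F^{\Nb}$.
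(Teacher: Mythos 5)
Your proposal is correct and follows essentially the same route as the paper: (1) and (2) via extensions from the profinite/discrete base cases, (3) via Platonov's characterization and countable increasing unions, and (4) via $QZ(U)\sleq QZ(G)\sleq \SIN(G)$ forcing $\SIN(G)$ to be open, then Proposition~\ref{prop:SIN} plus closure under unions and extension. The only cosmetic difference is in (1), where the paper notes directly that a SIN group is profinite-by-discrete rather than routing through residual discreteness and Lemma~\ref{lem:basic_E_lemmas}.
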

\begin{proof}
$(1)$ and $(2)$ follow immediately since SIN groups are profinite-by-discrete and solvable groups are built via group extension from abelian groups. $(3)$ follows since locally elliptic t.d.l.c.s.c. groups are countable increasing unions of profinite groups.\par

\indent For $(4)$, suppose $G$ is a t.d.l.c.s.c. group and $U\in \Uc(G)$ has a dense quasi-centre. Since $QZ(U)\sleq QZ(G)\sleq \SIN(G)$, the SIN-core $\SIN(G)$ is open in $G$. Appealing to Proposition~\rm\ref{prop:SIN}, $\SIN(G)$ is a countable increasing union of SIN groups, so $\SIN(G)$ is also elementary. It now follows that $G$ is elementary.
\end{proof}

It is illuminating to compute the construction and decomposition ranks in the first two examples. Suppose $G$ is a $SIN$ group. Since $G$ admits compact open normal subgroups, $G$ is compact-by-discrete, hence $\rk(G)\sleq 1$. For the decomposition rank, if $G=\{1\}$, then $\xi(G)=1$. For $G$ non-trivial, Observation~\rm\ref{obs:res_dis_ele} implies $\xi(G)=2$ since $G$ is residually discrete.

\indent Things are more interesting for solvable groups. Suppose $G$ is $n\sgeq 1$-step solvable. That is to say the $n$-th term of the closed derived series is trivial; see Section~\rm\ref{sec:locsolv} for a precise definition.
\begin{claim*} 
$\rk(G)\sleq 2n-1$. 
\end{claim*}
\begin{proof}[of Claim]
We argue by induction on $n$. For the base case, $G$ is abelian and the above paragraph gives that $\rk(G)\sleq 1$. Suppose $G$ is $(n+1)$-step solvable. By the induction hypothesis, $\rk\left(\ol{[G,G]}\right)\sleq 2n-1$. On the other hand, let $\pi:G\rightarrow G/\ol{[G,G]}$ be the usual projection. Since $G/\ol{[G,G]}$ is abelian, there is $K\trianglelefteq G/\ol{[G,G]}$ compact and open, so $\pi^{-1}(K)$ is an extension of a compact group by an at most construction rank $2n-1$ group. We infer that $\rk(\pi^{-1}(K))\sleq 2n$. Since $G/\pi^{-1}(K)$ is discrete, we conclude that $\rk(G)\sleq 2n+1$ completing the induction.
\end{proof}

\indent We now compute an upper bound for the decomposition rank.

\begin{claim*} 
$\xi(G)\sleq n+1$.
\end{claim*}
\begin{proof}[of Claim] We argue by induction on $n$. The base case we have since SIN groups are residually discrete. Suppose $G$ is $(n+1)$-step solvable. Write $G$ as an increasing union of compactly generated open subgroups $O_n$. Since $G/\ol{[G,G]}$ is residually discrete, $\Res{}\left( O_n\right)\sleq \ol{[G,G]}$ for each $n$, and via the induction hypothesis and Corollary~\rm\ref{cor:xi_monotone}, $\xi(\Res{}\left( O_n\right))\sleq n+1$. Lemma~\rm\ref{lem:xi_union} now implies
\[
\xi(G)=\sup_{n\in \omega}\xi(\Res{}(O_n))+1\sleq n+2
\]
completing the induction.
\end{proof}

\indent Of course, we only computed upper bounds on the ranks in the above examples. Lower bounds require knowing more about the specific structure. For example, any of the groups above can be discrete; in such a case, either rank is at most two. We compute lower bounds later in this section. We shall see the decomposition rank is preferred for computing lower bounds.

\begin{rmk}
We have an algorithm for computing the decomposition rank: Take an exhaustion by compactly generated open subgroups; compute the decomposition rank of the discrete residual of each term of the exhaustion; and take the supremum of the ranks plus one.  The construction rank seems to be, somewhat at odds with the name, non-constructive to compute. There does not seem to be a way to identify whether or not the rank is given by an increasing union or by a group extension. Complicating things further, in the case the construction rank is given by group extension, it is not clear how to compute the correct normal subgroup that witnesses the construction rank. 
\end{rmk}

\indent We now consider non-examples. Recall there are many non-discrete compactly generated t.d.l.c.s.c. groups that are topologically simple. For example, let $\mc{T}_n$ denote the $n$-regular tree for any $n\sgeq 3$. By work of J. Tits \cite {Ti70}, there is an index two non-discrete compactly generated t.d.l.c.s.c. subgroup of $Aut(\mc{T}_n)$, denoted $Aut^+(\mc{T}_n)$, that is topologically simple. Alternatively, $PSL_n(\Qp)$ for $n\sgeq 3$ is a non-discrete compactly generated topologically simple t.d.l.c.s.c. group; cf. \cite{BHV08} \cite{D71}.

\begin{prop}\label{prop:ex_topsimple}
If a t.d.l.c.s.c. group $G$ is compactly generated, topologically simple, and elementary, then $G$ is discrete. In particular, $PSL_n(\Qp)$ and $Aut^+(\mc{T}_n)$ for $n\sgeq 3$ are non-elementary.
\end{prop}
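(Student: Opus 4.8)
The plan is to pin down the residual structure of $G$ and then use topological simplicity to collapse it. I may assume $G$ is non-trivial, the trivial group being discrete. The discrete residual $\Res{}(G)$, being the intersection of all open normal subgroups, is a closed characteristic, and in particular closed normal, subgroup of $G$. Since $G$ is non-trivial, compactly generated, and elementary, Corollary~\ref{cor:residual_ele} applies and yields $\Res{}(G)\lneq G$. As $G$ is topologically simple, the only proper closed normal subgroup is $\{1\}$, so $\Res{}(G)=\{1\}$; that is, $G$ is residually discrete.

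Next I would feed this into the Caprace--Monod characterization recalled earlier (\cite{CM11}): a compactly generated t.d.l.c. group is a SIN group precisely when it is residually discrete. Thus $G$ is a SIN group, so the compact open normal subgroups of $G$ form a basis at $1$. In a non-trivial Hausdorff group such a basis has trivial intersection and therefore cannot consist solely of $G$; hence there is $U\in \Uc(G)$ with $U\trianglelefteq G$ and $U\lneq G$.

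Finally, topological simplicity forces $U=\{1\}$, so the identity is open and $G$ is discrete, which is the assertion. For the concluding remark, I would observe that $PSL_n(\Qp)$ and $Aut^+(\mc{T}_n)$ for $n\sgeq 3$, recalled just before the statement, are non-discrete, compactly generated, and topologically simple; were either of them elementary, the proposition just proved would make it discrete, a contradiction. Hence neither group is elementary.

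The argument is short, and its only real input is Corollary~\ref{cor:residual_ele}, which is exactly where the well-founded-tree characterization of $\Es$ does the work: it is what rules out $\Res{}(G)=G$ for a non-trivial compactly generated elementary group. The one point needing care is to apply that corollary only after discarding the trivial group, and to extract a proper compact open normal subgroup from the SIN basis (automatically handling the possibility that $G$ is compact) rather than presupposing one.
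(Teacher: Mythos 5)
Your proof is correct and takes essentially the same approach as the paper: the entire content is Corollary~\ref{cor:residual_ele} forcing $\Res{}(G)\lneq G$ for non-trivial $G$, whence topological simplicity gives $\Res{}(G)=\{1\}$ and then discreteness. The only difference is the last step, where the paper concludes directly (since $\Res{}(G)\neq G$, some open normal subgroup is proper, hence trivial by simplicity, so $\{1\}$ is open) while you detour through the Caprace--Monod SIN characterization of residually discrete compactly generated groups; both routes are valid.
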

 
\begin{proof}
If $G$ is trivial, we are done. Else $\Res{}(G)\lneq G$ via Corollary~\ref{cor:residual_ele}. Topological simplicity implies $\Res{}(G)=\{1\}$, and it follows that $\{1\}$ is open. Therefore, $G$ is discrete.
\end{proof}

\begin{rmk}
Although Proposition~\rm\ref{prop:ex_topsimple} shows there are no non-discrete compactly generated topologically simple groups in $\Es$, there are non-discrete \textit{non-compactly generated} topologically simple groups in $\Es$. For example, the topologically simple groups built by Willis \cite[Proposition 3.4]{Will07} are such groups. The construction ranks of Willis' examples are necessarily given by increasing union. These examples are indeed increasing unions of compact groups and as a corollary, have construction rank one and decomposition rank two.
\end{rmk}

\subsection{A family of elementary groups with decomposition rank unbounded below $\omega$}

\indent We construct a family of elementary groups with members of arbitrarily large finite decomposition rank. It will then follow via Proposition~\rm\ref{prop:rk_xi} that there are members with arbitrarily large finite construction rank. For this construction some notation is required: For $K\sleq L$, we use $\ngrp{K}_L$ to denote the normal subgroup generated by $K$ in $L$. When clear from context, we drop the subscript.\par

\indent Let $A_5$ be the alternating group on five letters; recall $A_5$ is a non-abelian finite simple group. Let $S$ denote the infinite four generated simple group built by G. Higman \cite{H51}. Form $H:=S^5\rtimes A_5$ where $A_5\acts \{0,1,2,3,4\}$ in the usual fashion and fix a transitive, free action of $H$ on $\Nb$. Observe that the normal subgroup generated by $A:=A_5$ in $H$ is $H$ itself. \par

\indent We inductively define compactly generated elementary groups $L_n$ and distinguish $K_n\in \Uc(L_n)$ so that $\ngrp{K_n}=L_n$.  For the base case, $n=1$, define $L_1:=H$ and $K_1:=A$. So $L_1$ is compactly generated, $K_1$ is a compact open subgroup of $L_1$, and $\ngrp{K_1}=L_1$. Suppose we have defined a compactly generated $L_n$ with a compact open subgroup $K_n$ so that $\ngrp{K_n}=L_n$. Let $(L^i_n)_{i\in \Nb}$ and $(K^i_n)_{i\in \Nb}$ list countably many copies of $L_n$ and $K_n$, respectively, and form $\bigoplus_{i\in \Nb}(L_n^i,K_n^i)$. Taking the previously fixed action of $H$ on $\Nb$, we see that $H\acts \bigoplus_{i\in \Nb}(L_n^i,K_n^i)$ by shift; see the discussion after Definition~\rm\ref{def:ldp}. We may thus form
\[
L_{n+1}:=\bigoplus_{i\in \Nb}(L_n^i,K_n^i)\rtimes H
\]
and put $K_{n+1}:=K_n^\Nb\rtimes A$. Certainly, $K_{n+1}$ is a compact open subgroup of $L_{n+1}$. Letting $X$ be a compact generating set for $L_n^0$ and $F$ be a finite generating set for $H$ in $L_{n+1}$, one verifies that $X\times \prod_{i>0}K_n^i\cup F$ is a compact generating set for $L_{n+1}$. It is easy to further verify that $\ngrp{K_{n+1}}_{L_{n+1}}=L_{n+1}$. This completes our inductive construction.

\begin{prop}\label{prop:unbd_fam} 
For each $n\sgeq 1$, $\xi(L_n)\sgeq n+1$.
\end{prop}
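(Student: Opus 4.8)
The plan is to induct on $n$, using part (2) of Lemma~\ref{lem:xi_union} to strip one unit off the decomposition rank at each stage and the monotonicity of $\xi$ under continuous injective homomorphisms (Corollary~\ref{cor:xi_monotone}). For the base case $n=1$, the group $L_1=H$ is non-trivial, so its decomposition tree has a non-terminal root; hence $\xi(L_1)\sgeq 2$, as required.

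For the inductive step I assume $\xi(L_n)\sgeq n+1$. Since $L_{n+1}$ is compactly generated by construction, Lemma~\ref{lem:xi_union}(2) gives $\xi(L_{n+1})=\xi(\Res{}(L_{n+1}))+1$, so it suffices to prove $\xi(\Res{}(L_{n+1}))\sgeq n+1$. I would do this by locating a closed copy of $L_n$ inside $\Res{}(L_{n+1})$: writing $B:=\bigoplus_{i\in \Nb}(L_n^i,K_n^i)$ for the base group of the semidirect product $L_{n+1}=B\rtimes H$, I claim $B\sleq \Res{}(L_{n+1})$, so that the coordinate subgroup $L_n^0\sleq B$ is a closed copy of $L_n$ sitting inside the discrete residual. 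As $\Res{}(L_{n+1})$ is a closed subgroup of an elementary group, it is elementary by Theorem~\ref{thm:closure_sgrp}, and Corollary~\ref{cor:xi_monotone} then yields $\xi(\Res{}(L_{n+1}))\sgeq \xi(L_n^0)=\xi(L_n)\sgeq n+1$.

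The crux of the argument, and what I expect to be the main obstacle, is the claim $B\sleq \Res{}(L_{n+1})$, i.e. that $B$ lies in every open normal subgroup $N\trianglelefteq L_{n+1}$. The idea is to propagate containment outward from the compact open part. Since $N$ is open, $N\cap K_n^{\Nb}$ is open in the product $K_n^{\Nb}=\prod_{i\in \Nb}K_n^i$, so $N$ contains a tail $\prod_{i\sgeq m}K_n^i$ for some $m$. Normality of $N$ makes it invariant under the shift action of $H$, and because $H$ acts transitively on $\Nb$, conjugating this tail carries the factor $K_n^m$ onto each coordinate; hence $K_n^j\sleq N$ for every $j$, and so $K_n^{\Nb}\sleq N$. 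Now I would invoke the defining property $\ngrp{K_n}_{L_n}=L_n$: fixing a coordinate $j$, the conjugates of $K_n^j$ by elements of $L_n^j$ all lie in $N$ by normality and generate the normal closure of $K_n^j$ in $L_n^j$, which is all of $L_n^j$. Thus $L_n^j\sleq N$ for every $j$, and since $N$ is closed it contains the closure of the subgroup generated by the $L_n^j$, which is exactly $B$. This gives $B\sleq N$ for all open normal $N$, hence $B\sleq \Res{}(L_{n+1})$.

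Assembling the pieces, the closed embedding $L_n\cong L_n^0\sleq \Res{}(L_{n+1})$ gives $\xi(\Res{}(L_{n+1}))\sgeq n+1$ via the inductive hypothesis, and therefore $\xi(L_{n+1})=\xi(\Res{}(L_{n+1}))+1\sgeq n+2$, closing the induction. The only delicate point is the transitivity-and-normal-closure step that lifts containment from the tail $\prod_{i\sgeq m}K_n^i$ all the way up to $B$; the remaining manipulations are routine bookkeeping with ranks.
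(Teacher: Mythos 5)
Your proposal is correct and follows essentially the same route as the paper: induct on $n$, use Lemma~\ref{lem:xi_union}(2) to reduce to the discrete residual, show every open normal subgroup of $L_{n+1}$ contains the tail of $K_n^{\Nb}$ and hence, by transitivity of the $H$-action and the normal-generation property $\ngrp{K_n}_{L_n}=L_n$, all of $\bigoplus_{i\in\Nb}(L_n^i,K_n^i)$, and finish with Corollary~\ref{cor:xi_monotone}. The only cosmetic difference is that the paper pins down $\Res{}(L_{n+1})$ exactly and uses residual discreteness of $L_1$ in the base case, whereas you only need the one containment and the non-triviality of $L_1$ — both of which suffice.
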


\begin{proof} We argue by induction on $n$. For the base case, $L_1=H$ is non-trivial and discrete. A fortiori, $L_1$ is residually discrete, hence $\xi(L_1)=2$ via Observation~\rm\ref{obs:res_dis_ele}.\par

\indent Suppose the induction hypothesis holds up to $n$ and consider $L_{n+1}$. We first compute $\Res{}(L_{n+1})$. Consider $O\trianglelefteq_o L_{n+1}$. Since $K_n^{\Nb}$ is a compact open subgroup of $L_{n+1}$, $O$ must contain 
\[
K_n^{(k,\infty)}:=\{f:\Nb\rightarrow K_n\mid f(0)=\dots =f(k)=1\}
\]
for some $k\in \Nb$. Since $H$ acts transitively on $\Nb$ and $O$ is normal, $O$ indeed contains $K_n^{\Nb}$. Recalling $\ngrp{K_n}_{L_n}=L_n$, we conclude that
\[
\bigoplus_{i\in \Nb}(L_n^i,K_n^i)=\ngrp{K_n^{\Nb}}_{L_{n+1}}\sleq O.
\]
It now follows that $\Res{}(L_{n+1})=\bigoplus_{i\in \Nb}(L_n^i,K_n^i)$.\par

\indent Lemma~\rm\ref{lem:xi_union} gives that $\xi(L_{n+1})= \xi(\Res{}(L_{n+1}))+1$ because $L_{n+1}$ is compactly generated. The group $L_n$ admits a continuous injection into $\Res{}(L_{n+1})$, so 
\[
\xi(\Res{}(L_{n+1}))\sgeq \xi(L_n)\sgeq n+1
\]
via Corollary~\rm\ref{cor:xi_monotone} and the induction hypothesis. We conclude that $\xi(L_{n+1})\sgeq n+2$, and the induction is complete.
\end{proof}

\indent The set $\{L_n\mid n\sgeq 1\}$ is thus a family of elementary groups with members of arbitrarily large finite decomposition rank. In view of Proposition~\rm\ref{prop:rk_xi}, we infer that $\rk(L_n)\sgeq n-1$ for each $n$, hence $\{L_n\mid n\sgeq 1\}$ is also a family of elementary groups with members of arbitrarily large finite construction rank. 

\begin{cor}
For $G:= \bigoplus_{n\sgeq 1}(L_n,K_n)$, $\xi(G)=\omega+1$. It follows that $\rk(G)=\omega+1$.
\end{cor}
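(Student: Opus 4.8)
The plan is to compute $\xi(G)$ from the canonical exhaustion of the local direct product by compactly generated open subgroups and then read off $\rk(G)$ via the comparison between the two ranks. Setting $S_0:=\prod_{n\sgeq 1}K_n$ and, for $m\sgeq 1$, $S_m:=L_1\times\dots\times L_m\times\prod_{n>m}K_n$, the group $G$ is the $\subseteq$-increasing union $\bigcup_{m\in\omega}S_m$ of compactly generated open subgroups, so Lemma~\ref{lem:xi_union} gives $\xi(G)=\sup_{m\in\omega}\xi(\Res{}(S_m))+1$. Everything therefore reduces to understanding the ordinals $\xi(\Res{}(S_m))$.

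The key auxiliary fact I would establish first is a product formula for the decomposition rank: for elementary $A$ and $B$, $\xi(A\times B)=\max\{\xi(A),\xi(B)\}$; in particular $\xi(B^k)=\xi(B)$ for every finite $k\sgeq 1$. This rests on the elementary identity $\Res{}(A\times B)=\Res{}(A)\times\Res{}(B)$: the inclusion $\subseteq$ is immediate from the open normal subgroups $O_A\times B$ and $A\times O_B$, while $\supseteq$ follows by restricting the quotient map $A\times B\to(A\times B)/N$ to each factor, so that $\{a:(a,1)\in N\}$ is an open normal subgroup of $A$ and hence contains $\Res{}(A)$. Granting this, I would exhaust $A$ and $B$ by compactly generated open subgroups $A_i,B_i$, apply Lemma~\ref{lem:xi_union} to $A\times B=\bigcup_i(A_i\times B_i)$, and use that $\xi(\Res{}(A_i))<\xi(A)$ and $\xi(\Res{}(B_i))<\xi(B)$ to run an induction on $\max\{\xi(A),\xi(B)\}$; the direction $\sgeq$ is free from Corollary~\ref{cor:xi_monotone}. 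The induction closes cleanly because a decomposition rank is always a successor ordinal, so the supremum of the ordinals lying strictly below $\max\{\xi(A),\xi(B)\}$, increased by one, does not exceed it. I expect this product formula to be the main obstacle: it is the one genuinely new ingredient, and the successor-ordinal bookkeeping is what delivers the exact value rather than a loose bound.

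With the product formula in hand I would upgrade Proposition~\ref{prop:unbd_fam} to the exact value $\xi(L_n)=n+1$ by induction on $n$. The base case $L_1=H$ is non-trivial and discrete, so $\xi(L_1)=2$. For the inductive step, $\Res{}(L_{n+1})=\bigoplus_{i\in\Nb}(L_n^i,K_n^i)$ was already computed in the proof of Proposition~\ref{prop:unbd_fam}; exhausting this local direct product by its finite sub-products and using $\Res{}(L_n^0\times\dots\times L_n^m\times\prod_{i>m}K_n^i)=\Res{}(L_n)^{m+1}$, the product formula, and $\xi(\Res{}(L_n))=\xi(L_n)-1=n$ (Lemma~\ref{lem:xi_union}), one gets $\xi(\Res{}(L_{n+1}))=n+1$, whence $\xi(L_{n+1})=n+2$ by Lemma~\ref{lem:xi_union}. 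Returning to $G$, the same ingredients give $\Res{}(S_m)=\Res{}(L_1)\times\dots\times\Res{}(L_m)$ (the profinite tail contributing trivial discrete residual) and therefore $\xi(\Res{}(S_m))=\max_{i\sleq m}\xi(\Res{}(L_i))=\max_{i\sleq m} i=m$. Since these values are finite but unbounded, $\sup_{m\in\omega}\xi(\Res{}(S_m))=\omega$, and hence $\xi(G)=\omega+1$.

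Finally I would deduce $\rk(G)=\omega+1$ from Proposition~\ref{prop:rk_xi}. For the upper bound, each $\rk(L_i)\sleq 3\xi(L_i)=3(i+1)$ is finite, so each $S_m$ is a finite product of finite-rank elementary groups with a profinite group and thus has finite construction rank by the extension bound of Proposition~\ref{prop:grp_ext}; as $G=\bigcup_m S_m$ is an increasing union of open subgroups of finite rank, $\rk(G)\sleq\sup_m\rk(S_m)+1\sleq\omega+1$. For the lower bound, $\xi(G)\sleq\rk(G)+2$ forces $\omega+1\sleq\rk(G)+2$, so $\rk(G)\sgeq\omega$; and since a construction rank is always either zero or a successor ordinal, $\rk(G)\neq\omega$, giving $\rk(G)\sgeq\omega+1$. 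Combining the two bounds yields $\rk(G)=\omega+1$.
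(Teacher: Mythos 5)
Your argument is correct, but it takes a substantially different and more computational route than the paper. The paper's proof of $\xi(G)=\omega+1$ is two lines: for the lower bound it uses the continuous injections $L_n\hookrightarrow G$ together with Proposition~\rm\ref{prop:unbd_fam} and Corollary~\rm\ref{cor:xi_monotone} to get $\omega\sleq\xi(G)$, then invokes the fact that the decomposition rank is a successor ordinal to conclude $\omega+1\sleq\xi(G)$; the upper bound is left as an ``easy exercise,'' and the claim $\rk(G)=\omega+1$ is stated without proof. You instead prove the identity $\Res{}(A\times B)=\Res{}(A)\times\Res{}(B)$ and the product formula $\xi(A\times B)=\max\{\xi(A),\xi(B)\}$, use these to upgrade Proposition~\rm\ref{prop:unbd_fam} to the exact value $\xi(L_n)=n+1$, and then compute $\xi(\Res{}(S_m))=m$ directly, obtaining both inequalities for $\xi(G)$ at once from Lemma~\rm\ref{lem:xi_union}. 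What your approach buys is the exact value of $\xi(L_n)$ and a reusable product formula that is not in the paper; the cost is that the lower bound, which the paper gets for free from monotonicity, is routed through machinery you must first verify. Your treatment of $\rk(G)$ is also more complete than the paper's: the upper bound via the finite construction ranks of the $S_m$ and the lower bound via $\xi(G)\sleq\rk(G)+2$ together with the fact that construction ranks are zero or successor ordinals is exactly the argument the paper omits. One small remark: for the upper bound $\xi(G)\sleq\omega+1$ alone, the product formula is not strictly needed, since $\rk(S_m)<\omega$ already follows from Proposition~\rm\ref{prop:grp_ext} and then $\xi(\Res{}(S_m))\sleq\xi(S_m)\sleq\rk(S_m)+2<\omega$ by Proposition~\rm\ref{prop:rk_xi}; but your sharper computation is valid and self-contained.
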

\begin{proof}
For each $n\sgeq 1$, there is a continuous injection $L_n\hookrightarrow G$. Via Proposition~\rm\ref{prop:unbd_fam} and Corollary~\rm\ref{cor:xi_monotone}, $n+1\sleq \xi(G)$ for all $n\sgeq 1$, so $\omega \sleq \xi(G)$. Since the decomposition rank is always a successor ordinal or zero, we conclude that $\omega+1\sleq \xi(G)$. The converse inequality is an easy exercise.
\end{proof}
\indent The decomposition rank arises from well-founded trees $T\subseteq \wbaire$, hence it is always less than $\omega_1$, the first uncountable ordinal. However, it is unknown if $\omega_1$ is the least upper bound.

\begin{quest} 
Are there elementary groups of arbitrarily large decomposition rank below $\omega_1$?
\end{quest}

\begin{rmk}
Results for elementary amenable groups suggest a positive answer to the question; cf. \cite{OO13}. Alternatively, using bi-infinite iterated wreath products similar to M.G. Brin's construction in \cite{B05}, one can build elementary groups of decomposition rank $\omega+2$. The examples of decomposition rank $\omega+2$ are compelling as they are \textit{compactly generated} elementary groups with transfinite rank. It seems plausible, albeit difficult, this construction can be iterated to build groups of arbitrarily large rank below $\omega_1$. The decomposition rank $\omega+2$ examples will appear in a forthcoming paper joint with Colin Reid.  \par

\indent An answer in either direction to the question would be quite interesting. A negative answer would place strong restrictions on the class of elementary groups. A positive answer would imply there is no $SQ$-universal elementary group for the class of elementary groups. This would in particular imply there is no surjectively universal t.d.l.c.s.c. group for the class of t.d.l.c.s.c. groups answering a question of S. Gao and M. Xuan \cite{GM14}.\par
\end{rmk}

\section{Application 1: Structure theorems}\label{sec:structure}
For our first application, we consider elementary groups appearing as normal subgroups or as quotients of an arbitrary t.d.l.c.s.c. group to arrive at general structure theorems. 

\subsection{Preliminaries}
We require basic graph-theoretic notions. For a graph $\Gamma$, $V\Gamma$ denotes the vertices of $\Gamma$, and $E\Gamma$ denotes the edges. We take as a convention that edges connect distinct vertices. If $w\in V\Gamma$ is connected to $v\in V\Gamma$ by an edge, we say $w$ is a \textbf{neighbour} of $v$. In the case $\Gamma$ is a vertex transitive locally finite graph, the \textbf{degree} of $\Gamma$, denoted $\deg(\Gamma)$, is the number of neighbors of some (any) $v\in V\Gamma$. When a graph is connected, there is a metric given by the least length of an edge path. \par

\indent Graphs play an important role in the study of t.d.l.c. groups via an old result of H. Abels. 

\begin{thm} [(Abels \cite{A73})] \label{thm:cay_abels}
Let $G$ be a compactly generated t.d.l.c. group. Then there is a locally finite connected graph $\Gamma$ on which $G$ acts continuously and vertex transitively by graph automorphisms such that for all $v\in V\Gamma$, the stabilizer of $v$ in $G$ is compact and open. 
\end{thm}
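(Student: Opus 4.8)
The plan is to construct a Cayley--Abels graph on the coset space of a compact open subgroup. First I would invoke van Dantzig's theorem to fix some $U\in \Uc(G)$, and then use the compact generation of $G$ together with Proposition~\ref{prop:factor} to produce a finite symmetric set $A\subseteq G$ containing $1$ with $UAU=AU$ and $G=\grp{A}U$. The graph $\Gamma$ will have vertex set $V\Gamma:=G/U$, the collection of left cosets of $U$, on which $G$ acts by left translation. This action is transitive, and the stabilizer of the base vertex $U$ is $U$ itself, while the stabilizer of a general vertex $gU$ is the conjugate $gUg^{-1}$, which is again compact and open. Because $U$ is open, $G/U$ carries the discrete topology, so the continuity of the action amounts exactly to the openness of these stabilizers, which we already have.

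For the edges, I would declare $gU$ and $hU$ to be adjacent if and only if $gU\neq hU$ and $g^{-1}h\in UAU$. The bi-invariance $UAU=AU$ is precisely what makes this relation well defined on cosets: replacing $g,h$ by $gu,hu'$ with $u,u'\in U$ replaces $g^{-1}h$ by $u^{-1}(g^{-1}h)u'\in U\cdot UAU\cdot U=UAU$. Symmetry of the relation follows from $A$ being symmetric, since $(g^{-1}h)^{-1}=h^{-1}g$ lies in $(UAU)^{-1}=UAU$. Left translation by any $x\in G$ sends $gU,hU$ to $xgU,xhU$ and leaves $g^{-1}h$ unchanged, so $G$ acts by graph automorphisms. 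Local finiteness is immediate: the neighbours of the base vertex $U$ correspond to the cosets in $UAU\setminus U$, and since $UAU=AU=\bigcup_{a\in A}aU$ is a union of at most $|A|$ cosets of $U$, the base vertex has finite degree; vertex transitivity then gives the same finite degree everywhere.

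It remains to check connectedness, and this is where the generation statement $G=\grp{A}U$ is used: any $g\in G$ can be written $g=a_1\cdots a_n u$ with each $a_i\in A$ and $u\in U$, so the sequence of cosets $U,a_1U,a_1a_2U,\dots,a_1\cdots a_nU=gU$ is a walk in $\Gamma$ from the base vertex to $gU$, because consecutive cosets either coincide or differ by an element of $A\subseteq UAU$. Hence every vertex is joined to the base vertex and $\Gamma$ is connected. The only genuinely delicate point in the whole argument is arranging the set $A$ so that all the compatibility conditions line up simultaneously --- well-definedness and symmetry of the edge relation on the one hand, and connectedness on the other --- and this is exactly resolved by the two conclusions $UAU=AU$ and $G=\grp{A}U$ of Proposition~\ref{prop:factor}; once $A$ is chosen, each verification is routine.
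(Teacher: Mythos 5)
Your construction is correct and complete. Note that the paper does not prove this statement at all: it is quoted as a known result of Abels, with the reader directed to \cite{KM08} for a self-contained treatment, so there is no internal proof to compare against. What you have written is precisely the standard construction of the Cayley--Abels graph (vertices $G/U$, adjacency via the bi-$U$-invariant set $UAU=AU$), and all the verifications go through as you describe: Proposition~\ref{prop:factor} supplies exactly the finite symmetric set $A$ with $UAU=AU$ and $G=\grp{A}U$ needed to make the edge relation well defined, symmetric, and the graph connected; openness of the conjugate stabilizers $gUg^{-1}$ gives continuity of the action on the discrete vertex set; and $AU=\bigcup_{a\in A}aU$ bounds the degree by $|A|$. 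The one cosmetic remark is that since $1\in A$, the base vertex's neighbours are the cosets $aU\neq U$ with $a\in A$, so the degree is at most $|A|-1$; this does not affect anything.
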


\indent For $G$ a t.d.l.c. group, a connected locally finite graph on which $G$ acts continuously and vertex transitively by graph automorphisms with compact open stabilizers is called a \textbf{Cayley-Abels graph} for $G$. Theorem~\rm\ref{thm:cay_abels} shows every compactly generated t.d.l.c. group admits a Cayley-Abels graph; the converse also holds. We make one further remark: If $G$ is a compactly generated t.d.l.c. group and $\Gamma$ is a Cayley-Abels graph for $G$, then the kernel of $G\acts \Gamma$ is a compact normal subgroup. Therefore, in the case $G$ has no non-trivial compact normal subgroups, $G\acts \Gamma$ faithfully. We direct the reader to \cite{KM08} for a pleasant, self contained discussion of the Cayley-Abels graph; in \cite{KM08}, the Cayley-Abels graph is called the \textbf{rough Cayley graph}.\par

\indent Cayley-Abels graphs give rise to a finite invariant: For a compactly generated t.d.l.c.s.c. group $G$, the \textbf{degree} of $G$ is
\[
\deg(G):=\min\{\deg(\Gamma)\mid \Gamma\text{ is a Cayley-Abels graph for }G\}.
\]
We shall see this invariant plays an important role in the structure theory.\par

\indent If $G$ is a group and $X$ a set with a $G$-action, then a \textbf{$G$-congruence} $\sigma$ on $X$ is an equivalence relation on $X$ such that for all $g\in G$, $x\sim_{\sigma}y$ if and only if $g.x\sim_{\sigma}g.y$. The equivalence classes of $\sigma$ are called the \textbf{blocks} of $\sigma$. For $x\in X$, the block containing $x$ is denoted $x^{\sigma}$.\par

\indent We make use of $G$-congruences on Cayley-Abels graphs. In particular, suppose $G$ is a compactly generated t.d.l.c. group with $H\trianglelefteq G$ and suppose $\Gamma$ is a Cayley-Abels graph for $G$. The orbits of $H$ on $V\Gamma$ induce a $G$-congruence on $V\Gamma$; call this $G$-congruence $\sigma$. We may now define a quotient graph $\Gamma/\sigma$ by $V\Gamma/\sigma:=\{v^{\sigma}\mid v\in V\Gamma\}$ and
\[
E\Gamma/\sigma:=\{\{v^{\sigma},w^{\sigma}\}\mid v^{\sigma}\neq w^{\sigma}\text{ and }\exists v'\in v^{\sigma}\;w'\in w^{\sigma}\text{ such that }\{v,w\}\in E\Gamma\}
\]
This quotient graph has two very useful properties; the proofs are left to the reader.

\begin{obs}\label{obs:si}
\begin{enumerate}[(1)]
\item $\Gamma/\sigma$ is connected, locally finite, vertex transitive, and $\deg(\Gamma/\sigma)\sleq \deg(\Gamma)$.
\item $G/H\acts \Gamma/\sigma$ continuously and transitively by graph automorphisms. Additionally, for each $v^{\sigma}\in V\Gamma/\sigma$, the stabilizer of $v^{\sigma}$ is compact and open in $G/H$.
\end{enumerate}
\end{obs}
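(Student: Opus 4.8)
The plan is to handle the two assertions separately, relying throughout on the fact that $G$ acts on the block set $V\Gamma/\sigma$ by $g.v^{\sigma}:=(g.v)^{\sigma}$ — well-defined precisely because $\sigma$ is a $G$-congruence — and that $H$ acts trivially on blocks (since $h.v\in H.v=v^{\sigma}$ for $h\in H$), so this $G$-action descends to a $G/H$-action. I would first record that both actions are by graph automorphisms: if $\{v^{\sigma},w^{\sigma}\}\in E\Gamma/\sigma$ is witnessed by an edge $\{v',w'\}\in E\Gamma$ with $v'\in v^{\sigma}$, $w'\in w^{\sigma}$, then $\{g.v',g.w'\}\in E\Gamma$ with $g.v'\in (g.v)^{\sigma}$, $g.w'\in (g.w)^{\sigma}$, and $(g.v)^{\sigma}\neq (g.w)^{\sigma}$ because $g$ permutes blocks bijectively; applying the same argument to $g^{-1}$ yields the converse. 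Transitivity of $G\acts V\Gamma$ then passes to transitivity on blocks, giving vertex transitivity of $\Gamma/\sigma$ and, in particular, a well-defined degree.

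For part (1), connectedness is immediate: a path $v=x_0,\dots,x_n=w$ in $\Gamma$ projects to the block sequence $x_0^{\sigma},\dots,x_n^{\sigma}$, and after deleting equal consecutive blocks this is a walk in $\Gamma/\sigma$ joining $v^{\sigma}$ to $w^{\sigma}$. The degree bound is the one substantive point, and the key device is to pull every witnessing edge back to a fixed representative. Fix $v\in v^{\sigma}$ and let $w^{\sigma}$ be any neighbour of $v^{\sigma}$, witnessed by $\{v',w'\}\in E\Gamma$ with $v'=h.v$ for some $h\in H$. Applying the graph automorphism $h^{-1}$, the edge $\{v,h^{-1}.w'\}$ lies in $E\Gamma$, and $h^{-1}.w'\in h^{-1}.w^{\sigma}=w^{\sigma}$ because $h^{-1}\in H$ preserves the $H$-orbit $w^{\sigma}$. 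Thus every neighbour block of $v^{\sigma}$ contains a genuine $\Gamma$-neighbour of $v$; since distinct blocks are disjoint, the number of neighbour blocks is at most the number of $\Gamma$-neighbours of $v$, namely $\deg(\Gamma)$. With vertex transitivity this gives local finiteness and $\deg(\Gamma/\sigma)\sleq \deg(\Gamma)$.

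For part (2), the remaining content is the structure of the vertex stabilizers. I would first compute the stabilizer of a block in $G$: $G_{(v^{\sigma})}=\{g\in G\mid g.v\in H.v\}=HG_{(v)}$. Since $G_{(v)}\in \Uc(G)$, the subgroup $HG_{(v)}=\bigcup_{h\in H}hG_{(v)}$ is open and contains $H$, so its image in $G/H$ is an open subgroup; as this image is exactly $(G/H)_{(v^{\sigma})}$, the $G/H$-action on the discrete vertex set has open stabilizers, which is precisely its continuity. For compactness I would observe that $(G/H)_{(v^{\sigma})}=\pi(G_{(v)})$, where $\pi:G\rightarrow G/H$ is the quotient map (using $\pi(H)=\{1\}$), and hence it is the continuous image of the compact group $G_{(v)}$, therefore compact.

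The only non-routine step is the degree bound in part (1); everything else is bookkeeping about congruences and quotient maps. The subtlety there is that a witnessing edge for a neighbour block need not be incident to the chosen representative $v$, and it is exactly the normality of $H$ — allowing the edge to be translated back to $v$ by an element of $H$ while keeping the target block fixed — that removes this difficulty.
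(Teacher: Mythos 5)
Your proof is correct, and it is exactly the routine argument the paper has in mind when it says the proofs of Observation~\ref{obs:si} are left to the reader: translate a witnessing edge back to a fixed block representative by an element of $H$ for the degree bound, and identify the block stabilizer as $HG_{(v)}=\bigcup_{h\in H}hG_{(v)}$, whose image $\pi(G_{(v)})$ in $G/H$ is open and compact. One small quibble with your closing remark: the translation step in the degree bound only uses that $H$ is a subgroup (so $h^{-1}$ preserves $H$-orbits); normality of $H$ is what you already used earlier, namely to make the orbit partition a $G$-congruence so that $G$, and hence $G/H$, acts on the blocks at all.
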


\indent We lastly require a powerful theorem of Caprace and Monod.

\begin{thm}[(Caprace, Monod {\cite[Theorem B]{CM11}})]\label{thm:min_norm} Let $G$ be a compactly generated t.d.l.c. group. Then one of the following holds:
\begin{enumerate}[(1)]
\item $G$ has an infinite discrete normal subgroup.
\item $G$ has a non-trivial locally elliptic radical.
\item $G$ has exactly $0<n<\infty$ many minimal non-trivial closed normal subgroups.
\end{enumerate}
\end{thm}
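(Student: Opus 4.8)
The plan is to assume that neither (1) nor (2) holds and deduce (3). The failure of (2) first trivialises the technical input of Proposition~\ref{prop:CM_minimal}: for any $V\in \Uc(G)$ the subgroup $Q_V=\bigcap_{g\in G}gVg^{-1}$ is compact and normal, hence $Q_V\sleq \Rad{\mc{LE}}(G)=\{1\}$, so $Q_V=\{1\}$ and the proposition applies verbatim to $G$ itself, giving that every filtering family of non-discrete closed normal subgroups of $G$ has non-trivial intersection. Moreover, the simultaneous failure of (1) and (2) excludes \emph{all} non-trivial discrete normal subgroups: a finite one would be compact, hence trivial by (2), and an infinite one is forbidden by (1). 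Thus every non-trivial closed normal subgroup of $G$ is non-discrete, and the conclusion upgrades to: every filtering family of \emph{non-trivial} closed normal subgroups of $G$ has non-trivial intersection.

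With this in hand I would first produce at least one minimal non-trivial closed normal subgroup, giving $n\sgeq 1$. Order the non-trivial closed normal subgroups of $G$ by reverse inclusion; a chain in this order is a filtering family, so by the previous paragraph its intersection is again a non-trivial closed normal subgroup and serves as an upper bound. Zorn's lemma then yields a maximal element for reverse inclusion, that is, a minimal non-trivial closed normal subgroup. The same argument carried out inside any non-trivial closed normal subgroup $N$ shows that $N$ contains a minimal non-trivial closed normal subgroup of $G$; I expect to reuse this repeatedly.

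For finiteness I would exploit two features of minimal normal subgroups. First, two distinct minimal non-trivial closed normal subgroups $M,M'$ satisfy $M\cap M'=\{1\}$, since $M\cap M'$ is normal and properly contained in each; hence $[M,M']\sleq M\cap M'=\{1\}$ and distinct minimal normal subgroups commute. Second, by the reduction each such $M$ is non-compact, so $M\not\sleq V$ for any compact open $V$ and therefore $M$ fixes no vertex of a Cayley--Abels graph. Taking $\Gamma$ a Cayley--Abels graph for $G$ from Theorem~\ref{thm:cay_abels}, with $d=\deg(\Gamma)$, the goal is to bound the number of minimal normal subgroups by $d$. The mechanism would be the successive quotient graphs of Observation~\ref{obs:si}: passing from $G$ to $G/M$ replaces $\Gamma$ by a Cayley--Abels graph of degree at most $d$, and a commuting, independent family of non-compact normal subgroups should act through ``independent local directions'' at a base vertex, of which local finiteness permits only finitely many.

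The hard part will be making this last combinatorial bound precise, and this is where I expect the main obstacle to lie. The difficulty is that a non-compact normal subgroup may have infinite orbits on $V\Gamma$ while identifying no single edge incident to the base vertex, so ``occupies a direction'' is not literally correct; the care lies in processing the commuting family $M_1,M_2,\dots$ through the tower of quotient graphs supplied by Observation~\ref{obs:si} and showing that each genuinely new non-compact factor strictly simplifies the local action, forcing the tower to terminate after at most $d$ steps. (An attractive but flawed shortcut — that the chain $\overline{\langle M_i:i\sgeq n\rangle}$ has trivial intersection — fails because minimal normal subgroups need not form an internal direct product, as the Klein four group already shows, so one cannot avoid the graph-theoretic input.) Once finiteness is secured, enumerating the finitely many minimal non-trivial closed normal subgroups $M_1,\dots,M_n$ with $n\sgeq 1$ yields exactly alternative (3) and completes the trichotomy.
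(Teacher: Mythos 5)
First, a point of comparison: the paper does not prove this statement. It is quoted from Caprace and Monod \cite[Theorem B]{CM11}, and the remark immediately following it explains that alternative (2) has been weakened from ``non-trivial compact normal subgroup'' to ``non-trivial locally elliptic radical'' to repair an error in \cite[Proposition 2.6]{CM11}. So there is no in-paper argument to measure you against; your attempt must stand on its own as a reconstruction of the Caprace--Monod proof, with the only in-paper guidance being Proposition~\ref{prop:CM_minimal} and the degree-drop technique used later in the proof of Theorem~\ref{thm:cg_dcomp}.

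Your reductions are sound: with (1) and (2) both failing, $Q_V$ is a compact, hence locally elliptic, normal subgroup and so trivial; every non-trivial closed normal subgroup is non-discrete (a finite discrete one would be compact) and non-compact; a chain under reverse inclusion is a filtering family of non-discrete closed normal subgroups, so Proposition~\ref{prop:CM_minimal} gives non-trivial intersections and Zorn's lemma produces minimal normal subgroups, whence $n\sgeq 1$; and distinct minimal normal subgroups intersect trivially and commute. The genuine gap is exactly where you locate it: finiteness is asserted but not proved, and it is the substance of alternative (3). To run the degree-drop mechanism you gesture at, set $N_j:=\overline{M_1\cdots M_j}$ for distinct minimal normal subgroups $M_1,M_2,\dots$ and let $\sigma_j$ be the congruence on a Cayley--Abels graph $\Gamma$ induced by the $N_j$-orbits. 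You must then (a) rule out $M_{j+1}\sleq N_j$: if it held, $M_{j+1}$ would centralize the dense subgroup $M_1\cdots M_j$ and hence be central in $N_j$, so abelian, and an abelian closed normal subgroup either is discrete or has a non-trivial open characteristic locally elliptic subgroup, both excluded by your reduction; and (b) verify that $G/N_j$ acts faithfully on $\Gamma/\sigma_j$ and that the image of $M_{j+1}$ there is non-discrete, neither of which is automatic since $G/N_j$ need not inherit trivial locally elliptic radical or the absence of discrete normal subgroups. Only after these points does the argument of Theorem~\ref{thm:cg_dcomp} (a fixed vertex $u^{\sigma_j}$ having two distinct neighbours $w^{\sigma_j}$ and $(h.w)^{\sigma_j}$ that become identified in $\Gamma/\sigma_{j+1}$) yield $\deg(\Gamma/\sigma_j)>\deg(\Gamma/\sigma_{j+1})$ and hence the bound $n\sleq\deg(\Gamma)$. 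As written, the proposal establishes $1\sleq n$ but not $n<\infty$, so the trichotomy is not yet proved.
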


\begin{rmk} The statement of alternative $(2)$ in Theorem~\rm\ref{thm:min_norm} is weaker than the statement made in \cite[Theorem B]{CM11} that $G$ has a non-trivial compact normal subgroup. Examples show the stronger claim is false, however; the error occurs in the proof of \cite[Proposition 2.6]{CM11} on which \cite[Theorem B]{CM11} relies. Fortunately, upon replacing ``compact normal subgroup" by ``locally elliptic normal subgroup" in the statement of \cite[Proposition 2.6]{CM11}, it is an easy exercise to fix the proof given by Caprace and Monod and thus to prove Theorem~\rm\ref{thm:min_norm}. The author has been informed that Caprace and Monod are preparing a correction which implies Theorem~\rm\ref{thm:min_norm}.
\end{rmk}

\subsection{The elementary radical and the elementary residual}
\indent Let $G$ be a t.d.l.c.s.c. group and put 
\[
\mc{S}_{\Es}(G):=\{N\trianglelefteq G\mid  N \text{ is elementary}\}.
\]
We claim $\subseteq$-chains in $\mc{S}_{\Es}(G)$ admit upper bounds. Indeed, suppose $(N_{\alpha})_{\alpha<\lambda}$ is an $\subseteq$-increasing chain in $\mc{S}_{\Es}(G)$. Fix $U\in \Uc(G)$ and consider $(UN_{\alpha})_{\alpha<\lambda}$. Since $\bigcup_{\alpha<\lambda}UN_{\alpha}$ is open in $G$, it is a Lindel\"{o}f space when considered as a subspace, so there is a countable subcover $(UN_{\alpha_i})_{i\in \omega}$. Each $UN_{\alpha_i}$ is elementary, and therefore, $\bigcup_{i\in \omega}UN_{\alpha_i}\in \Es$. Applying Theorem~\rm\ref{thm:closure_main}, $\ol{\bigcup_{\alpha<\lambda}N_{\alpha}}\in \Es$. Since $\ol{\bigcup_{\alpha<\lambda}N_{\alpha}}\trianglelefteq G $, we conclude that $\ol{\bigcup_{\alpha<\lambda}N_{\alpha}}\in \mc{S}_{\Es}(G)$. Inclusion chains in $\mc{S}_{\Es}(G)$ thus have upper bounds, and Zorn's lemma implies $\mc{S}_{\Es}(G)$ has maximal elements. 

\begin{prop}
$\mc{S}_{\Es}(G)$ has a unique $\subseteq$-maximal element. 
\end{prop}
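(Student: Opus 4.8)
The plan is to prove uniqueness by showing that the product of any two elementary closed normal subgroups is again elementary, so that two distinct maximal elements would generate a strictly larger one. Concretely, suppose $M$ and $N$ are both $\subseteq$-maximal in $\mc{S}_{\Es}(G)$, which exist by the Zorn's lemma argument above. Since $M,N\trianglelefteq G$, the product $MN$ is a normal subgroup, and hence $\ol{MN}$ is a closed normal subgroup of $G$. If I can show $\ol{MN}\in \Es$, then $\ol{MN}\in \mc{S}_{\Es}(G)$; as $M\sleq \ol{MN}$ and $N\sleq \ol{MN}$, maximality of each forces $M=\ol{MN}=N$, giving uniqueness.

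It remains to show $\ol{MN}$ is elementary. First I observe $M\trianglelefteq \ol{MN}$ with $M\in \Es$, so by closure under group extension (Proposition~\ref{prop:grp_ext}) it suffices to show the Hausdorff quotient $\ol{MN}/M$ is elementary. Let $\pi:\ol{MN}\to \ol{MN}/M$ be the projection. The image $\pi(N)=NM/M$ is dense in $\ol{MN}/M$, because $MN$ is dense in $\ol{MN}$ and $\pi(MN)=\pi(M)\pi(N)=\pi(N)$; it is normal because $N\trianglelefteq G$ forces $N\trianglelefteq \ol{MN}$. Since $N\cap M$ is a closed normal subgroup of $N$, the assignment $n(N\cap M)\mapsto nM$ defines a continuous injective homomorphism $\psi:N/(N\cap M)\to \ol{MN}/M$ (the restriction $\pi\rest N$ has kernel $N\cap M$ and factors through the quotient topology) whose image $\psi\big(N/(N\cap M)\big)=\pi(N)$ is dense and normal in $\ol{MN}/M$.

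The group $N/(N\cap M)$ is a Hausdorff quotient of the elementary group $N$, hence elementary by Theorem~\ref{thm:closure_quot}. Now I invoke the dense-normal permanence property, Theorem~\ref{thm:dense_normal}: since $\psi$ is a continuous injective homomorphism from an elementary group whose image is dense and normal, the target $\ol{MN}/M$ is elementary. Closure under group extension then yields $\ol{MN}\in \Es$, completing the argument.

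The main obstacle is precisely that $MN$ need not be closed, so one cannot directly apply the topological second isomorphism theorem to identify $\ol{MN}/M$ with an honest quotient of $N$: a priori $\pi(N)$ is only dense in $\ol{MN}/M$, not equal to it. This is exactly the situation for which Theorem~\ref{thm:dense_normal} was developed, and its use here is what makes this proposition depend on the further permanence property of Section~\ref{sec:furtherperm} rather than on the elementary closure results of Section~\ref{sec:elementary} alone.
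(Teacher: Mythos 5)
Your proof is correct, but it takes a different and substantially heavier route than the paper. The paper also reduces to showing $\ol{MN}\in\Es$, but it does so by passing to $\ol{MN}/(M\cap N)$ and observing that this group is a quasi-product of the elementary groups $M/(M\cap N)$ and $N/(M\cap N)$; closure under quasi-products (Proposition~\ref{cons:closure_quasi}, item (5) of Theorem~\ref{thm:closure_main}) then finishes, and that closure property is itself proved with only Section~\ref{sec:elementary} tools --- the non-closedness of $MN$ is handled by the trick of multiplying by a compact open subgroup $U$, so that $U N M=G$ is closed and the first isomorphism theorem applies to $UN$. You instead invoke Theorem~\ref{thm:dense_normal}, whose proof requires Baire category, the Bergman--Lenstra theorem, and quasi-centralizers. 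This is legitimate here, since Theorem~\ref{thm:dense_normal} is established in Section~\ref{sec:furtherperm}, well before the elementary radical is constructed, and all of its hypotheses are verified correctly in your argument ($\psi$ is continuous and injective with dense normal image). However, your closing remark that the proposition therefore \emph{depends} on the permanence property of Section~\ref{sec:furtherperm} is not accurate: the quasi-product argument shows that the closure properties of Section~\ref{sec:elementary} alone suffice, precisely because one can enlarge the dense normal image $\pi(N)$ by a compact open subgroup to obtain an honest closed cocompact (indeed open, hence all of $\ol{MN}/M$) subgroup without ever needing the dense-normal-image theorem. What your approach buys is a shorter verification once Theorem~\ref{thm:dense_normal} is taken as a black box; what the paper's approach buys is logical economy and a proof that would survive even in a development that omitted Section~\ref{sec:furtherperm} entirely.
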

\begin{proof}
Suppose $M,N$ are two $\subseteq$-maximal elements. Form $\ol{MN}$ and consider $\ol{MN}/M\cap N$. It is easy to see $\ol{MN}/M\cap N$ is a quasi-product of $M/M\cap N$ and $N/M\cap N$, and these groups are elementary via Theorem~\rm\ref{thm:closure_main}. Two further applications of Theorem~\rm\ref{thm:closure_main} imply $\ol{MN}/M\cap N\in \Es$ and $\ol{MN}\in\Es$. We conclude that $M=\ol{MN}=N$, so $S_{\Es}(G)$ has a unique maximal element.
\end{proof}

We denote the unique maximal element of $\mc{S}_{\Es}(G)$ by $\Rad{\Es}(G)$. We call $\Rad{\Es}(G)$ the \textbf{elementary radical} of $G$. The elementary radical enjoys two useful properties:
\begin{obs}
\begin{enumerate}[(1)]
\item $\Rad{\Es}(G)$ is a closed characteristic subgroup of $G$ and contains every element of $\mc{S}_{\Es}(G)$. 
\item $\Rad{\Es}\left(G/\Rad{\Es}(G)\right)=\{1\}$.
\end{enumerate}
\end{obs}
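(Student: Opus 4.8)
The plan is to derive both parts from the closure properties collected in Theorem~\ref{thm:closure_main} together with the maximality of $\Rad{\Es}(G)$ in $\mc{S}_{\Es}(G)$. The single technical fact driving everything is that the join of two closed elementary normal subgroups is again elementary; this is exactly the quasi-product argument used just above to prove uniqueness of the maximal element. I would record it as the main lemma: if $M,N\in \mc{S}_{\Es}(G)$, then $\ol{MN}/(M\cap N)$ is a quasi-product of the groups $M/(M\cap N)$ and $N/(M\cap N)$, which are elementary by closure under quotients; hence $\ol{MN}/(M\cap N)\in \Es$ by closure under quasi-products, and $\ol{MN}\in \Es$ by closure under group extension. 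Being closed and normal, $\ol{MN}\in \mc{S}_{\Es}(G)$.

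For part $(1)$, closedness and normality are immediate from the definition of $\mc{S}_{\Es}(G)$. To see $\Rad{\Es}(G)$ contains an arbitrary $N\in \mc{S}_{\Es}(G)$, I would apply the main lemma with $M=\Rad{\Es}(G)$: the subgroup $\ol{N\,\Rad{\Es}(G)}$ then lies in $\mc{S}_{\Es}(G)$ and contains $\Rad{\Es}(G)$, so maximality forces $\ol{N\,\Rad{\Es}(G)}=\Rad{\Es}(G)$ and therefore $N\sleq \Rad{\Es}(G)$. For the characteristic property, let $\phi$ be any topological group automorphism of $G$. Then $\phi(\Rad{\Es}(G))$ is a closed normal subgroup, and it is elementary because $\phi$ restricts to a topological isomorphism onto it and $\Es$ is closed under isomorphism of topological groups; hence $\phi(\Rad{\Es}(G))\in \mc{S}_{\Es}(G)$, and the containment just proved gives $\phi(\Rad{\Es}(G))\sleq \Rad{\Es}(G)$. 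Running the same argument with $\phi^{-1}$ yields the reverse inclusion, so $\phi(\Rad{\Es}(G))=\Rad{\Es}(G)$.

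For part $(2)$, write $R:=\Rad{\Es}(G)$, let $\pi:G\rightarrow G/R$ be the quotient map, and set $M:=\Rad{\Es}(G/R)$, a closed elementary normal subgroup of $G/R$. Then $\pi^{-1}(M)$ is a closed normal subgroup of $G$ fitting into the short exact sequence
\[
1\rightarrow R\rightarrow \pi^{-1}(M)\rightarrow M\rightarrow 1
\]
with $R,M\in \Es$, so $\pi^{-1}(M)\in \Es$ by closure under group extension. Thus $\pi^{-1}(M)\in \mc{S}_{\Es}(G)$, and part $(1)$ forces $\pi^{-1}(M)\sleq R$. Since $R=\pi^{-1}(\{1\})\sleq \pi^{-1}(M)$ always holds, we conclude $\pi^{-1}(M)=R$, i.e. $M=\{1\}$.

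I do not expect a serious obstacle here: every step is an application of an already-established permanence property, and the content is really the maximality of $\Rad{\Es}(G)$ feeding back into the extension and quasi-product closures. If anything requires care it is only bookkeeping, namely confirming that elementarity is preserved by the topological automorphism $\phi$ in the characteristic argument (handled by closure under isomorphism, cf.\ the observation following the definition of the construction rank) and that the pullback sequence in part $(2)$ genuinely presents $\pi^{-1}(M)$ as an extension of the elementary group $M$ by the elementary group $R$ in the category of t.d.l.c.s.c.\ groups.
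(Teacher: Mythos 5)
Your proof is correct and follows exactly the route the paper intends for this (unproved) observation: the quasi-product join argument already used to establish uniqueness of the maximal element, maximality of $\Rad{\Es}(G)$ in $\mc{S}_{\Es}(G)$, and closure of $\Es$ under isomorphism and group extension. No gaps; both the characteristic-subgroup argument via $\phi$ and $\phi^{-1}$ and the pullback short exact sequence in part $(2)$ are handled correctly.
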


\indent We now explore a notion dual to the elementary radical. Let $G$ be a t.d.l.c.s.c. group and put 
\[
\mc{Q}_{\Es}(G):=\{N\trianglelefteq G\mid G/N\text{ is elementary}\}.
\]
We claim $\subseteq$-decreasing chains in $\mc{Q}_{\Es}(G)$ have lower bounds. Indeed, suppose $(N_{\alpha})_{\alpha\in I}$ is an $\subseteq$-decreasing chain in $\mc{Q}_{\Es}(G)$ and put $N:=\bigcap_{\alpha\in I}N_{\alpha}$. The group $G/N$ is thus residually elementary. Via the permanence properties of $\Es$, $G/N$ is elementary, hence $N\in \mc{Q}_{\Es}(G)$. We conclude that $\subseteq$-decreasing chains in $\mc{Q}_{\Es}(G)$ have lower bounds, and applying Zorn's lemma, $\mc{Q}_{\Es}(G)$ has minimal elements.

\begin{prop}
$\mc{Q}_{\Es}(G)$ has a unique $\subseteq$-minimal element.
\end{prop}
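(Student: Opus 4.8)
The plan is to dualize the argument just given for $\Rad{\Es}(G)$. Existence of $\subseteq$-minimal elements of $\mc{Q}_{\Es}(G)$ has already been secured via Zorn's lemma, so only uniqueness remains. I would suppose $M$ and $N$ are both $\subseteq$-minimal elements of $\mc{Q}_{\Es}(G)$ and aim to show $M=N$. The natural candidate for a common lower bound is the intersection $M\cap N$, which is again a closed normal subgroup of $G$. If I can establish that $M\cap N\in \mc{Q}_{\Es}(G)$, then since $M\cap N\sleq M$ and $M\cap N\sleq N$, minimality of $M$ and of $N$ forces $M\cap N=M$ and $M\cap N=N$, whence $M=N$.

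Thus the whole proof reduces to verifying that $G/(M\cap N)$ is elementary, and the idea is to embed this quotient into a product of the two known elementary quotients. I would consider the diagonal homomorphism $\phi:G\rightarrow (G/M)\times (G/N)$ given by $\phi(g)=(gM,gN)$; it is continuous, and its kernel is exactly $M\cap N$, so it descends to a continuous injective homomorphism $\ol{\phi}:G/(M\cap N)\rightarrow (G/M)\times (G/N)$. The target is elementary: $G/M$ and $G/N$ lie in $\Es$ by hypothesis, and $(G/M)\times (G/N)$ is a group extension of $G/N$ by $G/M$, hence elementary by closure under group extension (Theorem~\ref{thm:closure_main}(1), or directly Proposition~\ref{prop:grp_ext}). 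Since $G/(M\cap N)$ is a t.d.l.c.s.c. group admitting a continuous injective homomorphism into the elementary group $(G/M)\times (G/N)$, Theorem~\ref{thm:closure_sgrp} gives $G/(M\cap N)\in \Es$, i.e. $M\cap N\in \mc{Q}_{\Es}(G)$, completing the argument.

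There is essentially no hard obstacle here; the real content is packaged into the permanence properties already proved. The one point to get right is that the relevant closure property is closure under \emph{continuous injective homomorphisms} (Theorem~\ref{thm:closure_sgrp}), not merely under closed subgroups: the map $\ol{\phi}$ need not be a topological embedding, since its image need not be closed, but injectivity and continuity are all that Theorem~\ref{thm:closure_sgrp} requires. This is the precise dual of the elementary-radical proof, where the quasi-product structure on $\ol{MN}/(M\cap N)$ played the role that the diagonal embedding plays here.
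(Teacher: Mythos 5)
Your proof is correct. It differs from the paper's in a mild but genuine way: the paper first applies Theorem~\ref{thm:closure_sgrp} to the continuous injection $M/(M\cap N)\rightarrow G/N$ to see that $M/(M\cap N)$ is elementary, and then realizes $G/(M\cap N)$ as a group extension of $G/M$ by $M/(M\cap N)$, invoking closure under extensions; you instead form the product $(G/M)\times(G/N)$ first (elementary by the extension property) and then inject all of $G/(M\cap N)$ diagonally, invoking Theorem~\ref{thm:closure_sgrp} once at the end. Both arguments consume exactly the same two permanence properties, just in the opposite order, so neither is more general or more economical; your diagonal version is perhaps slightly cleaner to state, while the paper's version makes the parallel with the second isomorphism theorem explicit. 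Your closing caveat is exactly right and worth keeping: the image of $\ol{\phi}$ need not be closed, so it is essential that Theorem~\ref{thm:closure_sgrp} is stated for continuous injective homomorphisms rather than only for closed subgroups.
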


\begin{proof} Suppose $M,N$ are $\subseteq$-minimal elements of $\mc{Q}_{\Es}(G)$. The usual projection map $ M/M\cap N \rightarrow G/N$ is a continuous, injective group homomorphism. Theorem~\rm\ref{thm:closure_main} thus implies $M/M\cap N$ is elementary, and since $(G/M \cap N)/(M/M\cap N)\simeq G/M$ and $G/M$ is elementary, a second application of Theorem~\rm\ref{thm:closure_main} implies $G/M\cap N$ is elementary. The minimality of $M$ and $N$ now gives that $M=M\cap N=N$ as required.
\end{proof}

\noindent We call the unique minimal element of $\mc{Q}_{\Es}(G)$ the \textbf{elementary residual} of $G$, denoted $\Res{\Es}(G)$. We note two important features of the elementary residual.
\begin{obs}
\begin{enumerate}[(1)]
\item $\Res{\Es}(G)$ is a closed characteristic subgroup of $G$ and is contained in every element of $\mc{Q}_{\Es}(G)$. 
\item $\Res{\Es}\left(\Res{\Es}(G)\right)=\Res{\Es}(G)$.
\end{enumerate}
\end{obs}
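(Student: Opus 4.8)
The plan is to deduce part (2) from part (1), so I would establish (1) first. For part (1), closedness and normality of $\Res{\Es}(G)$ are immediate, since by definition $\Res{\Es}(G)$ is a member of $\mc{Q}_{\Es}(G)$ and every subgroup under consideration is closed. For the characteristic property, I would observe that any topological automorphism $\phi$ of $G$ permutes $\mc{Q}_{\Es}(G)$: if $N\trianglelefteq G$ with $G/N$ elementary, then $\phi(N)\trianglelefteq G$ and $G/\phi(N)\simeq G/N$ is elementary, so $\phi(N)\in \mc{Q}_{\Es}(G)$; moreover $\phi$ preserves $\subseteq$ on $\mc{Q}_{\Es}(G)$ and hence carries $\subseteq$-minimal elements to $\subseteq$-minimal elements. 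Thus $\phi(\Res{\Es}(G))$ is again $\subseteq$-minimal, and the uniqueness of the minimal element forces $\phi(\Res{\Es}(G))=\Res{\Es}(G)$.

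For the containment claim in (1), I would reuse the intersection computation underlying the proof that $\mc{Q}_{\Es}(G)$ has a unique minimal element. For any $N\in \mc{Q}_{\Es}(G)$, the projection $\Res{\Es}(G)/(\Res{\Es}(G)\cap N)\to G/N$ is continuous and injective, so $\Res{\Es}(G)/(\Res{\Es}(G)\cap N)$ is elementary by Theorem~\ref{thm:closure_main}(2); since $(G/(\Res{\Es}(G)\cap N))/(\Res{\Es}(G)/(\Res{\Es}(G)\cap N))\simeq G/\Res{\Es}(G)$ is elementary, closure under group extension (Theorem~\ref{thm:closure_main}(1)) gives $\Res{\Es}(G)\cap N\in \mc{Q}_{\Es}(G)$. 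As $\Res{\Es}(G)\cap N\sleq \Res{\Es}(G)$, minimality yields $\Res{\Es}(G)\cap N=\Res{\Es}(G)$, i.e. $\Res{\Es}(G)\sleq N$.

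For part (2), write $R:=\Res{\Es}(G)$ and $M:=\Res{\Es}(R)$. The heart of the argument is to promote $M$ from a subgroup normal in $R$ to one normal in $G$. Applying part (1) to $R$, the subgroup $M$ is characteristic in $R$; applying part (1) to $G$, the subgroup $R$ is characteristic, in particular normal, in $G$. Thus for each $g\in G$ conjugation $c_g$ restricts to a topological automorphism of $R$, which must fix $M$ setwise, so $gMg^{-1}=M$ for all $g\in G$ and $M\trianglelefteq G$. Now $R/M$ is elementary by the definition of $M=\Res{\Es}(R)$, and $G/R$ is elementary by the definition of $R=\Res{\Es}(G)$, so the identification $(G/M)/(R/M)\simeq G/R$ together with closure of $\Es$ under group extension shows $G/M$ is elementary. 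Therefore $M\in \mc{Q}_{\Es}(G)$, whence part (1) gives $R=\Res{\Es}(G)\sleq M$. Since $M\sleq R$ trivially, $M=R$, which is exactly $\Res{\Es}(\Res{\Es}(G))=\Res{\Es}(G)$.

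The main obstacle is precisely this passage in part (2) from normality in $R$ to normality in $G$: a priori $\Res{\Es}(R)$ is only a normal subgroup of $R$, so one cannot directly invoke the residual property of $G$ to compare it with $R$. The characteristic-subgroup conclusion of part (1) is what removes this obstacle, and it is the reason part (1) must be proved with the stronger \emph{characteristic} conclusion rather than merely \emph{normal}.
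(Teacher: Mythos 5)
Your proof is correct, and it follows exactly the route the paper intends: the paper states this as an Observation without proof, and your argument correctly supplies the details by reusing the intersection computation from the uniqueness proof for the containment claim, deducing the characteristic property from uniqueness of the minimal element, and then obtaining (2) via the standard ``characteristic in normal is normal'' step together with closure of $\Es$ under group extension. No gaps.
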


Collecting our results, we have proved the following theorem:
\begin{thm} Let $G$ be a t.d.l.c.s.c. group. Then
\begin{enumerate}[(1)]
\item There is a unique maximal closed normal subgroup $\Rad{\Es}(G)$ such that $\Rad{\Es}(G)$ is elementary. 
\item There is a unique minimal closed normal subgroup $\Res{\Es}(G)$ such that $G/\Res{\Es}(G)$ is elementary.
\end{enumerate}
\end{thm}

\subsection{The structure of compactly generated groups}
A key ingredient in the proof of the following theorem is the relationship between normal subgroups and $G$-congruences. The inspiration for this approach comes from a fascinating paper of Trofimov \cite{Tr03}.

\begin{thm}\label{thm:cg_dcomp}
Let $G$ be a compactly generated t.d.l.c.s.c. group. Then there is a finite series of closed characteristic subgroups 
\[
\{1\}=H_0\sleq H_1\sleq \dots \sleq H_{n}\sleq G
\]
such that 
\begin{enumerate}[(1)]
\item $G/H_{n}\in \Es$; 
\item for $0\sleq k \sleq n-1$, $(H_{k+1}/H_{k})/\Rad{\Es}(H_{k+1}/H_{k})$ is a quasi-product with $0<n_{k+1}<\infty$ many topologically characteristically simple non-elementary quasi-factors; and
\item $n\sleq \deg(G)$.
\end{enumerate}
\end{thm}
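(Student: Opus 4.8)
The plan is to build the series by repeatedly stripping off the ``socle'' of the quotient by the elementary radical, and to bound its length by watching the degree of a Cayley--Abels graph strictly decrease at every stage. Throughout I use that $\Rad{\Es}$ exists, that $\Es$ is closed under the operations of Theorem~\rm\ref{thm:closure_main}, and that compactly generated groups admit Cayley--Abels graphs (Theorem~\rm\ref{thm:cay_abels}).

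First I would construct a single layer. If $G\in\Es$, take $n=0$. Otherwise set $\bar G:=G/\Rad{\Es}(G)$, which is compactly generated with $\Rad{\Es}(\bar G)=\{1\}$, and apply Theorem~\rm\ref{thm:min_norm} to $\bar G$. Alternatives $(1)$ and $(2)$ are impossible: an infinite discrete normal subgroup and a non-trivial locally elliptic radical are elementary by Proposition~\rm\ref{prop:first_examples}, so would lie in the trivial $\Rad{\Es}(\bar G)$. Hence $(3)$ holds and $\bar G$ has finitely many minimal closed normal subgroups $M_1,\dots,M_{n_1}$, with $0<n_1<\infty$. Each $M_i$ is topologically characteristically simple (a proper closed characteristic subgroup of $M_i$ is normal in $\bar G$, hence trivial by minimality) and non-elementary (else contained in $\Rad{\Es}(\bar G)$), and therefore non-abelian, since abelian t.d.l.c.s.c.\ groups are elementary. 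I then claim $S:=\cgrp{M_1,\dots,M_{n_1}}$ is a quasi-product with quasi-factors $M_1,\dots,M_{n_1}$: the $M_i$ pairwise commute and pairwise meet trivially, and if the multiplication map had non-trivial kernel then some $M_i$ would lie in $\cgrp{M_j:j\ne i}$ and so centralize itself, forcing $M_i$ abelian, a contradiction.

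Next I assemble the series. Let $\pi:G\to\bar G$ and put $H_1:=\pi^{-1}(S)$. Since $S$ is characteristic in $\bar G$ and $\Rad{\Es}(G)$ is characteristic in $G$, the subgroup $H_1$ is closed and characteristic. As $\Rad{\Es}(H_1)$ is characteristic in $H_1\trianglelefteq G$ it is normal and elementary in $G$, whence $\Rad{\Es}(H_1)=\Rad{\Es}(G)$ and $H_1/\Rad{\Es}(H_1)\cong S$ is exactly the desired quasi-product. Replacing $G$ by $G/H_1$ and iterating yields $\{1\}=H_0\sleq H_1\sleq\cdots$ with each $H_k$ characteristic in $G$ (a preimage of a characteristic subgroup under a map with characteristic kernel) and each layer $(H_{k+1}/H_k)/\Rad{\Es}(H_{k+1}/H_k)$ a quasi-product of topologically characteristically simple non-elementary quasi-factors.

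Finally I would prove termination with $n\sleq\deg(G)$. Fix a Cayley--Abels graph $\Gamma_k$ for $L:=G/H_k$ of minimal degree, and let $N:=H_{k+1}/H_k\trianglelefteq L$, whose orbits on $V\Gamma_k$ give a $G$-congruence $\sigma$. By Observation~\rm\ref{obs:si}, $\Gamma_k/\sigma$ is a Cayley--Abels graph for $L/N=G/H_{k+1}$, so $\deg(G/H_{k+1})\sleq\deg(\Gamma_k/\sigma)\sleq\deg(\Gamma_k)=\deg(G/H_k)$. The crux is strictness. At a block $B=Nv$, a block $\ne B$ is adjacent to $B$ precisely when it contains a neighbour of $v$; hence $\deg_{\Gamma_k/\sigma}(B)=\deg(\Gamma_k)$ would force the neighbours of every vertex into distinct blocks, none its own, so $\Gamma_k\to\Gamma_k/\sigma$ would be a covering. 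Then any element of $N$ fixing a vertex fixes all its neighbours and so, by connectivity, fixes $\Gamma_k$; modulo the compact kernel $K$ of $L\acts\Gamma_k$ (which lies in $N$), the group $N/K$ would have trivial vertex stabilisers and be discrete, making $N$ compact-by-discrete and therefore elementary. This contradicts $N/\Rad{\Es}(L)\cong S_k$ being non-elementary, so $\deg(G/H_{k+1})<\deg(G/H_k)$. The degrees thus strictly decrease through non-negative integers starting at $\deg(G)$; a group of degree $0$ is compact, hence elementary, so the process halts at some $G/H_n\in\Es$ with $n\sleq\deg(G)$. I expect this strict degree drop --- specifically, excluding the degree-preserving covering case, where non-elementarity of the quasi-factors is exactly what yields the contradiction --- to be the main obstacle.
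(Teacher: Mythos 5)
Your proposal is correct, and its skeleton is exactly the paper's: quotient by the elementary radical, rule out alternatives (1) and (2) of Theorem~\ref{thm:min_norm} because discrete normal subgroups and the locally elliptic radical are elementary, verify that the closed subgroup generated by the finitely many minimal normal subgroups is a quasi-product via the same centrality/abelian contradiction, pull back to get a characteristic $H_{k+1}$, and iterate while tracking the degree of a Cayley--Abels graph. The one place you genuinely diverge is the strict degree drop, which is indeed the crux. The paper fixes a single minimal-degree Cayley--Abels graph $\Gamma$ for $G$ and compares the quotients $\Gamma/\sigma_k$ directly: it passes through the intermediate congruence $\tau$ induced by the preimage $R$ of the elementary radical, uses that $G/R$ acts faithfully on $\Gamma/\tau$ and that the image of $H_{k+1}$ there is non-discrete to produce a non-trivial element $\tilde h$ fixing a vertex, and then exhibits a specific vertex $u^\tau$ with two distinct neighbours $w^\tau$ and $(h.w)^\tau$ that become identified in $\Gamma/\sigma_{k+1}$. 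Your contrapositive covering argument --- if the degree is preserved then each vertex's neighbours lie in pairwise distinct blocks, so an elliptic element of $N=H_{k+1}/H_k$ fixes the whole graph, whence $N$ is compact-by-discrete and elementary, contradicting non-elementarity of $N/\Rad{\Es}(N)$ --- reaches the same conclusion without the detour through $R$ and without the faithfulness and non-discreteness bookkeeping; it is arguably cleaner, at the modest cost of re-choosing a minimal-degree Cayley--Abels graph at each stage rather than working inside one fixed $\Gamma$. Both arguments yield $n\sleq \deg(G)$.
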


\begin{proof} 
Fix $\Gamma$ a Cayley-Abels graph for $G$ so that $\deg(G)=\deg(\Gamma)$. We inductively build a sequence of characteristic subgroups $\{1\}=H_0\sleq H_1\sleq \dots \sleq H_{i}$ so that condition $(2)$ of the theorem holds of successive terms and $\deg(\Gamma/\sigma_{j-1})>\deg(\Gamma/\sigma_{j})$ for $0<j\sleq i$ where $\sigma_j$ is the $G$-congruence induced by the orbits of $H_j$ on $V\Gamma$.\par

\indent  The base case is immediate with $H_0:=\{1\}$. Suppose we have defined $H_{k}$ and pass to $\tilde{G}:=G/H_{k}$. If $\tilde{G}\in \Es$, we stop. Else, let $\pi:G\rightarrow \tilde{G}/\Rad{\Es}(\tilde{G})$ be the usual projection. The group $\tilde{G}/\Rad{\Es}(\tilde{G})$ has trivial elementary radical, so $\tilde{G}/\Rad{\Es}(\tilde{G})$ has no non-trivial discrete normal subgroups and trivial locally elliptic radical. Theorem~\rm\ref{thm:min_norm} thus implies $\tilde{G}/\Rad{\Es}(\tilde{G})$ has exactly $0<n_{k+1}<\infty$ many minimal non-trivial normal subgroups $N_1,\dots, N_{n_{k+1}}$. These subgroups must be non-elementary and topologically characteristically simple.\par

\indent  We claim $\cgrp{N_1,\dots, N_{n_{k+1}}}$ is a quasi-product. Let $m:N_1\times\dots \times N_{n_{k+1}}\rightarrow \cgrp{N_1,\dots, N_{n_{k+1}}}$ be the multiplication map. The minimality of the $N_i$ gives that the $N_i$ pairwise centralize each other, and thereby, the multiplication map is also a homomorphism. It follows immediately that the image of $m$ is dense. To check that $m$ is injective, take $(x_1,\dots,x_{n_{k+1}})\in \ker(m)$ and suppose some $x_i$ is non-trivial. Without loss of generality, we may assume $i=1$. We now see that
\[
1\neq x_1\in N_1\cap \cgrp{N_2,\dots,N_{n_{k+1}}},
\] 
and by the minimality of $N_1$, $N_1\sleq \cgrp{N_2,\dots,N_{n_{k+1}}}$. The group $N_1$ commutes with the generators of $\cgrp{N_2,\dots,N_{n_{k+1}}}$, so $N_1$ is central in $\cgrp{N_2,\dots,N_{n_{k+1}}}$. Abelian groups, however, are elementary making $N_1$ a non-trivial elementary normal subgroup. This contradicts that $\tilde{G}/\Rad{\Es}(\tilde{G})$ has trivial elementary radical. The map $m$ therefore has trivial kernel and is injective. We conclude that $\cgrp{N_1,\dots, N_{n_{k+1}}}$ is a quasi-product.\par

\indent Putting
\[
H_{k+1}:=\pi^{-1}\left(\cgrp{N_1,\dots,N_{n_{k+1}}}\right),
\] 
we have that the successive terms of $H_0,\dots, H_k,H_{k+1}$ satisfy $(2)$ of the theorem. It remains to show $\deg(\Gamma/\sigma_{k})>\deg(\Gamma/\sigma_{k+1})$. Put $R:=\pi^{-1}(\Rad{\Es}(G/H_k))$ and let $\tau$ be the $G$-congruence on $V\Gamma$ corresponding to $R$. Since $H_k\sleq R\sleq H_{k+1}$, we have that 
\[
\deg(\Gamma/\sigma_k)\sgeq \deg(\Gamma/\tau)\sgeq \deg (\Gamma/\sigma_{k+1}),
\]
so it is enough to show $\deg(\Gamma/\tau)>\deg(\Gamma/\sigma_{k+1})$. \par

\indent Let $\pi:G\rightarrow G/R$ be the usual projection, put $\tilde{H}_{k+1}:=\pi(H_{k+1})$, and fix $v^{\tau}\in V\Gamma/\tau$. The group $G/R$ has no non-trivial elementary normal subgroups, so $\tilde{H}_{k+1}$ is non-discrete. We may therefore find $\tilde{h}\in (\tilde{H}_{k+1})_{(v^{\tau})}\setminus \{1\}$ as $(G/R)_{(v^{\tau})}$ is open where $(G/R)_{(v^{\tau})}$ is the stabilizer of $v^{\tau}$ in $G/R$. The group $G/R$ also has no non-trivial compact normal subgroup, hence Observation~\rm\ref{obs:si} implies $G/R\acts \Gamma/\tau$ faithfully. There is thus $w^{\tau}$ nearest to $v^{\tau}$ such that $\tilde{h}.w^{\tau}\neq w^{\tau}$. It follows there is a neighbour $u^{\tau}$ of $w^{\tau}$ such that $\tilde{h}.u^{\tau}=u^{\tau}$. Fixing $h\in H_{k+1}$ with $\pi(h)=\tilde{h}$, in the quotient graph $\Gamma/\tau$ the vertex $u^{\tau}$ has $\tilde{h}.w^{\tau}=(h.w)^{\tau}$ and $w^{\tau}$ as distinct neighbors. In the quotient graph $\Gamma/\sigma_{k+1}$, however, $(h.w)^{\sigma_{k+1}}=w^{\sigma_{k+1}}$, so $u^{\sigma_{k+1}}$ has at least one less neighbour. Hence, $\deg(\Gamma/\tau)>\deg(\Gamma/\sigma_{k+1})$, and we have verified the inductive claim.\par

\indent Our inductive construction procedure must halt at $n\sleq \deg(G)$. The resulting series $\{1\}=H_0\sleq H_1\sleq \dots \sleq H_{n}\sleq G$ satisfies the theorem.
\end{proof}

\begin{cor}
Let $G$ be a compactly generated t.d.l.c. group. Then there is a finite series of closed normal subgroups 
\[
\{1\}\sleq H_0\sleq H_1\sleq \dots \sleq H_{n}\sleq G
\]
such that 
\begin{enumerate}[(1)]
\item $H_0$ is compact and $G/H_0$ is second countable;
\item $G/H_{n}\in \Es$; 
\item for $0\sleq k \sleq n-1$, $(H_{k+1}/H_{k})/\Rad{\Es}(H_{k+1}/H_{k})$ is a quasi-product with $0<n_{k+1}<\infty$ many topologically characteristically simple non-elementary quasi-factors; and
\item $n\sleq \deg(G)$.
\end{enumerate}
\end{cor}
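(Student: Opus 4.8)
The plan is to reduce to the second countable case treated in Theorem~\ref{thm:cg_dcomp} by quotienting out a suitable compact normal subgroup, apply that theorem, and then pull the resulting series back to $G$.

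First I would produce the bottom term $H_0$. Since $G$ is compactly generated it is $\sigma$-compact, so applying the Kakutani--Kodaira theorem (equivalently \cite[(8.7)]{HR79}) with the neighbourhood of $1$ given by a compact open subgroup $U\in \Uc(G)$ furnished by van Dantzig, there is a compact normal subgroup $H_0\trianglelefteq G$ with $H_0\sleq U$ such that $\bar{G}:=G/H_0$ is metrizable; being also $\sigma$-compact and locally compact, $\bar{G}$ is second countable. Moreover $\bar{G}$ remains totally disconnected: for any $V\in\Uc(G)$ the set $VH_0$ is a compact open subgroup of $G$ (as $H_0$ is normal), and the images $VH_0/H_0$ form a neighbourhood basis of $1$ in $\bar{G}$ consisting of compact open subgroups. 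Thus $\bar{G}$ is a compactly generated t.d.l.c.s.c. group, which already yields clause~$(1)$.

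Next I would apply Theorem~\ref{thm:cg_dcomp} to $\bar{G}$, obtaining closed characteristic subgroups $\{1\}=\bar{H}_0\sleq \bar{H}_1\sleq\dots\sleq \bar{H}_n\sleq \bar{G}$ with $\bar{G}/\bar{H}_n\in \Es$, the prescribed quasi-product structure on successive quotients, and $n\sleq \deg(\bar{G})$. Writing $\pi\colon G\to \bar{G}$ for the projection, I would set $H_k:=\pi^{-1}(\bar{H}_k)$ for $0\sleq k\sleq n$. Each $\bar{H}_k$ is normal in $\bar{G}$, so each $H_k$ is a closed normal subgroup of $G$, and $H_0=\ker\pi$ is precisely the compact subgroup above. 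The isomorphism theorems give $H_{k+1}/H_k\simeq \bar{H}_{k+1}/\bar{H}_k$ and $G/H_n\simeq \bar{G}/\bar{H}_n$, so clauses~$(2)$ and~$(3)$ transfer verbatim from the conclusion of Theorem~\ref{thm:cg_dcomp}, using that both membership in $\Es$ and the quasi-product structure modulo $\Rad{\Es}$ are preserved under topological isomorphism. (Note the pulled-back subgroups are only asserted to be normal, not characteristic, since $H_0$ itself need not be characteristic.)

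The one point requiring a little care is clause~$(4)$, the bound $n\sleq\deg(G)$; here $\deg(G)$ is defined exactly as before via Cayley--Abels graphs, which exist for any compactly generated t.d.l.c. group by Theorem~\ref{thm:cay_abels}. I would fix a Cayley--Abels graph $\Gamma$ for $G$ realizing $\deg(G)$, let $\sigma$ be the $G$-congruence on $V\Gamma$ induced by the orbits of the compact normal subgroup $H_0$, and invoke Observation~\ref{obs:si}: $\Gamma/\sigma$ is a Cayley--Abels graph for $\bar{G}=G/H_0$ with $\deg(\Gamma/\sigma)\sleq \deg(\Gamma)$. Hence $\deg(\bar{G})\sleq \deg(G)$, and combining with $n\sleq\deg(\bar{G})$ gives $n\sleq \deg(G)$. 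The main obstacle is thus not any single deep step but the bookkeeping ensuring that the reduction by $H_0$ is harmless, namely that $\bar{G}$ inherits all standing hypotheses and that the degree does not increase under the quotient by a compact normal subgroup, both of which follow from material already in hand.
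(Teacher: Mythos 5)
Your proposal is correct and follows essentially the same route as the paper: invoke \cite[(8.7)]{HR79} to obtain a compact normal $H_0$ with second countable quotient, apply Theorem~\ref{thm:cg_dcomp} to $G/H_0$, pull the series back along the projection, and note $\deg(G/H_0)\sleq\deg(G)$. The extra details you supply (that $G/H_0$ remains a compactly generated t.d.l.c.s.c. group, and the degree comparison via Observation~\ref{obs:si}) are correct elaborations of steps the paper leaves implicit.
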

\begin{proof}
Via \cite[(8.7)]{HR79}, there is $H_0\trianglelefteq G$ such that $H_0$ is compact and $G/H_0$ is second countable. We now apply Theorem~\rm\ref{thm:cg_dcomp} to $G/H_0$ to find $\tilde{H}_1\sleq\dots \sleq \tilde{H}_n\sleq G/H_0$ with $n\sleq \deg(G/H_0)\sleq \deg(G)$. Letting $\pi:G\rightarrow G/H_0$ be the usual projection and putting $H_i:=\pi^{-1}(\tilde{H}_i)$ for $1\sleq i\sleq n$, the series $H_0\sleq H_1\sleq\dots \sleq H_n$ satisfies the theorem. 
\end{proof}

\begin{rmk} 
The length of the characteristic series given by Theorem~\rm\ref{thm:cg_dcomp} can be arbitrarily long. Additionally, the quasi-factors need not be topologically simple or compactly generated. Example~\rm\ref{ex:cg_dcomp} demonstrates these phenomena. In the case of l.c.s.c. $p$-adic Lie groups, however, it is always the case that $n\sleq 1$ and that the quasi-factors are compactly generated and topologically simple; see \cite{W_2_14}.
\end{rmk}

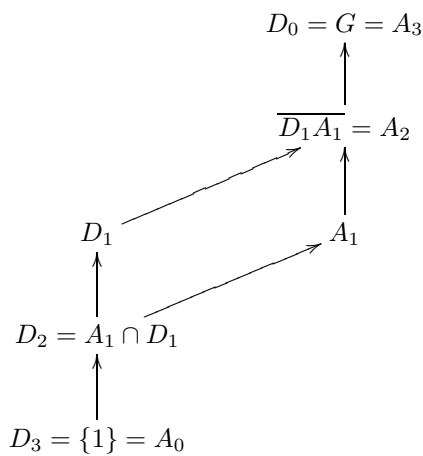
\begin{figure}[H]
\[\xymatrix{
& D_0=G=A_3 & \\
& \ol{D_1A_1}=A_2\ar@{->}[u] &\\
D_1\ar@{->}[ur] &  A_1\ar@{->}[u]&\\
 D_2=A_1\cap D_1 \ar@{->}[u]\ar@{->}[ur] && \\
 D_3=\{1\}=A_0 \ar@{->}[u] &&
 }\]
\caption{The inclusions between the descending and ascending elementary series}
\end{figure}

\subsection{The descending and ascending elementary series}
In the non-compactly generated case, we obtain weaker, nonetheless useful structure theorems. Via the elementary radical and elementary residual, we build two characteristic subgroup series in a t.d.l.c.s.c. group. Figure 1 indicates the relationship between these series; the arrows indicate inclusion. The diagram follows from the permanence properties of $\Es$.

\begin{defn} 
For a t.d.l.c.s.c. group $G$ the \textbf{descending elementary series} is defined by $D_0:=G$, $D_1:=\Res{\Es}(D_0)$, $D_2:=\Rad{\Es}(D_1)$, and $D_3:=\{1\}$.
\end{defn}

\begin{defn} For a t.d.l.c.s.c. group $G$ the \textbf{ascending elementary series} is defined by $A_0:=\{1\}$, $A_1:=\Rad{\Es}(G)$, $A_2:=\pi^{-1}(\Res{\Es}(G/A_1))$ where $\pi:G\rightarrow G/A_1$ is the usual projection, and $A_3:=G$.
\end{defn}

\begin{defn}
We say a t.d.l.c.s.c. group is \textbf{elementary-free} if it has no non-trivial elementary normal subgroups and no non-trivial elementary quotients.
\end{defn}

\begin{thm}\label{thm:des_E_series}
Let $G$ be a t.d.l.c.s.c. group. Then the descending elementary series 
\[
G\sgeq D_1\sgeq D_2\sgeq \{1\}
\]
is a series of closed characteristic subgroups with $G/D_1$ elementary, $D_1/D_2$ elementary-free, and $D_2$ elementary.
\end{thm}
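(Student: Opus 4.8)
The plan is to read off the entire statement from the two ``idempotency'' properties recorded in the Observations following the definitions of $\Rad{\Es}$ and $\Res{\Es}$; no genuinely new analysis is required. First I would dispose of the easy assertions. By definition $D_1=\Res{\Es}(G)$ is closed and characteristic in $G$, and $G/D_1$ is elementary by the defining property of the elementary residual. Likewise $D_2=\Rad{\Es}(D_1)$ is elementary by the defining property of the elementary radical. To see $D_2$ is characteristic in $G$, I would invoke transitivity: $D_2$ is characteristic in $D_1$, and $D_1$ is characteristic in $G$, so any topological automorphism of $G$ restricts to a topological automorphism of $D_1$ and hence preserves $D_2$. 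This leaves only the claim that $D_1/D_2$ is elementary-free, which is the substance of the theorem.

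For elementary-freeness I would verify the two defining conditions separately. Writing $H:=D_1$, so that $D_2=\Rad{\Es}(H)$, the absence of a non-trivial elementary normal subgroup is immediate from the radical idempotency $\Rad{\Es}(H/\Rad{\Es}(H))=\{1\}$: since $\Rad{\Es}(D_1/D_2)=\{1\}$ and $\Rad{\Es}$ contains every elementary closed normal subgroup, every elementary closed normal subgroup of $D_1/D_2$ must be trivial.

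For the absence of a non-trivial elementary quotient, the key input is the residual idempotency $\Res{\Es}(\Res{\Es}(G))=\Res{\Es}(G)$, i.e. $\Res{\Es}(D_1)=D_1$. I would argue that this forces $D_1$ itself to have no non-trivial elementary quotient: if $K\trianglelefteq D_1$ is closed with $D_1/K$ elementary, then $K\in\mc{Q}_{\Es}(D_1)$, so $D_1=\Res{\Es}(D_1)\sleq K$ by the containment property of the residual, whence $K=D_1$ and the quotient is trivial. Since every (Hausdorff) quotient of $D_1/D_2$ is of the form $D_1/K$ for some closed normal $K\sgeq D_2$, it is also a quotient of $D_1$, and hence cannot be both elementary and non-trivial. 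This establishes that $D_1/D_2$ is elementary-free and completes the proof.

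I do not expect a genuine obstacle here: the only care needed is bookkeeping, namely matching the radical idempotency to the ``no elementary normal subgroup'' half and the residual idempotency to the ``no elementary quotient'' half, together with the routine transitivity of ``characteristic'' used for $D_2$. The degenerate case $D_1=\{1\}$, which forces $D_2=\{1\}$ and a trivial middle factor, is handled vacuously by the same reasoning.
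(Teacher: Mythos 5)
Your proof is correct and takes essentially the same route as the paper: both halves of elementary-freeness are read off from the two idempotency observations, $\Rad{\Es}(D_1/D_2)=\{1\}$ for the normal-subgroup half and the residual idempotency for the quotient half. The only cosmetic difference is that where you cite $\Res{\Es}(\Res{\Es}(G))=\Res{\Es}(G)$ directly, the paper re-derives that fact inline by pulling back $\Res{\Es}(D_1/D_2)$ to a characteristic subgroup $H$ of $D_1$ (hence normal in $G$) and observing that $G/H$ is an extension of elementary groups, which forces $H=D_1$ by minimality of $D_1$ in $\mc{Q}_{\Es}(G)$.
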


\begin{proof}
It is immediate that $D_1$, $D_2$ are closed and characteristic and that $G/D_1$ and $D_2$ are elementary. For the remaining claim, it suffices to show $\Rad{\Es}(D_1/D_2)=\{1\}$ and $\Res{\Es}(D_1/D_2)=D_1/D_2$. The former is immediate since $D_2=\Rad{\Es}(D_1)$.\par

\indent For the case of $\Res{\Es}(D_1/D_2)$, let $\pi:D_1\rightarrow D_1/D_2$ be the usual projection and put $H:=\pi^{-1}(\Res{\Es}(D_1/D_2))$. We see $H$ is the intersection of all normal subgroups of $D_1$ extending $D_2$ whose quotient is elementary. Since $D_2$ is characteristic in $D_1$, $H$ is characteristic in $D_1$ and, thus, is normal in $G$. We now have a short exact sequence of topological groups 
\[
1\rightarrow D_1/H\rightarrow G/H\rightarrow G/D_1\rightarrow 1.
\]
This sequence shows that $G/H$ is a group extension of elementary groups and, therefore, elementary. Since $D_1$ is minimal in the collection of normal subgroups whose quotients are elementary, we conclude that $D_1=H$. Hence, $\Res{\Es}(D_1/D_2)=D_1/D_2$ as required.
\end{proof}

\begin{thm}\label{thm:acs_E_series}Let $G$ be a t.d.l.c.s.c. group. Then the ascending elementary series 
\[
\{1\}\sleq A_1\sleq A_2\sleq G
\]
is a series of closed characteristic subgroups with $A_1$ elementary, $A_2/A_1$ elementary-free, and $G/A_2$ elementary.
\end{thm}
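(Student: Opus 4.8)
The plan is to verify the four assertions—that $A_1$ and $A_2$ are closed characteristic subgroups, that $A_1$ is elementary, that $A_2/A_1$ is elementary-free, and that $G/A_2$ is elementary—mostly by unwinding the definitions and invoking the two idempotence properties already recorded for the elementary radical and the elementary residual. Set $\tilde G:=G/A_1$ and let $\pi:G\to \tilde G$ denote the projection. By construction $A_1=\Rad{\Es}(G)$ is closed, characteristic, and elementary, while $A_2=\pi^{-1}(\Res{\Es}(\tilde G))$, so that $R:=A_2/A_1=\Res{\Es}(\tilde G)$. The quotient assertion is then immediate: $G/A_2\simeq \tilde G/\Res{\Es}(\tilde G)$, which is elementary by the defining property of the elementary residual.

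First I would check that $A_2$ is closed and characteristic. Since $\Res{\Es}(\tilde G)$ is a closed normal subgroup of $\tilde G$, its preimage $A_2$ is closed and normal in $G$. For the characteristic property I would argue by transitivity: $A_1$ is characteristic in $G$, so every $\phi\in \mathrm{Aut}(G)$ descends to an automorphism $\bar\phi$ of $\tilde G$; as $\Res{\Es}(\tilde G)$ is characteristic in $\tilde G$ it is $\bar\phi$-invariant, and pulling back shows $A_2$ is $\phi$-invariant.

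The substance of the argument is showing that $R=\Res{\Es}(\tilde G)$ is elementary-free. For the quotient half I would invoke idempotence of the residual, $\Res{\Es}(R)=R$: any closed $M\trianglelefteq R$ with $R/M$ elementary satisfies $\Res{\Es}(R)\sleq M$, forcing $M=R$ and hence $R/M$ trivial. For the normal-subgroup half, the key point—and the step I expect to require the most care—is promoting a normal subgroup of $R$ to one of $\tilde G$. I would instead consider $\Rad{\Es}(R)$, the elementary radical of $R$, which is characteristic in $R$; since $R$ is itself characteristic in $\tilde G$, transitivity of the characteristic property gives $\Rad{\Es}(R)\trianglelefteq \tilde G$. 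As $\Rad{\Es}(R)$ is elementary and normal in $\tilde G$, it lies in $\Rad{\Es}(\tilde G)=\Rad{\Es}(G/\Rad{\Es}(G))=\{1\}$, whence $\Rad{\Es}(R)=\{1\}$. Consequently every closed elementary normal subgroup of $R$ is contained in $\Rad{\Es}(R)$ and so is trivial, which is precisely the remaining half of elementary-freeness.

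The only genuinely delicate point is this last passage: one cannot directly conclude that an arbitrary elementary normal subgroup of $R$ is normal in $\tilde G$, and the device of routing through the characteristic subgroup $\Rad{\Es}(R)$, combined with the triviality of the iterated radical $\Rad{\Es}(G/\Rad{\Es}(G))$, is what makes the argument succeed. Everything else is bookkeeping with the permanence properties established in Theorem~\ref{thm:closure_main}.
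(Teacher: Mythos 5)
Your proof is correct and follows essentially the same route as the paper: both halves of elementary-freeness come down to promoting a characteristic subgroup of $A_2/A_1$ to a normal subgroup of $G/A_1$ and then invoking maximality of the radical and minimality of the residual. The only cosmetic difference is that you cite the recorded idempotence observations $\Rad{\Es}(G/\Rad{\Es}(G))=\{1\}$ and $\Res{\Es}(\Res{\Es}(\tilde G))=\Res{\Es}(\tilde G)$ directly, where the paper re-derives them inline (via a pullback to $G$ and a short exact sequence, respectively).
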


\begin{proof}
It is immediate that $A_1$, $A_2$ are closed and characteristic and that $A_1$ and $G/A_2$ are elementary. For the remaining claim, it again suffices to show $\Rad{\Es}(A_2/A_1)=\{1\}$ and $\Res{\Es}(A_2/A_1)=A_2/A_1$. For the former, $\Rad{\Es}(A_2/A_1)$ is characteristic in $A_2/A_1$ and, therefore, normal in $G/A_1$. Letting $\pi:G\rightarrow G/A_1$ be the usual projection, we see $\pi^{-1}(\Rad{\Es}(A_2/A_1))$ is a elementary normal subgroup of $G$. By choice of $A_1$, it must be the case $\pi^{-1}(\Rad{\Es}(A_2/A_1))=A_1$, so $\Rad{\Es}(A_2/A_1)=\{1\}$. \par

\indent  For $\Res{\Es}(A_2/A_1)$, put $H:=\Res{\Es}(A_2/A_1)$. Since $H$ is characteristic in $A_2/A_1$, we have that $H$ is normal in $G/A_1$. This gives a short exact sequence of topological groups: 
\[
1\rightarrow (A_2/A_1)/H\rightarrow (G/A_1)/H\rightarrow (G/A_1)/(A_2/A_1)\rightarrow 1.
\]
This sequence shows that $(G/A_1)/H$ is a group extension of elementary groups and, therefore, elementary. Since $A_2/A_1$ is minimal with respect to normal subgroups of $G/A_1$ whose quotient is elementary, $A_2/A_1=H$ as required.
\end{proof}

\indent Theorem~\rm\ref{thm:dcomp} now follows from either Theorem~\rm\ref{thm:des_E_series} or Theorem~\rm\ref{thm:acs_E_series}. We note these theorems give different decompositions. It is also worth noting $D_1/D_2$ and $A_2/A_1$ need not be compactly generated when $G$ is compactly generated. Examples~\rm\ref{ex:cg_dcomp} and \rm\ref{ex:d_series} demonstrate these phenomena. 

\begin{quest}[(Caprace)] 
Are $D_1/D_2$ and $A_2/A_1$ isomorphic? Is the obvious inclusion $D_1/D_2\rightarrow A_2/A_1$ necessarily an isomorphism of topological groups?
\end{quest}

\subsection{Examples}
\indent Fix a prime $p$ and $U\in \Uc(PSL_3(\Qp))$ and let $(G_i)_{i\in \Zb}$ list countably many copies of $PSL_3(\Qp)$ with $U_i$ the copy of $U$ in $G_i$. Putting $G:=\Tsumz{G_i}{U_i}$, one verifies $G$ has non-trivial proper normal subgroups, is elementary free, and is not compactly generated. It will be convenient to see the group $G$ as the set of functions $f:\Zb\rightarrow PSL_3(\Qp)$ so that $f(i)\in U$ for all but finitely $i$; cf. the discussion after Definition~\rm\ref{def:ldp}.

\begin{example}\label{ex:cg_dcomp}
We use $G$ to show the length of the characteristic series in Theorem~\rm\ref{thm:cg_dcomp} can be arbitrarily long. These examples are due to Pierre-Emmanuel Caprace; the author thanks Caprace for allowing these to be included in the present work work.\par

\indent There is a natural action of $\Zb$ on $G$  by shifting the coordinates; that is to say, $\Zb\acts G$ by $(n.f)(i):=f(i-n)$. We may thus form the t.d.l.c.s.c. group $L_1:=G\rtimes \Zb$ where $\Zb$ acts on $G$ by shift. Let $X$ be a compact generating set for $PSL_3(\Qp)$ and put $K:=\prod_{i<0}U_i\times X\times \prod_{i>0}U_i$. Taking $(1,t)$ a generator for $\Zb$ in $L_1$, it follows that $K\cup \{(1,t)\}$ is a compact generating set for $L_1$. We conclude that $L_1$ is a compactly generated t.d.l.c.s.c. group, so we may compute the decomposition given by Theorem~\rm\ref{thm:cg_dcomp}. One verifies that $\Rad{\Es}(L_1)$ is trivial and that $G$ is the unique minimal non-trivial normal subgroup of $L_1$. It then follows that the decomposition is
\[
\{1\}\sleq G\sleq L_1.
\]

\indent We now show how to produce a group with a longer decomposition. Fix $U\in \Uc(L_1)$. Since $L_1$ has no non-trivial compact normal subgroups, $L_1$ acts on the set of left cosets of $U$ continuously, transitively, and faithfully. By enumerating the collection of cosets by $\Zb$, $L_1\acts \Zb$ continuously, transitively, and faithfully by permutations. As in the discussion after Definition~\rm\ref{def:ldp}, there is a group action $L_1\acts G$ given by permuting the coordinates. We now consider the t.d.l.c.s.c. group $G\rtimes L_1=:L_2$. Similar to the previous example, $L_2$ is compactly generated with trivial elementary radical and with $G$ as the unique minimal non-trivial normal subgroup. The decomposition given by Theorem~\rm\ref{thm:cg_dcomp} is thus
\[
\{1\}\sleq G\sleq G\rtimes G \sleq L_2.
\]

\indent Continuing in this fashion, we produce compactly generated t.d.l.c.s.c. groups such that the series given by Theorem~\rm\ref{thm:cg_dcomp} is arbitrarily long. \par

\indent We remark that $L_1$ shows we may not assume the topologically characteristically simple quasi-factors are topologically simple or compactly generated. One verifies that $L_1$ additionally shows $D_1/D_2$ and $A_2/A_1$ need not be compactly generated for a compactly generated group.
\end{example}

\begin{example}\label{ex:d_series}
Again using $G$, we produce a t.d.l.c.s.c. group such that all inclusions of the descending elementary series are proper. Let $V:=\prod_{i\in \Zb}U_i\in \Uc(G)$ and let $\mc{D}$ be the countable collection of left cosets of $V$ in $G$. Certainly, $G\acts \mc{D}$ continuously and transitively by left multiplication; this action is also faithful since $G$ has no non-trivial compact normal subgroups. As in the previous example, $G\acts A_5^{\mc{D}}$ by shift, so we may form $H:=\left(A_5^{\mc{D}}\rtimes G\right) \times \mathbb{Z}$. The descending elementary series for $H$ is
\[
H\sgeq G\ltimes A_5^{\mc{D}}\sgeq A_5^{\mc{D}}\sgeq \{1\},
\]
and all of the inclusions in this series are proper.\par

\indent  We remark that $H$ also demonstrates that the descending and ascending elementary series may differ. The ascending elementary series of $H$ has $A_1=A_5^{\mc{D}}\times \mathbb{Z}$ and $A_2=H$.
\end{example}

\section{Application 2: Locally solvable groups}\label{sec:locsolv}
For our second application, we prove local-to-global results for the class of t.d.l.c.s.c. group with an open solvable subgroup. Such groups are called \textbf{locally solvable}. In the case the open solvable subgroup is nilpotent, we call these groups \textbf{locally nilpotent}.

\subsection{Preliminaries}

The \textbf{derived series} of a topological group $G$ is the sequence of closed normal subgroups $(G^{(k)})_{k\geqslant 0}$ defined by $G^{(0)}:=G$ and $G^{(n+1)}:=\ol{[G^{(n)},G^{(n)}]}$. We say $G$ is \textbf{solvable} if the derived series stabilizes at $\{1\}$ after finitely many steps. For solvable $G$, the \textbf{derived length}, $l(G)$, is the least $k$ such that $G^{(k)}=\{1\}$.\par

\indent The \textbf{lower central series} of a topological group $G$ is the sequence of closed normal subgroups $(G_k)_{k\geqslant 1}$ defined by $G_1:=G$ and $G_{n+1}:=\ol{[G,G_n]}$. We say $G$ is \textbf{nilpotent} if the lower central series stabilizes at $\{1\}$ after finitely many steps. The \textbf{nilpotence class} of nilpotent $G$, $n(G)$, is the least $k$ so that $G_k=\{1\}$. Observe that the last non-trivial term in the lower central series of a nilpotent topological group is central.\par

\subsection{The structure of locally solvable groups}
For a locally solvable t.d.l.c.s.c. group $G$, define 
\[
r_s(G):=\min\left\{l(U)\mid U\in \mathcal{U}(G)\text{ and is solvable}\right\}.
\]
We call $r_s(G)$ the \textbf{solvable rank} of $G$. A t.d.l.c.s.c. group is locally solvable of solvable rank zero if and only if it is a countable discrete group.

\begin{thm}\label{thm:solv_str}
If $G$ is a locally solvable t.d.l.c.s.c. group, then $G\in \Es$ with $\rk(G)\sleq 4^{r_s(G)}$. 
\end{thm}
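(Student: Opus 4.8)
The plan is to induct on the solvable rank $r_s(G)=n$. The base case $n=0$ is precisely the case that $G$ is a countable discrete group, where $\rk(G)=0\sleq 1=4^0$. For the inductive step I would first reduce to the compactly generated case: writing $G=\bigcup_i O_i$ as an increasing union of compactly generated open subgroups, each $O_i$ contains a compact open solvable subgroup of $G$ and so $r_s(O_i)\sleq n$; hence, once the bound is known for compactly generated groups \emph{with a little room to spare} (that is, strictly below $4^n$ in the compactly generated case), Proposition~\ref{prop:open_sgrp} together with the increasing-union clause reintroduces at most the missing unit and returns $\rk(G)\sleq 4^n$ in general.

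So assume $G$ is compactly generated with $r_s(G)=n\sgeq 1$, and fix $U\in\Uc(G)$ solvable with $l(U)=n$. The key object is the bottom term $A:=U^{(n-1)}$ of the closed derived series: it is a nontrivial abelian compact group, and, being characteristic in $U$, it is normalized by the open subgroup $U$, so $N_G(A)$ is open. The heart of the argument is to produce a closed normal subgroup $N\trianglelefteq G$ that is elementary (of controlled rank) and contains $A$. Granting such an $N$, the image $\bar U:=UN/N\simeq U/(U\cap N)$ satisfies $\bar U^{(n-1)}=\{1\}$ because $A\sleq N$; hence $\bar U$ is a compact open solvable subgroup of $G/N$ of derived length at most $n-1$, so $r_s(G/N)\sleq n-1$. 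The induction hypothesis then gives $G/N\in\Es$ with $\rk(G/N)\sleq 4^{n-1}$, and Proposition~\ref{prop:grp_ext} yields $G\in\Es$ with $\rk(G)\sleq\rk(N)+\rk(G/N)+1$; the target bound $4^n$ follows as long as $\rk(N)$ stays below roughly $3\cdot 4^{n-1}$.

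Constructing $N$ and bounding its rank is the main obstacle, and my plan is to obtain it from the commensuration/quasi-centralizer machinery. First I would show that $A$ is commensurated: since $N_G(A)$ is open, only countably many conjugates of $A$ meet a given compact open subgroup, and a Baire category argument in the spirit of Lemma~\ref{lem:perm_2_norm} bounds the commensuration indices $|A:A\cap gAg^{-1}|$ uniformly on a compact open subgroup. Theorem~\ref{thm:BL} then furnishes a commensurated compact subgroup $K$ with $K\sim_c A$. I would next invoke Proposition~\ref{prop:quasicentralizer} to see that the quasi-centralizer $\qci{G}{U/K}$ is a normal subgroup of $G$ containing $K$, and let $N:=\ol{\langle\qc{G}{U/K},A^G\rangle}$ be the resulting closed normal subgroup containing $A$. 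Because $A$ is abelian and commensurated, a compact open subgroup of $N$ has dense quasi-centre modulo $K$, so the SIN-core analysis (Proposition~\ref{prop:SIN} and Proposition~\ref{prop:first_examples}(4)) shows $N$ is elementary of small decomposition rank; translating through Proposition~\ref{prop:rk_xi} keeps $\rk(N)$ well within the allotted budget.

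The step I expect to be genuinely delicate is exactly this last paragraph: verifying that the abelian bottom $A$ is commensurated in a locally solvable $G$ (the conjugates $gAg^{-1}=(gUg^{-1})^{(n-1)}$ are the derived bottoms of transverse compact open subgroups, so commensuration is not automatic and must be forced by the openness of $N_G(A)$ plus a measure/Baire argument), and then showing that the normal subgroup $N$ assembled from the quasi-centralizer is elementary with a rank bound tight enough to keep the recursion at base $4$. Everything else is a routine induction driven by the group-extension (Proposition~\ref{prop:grp_ext}) and quotient (Theorem~\ref{thm:closure_quot}) permanence properties already in hand.
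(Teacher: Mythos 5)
Your reduction to the compactly generated case and the outer induction are fine, but the construction of the elementary normal subgroup $N\supseteq A$ --- which you rightly flag as the delicate step --- genuinely fails. The obstruction is that $A:=U^{(n-1)}$ need not satisfy the Bergman--Lenstra hypothesis, and in fact need not be commensurate with \emph{any} subgroup normal in $G$. Take $G=\Qp\rtimes\Qp^{*}$ ($p$ odd) with $U=\Zb_p\rtimes\Zb_p^{*}$, so $r_s(G)=2$ and $A=U^{(1)}=\Zb_p\times\{1\}$. Conjugating by $g=(0,p^{k})$ gives $gAg^{-1}=p^{k}\Zb_p$, so the indices $|A:A\cap gAg^{-1}|=p^{k}$ are unbounded over $G$; by Theorem~\ref{thm:BL} there is no $K\trianglelefteq G$ with $K\sim_c A$ (and one checks directly that no compact subgroup commensurate with $\Zb_p$ is normal in $G$). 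The Baire argument of Lemma~\ref{lem:perm_2_norm} cannot rescue this: there the covering $U=\bigcup_n\Omega_n$ is exploited inside a \emph{compact} group, where a non-meagre closed piece has finitely many translates covering the whole group; over the non-compact $G$ no uniform bound is produced, and in the example none exists. Without a commensurated $K$, Proposition~\ref{prop:quasicentralizer} does not apply, $\qci{G}{U/K}$ is not available, and your $N$ cannot be assembled; nor is there any remaining argument that $\ol{\grp{A^G}}$ is elementary of controlled rank, which is essentially the whole content of the theorem at this stage.

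The paper avoids normality and commensuration of $U^{(n-1)}$ altogether by using a synthetic subgroup. It sets $\mc{A}_{k+1}$ to be the (conjugation-invariant, hereditary) family of closed subgroups $C$ such that for every $V\in\Uc(G)$ there is a solvable $W\sleq_o V$ with $C\sleq N_G(W^{(k)})$, and takes $N$ to be the closure of the $\mc{A}_{k+1}$-core, which is normal for free by Proposition~\ref{prop:syn_subgroup}. The core contains $U^{(k)}$ precisely because $U^{(k)}$ is \emph{abelian} and hence normalizes every $W^{(k)}$ with $W\sleq_o U$ --- no uniformity over $G$ is ever needed. Elementarity of $N$ is then obtained not through quasi-centralizers but by writing $N$ as an increasing union of compactly generated $M_i\in\mc{A}_{k+1}$, each of which has $W^{(k)}\cap M_i$ inside its locally elliptic radical, so that $M_i/\Rad{\mc{LE}}(M_i)$ has solvable rank at most $k$ and the induction hypothesis applies. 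If you want to keep your outline, the fix is to replace your $N$ by this $\mc{A}_{k+1}$-core; the quotient step $r_s(G/N)\sleq n-1$ and the rank bookkeeping in your proposal then go through essentially as you wrote them.
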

\begin{proof}
We induct on $r_s(G)$ for the theorem. The base case case, $r_s(G)=0$, is immediate since such a group is discrete.\par

\indent Suppose $G$ is a locally solvable group with $r_s(G)=k+1$. Let $U\in \Uc(G)$ be solvable with derived length $k+1$ and put 
\[
\mc{A}_{k+1}:=\left\{C\in S(G)\mid \forall\; V\in \Uc(G)\hspace{1pt}\;\exists \text{ solvable } W\sleq_oV : C\sleq N_G\left(W^{(k)}\right)\right\}.
\]
Certainly, $\mc{A}_{k+1}$ is hereditary. To see conjugation invariance, take $C\in \mc{A}_{k+1}$, $V\in \mc{U}(G)$, and $g\in G$. Find $W\sleq_o g^{-1}V g$ such that $C\sleq N_{G}(W^{(k)})$. We thus have that
\[
gCg^{-1}\sleq N_G\left(gW^{(k)}g^{-1}\right)=N_G\left((gWg^{-1})^{(k)}\right)
\]
and $gWg^{-1}\sleq_o V$. It now follows that $gCg^{-1}\in \mc{A}_{k+1}$, whereby $\mc{A}_{k+1}$ is invariant under conjugation.\par

\indent By way of Proposition~\rm\ref{prop:syn_subgroup}, we may form the $\mc{A}_{k+1}$-core; denote this subgroup by $N_{k+1}$.

\begin{claim*}
$U^{(k)}\sleq N_{k+1}$.
\end{claim*}

\begin{proof}[of Claim] Take $u\in U^{(k)}$, $C\in \mc{A}_{k+1}$, and $V\in \mc{U}(G)$. It suffices to show there is $W\sleq_o V$ such that $\cgrp{u,C}\sleq N_G(W^{(k)})$. By definition of $\mc{A}_{k+1}$, there is $W\sleq_o U\cap V$ with $C\sleq N_G(W^{(k)})$. Since $W^{(k)}\sleq U^{(k)}$ and $U^{(k)}$ is abelian, $u\in N_G(W^{(k)})$. Therefore, $\cgrp{u,C}\sleq N_{G}(W^{(k)})$ as required. 
\end{proof}

\indent Put $N=\ol{N_{k+1}}$. By the claim, $r_s(G/N)\sleq k$, and the induction hypothesis implies $G/N\in \Es$ with $\rk(G/N)\sleq 4^k$. On the other hand, let $(n_i)_{i\in \omega}$ list a countable dense subset of $N_{k+1}$. Our group $N$ is an increasing union of $M_i:=\grp{ U\cap N, n_0,\dots,n_i}$. Moreover, by the choice of $n_0,\dots,n_i$, the group $M_i$ is a member of $\mc{A}_{k+1}$ for each $i$ since $U\in \mc{A}_{k+1}$. We may thus find $W\sleq_o U$ such that $M_i$ normalizes $W^{(k)}$. So $W^{(k)}\cap M_i\sleq \Rad{\mc{LE}}(M_i)$, and since $(W\cap M_i)^{(k)}\sleq W^{(k)}\cap M_i$, we infer that $r_s(M_i/\Rad{\mc{LE}}(M_i))\sleq k$. The induction hypothesis now implies $M_i/\Rad{\mc{LE}}(M_i)\in \Es$ with $\rk(M_i/\Rad{\mc{LE}}(M_i))\sleq 4^k$. The group $\Rad{\mc{LE}}(M_i)$ is elementary with rank at most $1$, whereby Proposition~\rm\ref{prop:grp_ext} gives that $M_i\in \Es$ with $\rk(M_i)\sleq 4^k+2$. It now follows that $N\in \Es$ with $\rk(N)\sleq 4^k+3$. \par

\indent A second application of Proposition~\rm\ref{prop:grp_ext} implies $G\in \Es$ with
\[
\rk(G)\sleq 4^k+4^k+4\sleq 4^{k+1}.
\]
This completes the induction, and we conclude the theorem.
\end{proof}

As a corollary, we recover a theorem of Willis.

\begin{cor}[(Willis, {\cite[Theorem 2.2]{Will07}})] 
If $G$ is a non-discrete, compactly generated, t.d.l.c. group that is topologically simple, then $G$ is not locally solvable.
\end{cor}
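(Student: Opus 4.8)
The plan is to argue by contradiction, reducing immediately to the second countable setting so that the two main results of this paper apply. Suppose $G$ is a non-discrete, compactly generated, topologically simple t.d.l.c. group that is also locally solvable. The one wrinkle is that the hypotheses allow $G$ to fail second countability, whereas Theorem~\ref{thm:solv_str} and Proposition~\ref{prop:ex_topsimple} are stated for t.d.l.c.s.c. groups. So my first step is to show $G$ is in fact second countable.

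For this reduction I would invoke \cite[(8.7)]{HR79}: since $G$ is compactly generated, there is a compact normal subgroup $H_0 \trianglelefteq G$ with $G/H_0$ second countable. Topological simplicity forces $H_0 = \{1\}$ or $H_0 = G$. The case $H_0 = G$ would make $G$ compact, hence profinite; but a profinite group has a basis at the identity of open normal subgroups, so a non-trivial topologically simple profinite group would have to be finite and therefore discrete, contradicting non-discreteness. Thus $H_0 = \{1\}$ and $G \simeq G/H_0$ is second countable. Consequently $G$ is a non-discrete, compactly generated, topologically simple, locally solvable t.d.l.c.s.c. group.

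Now the two prior results finish the argument in one stroke. Being locally solvable and t.d.l.c.s.c., Theorem~\ref{thm:solv_str} yields $G \in \Es$. But $G$ is compactly generated, topologically simple, and elementary, so Proposition~\ref{prop:ex_topsimple} forces $G$ to be discrete. This contradicts the standing assumption that $G$ is non-discrete, completing the proof.

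The whole corollary is essentially a packaging of the substantive theorem (local solvability implies elementary) together with the observation that compactly generated topologically simple elementary groups are discrete. I do not expect any genuine obstacle: the only point requiring care is the passage from the t.d.l.c. to the t.d.l.c.s.c. setting, and even there topological simplicity collapses the compact normal subgroup furnished by van Dantzig--type reductions to the trivial group, so no serious work is needed.
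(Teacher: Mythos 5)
Your proof is correct and follows essentially the same route as the paper: reduce to the second countable case via \cite[(8.7)]{HR79}, apply Theorem~\ref{thm:solv_str} to get $G\in\Es$, then contradict non-discreteness via Proposition~\ref{prop:ex_topsimple}. Your explicit justification that topological simplicity kills the compact normal subgroup furnished by \cite[(8.7)]{HR79} is a detail the paper leaves implicit, but it is the intended argument.
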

\begin{proof} 
Suppose for contradiction $G$ is locally solvable. By \cite[(8.7)]{HR79}, $G$ is second countable, and we conclude that $G\in \Es$ via Theorem~\rm\ref{thm:solv_str}. Proposition~\rm\ref{prop:ex_topsimple} then implies $G$ is discrete which contradicts our assumptions on $G$.
\end{proof}

The proof of Theorem~\rm\ref{thm:solv_str} gives a procedure for decomposing a locally solvable t.d.l.c.s.c. group into groups that are either profinite, discrete, or have smaller solvable rank. The proof also implies groups with a compact open abelian subgroup have a nice structure. We record these observations here:

\begin{prop}\label{prop:abelian_str}
If $G$ is a locally abelian t.d.l.c.s.c. group, then $\SIN(G)\trianglelefteq_o G$.
\end{prop}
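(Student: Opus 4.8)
The plan is to show that $G$ contains an open abelian subgroup lying inside $\SIN(G)$; since $\SIN(G)$ is automatically normal by Proposition~\ref{prop:syn_subgroup}, establishing that it is open immediately gives $\SIN(G)\trianglelefteq_o G$. To that end I would first produce a \emph{compact} open abelian subgroup: by hypothesis $G$ has an open abelian subgroup $A$, and applying van Dantzig's theorem to the t.d.l.c. group $A$ yields a compact open subgroup $U$, which is abelian as a subgroup of $A$ and satisfies $U\in\Uc(G)$. One also records the easy fact that $U\in\mc{SIN}(G)$: for any $V\in\Uc(G)$ the subgroup $U\cap V$ is open in $V$ and, being a subgroup of the abelian group $U$, is normalized by $U$.

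The main step is to verify $U\leq\SIN(G)$, i.e. that for every $u\in U$ and every $C\in\mc{SIN}(G)$ we have $\cgrp{u,C}\in\mc{SIN}(G)$. Fixing $V\in\Uc(G)$, the crucial idea is to apply the defining property of $C\in\mc{SIN}(G)$ not to $V$ itself but to the compact open subgroup $U\cap V$: this produces an open subgroup $W\sleq_o U\cap V$ with $C\leq N_G(W)$. Since $W\leq U$ and $U$ is abelian, $u$ normalizes $W$ as well, and because $W$ is compact, $N_G(W)$ is closed, so the closed group $\cgrp{u,C}$ is contained in $N_G(W)$. As $W\sleq_o V$ and $V$ was arbitrary, this exhibits $\cgrp{u,C}\in\mc{SIN}(G)$, whence $u\in\SIN(G)$. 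Running over all $u\in U$ gives $U\leq\SIN(G)$.

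The one delicate point — and the heart of the argument — is arranging the normalized open subgroup $W$ to lie \emph{inside} $U$, so that commutativity of $U$ forces $u$ to normalize it; this is precisely what applying the SIN-property of $C$ to $U\cap V$ (rather than to $V$) accomplishes, and without the abelianness of $U$ there would be no reason for a single open subgroup to be simultaneously normalized by $u$ and by $C$. Once $U\leq\SIN(G)$ is established, $\SIN(G)$ contains the open subgroup $U$ and is therefore open, and combined with its normality this yields $\SIN(G)\trianglelefteq_o G$, completing the proof.
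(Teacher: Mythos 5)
Your proof is correct and is essentially the paper's argument: the proposition is recorded as a byproduct of the proof of Theorem~\ref{thm:solv_str}, whose key claim ($U^{(k)}\sleq N_{k+1}$, specialized to $k=0$) is exactly your step of applying the SIN-property of $C$ to $U\cap V$ so that the normalized open subgroup $W$ lands inside the abelian $U$ and is therefore also normalized by $u$. The only difference is cosmetic — you work directly with $\mc{SIN}(G)$ rather than the family $\mc{A}_1$ of the locally solvable induction.
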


\begin{thm}\label{thm:solv_str_2} 
Suppose a t.d.l.c.s.c. group $G$ is locally solvable and $r_s(G)\sgeq 1$. Then there is $1\sleq k\sleq r_s(G)$ and a sequence of closed characteristic subgroups 
\[
\{1\}=:N_{0}\sleq N_1\sleq \dots\sleq N_k\sleq G
\]
such that
\begin{enumerate}[(1)]
\item $r_s(G)>r_s(G/N_1)>\dots>r_s(G/N_k)=0$, and
\item for all $1\sleq j\sleq k$, $N_j/N_{j-1}=\bigcup_{i\in \omega}H_i$ with $(H_i)_{i\in \omega}$ an $\subseteq$-increasing sequence of compactly generated open subgroups of $N_j/N_{j-1}$ for which  $r_s(H_i/\Rad{\mc{LE}}(H_i))< r_s(G/N_{j-1})$ for all $i\in \omega$.
\end{enumerate}
\end{thm}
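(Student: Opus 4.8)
The plan is to iterate the synthetic-subgroup construction from the proof of Theorem~\ref{thm:solv_str}, harvesting at each stage both a characteristic subgroup and the increasing exhaustion that that proof already produces. Recall the mechanism: if $H$ is locally solvable with $r_s(H)=m\sgeq 1$ and $U\in\Uc(H)$ is solvable of derived length $m$, one forms the family
\[
\mc{A}_m:=\left\{C\in S(H)\mid \forall\, V\in\Uc(H)\ \exists\text{ solvable }W\sleq_o V : C\sleq N_H(W^{(m-1)})\right\},
\]
which is hereditary and conjugation invariant, and takes its core $N_m$. The proof of Theorem~\ref{thm:solv_str} shows $U^{(m-1)}\sleq N_m$, whence $r_s(H/\overline{N_m})\sleq m-1$, and it exhibits $\overline{N_m}$ as an increasing union of compactly generated relatively open subgroups $M_i:=\grp{U\cap\overline{N_m},n_0,\dots,n_i}$ (with $(n_i)$ dense in $N_m$) satisfying $r_s(M_i/\Rad{\mc{LE}}(M_i))\sleq m-1$. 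The extra observation I would record is that $\mc{A}_m$ is defined purely from the topological group structure of $H$, hence is invariant under every topological automorphism of $H$; consequently $N_m$, and therefore $\overline{N_m}$, is characteristic in $H$.

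With this in hand I would build the series recursively. Set $N_0:=\{1\}$. Given a characteristic $N_{j-1}\trianglelefteq G$ with $r_s(G/N_{j-1})=m_{j-1}\sgeq 1$, apply the construction to $H:=G/N_{j-1}$ to obtain a closed characteristic subgroup $\tilde{N}_j:=\overline{N_{m_{j-1}}}\trianglelefteq G/N_{j-1}$, and define $N_j:=\pi_{j-1}^{-1}(\tilde{N}_j)$, where $\pi_{j-1}:G\rightarrow G/N_{j-1}$ is the projection. Since $\tilde{N}_j$ is characteristic in $G/N_{j-1}$ and $N_{j-1}$ is characteristic in $G$, any automorphism of $G$ descends to $G/N_{j-1}$, fixes $\tilde{N}_j$ setwise, and hence fixes $N_j$; thus $N_j$ is characteristic in $G$. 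By the rank drop above, $G/N_j\simeq(G/N_{j-1})/\tilde{N}_j$ has $r_s(G/N_j)\sleq m_{j-1}-1<r_s(G/N_{j-1})$. The values $r_s(G/N_0)>r_s(G/N_1)>\cdots$ therefore form a strictly decreasing sequence of natural numbers starting from $r_s(G)$, so the recursion halts at some $k$ with $r_s(G/N_k)=0$; strict decrease forces $k\sleq r_s(G)$, and $r_s(G)\sgeq 1$ forces $k\sgeq 1$, giving condition $(1)$.

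For condition $(2)$, note $N_j/N_{j-1}\simeq\tilde{N}_j$, which is precisely the subgroup $\overline{N_{m_{j-1}}}$ of $G/N_{j-1}$ produced above. The exhaustion furnished by the proof of Theorem~\ref{thm:solv_str} gives $\tilde{N}_j=\bigcup_{i\in\omega}H_i$ with each $H_i$ a compactly generated open subgroup of $\tilde{N}_j$ and $r_s(H_i/\Rad{\mc{LE}}(H_i))\sleq m_{j-1}-1<r_s(G/N_{j-1})$, exactly as required. I expect the only genuinely delicate point to be the verification that the construction is canonical enough to yield \emph{characteristic} rather than merely normal subgroups, i.e.\ the automorphism-invariance of $\mc{A}_m$ and the inherited invariance of its core and closure; everything else is bookkeeping, since the substantive analytic content — the existence of the synthetic subgroup, the bound $U^{(m-1)}\sleq N_m$, and the rank estimate on the $M_i/\Rad{\mc{LE}}(M_i)$ — is already supplied by Theorem~\ref{thm:solv_str}.
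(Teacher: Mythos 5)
Your proposal is correct and follows the same route the paper intends: the paper gives no separate proof of this theorem, stating only that it is obtained by iterating the procedure from the proof of Theorem~\ref{thm:solv_str}, which is exactly what you do. Your explicit check that $\mc{A}_m$ is invariant under topological automorphisms (so that the core, its closure, and the pulled-back subgroups $N_j$ are characteristic rather than merely normal) supplies the one point the paper leaves implicit.
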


\subsection{The structure of locally nilpotent groups}
The \textbf{nilpotence rank} of a locally nilpotent t.d.l.c.s.c. group $G$ is defined to be 
\[ 
r_n(G):=\min\left\{n(U)\mid U\in \mathcal{U}(G)\text{ and is nilpotent}\right\}
\]
where $n(U)$ is the nilpotence class of $U$. A t.d.l.c.s.c. group has nilpotence rank $1$ if and only if it is discrete.

\begin{thm} 
Suppose $G$ is a locally nilpotent t.d.l.c.s.c. group. Then there is $1\sleq k \sleq r_n(G)$ and a sequence of closed characteristic subgroups 
\[
\{1\}=:N_0\sleq N_1\sleq\dots\sleq N_k\sleq_o G
\]
such that $N_i/N_{i-1}\simeq \SIN(G/N_{i-1})$ for $1\sleq i \sleq k$.
\end{thm}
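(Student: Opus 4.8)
The plan is to induct on the nilpotence rank $r_n(G)=c$, peeling off one SIN-core at each stage. The crux is the following reduction lemma: if $G$ is a non-discrete locally nilpotent t.d.l.c.s.c. group, then $G/\SIN(G)$ is again locally nilpotent and $r_n(G/\SIN(G))\sleq r_n(G)-1$. Granting this, the theorem follows by a routine induction: set $N_1:=\SIN(G)$, apply the inductive hypothesis to the strictly lower-rank group $G/N_1$, and pull the resulting series back to $G$. Note that I will \emph{not} need Proposition~\rm\ref{prop:abelian_str} directly; it is the $c=2$ shadow of this argument, where $\SIN(G)$ turns out to be open.

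To prove the reduction lemma, fix a compact open nilpotent $U\in\Uc(G)$ realizing $n(U)=c=r_n(G)$, and let $U_{c-1}$ be the last nontrivial term of the (closed) lower central series of $U$. Since $U_c=\overline{[U,U_{c-1}]}=\{1\}$, every element of $U_{c-1}$ is centralized by the open subgroup $U$; hence $U_{c-1}\sleq QZ(G)\sleq\SIN(G)$. Writing $\pi:G\rightarrow\bar{G}:=G/\SIN(G)$, the image $\bar{U}:=\pi(U)\simeq U/(U\cap\SIN(G))$ is a compact open nilpotent subgroup, so $\bar{G}$ is locally nilpotent; and because $U_{c-1}\sleq U\cap\SIN(G)$, the group $\bar{U}$ is a quotient of $U/U_{c-1}$, whose (closed) lower central series terminates at step $c-1$. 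Thus $n(\bar{U})\sleq c-1$, giving $r_n(\bar{G})\sleq c-1$ as required.

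For the induction itself, the base case is $\SIN(G)=G$ (which includes every discrete $G$, i.e. $r_n(G)=1$): here $k=1$ and $N_1:=G\sleq_o G$ works, since $N_1/N_0=G=\SIN(G)$. Otherwise $\SIN(G)\lneq G$ forces $G$ non-discrete, so the lemma gives $r_n(G/N_1)\sleq c-1<c$ with $N_1:=\SIN(G)$. The inductive hypothesis yields a series $\{1\}=\bar{N}_0\sleq\dots\sleq\bar{N}_{k'}\sleq_o G/N_1$ with $\bar{N}_j/\bar{N}_{j-1}\simeq\SIN((G/N_1)/\bar{N}_{j-1})$ and $k'\sleq r_n(G/N_1)\sleq c-1$; setting $N_{1+j}:=\pi^{-1}(\bar{N}_j)$ and invoking the topological third isomorphism theorem identifies $N_{1+j}/N_{1+j-1}\simeq\bar{N}_j/\bar{N}_{j-1}\simeq\SIN(G/N_{1+j-1})$, while openness of $N_{k}$ and the bound $k:=1+k'\sleq c$ are inherited from the quotient. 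Each $N_i$ is closed and characteristic because $\SIN(\cdot)$ is closed and characteristic and the preimage of a characteristic subgroup under the quotient by a characteristic subgroup is again characteristic. I expect the only genuine obstacle to be the reduction lemma — specifically, checking cleanly that the top term of the lower central series of $U$ lands in the quasi-centre (hence in $\SIN(G)$) and that quotienting by $\SIN(G)$ really drops the nilpotence class of the witnessing compact open subgroup; everything afterward is bookkeeping with isomorphism theorems and the already-established facts that $\SIN(G)$ is closed and characteristic.
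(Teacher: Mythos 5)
Your proposal is correct and follows essentially the same route as the paper: iteratively peel off $\SIN(G)$, and show the nilpotence rank drops because a central closed subgroup of the witnessing compact open $U$ lies in $QZ(G)\sleq\SIN(G)$ (the paper uses $Z(U)$ where you use the last non-trivial term $U_{c-1}$ of the lower central series, and it phrases the induction as an iterative construction rather than induction on $r_n(G)$, but these are cosmetic differences).
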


\begin{proof}
We proceed by induction on $i$ to build the $N_i$. For the base case, put $N_1:=\SIN(G)$; recall $\SIN(G)$ is a closed characteristic subgroup. If $N_1=G$, then we stop. Else, suppose $N_1\neq G$ and let $U\in \Uc(G)$ be such that $n(U)=r_n(G)$. The group $G$ must be non-discrete, so $n(U/Z(U))<n(U)$. Since $Z(U)\sleq \SIN(G)$, we conclude that $r_n(G/N_1)<r_n(G)$. \par

\indent Suppose we have defined a closed characteristic $N_l$ for some $l\sgeq 1$. If $r_n(G/N_l)=1$, we stop. Else, let $\pi_l:G\rightarrow G/N_l$ be the usual projection and put $N_{l+1}:=\pi_l^{-1}(\SIN(G/N_l))$. We now have that $N_{l+1}$ is closed and characteristic, $N_{l+1}/N_l\simeq \SIN(G/N_l)$, and $ G/N_{l+1}\simeq(G/N_l)/\SIN(G/N_l)$. Letting $U\in \Uc(G/N_l)$ be such that $n(U)=r_n(G/N_l)$, we additionally see that the group $\SIN(G/N_l)$ contains $Z(U)$, so $r_n(G/N_{l+1})<r_n(G/N_l)$.\par

\indent Since the nilpotence rank of $G/N_l$ drops at each stage of our construction, our procedure halts at some $k\sleq n(U)$. At this $k$, it must be the case that $r_n(G/N_k)=1$, and therefore, $G/N_k$ is discrete. We conclude that
\[
N_1\sleq N_2\sleq\dots\sleq N_k\sleq_o G
\]
satisfies the theorem. 
\end{proof} 

\begin{rmk} 
The results in this section demonstrate that local assumptions affect global structure \emph{even in the setting of elementary groups}.
\end{rmk}

\subsection{Examples}
\indent We here present examples illustrating Proposition~\rm\ref{prop:abelian_str} and Theorem~\rm\ref{thm:solv_str_2}. 

\begin{example} For Proposition~\rm\ref{prop:abelian_str}, let $F:=\Zb/3\Zb$ and $(F_i)_{i\in \Zb}$ list countably many copies of $F$. Set $U_i=F$ for $i\sleq 0$ and $U_i=\{1\}$ for $i>0$ and form $\Tsumz{F_i}{U_i}$. Since $\Zb\acts \Tsumz{F_i}{U_i}$ by shift, we may form 
\[
G:=\Tsumz{F_i}{U_i}\rtimes \Zb;
\]
see the discussion after Definition~\rm\ref{def:ldp} for more details. The group $\prod_{i\sleq 0}F_i$ is a compact open subgroup of $G$, hence $G$ is locally abelian. One verifies that
\[
\Tsumz{F_i}{U_i}= \SIN(G).
\]
\end{example}

\begin{example}
For Theorem~\rm\ref{thm:solv_str_2}, let $P$ be the non-discrete topologically simple locally elliptic group with a compact open abelian subgroup built by Willis \cite[Proposition 3.4]{Will07}. Let $V\in \Uc(P)$ be abelian and define $\mc{D}$ to be the collection of left cosets of $V$ in $P$. The group $P$ acts on $\mc{D}$ by left multiplication, and this action is continuous, faithful, and transitive. Take $S$ to be the finitely generated infinite simple group built by Higman \cite{H51} and set $F:=\Zb/3\Zb$. We now form $S^{<\mc{D}}$ and $F^{S^{<\mc{D}}}$, and since $S^{<\mc{D}}$ acts on $F^{S^{<\mc{D}}}$ by shift, we may construct the t.d.l.c.s.c. group $F^{S^{<\mc{D}}}\rtimes S^{<\mc{D}}$. We may further construct
\[
H:=\left(F^{S^{<\mc{D}}}\rtimes S^{<\mc{D}}\right)\rtimes P
\]
where $P\acts (F^{S^{<\mc{D}}}\rtimes S^{<\mc{D}})$ via $p.(\alpha,f):=(p.\alpha,p.f)$ with $p.f$ the usual shift and $p.\alpha$ is the shift induced by the shift action of $P$ on $S^{<\mc{D}}$.\par

\indent The subgroup 
\[
U:=\left(F^{S^{<\mc{D}}}\times \{1\}\right)\times V
\]
is a compact open two step solvable subgroup of $H$. The group $H$ acts on the set of left cosets of $U$, denoted $\mc{F}$, transitively with $F^{S^{<\mc{D}}}$ in the kernel of the action. We take a second semi-direct product to obtain
\[
G:=S^{<\mc{F}}\rtimes H
\]
where $H\acts S^{<\mc{F}}$ by shift. Since $U$ remains a compact open subgroup of $G$, $G$ is locally solvable with $r_s(G)=2$.\par

\indent We now compute the decomposition given by Theorem~\rm\ref{thm:solv_str_2}. One verifies that $N_1=S^{<\mc{F}}\rtimes F^{S^{<\mc{D}}}$. Since $F^{S^{<\mc{D}}}$ acts trivially on $S^{<\mc{F}}$, this semi-direct product is indeed a direct product. Setting $H_n:=S^{n}\times F^{S^{<\mc{D}}}$, we have that $N_1=\bigcup_{n\sgeq 0}H_n$, and for each $n\sgeq 0$, $H_n/\Rad{\mc{LE}}(H_n)$ is discrete and thereby has solvable rank zero.\par

\indent On the other hand, $G/N_1=S^{<\mc{D}}\rtimes P$ and thus is locally abelian. Since $P$ is topologically simple, it follows that $N_2=\psi^{-1}(\SIN(G/N_1))=G$, hence $N_2/N_1=\bigcup_{n\sgeq 0} O_n$ with $(O_n)_{n\sgeq 0}$ an $\subseteq$-increasing sequence of compactly generated $SIN$ groups. The group $O_n/\Rad{\mc{LE}}(O_n)$ is discrete and so has solvable rank zero. We have now computed the decomposition for $G$ as given by Theorem~\rm\ref{thm:solv_str_2}:
\[
\{1\} \sleq  S^{<\mc{F}}\rtimes F^{S^{<\mc{D}}}\sleq N_2=G.
\]
\end{example}

\section{Application 3: $[A]$-regular groups}\label{sec:a-reg}

For our last application, we consider a somewhat technical local assumption, which plays an important role in the new, deep theory developed in \cite{CRW_1_13} and \cite{CRW_2_13}. Specifically, we consider the structure of t.d.l.c.s.c. groups that are $[A]$-regular.

\subsection{Preliminaries} These preliminary definitions and facts either come from or are implicit in \cite{CRW_1_13}; we include a discussion for completeness.

\begin{defn} Let $[A]$ be the smallest class of profinite groups such that the following conditions hold:
\begin{enumerate}[(i)]
\item $[A]$ contains all abelian profinite groups and all finite simple groups.
\item If $U\in [A]$ and $K\trianglelefteq U$, then $K\in [A]$ and $U/K\in [A]$.
\item If $U=V_1\dots V_n$ with $V_i\trianglelefteq U$ and $V_i\in [A]$ for each $1\sleq i \sleq n$, then $U\in [A]$. 
\end{enumerate}
\end{defn}
\noindent It is an easy application of Fitting's theorem to see $[A]$ consists of virtually nilpotent profinite groups. For a profinite group $U$, we let $[A](U)$ denote the closed subgroup generated by all normal subgroups of $U$ that are members of the class $[A]$. \par

\indent For $G$ a t.d.l.c. group, a compact $K\sleq G$ is \textbf{locally normal} if $N_G(K)$ is open. Via locally normal subgroups, we arrive at the central definition of this section.

\begin{defn} Let $U$ be a profinite group and $G$ be a t.d.l.c. group.
\begin{enumerate}[(i)]
\item For $K\sleq U$ a closed subgroup, $K$ is $[A]$\textbf{-regular} if for every $L\trianglelefteq U$ closed that does not contain $K$, the image of $K$ in $U/L$ contains a non-trivial locally normal $[A]$-subgroup of $U/L$. 
\item For $H\sleq G$ a closed subgroup, $H$ is $[A]$\textbf{-regular} if $U\cap H$ is $[A]$-regular in $U$ for all $U\in \Uc(G)$.
\item $G$ is $[A]$\textbf{-semisimple} if $G$ has trivial quasi-centre and the only locally normal subgroup of $G$ belonging to $[A]$ is $\{1\}$
\end{enumerate}
\end{defn}

\indent In particular, a t.d.l.c.s.c. group $G$ is $[A]$-regular, if for every $U\in \Uc(G)$ and every non-trivial quotient group $U/L$, it is the case that $U/L$ contains a non-trivial locally normal $[A]$-subgroup. As a corollary, if $G$ is $[A]$-regular, then $G/N$ is $[A]$-regular for any closed normal subgroup $N$ of $G$.\par

\indent We now recall a few  basic lemmas regarding locally normal $[A]$-subgroups and a deep result of Caprace, Reid, and Willis, which elucidates the connection between $[A]$-regularity and $[A]$-semisimplicity.\par

\begin{lem}\label{lem:inf_norm_A} 
If a profinite group $U$ contains an infinite locally normal $[A]$-subgroup, then $U$ contains an infinite normal $[A]$-subgroup.
\end{lem}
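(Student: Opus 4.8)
The plan is to build the required subgroup as the subgroup generated by the intersections of the finitely many conjugates of $K$ with a suitable open normal subgroup of $U$. First I would exploit local normality: since $N_U(K)$ is open in the profinite (hence compact) group $U$, it has finite index, so $K$ has only finitely many conjugates $K_1=K,K_2,\dots,K_n$. Each $K_i=g_iKg_i^{-1}$ is infinite and, being the image of $K$ under the topological-group automorphism $x\mapsto g_ixg_i^{-1}$ of $U$, again lies in $[A]$, as $[A]$ is closed under topological isomorphism.

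The key device is the normal core $C:=\bigcap_{g\in U}gN_U(K)g^{-1}$. As a finite intersection of the (finitely many) conjugates of the open subgroup $N_U(K)$, the subgroup $C$ is open and normal in $U$. Moreover $gN_U(K)g^{-1}=N_U\!\left(gKg^{-1}\right)$, so $C=\bigcap_{i}N_U(K_i)$ and therefore $C$ normalizes every $K_i$. I would then set
\[
J:=\grp{K_1\cap C,\dots,K_n\cap C}.
\]
Each $K_i\cap C$ is open in $K_i$, hence of finite index, hence infinite; it is normal in $K_i$ because $C\trianglelefteq U$, so $K_i\cap C\in[A]$ by closure property (ii). Since conjugation by any $g\in U$ fixes $C$ and permutes the $K_i$, it permutes the generators $K_i\cap C$; thus $J\trianglelefteq U$, and $J$ is infinite as it contains the infinite group $K_1\cap C$.

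It remains to check $J\in[A]$, which is where the core does the real work. Because $J\sleq C\sleq N_U(K_i)$ and $C\trianglelefteq U$, the group $J$ normalizes both $K_i$ and $C$, hence normalizes $K_i\cap C$; so each $K_i\cap C$ is normal in $J$. As these are normal subgroups that together generate $J$, their set product satisfies $J=(K_1\cap C)\cdots(K_n\cap C)$, and closure property (iii) applied inside $J$ yields $J\in[A]$. Then $J$ is the sought infinite normal $[A]$-subgroup of $U$.

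The main obstacle to anticipate is that the naive candidate, the normal closure $\grp{K_1,\dots,K_n}$, need not lie in $[A]$: the conjugates $K_i$ are in general not normal in the subgroup they generate, so property (iii) cannot be applied to it directly, and without local normality two conjugate $[A]$-subgroups of a profinite group may generate a non-$[A]$ group. Passing to $C$ and to the intersections $K_i\cap C$ is precisely what repairs normality — forcing $J$ to sit inside $C$, which normalizes each conjugate — while simultaneously preserving infiniteness and membership in $[A]$, so that property (iii) becomes applicable.
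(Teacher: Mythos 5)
Your proof is correct and follows essentially the same route as the paper: the paper fixes an open normal $W\trianglelefteq_o U$ inside the normalizer of the locally normal subgroup $L$, notes that $L\cap W$ is an infinite $[A]$-subgroup normal in $W$, and takes the product of its finitely many $U$-conjugates, which is exactly your $J$ (your normal core $C$ plays the role of $W$, and $u_i(L\cap W)u_i^{-1}=K_i\cap W$ since $W$ is normal). The only cosmetic difference is that you define $J$ as the generated subgroup and then identify it with the set product, whereas the paper works with the product directly.
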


\begin{proof}
Suppose $L\sleq U$ is an infinite locally normal $[A]$-subgroup and find $W\trianglelefteq_oU$ such that $W\sleq N_U(L)$. Since $L\cap W\trianglelefteq L$, $L\cap W$ is an $[A]$-subgroup, and since $L$ is infinite and $W$ is open, $L\cap W$ is also infinite.\par

\indent Take $u_1,\dots,u_n$ left coset representatives for $W$ in $U$. Since $L\cap W$ is an $[A]$-subgroup and $u_i(L\cap W)u_i^{-1}\trianglelefteq W$ for each $i$, we have that
\[
K:=u_1(L\cap W)u_1^{-1}\dots u_n(L\cap W)u_n^{-1}
\]
is an $[A]$-subgroup. Further, $K\trianglelefteq U$ giving an infinite $[A]$-subgroup that is normal in $U$.
\end{proof}

\begin{prop}\label{prop:A_comm}
If $G$ is a t.d.l.c. group, then $[A](U)\sim_c [A](V)$ for all $U,V\in \Uc(G)$.
\end{prop}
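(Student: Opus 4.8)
The plan is to reduce everything to the case of an \emph{open normal} subgroup and there to prove an exact identity rather than mere commensurability. Since $\sim_c$ is an equivalence relation, for $U,V\in\Uc(G)$ it suffices to relate each of $U,V$ to the common refinement $W:=U\cap V\in\Uc(G)$: from $W\sleq_o U$ and $W\sleq_o V$ one gets $[A](U)\sim_c[A](W)\sim_c[A](V)$ by transitivity. For a general open $W\sleq_o U$ I would pass to the normal core $W_0:=\bigcap_{u\in U}uWu^{-1}$, which is open (a finite intersection, since $[U:W]<\infty$) and normal in $U$, hence also open and normal in $W$. Applying the open-normal case twice gives $[A](U)\sim_c[A](W_0)\sim_c[A](W)$. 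Thus the proposition follows once I prove: if $K\trianglelefteq_o U$, then $[A](U)\cap K=[A](K)$; because $K$ is open this exhibits $[A](K)$ as an open, hence finite-index, subgroup of $[A](U)$, giving $[A](K)\sim_c[A](U)$.

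For $[A](K)\subseteq[A](U)$ I would take any $N\trianglelefteq K$ with $N\in[A]$ and form its normal closure $\ngrp{N}_U$ in $U$. As $[U:K]<\infty$, this is a finite product $\ngrp{N}_U=N_1\cdots N_r$ of $U$-conjugates of $N$; each $N_i$ is again normal in $K$ (because $N\trianglelefteq K\trianglelefteq U$) and lies in $[A]$ by conjugation invariance of the class, so each $N_i$ is normal in the compact subgroup $\ngrp{N}_U\sleq K$, and axiom (iii) yields $\ngrp{N}_U\in[A]$. Hence $\ngrp{N}_U$ is a normal $[A]$-subgroup of $U$, so $N\subseteq\ngrp{N}_U\subseteq[A](U)$; as $N$ was arbitrary, $[A](K)\subseteq[A](U)$. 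For the reverse inclusion $[A](U)\cap K\subseteq[A](K)$ I would first observe that the family $\mathcal P$ of normal $[A]$-subgroups of $U$ is directed (closed under finite products by axiom (iii)), so $[A](U)=\overline{\bigcup\mathcal P}$. For each $P\in\mathcal P$ the subgroup $P\cap K$ is normal in $K$ and, being normal in $P\in[A]$, lies in $[A]$ by axiom (ii); thus $P\cap K\subseteq[A](K)$. Since $K$ is open, intersecting with $K$ commutes with closure, so
\[
[A](U)\cap K=\overline{\Big(\bigcup\mathcal P\Big)}\cap K=\overline{\bigcup_{P\in\mathcal P}(P\cap K)}\subseteq[A](K).
\]
Combined with $[A](K)\subseteq[A](U)\cap K$ this gives the desired equality.

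The point requiring care, and the step I expect to be the main obstacle, is that $[A](U)$ need \emph{not} itself belong to $[A]$ (for instance it can be an infinite product of finite simple groups, which is not virtually nilpotent). Consequently one cannot simply argue that $[A](U)\cap K$, being a closed subgroup of $[A](U)$, lies in $[A]$ and is therefore contained in $[A](K)$. The device that circumvents this is to descend to the individual generators $P\in\mathcal P$, where axiom (ii) does apply to $P\cap K$, and then to recover $[A](U)\cap K$ as the closure of the union of these pieces using openness of $K$. The remaining ingredients, namely conjugation invariance of $[A]$, compactness (so that finite products of conjugates are closed subgroups), and transitivity of $\sim_c$, are routine.
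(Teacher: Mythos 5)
Your proof is correct and follows essentially the same route as the paper: reduce to the case of an open normal subgroup, use a finite product of conjugates (axiom (iii)) for the inclusion $[A](K)\sleq [A](U)$, and use axiom (ii) applied to $P\cap K$ for the generators $P$ together with a density/openness argument for the reverse containment. The only differences are cosmetic refinements --- you establish the exact identity $[A](U)\cap K=[A](K)$ where the paper settles for $[A](U)\cap W\sleq [A](W)\sleq [A](U)$, and you pass to the normal core rather than choosing a subgroup normal in both $V$ and $U\cap V$ --- neither of which changes the substance of the argument.
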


\begin{proof} Fix $U\in \Uc(G)$. We first show $[A](W)\sim_c[A](U)$ for $W\trianglelefteq_oU$. Let $A\trianglelefteq W$ be an $[A]$-subgroup of $W$ and fix $u_1,\dots,u_n$ left coset representatives for $W$ in $U$. As in the previous proof, $L:=u_1Au_1^{-1}\dots u_nAu_n^{-1}$ is a normal $[A]$-subgroup in $U$, so $A\sleq L\sleq[A](U)$. We conclude that $[A](W)\sleq [A](U)$.\par

\indent On the other hand, take $a\in [A](U)\cap W$. It suffices to assume $a\in A\trianglelefteq U$ with $A$ some $[A]$-subgroup since $W$ is open. Since $W\cap A\trianglelefteq A$, the group $W\cap A$ is a normal $[A]$-subgroup of $W$, hence $a\in W\cap A\sleq [A](W)$. We now have that $[A](U)\cap W\sleq[A](W)\sleq [A](U)$, and therefore, $[A](U)\sim_c[A](W)$. \par

\indent For an arbitrary $V\in \Uc(G)$, we may find $W\trianglelefteq_o V$ such that $W\trianglelefteq_o U\cap V$. By our argument above, 
\[
[A](V)\sim_c [A](W)\sim_c[A](U\cap V).
\]
On the other hand, the same argument implies $[A](U)\sim_c[A](U\cap V)$. Therefore, $[A](U)\sim_c[A](V)$ by the transitivity of $\sim_c$.
\end{proof}

\begin{thm}[(Caprace, Reid, Willis {\cite[Theorem 6.10]{CRW_1_13}})]\label{thm:A_reg_rad} If $G$ is a t.d.l.c. group, then $G$ has a closed normal subgroup $R_{[A]}(G)$, the $[A]$\textbf{-regular radical} of $G$, such that 
\begin{enumerate}[(1)]
\item $R_{[A]}(G)$ is the unique largest closed normal subgroup of $G$ that is $[A]$-regular.
\item $G/R_{[A]}(G)$ is $[A]$-semisimple, and given any closed normal subgroup $N$ of $G$ such that $G/N$ is $[A]$-semisimple, $R_{[A]}(G)\sleq N$. 
\end{enumerate}
\end{thm}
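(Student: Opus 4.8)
The plan is to construct $R_{[A]}(G)$ as the smallest closed normal subgroup with $[A]$-semisimple quotient and then verify that this subgroup coincides with the largest $[A]$-regular closed normal subgroup; the two descriptions agree because $[A]$-regularity and $[A]$-semisimplicity are mutually exclusive for nontrivial normal subgroups. Throughout I would use that the class $[A]$ is closed under subquotients and under products of normal members, directly from its defining axioms.

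I would first record the key \emph{orthogonality lemma}: an $[A]$-semisimple t.d.l.c. group $Q$ has no nontrivial $[A]$-regular closed normal subgroup. Suppose $H\trianglelefteq Q$ is closed, nontrivial, and $[A]$-regular. If $H$ were discrete, then for $h\in H$ the continuous conjugation map $q\mapsto qhq^{-1}$ into the discrete space $H$ has open fibre $C_Q(h)$, so $H\sleq QZ(Q)=\{1\}$, a contradiction. Hence $H$ is non-discrete, and there is $U\in\Uc(Q)$ with $U\cap H\neq\{1\}$. Applying $[A]$-regularity of $U\cap H$ in $U$ to the closed normal subgroup $L=\{1\}$ (which does not contain $U\cap H$) shows that $U\cap H$ itself contains a nontrivial locally normal $[A]$-subgroup $A$ of $U$. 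Since $U\sleq_o Q$, the subgroup $N_Q(A)$ is open, so $A$ is a nontrivial locally normal $[A]$-subgroup of $Q$, contradicting $[A]$-semisimplicity.

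For existence I would build the $[A]$-semisimple residual directly. Let $O(G)$ be the closed subgroup generated by $QZ(G)$ together with all locally normal $[A]$-subgroups of $G$; this is a closed characteristic subgroup, and passing to $G/O(G)$ removes these obstructions at one level. Iterating transfinitely, with $N_0:=\{1\}$, $N_{\alpha+1}$ the preimage in $G$ of $O(G/N_\alpha)$, and $N_\lambda:=\ol{\bigcup_{\alpha<\lambda}N_\alpha}$ at limits, yields an increasing chain of closed characteristic subgroups. A Lindel\"{o}f argument on the open covers $\bigcup_\alpha UN_\alpha$, exactly as in the treatment of $\mc{S}_{\Es}$, shows the chain stabilizes at some $R:=N_\infty$ with $G/R$ $[A]$-semisimple; minimality of this construction is precisely statement (2).

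The hard part will be showing that $R$ is itself $[A]$-regular; granting this, the orthogonality lemma (applied in the $[A]$-semisimple quotient $G/R$, using that $[A]$-regularity passes to images under quotients) forces every $[A]$-regular closed normal subgroup into $R$, so $R$ is the unique largest one and equals $R_{[A]}(G)$. The crux is the local claim that if $H\trianglelefteq G$ is generated by quasi-central elements and locally normal $[A]$-subgroups, then $U\cap H$ is $[A]$-regular in each $U\in\Uc(G)$. To prove it I would fix a closed $L\trianglelefteq U$ with $U\cap H\not\sleq L$ and exhibit a nontrivial locally normal $[A]$-subgroup in the image of $U\cap H$ in $U/L$: either a locally normal $[A]$-generator survives, in which case Lemma~\ref{lem:inf_norm_A} produces a normal $[A]$-subgroup from its surviving conjugates, or only a quasi-central element survives, whose small conjugates generate an abelian---hence $[A]$---locally normal subgroup. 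Assembling these local contributions across compact open subgroups, with Proposition~\ref{prop:A_comm} controlling the $[A]$-part of $U$ independently of the choice of $U$, would give closure of the $[A]$-regular normal subgroups under finite joins and under the transfinite union. I expect this regularity-preservation step---tracking the persistence of locally normal $[A]$-subgroups under the quotients $U/L$---to be the principal difficulty.
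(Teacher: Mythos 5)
First, a point of order: the paper does not prove this statement. It is quoted as an external result of Caprace--Reid--Willis (\cite[Theorem 6.10]{CRW_1_13}), so there is no internal proof to compare your attempt against; I can only assess your argument on its own terms. Your overall architecture is the right one: an orthogonality lemma (an $[A]$-semisimple group has no nontrivial $[A]$-regular closed normal subgroup --- your proof of this is correct), a transfinite construction of the minimal closed normal subgroup $R$ with $[A]$-semisimple quotient, and an identification of $R$ with the largest $[A]$-regular closed normal subgroup.

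The gap is that the identification rests on two substantial permanence claims that are asserted rather than proved, and this is where essentially all of the content of the theorem lies. (i) To show $R$ is $[A]$-regular you need $[A]$-regularity to be preserved by each operation in your iteration: the single step (the closed normal subgroup generated by $QZ(G)$ and the locally normal $[A]$-subgroups is $[A]$-regular), the successor step (an extension of an $[A]$-regular closed normal subgroup by another is $[A]$-regular), and the limit step (closures of increasing unions). None of these is carried out, and your sketch of the single step contains an error: the conjugates of a surviving quasi-central element $\bar g$ of $U/L$ generate a centre-by-finite, not abelian, closed subgroup, and centre-by-finite profinite groups need not lie in $[A]$, since $[A]$ is closed under normal products but not under arbitrary extensions. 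The correct move is to take $\ol{\grp{\bar g}}$ itself, which is abelian (hence in $[A]$) and locally normal because its normalizer contains the open subgroup $C_{U/L}(\bar g)$. More seriously, an element of $U\cap R$ surviving in $U/L$ need not be approximable by quasi-central elements or by elements of locally normal $[A]$-subgroups of $G$ at all --- it may arise only at a later stage of the iteration --- so your two-case dichotomy does not cover the general situation without the inductive permanence statements. (ii) To show every $[A]$-regular closed normal subgroup $H$ lies in $R$ you invoke, in passing, that $[A]$-regularity of $H\trianglelefteq G$ passes to the image $\ol{HR}/R$ in $G/R$; this requires comparing $(U\cap H)R/R$ with $(UR/R)\cap(\ol{HR}/R)$ for $U\in\Uc(G)$ and is itself a nontrivial lemma in \cite{CRW_1_13}, not a formality. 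Until (i) and (ii) are supplied, the two descriptions of $R_{[A]}(G)$ have not been shown to coincide.
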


\subsection{Quasi-centralizers and commensurators}

We now prove a series of technical lemmas which are used in the next section to show $\qc{G}{U/[A](U)}$ is elementary in a t.d.l.c.s.c. group $G$ with a trivial quasi-centre.

\begin{lem}\label{lem:quasi_center_quot}
Suppose $U$ is a profinite group, $B\trianglelefteq U$, and $U/B$ has a dense quasi-centre. If $A\trianglelefteq U$ and $A\sleq_oB$, then $U/A$ has a dense quasi-centre.
\end{lem}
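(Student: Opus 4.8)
The plan is to reduce the density assertion to a lifting statement for single elements and then to settle that lifting by an orbit–stabilizer argument that exploits the finiteness of $B/A$. At the outset I would record the basic facts: since $A\sleq_o B$ the quotient $B/A$ is finite, and since $A$ is then of finite index and open in $B$ it is closed in $U$; as $A\trianglelefteq U$, the group $U/A$ is again profinite. Write $p:U/A\to U/B$ for the natural projection, a continuous surjection of profinite groups with finite kernel $B/A$.

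The central step is the claim that $uB\in QZ(U/B)$ implies $uA\in QZ(U/A)$. Quasi-centrality of $uB$ says exactly that $W:=\{w\in U\mid [u,w]\in B\}$ is an open subgroup of $U$, being the preimage of $C_{U/B}(uB)$ under $U\to U/B$. I would then observe that $W$ acts by conjugation on the \emph{finite} set $S:=\{ubA\mid b\in B\}\subseteq U/A$, which is in bijection with $B/A$. To justify this, note that for $w\in W$ one has $[u^{-1},w]=u^{-1}[u,w]^{-1}u$, a conjugate of an element of $B$, hence $[u^{-1},w]\in B$ by normality of $B$; this gives $wuw^{-1}\in uB$, and combined with $wbw^{-1}\in B$ one gets $w(ubA)w^{-1}\in uB$, so conjugation by $w$ indeed permutes $S$. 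The stabilizer of the point $uA$ is $\{w\in W\mid [w,u]\in A\}$, which is precisely $W$ intersected with the preimage of $C_{U/A}(uA)$. By orbit–stabilizer this stabilizer has index at most $|S|\le |B/A|$ in $W$, hence is open in $U$. Therefore $C_{U/A}(uA)$ is open, i.e. $uA\in QZ(U/A)$, proving the claim.

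Finally I would deduce density. Setting $Q:=p^{-1}(QZ(U/B))$, this is a subgroup of $U/A$ containing $\ker p=B/A$, and the previous step yields $Q\sleq QZ(U/A)$. Since $QZ(U/B)$ is dense in $U/B$ and $p$ is a continuous open surjection of profinite groups, $\ol{Q}$ is a closed subgroup whose image $p(\ol Q)$ is closed and contains the dense set $QZ(U/B)$, forcing $p(\ol Q)=U/B$; as $\ol Q$ also contains $\ker p$, it must be all of $U/A$. Thus $Q$, and hence $QZ(U/A)$, is dense in $U/A$, which is the desired conclusion.

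I expect the only genuine obstacle to be the lifting step. The naive attempt to prove directly that $\{w\mid [u,w]\in A\}$ is open stumbles on the fact that $w\mapsto [u,w]A$ is merely a cocycle into $B/A$, not a homomorphism, so its "kernel" is not transparently of finite index. Recasting the situation as a finite permutation action and invoking orbit–stabilizer is exactly what converts the finiteness of $B/A$ into the openness of the relevant centralizer; the density step is then routine.
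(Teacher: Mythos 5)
Your proof is correct and follows the same overall skeleton as the paper's: both arguments show that the full preimage $p^{-1}(QZ(U/B))$ consists of quasi-central elements of $U/A$ and then push density through the continuous open projection $p$. The difference is in the lifting step. The paper uses the cocycle identity $[ub,w]=[b,w]^{u}[u,w]$ to see that, for $w$ ranging over the open subgroup $W$ with $[u,W]\subseteq B$, the continuous map $w\mapsto [ub,w]A$ takes values in the finite discrete set $B/A$; hence $\{w\in W\mid [ub,w]\in A\}$ is an open neighbourhood of $1$, so it contains an open subgroup $V$, and $VA/A\sleq C_{U/A}(ubA)$. You instead let $W$ act by conjugation on the finite set $\{ubA\mid b\in B\}$ and apply orbit--stabilizer. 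Both routes work and both exploit only the finiteness of $B/A$; the paper's is marginally leaner because it never needs the set $\{w\mid [ub,w]\in A\}$ to be a subgroup, only a neighbourhood of the identity. Two small corrections to your write-up. First, your closing claim that the ``naive'' approach stumbles is mistaken: that approach is precisely the paper's proof, and it succeeds because every open neighbourhood of $1$ in a profinite group contains an open subgroup, so one never needs the cocycle's ``kernel'' to have finite index. Second, the inference ``finite index in $W$, hence open'' is not valid in an arbitrary profinite group (finite-index subgroups need not be open); to close this you should note that the stabilizer is the preimage of a point under a continuous map into a finite discrete subset of $U/A$ (equivalently, that it is a closed finite-index subgroup of the clopen $W$), which is what actually gives openness.
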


\begin{proof} Fix $\Sigma:=(g_i)_{i\in I}\subseteq U$ coset representatives for the elements of $QZ(U/B)$ in $U/B$. Consider the set of elements of $U/A$ with the form $g_ibA$ for $i\in I$ and $b\in B$. By the choice of the $g_i$, for each $i\in I$ there is $W_i\sleq_oU$ such that $[g_i,W_i]\subseteq B$. In fact, more is true: for any $b\in B$ and $w\in W_i$, we have that $[g_ib,w]=[b,w]^{g_i}[g_i,w]\in B$.\par

\indent Now the map $\xi:W_i\rightarrow U/A$ by $w\mapsto[g_ib,w]A$ is continuous and by the above, has image in $B/A$. Since $1\in im(\xi)$ and $B/A$ is finite, there is $V_i\sleq_o W_i$ such that $\xi(V_i)=1$. It follows that $C_{U/A}(g_ibA)$ is open for each $i\in I$ and $b\in B$. Therefore, $\{g_ibA\mid b\in B\text{ and }i\in I\}\subseteq QZ(U/A)$.\par

\indent On the other hand, the map $\psi: U/A\rightarrow U/B$ defined by $uA\mapsto uB$ is open and continuous. It follows the set $\{g_ibA\mid b\in B\text{ and }i\in I\}$ is additionally dense in $U/A$. Thus, $U/A$ has a dense quasi-centre.
\end{proof}

\begin{lem}\label{lem:vnil_E} 
Suppose $G$ is a t.d.l.c.s.c. group. Suppose further there is $U\in \Uc(G)$ with a virtually nilpotent $B\trianglelefteq U$ such that $Comm_G(B)=G$ and $QZ(U/B)$ is dense in $U/B$. Then $G$ is elementary. 
\end{lem}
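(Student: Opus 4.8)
The plan is to peel off the commensurated virtually nilpotent subgroup $B$ in two movements: first reduce to the case that $B$ is nilpotent, and then split $G$ along the normal closure of $B$. Write $P:=\ol{\ngrp{B}_G}$ for the closed normal closure of $B$ in $G$. The series $\{1\}\sleq P\sleq G$ will do the job: the quotient $G/P$ lands directly in the scope of Proposition~\ref{prop:first_examples}(4), while $P$ will be shown to be locally elliptic, hence elementary by Proposition~\ref{prop:first_examples}(3). Since $\Es$ is closed under group extension (Theorem~\ref{thm:closure_main}(1)), $G\in\Es$ will follow.

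First I would dispose of the reductions. If $B$ is trivial then $QZ(U)$ is dense in $U$, so $U$ is a compact open subgroup with dense quasi-centre and $G\in\Es$ by Proposition~\ref{prop:first_examples}(4). In general, choose a nilpotent open subgroup $B_0\sleq B$. As $B_0$ has finite index in $B\trianglelefteq U$, the normalizer $N_U(B_0)$ is open, being the stabilizer of $B_0$ under the continuous action of $U$ on the finite set of subgroups of $B$ of index $[B:B_0]$; thus $B_0$ has finitely many $U$-conjugates and $B_0^{\ast}:=\bigcap_{u\in U}uB_0u^{-1}$ is an open, nilpotent, $U$-invariant (so normal in $U$) subgroup of $B$. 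Since $B_0^{\ast}\sleq_o B$ we have $B_0^{\ast}\sim_c B$, so $B_0^{\ast}$ remains commensurated in $G$, and Lemma~\ref{lem:quasi_center_quot} (with $A=B_0^{\ast}$) shows $QZ(U/B_0^{\ast})$ is dense in $U/B_0^{\ast}$. Replacing $B$ by $B_0^{\ast}$, I may assume $B$ is nilpotent.

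Granting that $P$ is locally elliptic, the theorem is immediate. Then $P$ is elementary by Proposition~\ref{prop:first_examples}(3), and writing $\pi:G\to G/P$, the image $\pi(U)\simeq U/(U\cap P)$ is a compact open subgroup of $G/P$. Since $B\sleq U\cap P$, this $\pi(U)$ is a continuous quotient of $U/B$; as the image of a dense quasi-centre under a quotient map is again a dense quasi-centre, $\pi(U)$ has dense quasi-centre and $G/P\in\Es$ by Proposition~\ref{prop:first_examples}(4). Hence $G$ is an extension of an elementary group by an elementary group, so $G\in\Es$. Equivalently, it suffices to prove $B\sleq\Rad{\mathcal{LE}}(G)$, for then $P\sleq\Rad{\mathcal{LE}}(G)$ is a closed subgroup of an elementary group and so is elementary by Theorem~\ref{thm:closure_sgrp}.

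The heart of the argument, and the step I expect to be the main obstacle, is therefore showing that the normal closure $P=\ol{\ngrp{B}_G}$ of a commensurated nilpotent profinite subgroup is locally elliptic, i.e.\ that every finite subset generates a relatively compact subgroup. By Platonov's characterization \cite{Plat66} this reduces to showing $\grp{A_1,\dots,A_m}$ is relatively compact whenever $A_1,\dots,A_m$ are $G$-conjugates of $B$; note that, $B$ being commensurated, the $A_i$ are pairwise commensurate with $B$ and each is nilpotent of class $\le c$, where $c$ is the class of $B$. I would attack this by induction on $c$, peeling off the commensuration-invariant layer $\ol{QZ(B)}$: the quasi-centre is preserved under commensuration (for $B\sim_c B'$ one checks $QZ(B)\cap(B\cap B')=QZ(B')\cap(B\cap B')$, since having an open centralizer is detected on the common finite-index subgroup $B\cap B'$), so $\ol{QZ(B)}$ is itself commensurated in $G$, is characteristic in $B$, contains $Z(B)$, and hence has strictly smaller class in the quotient. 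The delicate point is that pairwise commensurate \emph{nilpotent} compact groups cannot generate a non-compact group; this is exactly where nilpotency is essential and cannot be dropped, in sharp contrast to the compact open subgroups of a simple group such as $PSL_n(\Qp)$, where the corresponding generated group is all of $G$ and the conclusion fails. Making this precise, by controlling the group generated by finitely many pairwise-commensurate conjugates through the central series, is where the real work lies. Once local ellipticity of $P$ is established, the theorem follows as above.
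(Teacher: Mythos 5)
Your overall architecture --- split $G$ along $P:=\ol{\ngrp{B}_G}$, handle $G/P$ via the dense quasi-centre of $\pi(U)$, and handle $P$ by showing it is locally elliptic --- departs completely from the paper, and unfortunately the load-bearing step is not merely unproved (you yourself flag it as ``where the real work lies'') but false. The claim that finitely many pairwise commensurate compact nilpotent subgroups generate a relatively compact group, equivalently that a commensurated compact (virtually) nilpotent $B\trianglelefteq U$ satisfies $B\sleq \Rad{\mc{LE}}(G)$, already fails for \emph{finite abelian} subgroups: take $G=D_\infty=\grp{a}\ast\grp{b}$ with $a,b$ of order $2$, viewed as a countable discrete (hence t.d.l.c.s.c.) group, and $U=B=\grp{a}$. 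Then $U\in\Uc(G)$, $B\trianglelefteq U$ is abelian, $B$ is commensurated since every conjugate is finite, and $QZ(U/B)$ is trivially dense in the trivial group $U/B$; yet $B$ and its conjugate $B^{b}$ generate an infinite dihedral group, $\ngrp{B}_G$ is infinite and not locally finite, and $\Rad{\mc{LE}}(G)=\{1\}$. So nilpotency of the individual subgroups does not prevent the group they generate from being non-compact --- commensurability of finite subgroups carries no information --- and your proposed series $\{1\}\sleq P\sleq G$ does not have a locally elliptic (or, in general, even elementary-for-free) middle term. Crossing $G\times K$ with an abelian profinite $K$ shows the failure is not an artifact of discreteness. (Your reduction to nilpotent $B$ via $B_0^{\ast}=\bigcap_{u\in U}uB_0u^{-1}$ and Lemma~\thref{lem:quasi_center_quot}, and your treatment of $G/P$, are both fine; the collapse is entirely in the claim about $P$.)

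The paper avoids this trap by never trying to swallow all of $B$ at once. It inducts on the nilpotence class of a nilpotent $A\sleq_o B$ with $A\trianglelefteq U$, and at each stage extracts only the \emph{central} layer: it builds a conjugation-invariant hereditary family $\mc{H}$ of subgroups that uniformly normalize subgroups commensurate with $A$ along a normal basis, forms the synthetic $\mc{H}$-core $N_{\mc{H}}$, and checks $Z(A)\sleq N_{\mc{H}}$. The closure $N=\ol{N_{\mc{H}}}$ is shown to be elementary directly --- as an increasing union of open subgroups $P_i$ with $\SIN(P_i/L)$ open, using Lemma~\thref{lem:quasi_center_quot} again --- not by showing it is locally elliptic, which it need not be; and in $G/N$ the nilpotence class has dropped because $Z(A)$ has been killed, so the induction closes via extension-stability of $\Es$. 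If you want to salvage your outline, you would have to replace ``$P$ is locally elliptic'' by an argument of this kind that peels off one central layer at a time and settles for elementarity rather than local ellipticity.
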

\begin{proof}
Given a triple $(G,U,B)$ as hypothesized, we may find $W\trianglelefteq_o U$ such that $A:=W\cap B$ is nilpotent. Plainly, $A\trianglelefteq U$, $A$ is nilpotent, and $Comm_G(A)=G$. We induct on the minimal nilpotence class of such an $A$ for the lemma. For the base case, $n(A)=1$, the group $A$ is trivial. Lemma~\rm\ref{lem:quasi_center_quot} thus implies $U=U/A$ has a dense quasi-centre. Since $QZ(G)\sleq \SIN(G)$, we conclude that $\SIN(G)$ is open in $G$, and it follows that $G\in \Es$.\par 

\indent Suppose the lemma holds for all triples $(G,U,B)$ as hypothesized such that there is a nilpotent $A\sleq_oB$ with $A\trianglelefteq U$ and $n(A)\sleq k$. Suppose the triple $(G,U,B)$ is as hypothesized, but every $A\sleq_oB$ with $A\trianglelefteq U$ and $A$ nilpotent has $n(A)= k+1$. Fix such an $A$. \par

\indent Define $\mc{H}\subseteq S(G)$ by $C\in \mc{H}$ if and only if for all $V\in \Uc(G)$, there exists $N_W\trianglelefteq W\sleq_o V$ and $(W_i)_{i\in \omega}$ a normal basis at $1$ for $W$ such that
\begin{enumerate}[(i)]
\item $N_W\sim_c A$, and
\item $C\sleq \bigcap_{i\in \omega} N_G(N_W\cap W_i)$.
\end{enumerate}
It is immediate that $\mc{H}$ is hereditary and that $U\in \mc{H}$.

\begin{claim*} 
$\mc{H}$ is conjugation invariant.
\end{claim*}
\begin{proof}[of Claim] 
Take $C\in \mc{H}$ and $g\in G$. Fix $V\in \mc{U}(G)$ and find $W\sleq_o g^{-1}Vg$ as given by the definition of $C\in\mc{H}$. So there is $(W_i)_{i\in \omega}$ a normal basis at $1$ for $W$ and $N_W\trianglelefteq W$ with $N_W\sim_c A$ such that $C\sleq \bigcap_{i\in\omega}N_G(N_W\cap W_i)$. \par

\indent We claim $gWg^{-1}\sleq_o V$ satisfies the conditions for $gCg^{-1}\in \mc{H}$. Certainly, $gN_Wg^{-1}\trianglelefteq gWg^{-1}$,  $(gW_ig^{-1})_{i\in \omega}$ is a normal basis at $1$ for $gWg^{-1}$, and 
\[
gCg^{-1}\sleq \bigcap_{i\in \omega} N_G(gN_Wg^{-1}\cap gW_ig^{-1}).
\]
Since $gN_Wg^{-1}\sim_c gAg^{-1}$ and $gAg^{-1}\sim_c A$, it is additionally the case that $gN_Wg^{-1}\sim_c A$. Therefore, $\mc{H}$ is conjugation invariant.
\end{proof}

\indent  We may now form $N_{\mc{H}}$, the $\mc{H}$-core.

\begin{claim*}
$Z(A)\sleq N_{\mc{H}}$.
\end{claim*}

\begin{proof}[of Claim] Take $g\in Z(A)$ and $C\in \mc{H}$. Fix $V\in \mc{U}(G)$ and find $W\sleq_o V$ as given by the definition of $C\in \mc{H}$. So there is $(W_i)_{i\in \omega}$ a normal basis at $1$ for $W$ and $N_W\trianglelefteq W$ with $N_W\sim_c A$ and $C\sleq \bigcap_{i\in\omega}N_G(N_W\cap W_i)$. Since $N_W\sim_c A$, there is $i\in \omega$ such that $N_W\cap W_i\sleq A\cap N_W$. Fix such an $i$ and put $N_{W_i}:=N_W\cap W_i$.\par

\indent We now have that $N_{W_i}\trianglelefteq W_i$, $N_{W_i}\sim_cA$, and $(W_j)_{j\sgeq i}$ is a normal basis at $1$ for $W_i$. Since $N_{W_i}\sleq A$, it is further the case that
\[
\cgrp{C,g}\sleq \bigcap_{j\sgeq i}N_G(N_{W_i}\cap W_j).
\]
We conclude that $\cgrp{C,g}\in \mc{H}$ and therefore, that $g\in N_{\mc{H}}$. 
\end{proof} 

\indent Put $N:=\ol{N_{\mc{H}}}$ and let $\pi:G\rightarrow G/N$ be the usual projection. Since $Comm_{G/N}(\pi(B))=G/N$, the triple $(G/N,\pi(U),\pi(B))$ satisfies the hypotheses of the lemma. Moreover, $\pi(A)\sleq_o\pi(B)$, $\pi(A)\trianglelefteq \pi(U)$, and $n(\pi(A))\sleq k$. The induction hypothesis thus implies $G/N$ is elementary. 

\indent On the other hand, let $(n_i)_{i\in \omega}$ list a countable dense set of $ N_{\mc{H}}$ and define 
\[
P_i:=\grp{U,n_0,\dots,n_i}.
\]
Since $U\in \mc{H}$ and $n_0,\dots,n_i\in N_{\mc{H}}$, we have that $P_i\in \mc{H}$ for each $i$. Following from the definition of $\mc{H}$, we may find $W\sleq_oU$ and $L\trianglelefteq W$ such that $L\sleq_oA$ and $P_i\sleq N_G(L)$. \par

\indent Lemma~\rm\ref{lem:quasi_center_quot} implies $U/A$ has a dense quasi-centre, and since $W/(A\cap W)\sleq_o U/A$, the group $W/A\cap W$ has a dense quasi-centre. Applying Lemma~\rm\ref{lem:quasi_center_quot} to $W$ gives that $W/L$ also has a dense quasi-centre because $L\sleq_o A\cap W$ and $L\trianglelefteq W$. Letting $\pi:P_i\rightarrow P_i/L$ be the usual projection, we infer that
\[
\pi(W)\sleq \ol{QZ(P_i/L)}\sleq \SIN(P_i/L),
\] 
and therefore, $\SIN(P_i/L)\trianglelefteq_oP_i/L$. It now follows that $P_i$ is elementary, and since $N\sleq \bigcup_{i\in \omega}P_i$ is closed, Theorem~\rm\ref{thm:closure_main} implies $N$ is also elementary.\par

\indent The class $\Es$ is closed under group extension, so we conclude that $G\in \Es$ completing the induction.
\end{proof}

\begin{lem}\label{lem:qc_E} 
Suppose $G$ is a t.d.l.c.s.c. group and $U\in \Uc(G)$ contains a non-trivial virtually nilpotent $B\trianglelefteq U$ such that $Comm_G(B)=G$. Then $\qc{G}{U/B}$ is an elementary normal subgroup of $G$ containing $B$.
\end{lem}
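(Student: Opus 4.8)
The plan is to dispatch the easy structural claims (normality and the containment of $B$) directly from Proposition~\ref{prop:quasicentralizer}, and then to obtain elementarity by reducing to Lemma~\ref{lem:vnil_E}, applied with the \emph{ambient group taken to be $\qc{G}{U/B}$ itself} rather than $G$.

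First I would verify the hypotheses of Proposition~\ref{prop:quasicentralizer} for the pair $B\trianglelefteq U$. The subgroup $B$ is commensurated by assumption, and it is non-trivial. The subgroup $U$ is also commensurated: for every $g\in G$ the conjugate $gUg^{-1}$ is again compact and open, so $U\cap gUg^{-1}$ is open and of finite index in each of $U$ and $gUg^{-1}$, whence $U\sim_c gUg^{-1}$ and $Comm_G(U)=G$. Thus $B$ and $U$ are non-trivial commensurated compact subgroups with $B\trianglelefteq U$, so Proposition~\ref{prop:quasicentralizer} shows $\qci{G}{U/B}$ is a normal subgroup of $G$ containing $B$. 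Passing to closures, $N:=\qc{G}{U/B}$ is a closed normal subgroup of $G$ containing $B$.

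Next I would assemble the data for Lemma~\ref{lem:vnil_E} inside $N$. As a closed subgroup of a t.d.l.c.s.c.\ group, $N$ is t.d.l.c.s.c. Put $V:=U\cap N$; since $U$ is compact open in $G$, we have $V\in\Uc(N)$. Now $B\sleq V$ because $B\sleq U$ and $B\sleq N$, and $B\trianglelefteq V$ since $B\trianglelefteq U$ and $V\sleq U$; moreover $B$ is virtually nilpotent by hypothesis. Restricting $Comm_G(B)=G$ to $N$ gives $Comm_N(B)=N$: for $g\in N$ the conjugate $gBg^{-1}$ again lies in $N$ (as $N\trianglelefteq G$ and $B\sleq N$), and the commensurability indices $|B:B\cap gBg^{-1}|$ are intrinsic, so $B\sim_c gBg^{-1}$ in $N$ exactly as in $G$. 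Finally, $V$ is precisely the subgroup $U\cap\qc{G}{U/B}$ analysed immediately after the definition of $\qc{G}{K/L}$ (with $L=B$); that discussion shows $QZ(V/B)$ is dense in $V/B$.

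The triple $(N,V,B)$ therefore satisfies all hypotheses of Lemma~\ref{lem:vnil_E}, which yields that $N=\qc{G}{U/B}$ is elementary, completing the proof. I do not anticipate a genuine obstacle, since the substantive work is already encapsulated in Lemma~\ref{lem:vnil_E} and in the density of $QZ(U\cap\qc{G}{U/B}\,/\,B)$. The only step demanding care is the bookkeeping that permits Lemma~\ref{lem:vnil_E} to be invoked with $N$ in the role of the ambient group — specifically, confirming that $V=U\cap N$ is compact open in $N$, that $B$ stays normal in $V$, and that the global hypothesis $Comm_G(B)=G$ descends to $Comm_N(B)=N$.
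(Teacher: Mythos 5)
Your proposal is correct and follows essentially the same route as the paper: Proposition~\ref{prop:quasicentralizer} gives normality and the containment of $B$, and then Lemma~\ref{lem:vnil_E} is applied to the triple $(N, U\cap N, B)$, using the density of $QZ(U\cap N/B)$ noted after the definition of $\qc{G}{K/L}$ together with $Comm_N(B)=N$. The paper's proof is just a terser version of the same verification.
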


\begin{proof}
Put 
\[
N:=\qc{G}{U/B}:=\ol{\left\{g\in G\mid \exists\;W\in \Uc(G)\text{ such that } [g,W\cap  U]\subseteq B\right\}}.
\]
Proposition~\rm\ref{prop:quasicentralizer} implies $N$ is a normal subgroup containing $B$ since $B$ and $U$ are commensurated and $B\trianglelefteq U$. Furthermore, $B\trianglelefteq V:=U\cap N\in \Uc(N)$, $QZ(V/B)$ is dense in $V/B$, and $Comm_N(B)=N$. Lemma~\rm\ref{lem:vnil_E} therefore implies $N$ is also elementary. 
\end{proof}

We now present a slight adaptation of a lemma due to Caprace, Reid, and Willis from their work \cite{CRW_2_13}. \par

\indent Let $G$ be a t.d.l.c. group, $U\in \Uc(G)$, and $K\trianglelefteq U$ be infinite. Take $g\in G$ and consider $K^g:=gKg^{-1}$. Certainly, $K^g\trianglelefteq U^g$, and since $U$ is compact and open, there are $u_1,\dots,u_n$ in $U$ such that $U\subseteq \bigcup_{i=1}^n u_iU^g$. For all $u\in U$, there is then some $1\sleq i \sleq n$ such that $uK^gu^{-1}=u_iK^gu_i^{-1}$. Putting
\[
(K^g)^U:=\left\{u_1K^gu_1^{-1},\dots,u_nK^gu_n^{-1}\right\},
\]
the set of subgroups $(K^g)^U$ is permuted by $U$ under the action by conjugation.\par

\indent Take $g_1,\dots,g_n\in G$ and let $K^{h_1},\dots,K^{h_m}$ list $\{K\}\cup\bigcup_{i=1}^n(K^{g_i})^U$. We now consider the group 
\[
H:=\cgrp{K^{h_1},\dots,K^{h_m}}.
\]
Since $U$ permutes the generating set, we immediately see that $U\sleq N_G(H)$. Let us make a few further observations: Since $N_{G}(K^{h_j})$ is open in $G$ for each $1\sleq j\sleq m$, the subgroup $V:=\bigcap_{j=1}^mN_{H\cap U}(K^{h_j})$ is a compact open subgroup of $H$. We additionally have that $V\cap K^{h_j}\trianglelefteq V$ for each $1\sleq j \sleq m$, hence
\[
L:=(V\cap K^{h_1})\dots(V\cap K ^{h_m})
\]
is an infinite normal subgroup of $V$.\par

\indent We claim that $L$ is also commensurated in $H$. Since $L\sleq N_H(K^{h_j})$, we have that $LK^{h_j}$ is a compact subgroup, and furthermore, $LK^{h_j}/L$ is finite as $(K^{h_j}\cap V)\sleq L$ and $LK^{h_j}/L\leftrightarrow K^{h_j}/L\cap K^{h_j}$. The group $L$ is a thus a finite index subgroup of $LK^{h_j}$, and it follows that $K^{h_j}\sleq Comm_H(L)$. As $V\sleq Comm_H(L)$, we conclude that $Comm_H(L)$ is a dense open subgroup of $H$ and therefore equals $H$. That is to say, $L$ is commensurated in $H$.\par

\indent We have now demonstrated the following lemma:

\begin{lem}[(Caprace, Reid, Willis \cite{CRW_2_13})]\label{lem:CRW}
Suppose $G$ is a t.d.l.c. group, $U\in \Uc(G)$, $K\trianglelefteq U$ is infinite, and $g_1,\dots,g_n\in G$. Let $K^{h_1},\dots,K^{h_m}$ list $\{K\}\cup\bigcup_{i=1}^n(K^{g_i})^U$ and put 
\[
H:=\cgrp{K^{h_1},\dots,K^{h_m}}.
\]
Then,
\begin{enumerate}[(1)]
\item $U\sleq N_G(H)$, and
\item for $V:=\bigcap_{j=1}^mN_{H\cap U}(K^{h_j})$ and $L:=(V\cap K^{h_1})\dots(V\cap K ^{h_m})$, it is the case that $L\trianglelefteq V\in \Uc(H)$, $L$ is infinite, and $Comm_H(L)=H$. 
\end{enumerate}
\end{lem}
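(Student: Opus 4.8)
The plan is to read off both claims directly from the way $H$, $V$, and $L$ were assembled, using only that $U$ permutes the chosen generators and that $K$ is an infinite profinite group. For claim $(1)$, the key observation is that conjugation by $U$ permutes the generating set $\{K^{h_1},\dots,K^{h_m}\}$ \emph{setwise}. Indeed, $\{K\}$ is $U$-invariant since $K\trianglelefteq U$, and each set $(K^{g_i})^U$ is by its very construction the full $U$-orbit of $g_iKg_i^{-1}$ under conjugation, hence $U$-invariant. Thus for $u\in U$, conjugation by $u$ is a continuous automorphism of $G$ fixing the generating family of $H$ setwise, so it preserves the closed subgroup $H$ they generate. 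This yields $U\sleq N_G(H)$ with essentially no calculation.

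For claim $(2)$ I would first check $V\in \Uc(H)$. Since $K\trianglelefteq U$ with $U$ open, $N_G(K)$ is open, and each $K^{h_j}$ is a $U$-conjugate of some $g_iKg_i^{-1}$ (or of $K$ itself), so each $N_G(K^{h_j})$ is open and closed. Hence $N_{H\cap U}(K^{h_j})=N_G(K^{h_j})\cap(H\cap U)$ is a relatively open closed subgroup of the compact group $H\cap U$, and $V$, a finite intersection of these, is compact and open in $H$. By construction $V\sleq N_H(K^{h_j})$ for every $j$, so $V\cap K^{h_j}\trianglelefteq V$, and the product $L$ of these normal subgroups is again normal in $V$; this gives $L\trianglelefteq V\in\Uc(H)$. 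Infiniteness is immediate: taking $K^{h_1}=K$ (which appears in the list), $V\cap K$ is open in the infinite profinite group $K$, hence of finite index and infinite, and $V\cap K\sleq L$.

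The substantive part, and the step I expect to be the main obstacle, is showing $Comm_H(L)=H$. For each $j$ we have $L\sleq V\sleq N_H(K^{h_j})$, so $LK^{h_j}$ is a compact subgroup; since $V\cap K^{h_j}$ is open, hence finite-index, in $K^{h_j}$ and lies in $L$, the quotient $LK^{h_j}/L\cong K^{h_j}/(L\cap K^{h_j})$ is finite, i.e. $L$ is finite-index in $LK^{h_j}$. A standard commensuration argument then promotes this to $K^{h_j}\sleq Comm_H(L)$: for $k\in K^{h_j}$, both $L$ and $kLk^{-1}$ are finite-index in $LK^{h_j}$, so their intersection is finite-index in each, whence $k$ commensurates $L$. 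Combining this with $V\sleq N_H(L)\sleq Comm_H(L)$, the subgroup $Comm_H(L)$ contains the open subgroup $V$ together with all the $K^{h_j}$. Being open it is also closed, and it contains the dense subgroup $\grp{K^{h_1},\dots,K^{h_m}}$, so $Comm_H(L)=H$. The delicate point is precisely this last bootstrap from ``$L$ has finite index in each $LK^{h_j}$'' to genuine commensuration in all of $H$, for which one leans on the openness of $V$ and the density of the generating set; the rest is routine bookkeeping with the compact-open topology.
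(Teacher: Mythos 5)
Your proposal is correct and follows essentially the same route as the paper: $U$-invariance of the generating family gives $(1)$, and $(2)$ is obtained by noting $V$ is a finite intersection of open normalizers, that $L$ is a product of subgroups normal in $V$ containing the finite-index subgroup $V\cap K^{h_j}$ of each $K^{h_j}$, and that $Comm_H(L)$ is therefore an open subgroup containing the dense subgroup $\grp{K^{h_1},\dots,K^{h_m}}$. You even spell out the small commensuration step (finite index of $L$ in $LK^{h_j}$ implies $K^{h_j}\sleq Comm_H(L)$) that the paper leaves implicit.
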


\subsection{Structure theorems}

\begin{lem}\label{lem:qc_A_E}
Suppose $G$ is a t.d.l.c.s.c. group with trivial quasi-centre. If $G$ has a non-trivial locally normal $[A]$-subgroup and $U\in \Uc(G)$, then $\qc{G}{U/[A](U)}$ is a non-trivial elementary normal subgroup.
\end{lem}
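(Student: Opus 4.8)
The plan is to verify the four hypotheses of Lemma~\ref{lem:qc_E} for the subgroup $B:=[A](U)$ and then simply read off the conclusion. Three of them are essentially formal. First, $[A](U)\trianglelefteq U$ directly from its definition, since it is generated by a conjugation-invariant family of normal subgroups. Second, for any $g\in G$ the automorphism $x\mapsto gxg^{-1}$ of $G$ carries $[A](U)$ onto $[A](gUg^{-1})$, so Proposition~\ref{prop:A_comm} gives $[A](U)\sim_c g[A](U)g^{-1}$; hence $Comm_G([A](U))=G$, i.e. $[A](U)$ is commensurated. With non-triviality in hand, Proposition~\ref{prop:quasicentralizer} then applies with $K=U$ and $L=[A](U)$ and already shows that $\qc{G}{U/[A](U)}$ is a normal subgroup of $G$ containing $[A](U)$.

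Next I would establish that $[A](U)$ is non-trivial, which is where the standing hypotheses first enter. Let $K$ be the given non-trivial locally normal $[A]$-subgroup of $G$. If $K$ were finite, then $N_G(K)/C_G(K)$ would embed in the finite group $\mathrm{Aut}(K)$, so $C_G(K)$ would be open and every element of $K$ would lie in $QZ(G)=\{1\}$, forcing $K=\{1\}$; thus $K$ is infinite. Choosing $U'\in\Uc(G)$ with $U'\sleq N_G(K)$, the set $U'K$ is a compact open subgroup of $G$ containing $K$ as an infinite normal $[A]$-subgroup, so $[A](U'K)$ is infinite. By Proposition~\ref{prop:A_comm}, $[A](U)\sim_c [A](U'K)$, whence $[A](U)$ is infinite, in particular non-trivial. (Lemma~\ref{lem:inf_norm_A} may be invoked instead of the explicit subgroup $U'K$.)

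The remaining and crucial hypothesis is that $[A](U)$ is virtually nilpotent, and this is the step I expect to be the main obstacle, because in general $[A](U)$ need \emph{not} be virtually nilpotent. The triviality of the quasi-centre is exactly what rescues the argument: since $U$ is open in $G$ we have $QZ(U)\sleq QZ(G)=\{1\}$, so $U$ has no non-trivial finite normal subgroup (such a subgroup would be centralized by an open subgroup and hence lie in $QZ(U)$). In particular no non-abelian finite simple group can occur as a normal subgroup of $U$, which removes the one mechanism producing an infinite semisimple part; what remains of the join of normal $[A]$-subgroups should then be virtually nilpotent by Fitting's theorem, as in the discussion of the class $[A]$ in \cite{CRW_1_13}. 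Pinning this down carefully is the real content of the proof.

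Finally, with all four properties established, Lemma~\ref{lem:qc_E} applies to $B=[A](U)$ and yields that $\qc{G}{U/[A](U)}$ is an elementary normal subgroup of $G$ containing $[A](U)$; since $[A](U)\neq\{1\}$ by the second step, this subgroup is non-trivial, which completes the proof.
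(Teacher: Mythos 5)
Your reduction to Lemma~\ref{lem:qc_E} with $B=[A](U)$ is correct in all of its formal steps: $[A](U)\trianglelefteq U$, $Comm_G([A](U))=G$ via Proposition~\ref{prop:A_comm}, and the non-triviality argument (a finite locally normal subgroup is centralized by an open subgroup and so lies in $QZ(G)=\{1\}$) matches the paper's opening move exactly. The gap is precisely where you flagged it: the claim that $[A](U)$ is virtually nilpotent. Your proposed rescue --- trivial quasi-centre rules out finite normal subgroups of $U$, hence finite simple factors, hence Fitting's theorem applies --- does not close it. Fitting's theorem bounds the nilpotency class of a product of \emph{finitely many} nilpotent normal subgroups; it says nothing about the closure of the join of infinitely many of them. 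The obstruction to $[A](U)$ being virtually nilpotent is not the presence of finite simple normal subgroups but unbounded nilpotency class: $U$ may have infinitely many infinite abelian (say) normal $[A]$-subgroups whose partial products have class tending to infinity, and the closed join is then only pro-nilpotent. Trivial quasi-centre does not exclude this, and if it did, the paper's proof would be a one-line application of Lemma~\ref{lem:qc_E} rather than the elaborate construction it actually is.

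The paper's argument is engineered precisely to avoid ever needing $[A](U)$ itself to be virtually nilpotent. It enumerates normal $[A]$-subgroups of $U$ and forms finite products $C_i:=B_0\cdots B_i$; each $C_i$ \emph{is} a virtually nilpotent normal $[A]$-subgroup of $U$ by Fitting, and $\bigcup_{i\in\omega}C_i$ is dense in $[A](U)$. Since an individual $C_i$ is normal in $U$ but not commensurated in $G$, Lemma~\ref{lem:qc_E} cannot be applied to it directly; instead Lemma~\ref{lem:CRW} is used to build groups $P_i:=\cgrp{C_i^{h_1},\dots,C_i^{h_{m(i)}}}$ normalized by $U$ that carry a commensurated infinite virtually nilpotent $L\trianglelefteq V\in\Uc(P_i)$. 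Lemma~\ref{lem:qc_E} is then applied inside each $P_i$, a SIN-core argument handles $P_i/\qc{P_i}{V/L}$, one assembles $P:=\ol{\bigcup_{i\in\omega}P_i}$ and shows it is elementary and normal in $G$, and finally $N/P$ is seen to have open SIN-core. To repair your proof you would either have to show that trivial quasi-centre forces $[A](U)$ to be virtually nilpotent (which I do not believe is true) or adopt some version of this exhaustion-and-reassembly argument.
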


\begin{proof}
Since $QZ(G)$ is trivial, the non-trivial locally normal $[A]$-subgroup that $G$ is hypothesized to have must be infinite. It follows that $U$ has an infinite locally normal $[A]$-subgroup, and from Lemma~\rm\ref{lem:inf_norm_A}, we infer that $U$ has an infinite normal $[A]$-subgroup. Thus, $[A](U)$ is non-trivial. We now form 
\[
N:=\qc{G}{U/[A](U)}:=\ol{\left\{g\in G\mid \exists\;W\in \Uc(G)\text{ such that } [g,W\cap  U]\subseteq [A](U)\right\}}.
\]
By Proposition~\rm\ref{prop:A_comm}, $[A](U)$ is commensurated, hence Proposition~\rm\ref{prop:quasicentralizer} implies $N$ is a normal subgroup that contains $[A](U)$. It remains to show $N$ is elementary.\par

\indent Let $\{A_{j}\}_{j\in J}$ list all normal $[A]$-subgroups of $U$ and let $(W_i)_{i\in \omega}$ be a normal basis at $1$ for $[A](U)$. For each $i$, we may find $\Omega_i:=\{A_{j_1},\dots,A_{j_n}\}\subseteq \{A_{j}\}_{j\in J}$ so that $\bigcup \Omega_i$ contains coset representatives for $[A](U)/W_i$. Put $\Omega:=\bigcup_{i\in \omega}\Omega_i$, list $\Omega$ as $(B_i)_{i\in \omega}$, and for each $i\in \omega$, define $C_i:=B_0B_1\dots B_i$. The sequence $(C_i)_{i\in \omega}$ is thus an $\subseteq$-increasing sequence of infinite closed normal virtually nilpotent subgroups of $U$ with $\bigcup_{i\in \omega}C_i$ dense in $[A](U)$. \par

\indent Fix $\Sigma=\{g_i\mid i\in\omega\}$ a countable dense \emph{subgroup} of $G$. For each $i\in \omega$, let $C_i^{h_1},\dots,C_i^{h_{m(i)}}$ list $C_i,(C_i^{g_0})^U,\dots,(C_i^{g_i})^U$ and define
\[
P_i:=\cgrp{C_i^{h_1},\dots,C_i^{h_{m(i)}}}.
\]
The construction of $P_i$ allows us to apply Lemma~\rm\ref{lem:CRW} giving $V\in \Uc(P_i)$ and 
\[
(V\cap C_i^{h_1})\dots(V\cap C_i ^{h_{m(i)}})=L\trianglelefteq V
\]
with $Comm_{P_i}(L)=P_i$. Each $C_i$ is virtually nilpotent, so $L$ is also virtually nilpotent. Setting $N_i:=\qc{P_i}{V/L}$, Lemma~\rm\ref{lem:qc_E} implies $N_i$ is an elementary normal subgroup of $P_i$.\par

\indent On the other hand, let $\pi:P_i\rightarrow P_i/N_i=:\tilde{P_i}$ be the usual projection. We have that $N_{P_i}(C_i^{h_j})$ is open in $P_i$ and thus, that $N_{\tilde{P_i}}(\pi(C_i^{h_j}))$ is open in $\tilde{P_i}$ for each $1\sleq j\sleq m(i)$. However, $\pi(C_i^{h_j})$ is finite as $L\cap C^{h_j}_i$ is finite index in $C_i^{h_j}$, and this implies
\[
\pi(C_i^{h_j})\sleq QZ(\tilde{P}_i)\sleq \SIN(\tilde{P_i}).
\]
for each $j$. It follows that $\SIN(\tilde{P}_i)=\tilde{P}_i$, so $\tilde{P}_i$ is elementary. Since $\Es$ is closed under group extension, we conclude that $P_i$ is elementary.\par

\indent The $P_i$ form an $\subseteq$-increasing sequence of elementary groups with $U\sleq N_G(P_i)$ for each $i$. Lemma~\rm\ref{lem:basic_E_lemmas} thus implies $P:=\ol{\bigcup_{i\in \omega}P_i}$ is also elementary. 

\begin{claim*}
$P\trianglelefteq G$. 
\end{claim*}

\begin{proof}[of Claim]It suffices to show the countable dense subgroup $\Sigma$ of $G$ used in the definition of $P$ normalizes $P$. Fixing $P_j$ from the construction of $P$ and  $g\in \Sigma$, it is indeed enough to show $gP_jg^{-1}\subseteq P$. \par

\indent Take $C_j^{ug_k}$ one of the generating subgroups from the construction of $P_j$. We may assume $u\in\Sigma$ since the normalizer of $C_j^{g_k}$ is open and $\Sigma$ is dense. So $gug_k\in \Sigma$, and there is some sufficiently large $l$ such that $C_j\sleq C_l$ and $gug_k$ is in the first $l$ elements of the enumeration of $\Sigma$. Thus, $C_l^{gug_k}\sleq P_l$. Since $P_j$ is topologically generated by finitely many groups of the form $C_j^{ug_k}$, it follows that $gP_jg^{-1}\sleq P_{l'}\sleq P$ for some large $l'$ proving the claim.
\end{proof}

By the claim and the construction of $P$, we now have that $[A](U)\sleq P\trianglelefteq N$. Let $\pi:N\rightarrow N/P$ be the usual projection and put $W:=N\cap U$. The group $\pi(W)$ is a quotient of $W/[A](U)$ and, therefore, has a dense quasi-centre. It follows that $\SIN(N/P)$ is open in $N/P$, so $N/P\in \Es$. Since $\Es$ is closed under group extension, we conclude that $N=\qc{G}{U/[A](U)}$ is elementary.
\end{proof}

We now prove the main theorem of this section.

\begin{thm}
If $G$ is an $[A]$-regular t.d.l.c.s.c. group, then $G$ is elementary.
\end{thm}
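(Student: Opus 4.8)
The plan is to reduce everything to the elementary radical. Set $R:=\Rad{\Es}(G)$ and pass to $H:=G/R$. As noted just after the definition of $[A]$-regularity, this property is inherited by Hausdorff quotients, so $H$ is again an $[A]$-regular t.d.l.c.s.c. group; moreover, by the defining maximality property of the elementary radical we have $\Rad{\Es}(H)=\{1\}$. It therefore suffices to prove $H=\{1\}$, since that gives $G=R\in \Es$. I would argue by contradiction, assuming $H\neq \{1\}$ and then manufacturing a non-trivial elementary normal subgroup of $H$, which is absurd once $\Rad{\Es}(H)=\{1\}$.

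To produce that subgroup I intend to invoke Lemma~\ref{lem:qc_A_E}, so I must verify its two hypotheses for $H$: that $QZ(H)$ is trivial and that $H$ carries a non-trivial locally normal $[A]$-subgroup. For the first, recall $QZ(H)\sleq \SIN(H)$, and by Proposition~\ref{prop:SIN} the group $\SIN(H)$ is an increasing union of compactly generated relatively open SIN groups and hence is elementary; consequently $\ol{QZ(H)}$ is a closed normal elementary subgroup of $H$ and must be contained in $\Rad{\Es}(H)=\{1\}$, forcing $QZ(H)=\{1\}$. For the second, observe that $H$ cannot be discrete, for a non-trivial countable discrete group is elementary and would give $\Rad{\Es}(H)=H\neq\{1\}$; thus $H$ is non-discrete and every $U\in \Uc(H)$ is infinite, in particular non-trivial. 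Applying $[A]$-regularity with the trivial normal subgroup, the non-trivial quotient $U/\{1\}=U$ contains a non-trivial locally normal $[A]$-subgroup, so $H$ does as well.

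With both hypotheses in hand, Lemma~\ref{lem:qc_A_E} shows that $\qc{H}{U/[A](U)}$ is a \emph{non-trivial} elementary normal subgroup of $H$, contradicting $\Rad{\Es}(H)=\{1\}$. Hence $H=\{1\}$ and $G=\Rad{\Es}(G)$ is elementary. The substantive difficulty has already been absorbed into Lemma~\ref{lem:qc_A_E} and the chain of quasi-centralizer lemmas feeding it (in particular Lemma~\ref{lem:CRW}, Lemma~\ref{lem:qc_E}, and Lemma~\ref{lem:vnil_E}); the theorem itself is then a short packaging argument. The one point genuinely requiring care is the reduction to a group with trivial quasi-centre, namely checking that $G/\Rad{\Es}(G)$ has $QZ=\{1\}$, since that is precisely the semisimplicity-flavoured hypothesis that makes Lemma~\ref{lem:qc_A_E} applicable.
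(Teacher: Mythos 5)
Your proof is correct and follows essentially the same route as the paper: pass to $G/\Rad{\Es}(G)$, note it is $[A]$-regular with trivial elementary radical, and derive a contradiction from Lemma~\ref{lem:qc_A_E}. The only difference is that you spell out the verification of the two hypotheses of that lemma (trivial quasi-centre via $QZ\sleq\SIN$ and the existence of a non-trivial locally normal $[A]$-subgroup via non-discreteness), which the paper simply asserts.
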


\begin{proof}
Suppose $G$ is an $[A]$-regular group, so $G/\Rad{\Es}(G)$ is also $[A]$-regular. Suppose for contradiction $G/\Rad{\Es}(G)$ is non-trivial. We thus have that $G/\Rad{\Es}(G)$ is a non-discrete t.d.l.c.s.c. group with a trivial quasi-centre and a non-trivial locally normal $[A]$-subgroup. Fixing $U\in \Uc(G/\Rad{\Es}(G))$, we apply Lemma~\rm\ref{lem:qc_A_E} to conclude that $\qc{G/\Rad{\Es}(G)}{U/[A](U)}$ is a non-trivial elementary normal subgroup of $G/\Rad{\Es}(G)$. This is absurd as $G/\Rad{\Es}(G)$ has a trivial elementary radical. 
\end{proof} 

\begin{cor}\label{cor:str_A}
If $G$ is a t.d.l.c.s.c. group, then $G/\Rad{\Es}(G)$ is $[A]$-semisimple. In particular, if $G$ is an elementary-free t.d.l.c.s.c. group, then $G$ is $[A]$-semisimple.
\end{cor}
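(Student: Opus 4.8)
The plan is to set $H:=G/\Rad{\Es}(G)$ and use the fact, recorded just after the definition of the elementary radical, that $\Rad{\Es}(H)=\{1\}$. The task then reduces to verifying the two defining conditions of $[A]$-semisimplicity for $H$: that $QZ(H)$ is trivial and that $H$ admits no non-trivial locally normal $[A]$-subgroup.

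First I would dispatch the quasi-centre. The SIN-core $\SIN(H)$ is a closed characteristic subgroup with $QZ(H)\sleq \SIN(H)$, and by Proposition~\ref{prop:SIN} it is a countable increasing union of relatively open SIN groups, hence elementary (exactly as argued in the proof of Proposition~\ref{prop:first_examples}(4)). Thus $\SIN(H)$ is an elementary closed normal subgroup of $H$, and since $\Rad{\Es}$ contains every elementary normal subgroup we get $\SIN(H)\sleq \Rad{\Es}(H)=\{1\}$. In particular $QZ(H)=\{1\}$, establishing the first condition.

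Next I would rule out non-trivial locally normal $[A]$-subgroups by contradiction. Assuming $H$ has such a subgroup, I note that $QZ(H)=\{1\}$ was just proved, so the standing hypotheses of Lemma~\ref{lem:qc_A_E} are satisfied; applying that lemma to any $U\in \Uc(H)$ produces $\qc{H}{U/[A](U)}$ as a non-trivial elementary normal subgroup of $H$. This again contradicts $\Rad{\Es}(H)=\{1\}$. Hence $H$ has no non-trivial locally normal $[A]$-subgroup, so $H$ is $[A]$-semisimple. For the ``in particular'' clause, an elementary-free $G$ has no non-trivial elementary closed normal subgroups, whence $\Rad{\Es}(G)=\{1\}$ and $G=G/\Rad{\Es}(G)$; the first part then yields that $G$ itself is $[A]$-semisimple.

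The genuine difficulty has already been absorbed into Lemma~\ref{lem:qc_A_E}, so the remaining argument is a short deduction from the elementary radical's universal property. The one point demanding care is the \emph{order} of the steps: the triviality of $QZ(H)$ must be secured \emph{before} invoking Lemma~\ref{lem:qc_A_E}, because a trivial quasi-centre is a precondition of that lemma rather than part of its conclusion.
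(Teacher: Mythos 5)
Your proposal is correct and follows essentially the same route as the paper's proof: triviality of the quasi-centre (which the paper asserts with a bare ``certainly'' and you justify via $QZ(H)\sleq \SIN(H)$ and the universal property of the elementary radical), followed by an application of Lemma~\ref{lem:qc_A_E} to rule out non-trivial locally normal $[A]$-subgroups. No gaps; you have merely filled in details the paper leaves implicit.
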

\begin{proof} Certainly, $G/\Rad{\Es}(G)$ must have a trivial quasi-centre. Since $G/\Rad{\Es}(G)$ has no non-trivial elementary normal subgroups, Lemma~\rm\ref{lem:qc_A_E} implies $G/\Rad{\Es}(G)$ has no non-trivial abelian locally normal subgroups. Therefore, $G/\Rad{\Es}(G)$ is $[A]$-semisimple.
\end{proof}

\begin{cor}\label{Arcons}
If $G$ is a t.d.l.c.s.c. group, then $R_{[A]}(G)$ is elementary. 
\end{cor}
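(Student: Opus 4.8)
The plan is to deduce the corollary from the preceding theorem, that every $[A]$-regular t.d.l.c.s.c. group is elementary, applied to $H:=R_{[A]}(G)$. The one subtlety to watch is that Theorem~\ref{thm:A_reg_rad}(1) only asserts that $H$ is $[A]$-regular \emph{relative to} $G$ (i.e.\ $U\cap H$ is $[A]$-regular in $U$ for every $U\in\Uc(G)$), whereas the theorem's hypothesis is phrased for a standalone group. Rather than argue that relative $[A]$-regularity upgrades to the standalone notion — which is delicate, since $[A]$ is not closed under extensions and an $[A]$-subgroup of a proper quotient need not lift to an $[A]$-subgroup — I would re-run the proof of the theorem with $H$ in place of $G$, using the relative hypothesis only through its weakest instance.

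Concretely, set $H:=R_{[A]}(G)$, which is a closed normal, hence t.d.l.c.s.c., subgroup of $G$. Since $\Rad{\Es}(H)$ is characteristic in $H\trianglelefteq G$ it is normal in $G$, so I may form $\ol{H}:=H/\Rad{\Es}(H)$ inside $\ol{G}:=G/\Rad{\Es}(H)$; the goal becomes $\ol{H}=\{1\}$. First I would check that relative $[A]$-regularity descends to this quotient: for $\ol{U}\in\Uc(\ol{G})$ the group $\ol{U}\cap\ol{H}$ is the image of $U\cap H$ for a suitable $U\in\Uc(G)$, and every quotient of $\ol{U}$ is a quotient $U/L$ with $L$ containing the kernel; applying the relative $[A]$-regularity of $U\cap H$ in $U$ to exactly these $L$ shows $\ol{U}\cap\ol{H}$ is $[A]$-regular in $\ol{U}$.

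Now suppose, for contradiction, $\ol{H}\neq\{1\}$. By construction $\Rad{\Es}(\ol{H})=\{1\}$; since $\SIN(\ol{H})$ is a closed characteristic subgroup which is elementary by Proposition~\ref{prop:SIN}, this forces $\SIN(\ol{H})=\{1\}$ and hence $QZ(\ol{H})=\{1\}$ (as in Corollary~\ref{cor:str_A}). A nontrivial $\ol{H}$ with trivial quasi-centre cannot be discrete, so $\ol{U}\cap\ol{H}\neq\{1\}$ for any $\ol{U}\in\Uc(\ol{G})$. Feeding the trivial subgroup into the relative $[A]$-regularity just established — i.e.\ the instance $L=\{1\}$ — produces a nontrivial locally normal $[A]$-subgroup of $\ol{U}\cap\ol{H}$, which is locally normal in $\ol{H}$ as well. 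Thus $\ol{H}$ is a t.d.l.c.s.c. group with trivial quasi-centre possessing a nontrivial locally normal $[A]$-subgroup, and Lemma~\ref{lem:qc_A_E} yields that $\qc{\ol{H}}{V/[A](V)}$ is a \emph{nontrivial} elementary normal subgroup of $\ol{H}$ for $V:=\ol{U}\cap\ol{H}$. This contradicts $\Rad{\Es}(\ol{H})=\{1\}$, so $\ol{H}=\{1\}$ and $H=\Rad{\Es}(H)$ is elementary.

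The main obstacle is precisely the relative-versus-standalone gap for $[A]$-regularity: because $[A]$ fails to be extension-closed, one cannot naively transport $[A]$-subgroups through quotients, and a closed normal subgroup that is $[A]$-regular relative to the ambient group need not be $[A]$-regular in its own right. The argument above circumvents this by never needing $[A]$-regularity for an arbitrary quotient of $\ol{H}$ — only the single instance $L=\{1\}$, which merely guarantees the existence of one nontrivial locally normal $[A]$-subgroup, is used, and that is exactly the input Lemma~\ref{lem:qc_A_E} requires. If one instead knows from \cite{CRW_1_13} that the $[A]$-regular radical is $[A]$-regular as an abstract group, the corollary follows immediately by applying the theorem to $R_{[A]}(G)$ directly.
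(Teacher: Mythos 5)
Your proof is correct, but it takes a genuinely different route from the paper's. The paper's proof is a two-line deduction: Corollary~\ref{cor:str_A} shows $G/\Rad{\Es}(G)$ is $[A]$-semisimple, so the minimality clause of Theorem~\ref{thm:A_reg_rad}(2) forces $R_{[A]}(G)\sleq \Rad{\Es}(G)$, and closed subgroups of elementary groups are elementary by Theorem~\ref{thm:closure_main}(2); this also yields the stronger containment $R_{[A]}(G)\sleq\Rad{\Es}(G)$ noted in the remark that follows. You instead use only part (1) of Theorem~\ref{thm:A_reg_rad} and re-run the contradiction argument of the main theorem inside $\ol{H}=R_{[A]}(G)/\Rad{\Es}(R_{[A]}(G))$, which buys you independence from Corollary~\ref{cor:str_A} and from the universal property of the radical, at the cost of repeating the argument; you also correctly identify and circumvent the relative-versus-standalone gap in $[A]$-regularity that would trip up a naive application of the preceding theorem to $R_{[A]}(G)$. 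One small point: your blanket claim that for every $\ol{U}\in \Uc(\ol{G})$ the intersection $\ol{U}\cap\ol{H}$ is the image of $U\cap H$ for some $U\in\Uc(G)$ presumes that every compact open subgroup of a quotient lifts to one of $G$, which requires an argument in general; but since your contradiction only consumes a single instance, namely $\ol{U}=\pi(U)$ with $U\in\Uc(G)$ and $L=\Rad{\Es}(H)\cap U$, where $\pi(U)\cap\pi(H)=\pi(U\cap H)$ holds because $\Rad{\Es}(H)\sleq H$, the proof goes through as written.
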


\begin{proof} 
By Corollary~\rm\ref{cor:str_A}, $G/\Rad{\Es}(G)$ is $[A]$-semisimple, and Theorem~\rm\ref{thm:A_reg_rad} therefore implies $R_{[A]}(G)\sleq \Rad{\Es}(G)$. Applying Theorem~\rm\ref{thm:closure_main}, we conclude that $R_{[A]}(G)$ is elementary.
\end{proof}

We lastly recover a theorem from the literature.
\begin{cor}[(Caprace, Reid, Willis \cite{CRW_2_13})] A non-discrete compactly generated topologically simple t.d.l.c. group is $[A]$-semisimple.
\end{cor}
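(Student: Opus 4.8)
The plan is to reduce to the second countable setting and then read off the conclusion from the elementary radical. First I would invoke \cite[(8.7)]{HR79} to obtain a compact normal subgroup $K\trianglelefteq G$ with $G/K$ second countable. Since $G$ is topologically simple and $K$ is closed, either $K=\{1\}$ or $K=G$. The case $K=G$ is impossible: it would force $G$ to be compact, hence profinite, and a non-discrete profinite group admits, by van Dantzig, a proper open normal subgroup, which is of finite index and thus non-trivial by infiniteness, contradicting topological simplicity. Therefore $K=\{1\}$, and $G$ is a non-discrete compactly generated topologically simple t.d.l.c.s.c. group, so that the second-countable machinery developed above applies.

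Next I would examine the elementary radical $\Rad{\Es}(G)$, which is a closed normal subgroup of $G$. Topological simplicity gives $\Rad{\Es}(G)\in\{\{1\},G\}$. If $\Rad{\Es}(G)=G$, then $G$ is itself elementary; being also compactly generated and topologically simple, Proposition~\ref{prop:ex_topsimple} forces $G$ to be discrete, contrary to hypothesis. Hence $\Rad{\Es}(G)=\{1\}$. Finally, Corollary~\ref{cor:str_A} states that $G/\Rad{\Es}(G)$ is $[A]$-semisimple for every t.d.l.c.s.c. group $G$; since here $\Rad{\Es}(G)=\{1\}$, this says precisely that $G$ is $[A]$-semisimple, which is the desired conclusion.

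The substantive inputs are therefore just the two theorems already in hand, namely that elementary compactly generated topologically simple groups are discrete (Proposition~\ref{prop:ex_topsimple}) and that the quotient by the elementary radical is $[A]$-semisimple (Corollary~\ref{cor:str_A}); the argument is essentially a two-line deduction once these are available. The only genuinely delicate point I anticipate is the passage to second countability: one must explicitly dispose of the possibility that $G$ is compact in order to legitimately use $\Rad{\Es}$ and Corollary~\ref{cor:str_A}, both of which are stated only for t.d.l.c.s.c. groups. After that reduction, no further computation is required.
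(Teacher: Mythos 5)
Your argument is correct and follows essentially the same route as the paper: reduce to the second countable case via \cite[(8.7)]{HR79}, use Proposition~\ref{prop:ex_topsimple} to rule out $G$ being elementary, and conclude with Corollary~\ref{cor:str_A} (you invoke its first clause via $\Rad{\Es}(G)=\{1\}$, the paper its ``elementary-free'' clause --- an immaterial difference). Your explicit handling of the compact normal subgroup in the second-countability reduction is a detail the paper leaves implicit, and it is handled correctly.
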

\begin{proof}
Via \cite[(8.7)]{HR79}, such a group $G$ is second countable, so Proposition~\rm\ref{prop:ex_topsimple} implies $G$ is non-elementary. We conclude that $G$ is elementary-free and via Corollary~\rm\ref{cor:str_A}, is $[A]$-semisimple.
\end{proof}

\begin{rmk} 
In general, $R_{[A]}(G)\lneq \Rad{\Es}(G)$.
\end{rmk}


\bibliographystyle{bibgen}
\bibliography{C:/Users/Phillip/Dropbox/Research/TeX/Bibtex/biblio}

\affiliationone{
   Phillip Wesolek\\
   MSCS University of Illinois at Chicago\\
   322 Science and Engineering Offices MC249\\
   851 S. Morgan St. Chicago, Il 60607-7045\\
   USA}
\affiliationtwo{~} 
\affiliationthree{
   Current address:\\
   Universit\'{e} catholique de Louvain IRMP\\ 
   Chemin du Cyclotron 2, box L7.01.02\\
   1348 Louvain-la-Neuve \\
   Belgique \\
   \email{phillip.wesolek@uclouvain.be}}
\end{document}